\documentclass[leqno,11pt,a4paper]{amsart}

\usepackage{jc}
\usepackage[mathcal]{eucal}
\usepackage{comment}

\begin{document}

\author{Julian Chaidez}
\address{Department of Mathematics\\University of Southern California\\Los Angeles, CA\\90007\\USA}
\email{julian.chaidez@usc.edu}

\author{Michael Huang}
\address{Department of Mathematics\\University of Southern California\\Los Angeles, CA\\90007\\USA}
\email{mshuang@usc.edu}

\title[Convex Hypersurfaces And Robust Heterodimensional Dynamics]{Convex Hypersurfaces And Robust Heterodimensional Dynamics}

\begin{abstract} We prove that any closed orientable hypersurface in a contact manifold of dimension five or greater is isotopic to a robustly non-convex hypersurface via an arbitrarily $C^0$-small isotopy. This strengthens a recent result of the first author and yields a strong counterpart to the groundbreaking density theorem of Honda-Huang and Giroux. This is proven by combining a new convexity obstruction via heteroclinics and recent advances in robust heterodimensional dynamics due to Li-Turaev to produce a robust deconvexifying plug, which is a local and robust convexity obstruction.
\end{abstract}

\vspace*{-15pt}

\maketitle

\vspace*{-20pt}

%\tableofcontents

\section{Introduction} \label{sec:introduction} 
A hypersurface in a contact manifold is \emph{convex} if there exists a contact vector field that is everywhere transverse to the hypersurface. Convex surface theory was originally pioneered by Giroux \cite{g1991}, who established several fundamental properties of convex surfaces and applied them to resolve (in dimension three) a question of Eliashberg-Gromov \cite{eliashberg1991convex} on the existence of contact Morse functions. Convex surfaces subsequently developed into a core tool in low-dimensional contact topology, with many fundamental applications (cf. \cite{h1999,h2000,hkm2002,hkm2003,hkm2007,e2001,eh2002,eh2005}). We refer the reader to the monograph of Etnyre \cite{etnyre} for a more comprehensive reference.

\vspace{3pt}

In higher dimensions, convex hypersurface theory has only recently gained traction due to the groundbreaking work of Honda-Huang \cite{hh2018,hh2019}. Applications have included the completion of the higher-dimensional Giroux correspondence by Breen-Honda-Huang \cite{bhh2023}. A central question in all of these works regards the abundance of convex hypersurfaces. More precisely, we have the following question about density and genericity \cite[Remark 1.2.4]{hh2019} or \cite{AIMproblems}.

\begin{question*}[Density/Genericity] \label{qu:density} Are convex hypersurfaces dense (or even generic) in the space of closed oriented hypersurfaces in any contact manifold in the $C^k$-topology?
\end{question*}

\noindent One of the fundamental results of Giroux \cite{g1991} states that convex surfaces are $C^\infty$-generic in dimension three, and the main results of Honda-Huang \cite{hh2018} established the $C^0$-density of convex hypersurfaces in any dimension. On the other hand, it was long conjectured that Question \ref{qu:density} should have a negative answer in the $C^\infty$-topology beyond dimension three. This longstanding problem was finally resolved by the first author \cite{jc2024} who recently proved the following result.

\begin{theorem*} \cite{jc2024} \label{thm:chaidez_nonconvex} There is a closed oriented hypersurface $\Sigma$ in any contact manifold $(Y,\xi)$ of dimension five or greater that cannot be approximated by convex hypersurfaces in the $C^2$-topology.  
\end{theorem*}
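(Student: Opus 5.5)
The plan is to build a \emph{local} convexity obstruction and implant it into an arbitrary hypersurface. The basic tool is the characteristic foliation $\Sigma_\xi$, the singular line field $\ker(\alpha|_\Sigma)\cap T\Sigma$. By the Giroux/Honda--Huang characterization, if $\Sigma$ is convex then $\Sigma_\xi$ is directed by a vector field that is Morse--Smale-like near its non-degenerate part. More precisely, there is a dividing set $\Gamma\subset\Sigma$ and a Liouville-type vector field on $\Sigma\setminus\Gamma$ that expands a volume form on $R_+$ and contracts one on $R_-$, with all flow lines going from $R_+$ to $\Gamma$ to $R_-$. The strategy is to find a dynamical feature of $\Sigma_\xi$ that is incompatible with this structure and that persists under $C^2$-perturbations of $\Sigma$. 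Such perturbations induce $C^1$-perturbations of $\Sigma_\xi$.

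\textbf{Step 1 (obstruction).} I would show that a convex hypersurface cannot have a chain-recurrent set of $\Sigma_\xi$ containing a periodic orbit, or more generally a nontrivial recurrent invariant set, lying in the region where the characteristic foliation is directed by a Liouville-type field. On $R_\pm$ the flow expands (resp. contracts) a volume form. Hence any compact invariant set must have nontrivial divergence, and it cannot be recurrent in the sense of a closed orbit with prescribed Floquet data, such as a partially hyperbolic orbit whose multipliers are incompatible with a conformally symplectic linearization. The cleaner version: along a closed leaf in $R_+$ the linearized return map is conformally symplectic on the transverse symplectic slice with expansion factor $>1$. Therefore an orbit whose transverse multipliers include a pair with product $<1$ alongside one with product $>1$ contradicts conformality. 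This condition is open in $C^1$.

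\textbf{Step 2 (local model).} In a Darboux ball of $(Y,\xi)$ with $\dim Y=2n+1\ge 5$, I would write a hypersurface as a graph $\Sigma=\{z=f(x,y)\}$ in a neighborhood $[0,1]\times D^{2n-1}$. I would choose $f$ so that $\Sigma_\xi$ contains a hyperbolic closed orbit with non-conformal transverse linearization. This requires at least two transverse symplectic directions, hence $2n\ge 4$, which is where dimension five enters. The characteristic foliation of a graph is computable from $f$, and realizing a prescribed flow with a closed orbit reduces to solving a first-order PDE for $f$ along a transversal. I expect this to be doable by a suspension construction.

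\textbf{Step 3 (implant and robustness).} Any closed hypersurface can be isotoped, by a $C^0$-small (not $C^2$-small) isotopy inside a small Darboux ball, to contain this plug. Hyperbolic periodic orbits with non-conformal spectrum survive $C^1$-perturbations of the line field, so every $C^2$-nearby hypersurface retains the obstruction and is non-convex. The main obstacle is Step 1: ruling out that the bad orbit lies on or near the dividing set $\Gamma$, where the Liouville structure degenerates, or that the invariant set crosses $\Gamma$. I would handle this by making the orbit lie in a region where $\alpha|_\Sigma$ is nonvanishing and comparing with the contact vector field's divergence, which controls the sign of the expansion on each side.
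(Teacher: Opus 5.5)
\textbf{Gap.} Step 1's obstruction detects nothing, and Step 2 asks for something no hypersurface has.

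For \emph{any} hypersurface in any contact manifold, convex or not, the return map of a closed leaf of $\Sigma_\xi$ on a local section $D$ is a contact embedding of the $(2n-1)$-dimensional contact manifold $(D,\xi\cap TD)$; this is the Contact Transition lemma of Section 2. By Lemma \ref{lem:normal_form}, its differential at the fixed point is block lower-triangular. The diagonal blocks are $C>0$ on the transverse direction and $\sqrt{C}\,B$ on $\xi$, with $B$ symplectic. So the action on $\xi$ is conformally symplectic with a \emph{single} factor $C$, and every pair of paired transverse multipliers has product exactly $C$.

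Consequently, ``a pair with product $<1$ alongside one with product $>1$'' never occurs. There is no non-conformal spectrum for a graph $z=f(x,y)$ to realize. The conformality you want to contradict holds for every characteristic foliation; it is not a consequence of convexity.

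For the same reason, a single closed orbit is not a robust obstruction. A hyperbolic closed leaf has $C\neq 1$. By Theorem \ref{thm:hyperbolic_implies_Liouville} it is positive Liouville when $C>1$ (index $\le n-1$) and negative Liouville when $C<1$ (index $\ge n$). So it can sit in $R_+$ or $R_-$ of a convex hypersurface; Liouville vector fields can have closed orbits, as Lemma \ref{lem:orbit_creation} shows. The only single-orbit obstruction is $C=1$ (vanishing total divergence), and a $C^2$-small perturbation destroys it.

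Two smaller points. Convexity does not make $\Sigma_\xi$ Morse--Smale-like: $R_\pm$ can carry arbitrary hyperbolic Liouville dynamics. And the dividing set is not the real difficulty: $\Gamma$ is transverse to $\Sigma_\xi$, so no recurrent set meets it.

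\textbf{Missing idea.} What convexity actually constrains is the relative position of invariant sets of opposite sign. The flow crosses $\Gamma$ only from $R_+$ to $R_-$, and a Liouville invariant set cannot be both positive and negative (Lemma \ref{lem:sign_exclusive}). Hence no orbit can run from a negative (index $\ge n$) basic set to a positive (index $\le n-1$) one; this is the paper's heteroclinic obstruction. Likewise, a convex hypersurface cannot have transitive characteristic foliation.

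The hard part is making such a configuration $C^1$-robust, and your Step 3 cannot do this. For periodic orbits, this connection is fragile by dimension count: $\dim W^u(C_-)+\dim W^s(C_+)\le 2n=\dim\Sigma$, so hyperbolic continuation of orbits gives no robustness.

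Robustness needs genuinely non-hyperbolic mechanisms. The cited proof in \cite{jc2024} uses blenders to produce $C^2$-robustly transitive characteristic foliations. This paper instead:
\begin{enumerate}
\item creates a cycle of index $(n-1,n)$ in a plug (Theorem \ref{thm:local_cycle_creation});
\item makes it non-degenerate (Theorem \ref{thm:nondeg_approx});
\item builds a proper unfolding by contact plugs (Theorem \ref{thm:characteristic_unfoldings});
\item applies Li--Turaev (Theorem \ref{thm:unfoldings_to_robust}) to get a robust heterodimensional cycle.
\end{enumerate}
The robust fragile heteroclinic of that cycle then triggers the heteroclinic obstruction.
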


\noindent The work \cite{jc2024} applied a tool from partially hyperbolic dynamics, namely Bonatti-Diaz blenders \cite{whatisblender,bonatti2004dynamics,bonattidiaz1995,bonatti2008robust}, to construct a closed hypersurface in any contact manifold with $C^2$-robustly transitive characteristic foliations. Any such hypersurface is also $C^2$-robustly non-convex, since a convex hypersurface cannot have transitive characteristic foliation. These results revealed a connection between the topology of hypersurfaces in contact manifolds and partially hyperbolic dynamics.

\vspace{3pt}

In this paper, we develop these connections further by establishing a general relationship between robust non-convexity in contact topology and robust heterodimensional dynamics (cf. Bonatti-Diaz \cite{bonatti2008robust}). As an application, we use recent work of Li-Turaev \cite{lt2024} to construct robust and local obtructions to convexity, yielding the following dramatic strengthening of Theorem \ref{thm:chaidez_nonconvex}.

\begin{theorem*}[Main Result] \label{thm:main_result} Any closed orientable hypersurface $\Sigma$ in a contact manifold $(Y,\xi)$ of dimension five or greater is isotopic to a $C^2$-robustly non-convex hypersurface via a $C^0$-small isotopy.
\end{theorem*}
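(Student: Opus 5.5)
The plan is to construct a \emph{robust deconvexifying plug}: a model hypersurface piece $P$ inside a Darboux ball, whose characteristic foliation contains a $C^2$-robust heterodimensional configuration that no convex hypersurface can carry. We then insert $P$ into $\Sigma$ by a $C^0$-small local modification. The argument has three steps.

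\textbf{Step 1: convexity obstruction via heteroclinics.} Suppose $\Sigma$ is convex, with transverse contact vector field $v$. Then the dividing set $\Gamma = \{\alpha(v)=0\}\cap\Sigma$ splits $\Sigma$ into $R_+$ and $R_-$. The characteristic foliation is directed by a vector field $X$ that expands a volume form on $R_+$, contracts it on $R_-$, and is transverse to $\Gamma$, pointing from $R_+$ into $R_-$. Consequently any compact invariant set lies in $R_+$ or in $R_-$ and is either repelling or attracting in the volume sense. I will then prove the key lemma: the characteristic foliation of a convex hypersurface cannot contain a pair of hyperbolic periodic orbits (or, more generally, transitive hyperbolic sets) $\Lambda_1,\Lambda_2$ of different stable indices that are joined by heteroclinic orbits in both directions.

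The proof of the lemma goes as follows. A cycle $\Lambda_1\to\Lambda_2\to\Lambda_1$ forces both sets to lie in the same region, say $R_+$, because orbits cannot cross $\Gamma$ back from $R_-$ to $R_+$. On $R_+$ the contact structure gives a Liouville-type form $\beta$ with $d\beta$ symplectic and $X$ Liouville for it. Within one region, the indices are then constrained by the conformal symplectic structure: the linearized return map on $\xi\cap T\Sigma$ modulo $X$ is conformally symplectic with a conformal factor $>1$. Hence exponents pair as $\lambda_i+\lambda_{n-i}=c>0$, which forces the unstable dimension to be at least half. A heterodimensional cycle with indices $n-1$ and $n$ (in appropriate normalization), straddling the half-dimension, then gives a contradiction. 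I expect the exact index bookkeeping will need adjustment. An alternative route is to use the Lyapunov function provided by $\beta$ along the connecting orbits to rule out a heteroclinic cycle within $R_\pm$ between sets of differing index parity.

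\textbf{Step 2: a robust plug.} Next I take a Li--Turaev robust heterodimensional cycle: a $C^2$-open set of diffeomorphisms of a $2n$-dimensional region exhibiting a heterodimensional cycle that persists, via blenders. I must make it conformally symplectic and compatible with the index constraint of Step 1. I realize it as a return map of a flow on $D^{2n}\times S^1$, or in a mapping-torus plug $W\subset\Sigma$, and then find a hypersurface in a Darboux chart whose characteristic foliation is this flow. I will use the standard fact that any $1$-form on a hypersurface with $d\beta$ of maximal rank on $\ker\beta$ is realized by a graph embedding into the $1$-jet space $J^1$. The plug should be straight (identity-like) near its boundary so that it can be glued in. Robustness follows because $C^2$-perturbations of the hypersurface induce $C^1$-perturbations of the characteristic foliation, and Li--Turaev's cycles are $C^1$-robust (or $C^2$-robust, which then requires $C^3$ control; I will need to check this regularity loss).

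\textbf{Step 3: insertion.} Finally, given any $\Sigma$, I choose a tiny Darboux ball meeting $\Sigma$ in a disk. After a $C^\infty$-small perturbation the characteristic foliation there is a standard flow box. I then replace a flow box by a scaled copy of the plug. Scaling the Darboux coordinates via contactomorphisms $(x,y,z)\mapsto(\epsilon x,\epsilon y,\epsilon^2 z)$ makes the isotopy $C^0$-small, though not $C^1$-small, as expected.

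I expect the main obstacle to be Step 2. Li--Turaev produce robust heterodimensional dynamics for generic diffeomorphisms, but here the relevant class is flows induced by characteristic foliations, whose return maps are conformally symplectic. We need both an open set inside this restricted class and indices compatible with the obstruction of Step 1. I would first try embedding a Li--Turaev map on a low-dimensional factor and taking a product with a strongly expanding conformal-symplectic partner, so that the overall map is conformally symplectic while the cycle survives robustly.
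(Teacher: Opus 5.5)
Your Step 1 is correct, and it is a more elementary route to the obstruction than the paper's. On $R_\pm$ the characteristic flow is $\pm$ a Liouville flow. So a periodic orbit there has a contact return map with conformal factor $C=e^{\pm T}$, and the normal form $C\oplus\sqrt{C}B$ forces index $\le n-1$ in $R_+$ and $\ge n$ in $R_-$. You pass to basic sets by density of periodic orbits; note that only \emph{transitive} invariant sets are forced off $\Gamma$, not arbitrary compact ones. The paper instead proves that every hyperbolic basic set is Liouville with sign fixed by its index, using a McDuff--Sullivan type divergence criterion plus sign exclusivity. That is more than the obstruction needs. Step 3 is also fine.

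The genuine gap is Step 2, which carries essentially all the work of the theorem, and the fix you propose cannot work. Return maps of characteristic foliations are contact embeddings of $(2n-1)$-dimensional sections; your counts of $2n$ and $D^{2n}\times S^1$ are each off by one. A product $f\times g$ is conformally symplectic only if $f$ already is, with the same factor, so an arbitrary Li--Turaev map cannot be imported this way. Worse, by the normal-form computation of your own Step 1, the index-$n$ orbit of an $(n-1,n)$ cycle has $C<1$. Its eigenvalues on $\xi$ come in pairs $\sqrt{C}\lambda,\sqrt{C}\lambda^{-1}$ with product $C<1$, so no conformally symplectic partner can be expanding at that end. The cycle must genuinely straddle the two conformal regimes. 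Nothing in your plan places a characteristic foliation inside one of Li--Turaev's open sets. Also, a flow box has no recurrence, so even producing a mapping-torus region there needs an argument; the paper uses an orbit-creation plug for this.

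The missing idea is not to import an open set, but to invoke Li--Turaev's unfolding theorem (Theorem~B). It only needs a non-degenerate coindex-one cycle plus a $2$-parameter proper unfolding of it, and both can be built \emph{inside} the class of characteristic foliations by contact plugs. The paper proceeds in three steps.

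(i) It creates a closed orbit in a flow box with a $C^0$-small plug. It cancels the return map near the fixed point with a plug realizing its inverse, via a contact isotopy to the identity. It then inserts the time-$\delta$ map of a contact vector field whose zeros of index $n-1$ and $n$ are connected in both directions, built from a Reeb chord of the Legendrian unknot. This gives a simple $(n-1,n)$ cycle.

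(ii) It makes the cycle non-degenerate by $C^\infty$-small plugs along the heteroclinic segments. This uses that $W^s(O_+)$ and $W^u(O_-)$ are Legendrian, that $W^u(O_+)$, $W^s(O_-)$ and the extended manifolds are pre-Lagrangian, and that the center multipliers are automatically real and simple.

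(iii) It unfolds by composing $\mathrm{Ret}\,C_-$ with the dilation $(z,x,y)\mapsto(e^rz,e^{r/2}x,e^{r/2}y)$ near $O_-$, which multiplies $\lambda_{cs}(O_-)$ by $e^r$. It also pushes the transported copy of $W^s(O_+)$ along the Reeb flow near its intersection with $W^u(O_-)$, which makes the splitting function equal to $s$.

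Li--Turaev then supply the robust $(n-1,n)$ cycle, and an obstruction like your Step 1 rules out convexity of all $C^2$-nearby hypersurfaces.
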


\noindent This resolves a conjecture \cite[Conjecture 20]{jc2024} of the first author, and proves that the failure of Question \ref{qu:density} is extreme. Moreover, this work points to a connection between the convex/non-convex dichotomy and the program of $C^1$-generic dynamics advocated by Bonatti \cite{bonattitowards}.

\subsection{Heteroclinic Cycles And Robust Heterodimensional Dynamics} \label{subsec:heteroclinic_cycles_and_robust_heterodimensional_dynamics} We start by discussing the key concepts involved in robust heterodimensional dynamics. Recall that a \emph{singular line field} 
\[
L \quad\text{on a closed manifold}\quad M
\] is an equivalence class of smooth vector field up to rescaling by a positive smooth function. A singular line field may be viewed as a 1-dimensional foliation with singularities, or as a flow modulo reparametrization. In this paper, we will formulate many concepts using singular line fields instead of vector fields or flows, since this is the dynamical structure associated to a hypersurface in a contact manifold. Fix the following terminology.

\begin{definition*}[Heteroclinic Cycle] A \emph{heteroclinic cycle} $C$ of a singular line field $L$ is a pair of hyperbolic basic sets $C_+$ and $C_-$ whose stable and unstable sets satisfy
\[ W^u(C_-) \cap W^s(C_+) \neq \emptyset \qquad\text{and}\qquad W^u(C_+) \cap W^s(C_-) \neq \emptyset \] 
The \emph{index} $\on{ind}(C)$ is the pair of indices of the hyperbolic invariant sets $C_+$ and $C_-$. A heteroclinic cycle is \emph{simple} if both basic sets are periodic orbits and \emph{heterodimensional} if $\on{ind}(C_+) \neq \on{ind}(C_-)$.\end{definition*}

\begin{remark*} We adopt the analogous terminology for heteroclinic cycles, indices and heterodimensional cycles in the settings of diffeomorphisms, vector fields and flows.
\end{remark*}

\begin{figure}[h]
    \centering
    \includegraphics[width=.9\linewidth]{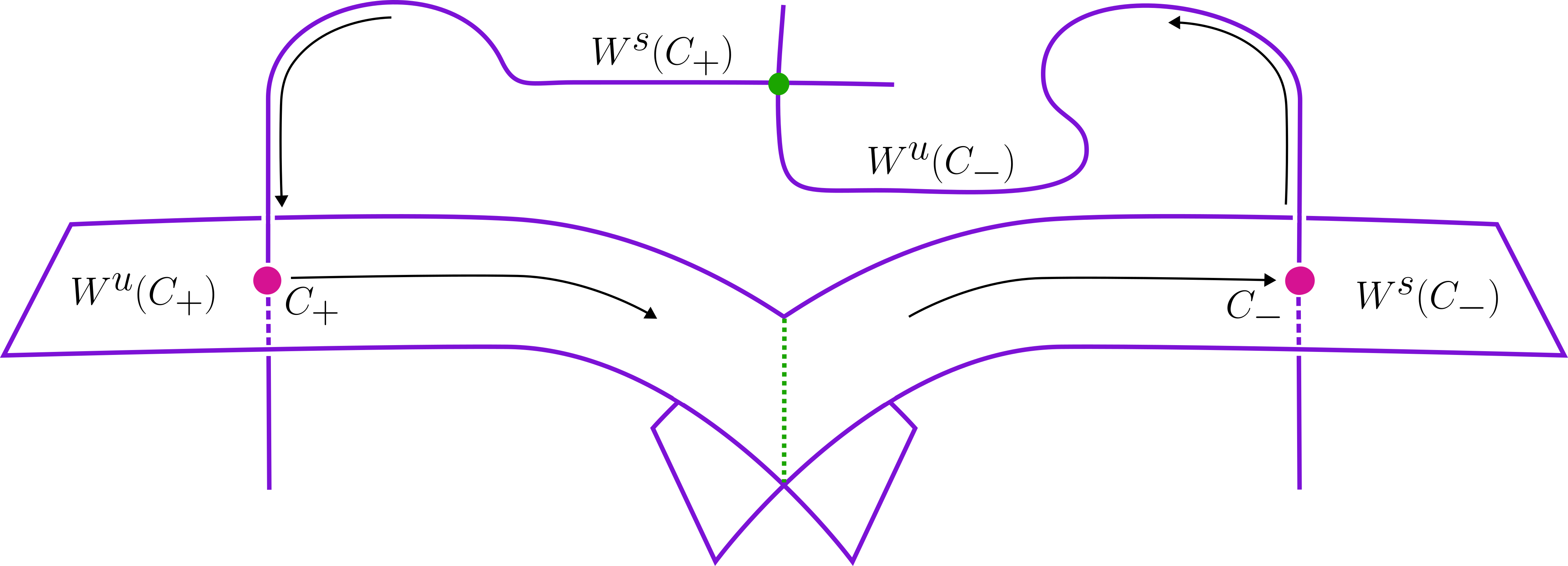}
    \caption{A simple heterodimensional cycle consisting of two hyperbolic fixed points of a 3-dimensional diffeomorphisms. }
    \label{fig:heterodimensional_cycle}
\end{figure}

\noindent Given a hyperbolic invariant set $\Lambda$ of a singular line field $L$ on $\Sigma$, any other singular line field $K$ sufficiently close to $L$ in the $C^1$-topology has a hyperbolic invariant set $\Lambda_K$ called the \emph{continuation} of  $\Lambda$. The continuation is Hausdorff close to $\Lambda$, with the same index and conjugate dynamics.

\begin{definition*}[Robustness] A heteroclinic cycle $C$ of a singular line field $L$ is \emph{robust} if it has  a $C^1$-open neighborhood $\mathcal{U}$ in the space of singular line fields such that the continuations
\[
C^+_K\quad\text{and}\quad C^-_K
\]
of the basic sets $C_+$ and $C_-$ form a heteroclinic cycle for any singular line field $K$ in $\mathcal{U}$. \end{definition*}

\begin{remark*}[Fragile And Robust Heteroclinics] \label{rmk:fragile_vs_robust} Let $C_+$ and $C_-$ be hyperbolic orbits such that $\on{ind}(C_+)$ is less than $\on{ind}(C_-)$. If the stable and unstable manifolds intersect transversely, then
\[
\on{dim}(W^s(C_+) \cap W^u(C_-)) \le 0 \qquad \on{dim}(W^u(C_-) \cap W^s(C_+)) > 0
\]
Therefore, in the transverse case $W^s(C_+) \cap W^u(C_-)$ must be empty while the intersections $W^s(C_-) \cap W^u(C_+)$ persist. Thus, the heteroclinic intersections in these two sets are called \emph{fragile} and  \emph{robust} respectively. Since a generic flow satisfies the Smale property, this analysis implies that a robust heterodimensional cycle cannot be simple.
\end{remark*}

Historically, robust heterodimensional dynamics originally appeared in the seminal work of Abraham-Smale \cite{abraham2000nongenericity} who constructed the first examples of $C^1$-open sets of diffeomorphisms that are not Axiom A. This disproved the conjecture that Axiom A diffeomorphisms and flows are $C^1$-generic, historically known as \emph{Smale's dream} \cite{smale1967differentiable}. Further examples were discovered in \cite{diaz1992nonconnected,diaz1992nonconnected,diaz1995persistence,gorodetski2000certain,mane1978contributions,shub2006topologically,simon19723} and a general approacht to robust heterodimensional cycles was eventually developed by Bonatti-Diaz \cite{bonatti2004dynamics,bonattidiaz1995} using blenders. Robust heterodimensional dynamics, along with the related phenomenon of robust homoclinic tangencies, subsequently developed into central phenomena in differentiable dynamics (cf. \cite{bonatti2008robust,hp2018,hertz2011partially} for related overviews). 

\vspace{3pt}

The centrality of robust heterodimensional dynamics is exhibited by the following famous conjecture of Palis, posed originally in \cite{palis2000global,palis2005global}, which can be viewed as a revision of Smale's dream that incorporates non-hyperbolic robust phenomena. 

\begin{conjecture*}[Palis] A $C^1$-generic diffeomorphism (or non-singular line field) either satisfies Axiom A, contains a robust heterodimensional cycle, or contains a robust homoclinic tangency.
\end{conjecture*}

\noindent In fact, there is a version of this question posed by Bonatti-Diaz \cite[Question 1.2]{bonatti2008robust} that removes the case of homoclinic tangencies, and thus conjectures a dichotomy between hyperbolicity and robust heterodimensional cycles. This variant of the Palis Conjecture is closely related to the results of this paper, and we will return to it shortly.

\subsection{Characteristic Foliations And Basic Sets} \label{subsec:intro_convexity_obstruction} We next revisit hypersurfaces in contact manifolds and formulate the key convexity obstruction. Consider a hypersurface
\[
\Sigma \qquad \text{in a contact manifold}\qquad (Y,\xi)
\]
Any such hypersurface has a natural singular line field called the \emph{characteristic foliation} that is induced by the contact structure. Examples of characteristic foliations include the suspension flow of a contactomorphism, the Liouville flow on a Liouville manifold and Hamiltonian flows with drag. We will review key facts about characteristic foliations in Section \ref{sec:contact_hamiltonian_manifolds}.

\begin{remark*}[Contact Hamiltonian Manifolds] In the body of the paper, we will formulate most of our results in terms of \emph{contact Hamiltonian manifolds} which are the natural intrinsic models for hypersurfaces in contact manifolds. This language is introduced in \cite{chaidez2026conformally} and reviewed in Section \ref{sec:contact_hamiltonian_manifolds}. For simplicity, we will formulate our results in the introduction using hypersurfaces.
\end{remark*}

 A subset $\Lambda$ in the hypersurface $\Sigma$ is called \emph{Liouville} if there is a contact form for $\xi$ whose restriction to a neighborhood of $\Lambda$ is a Liouville form. If $\Sigma$ is oriented, then a Liouville subset is \emph{positive} if the Liouville form can be chosen to induce the given orientation, and \emph{negative} if it can be chosen to induce the opposite orientation. Liouville subsets plays a key role in the theory of hypersurface in contact manifolds and has appeared, for example, in the work of Breen \cite{b2021,b2024}.

\begin{figure}[h]
    \centering
    \includegraphics[width=.9\linewidth]{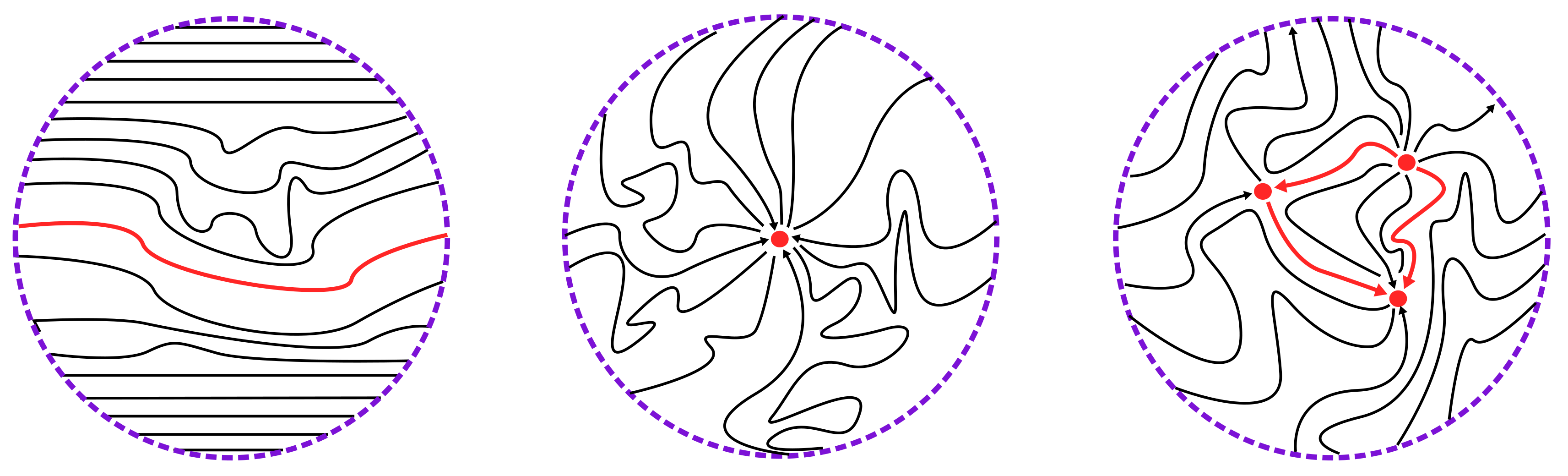}
    \caption{Examples of Liouville and non-Liouville subsets. The first subset, a trivial arc in a topologically trivial characteristic foliation, is both positive and negative Liouville. The second subset, a sink singularity, is a negative Liouville set. The final subset is a more complicated invariant set that is not Liouville.}
    \label{fig:liouville_invariant_sets}
\end{figure}
In general, a subset can be positive Liouville, negative Liouville, neither or even both. Our first main result states that hyperbolic basic sets are automatically Liouville, and the sign of the Liouville form is determined by the index of the basic set.

\begin{theorem*}[Basic Sets Are Liouville] \label{thm:intro_basic_sets_are_Liouville} Let $\Lambda \subset \Sigma$ be a hyperbolic basic set of the characteristic foliation of hypersurface $\Sigma$ in a contact $(2n+1)$-manifold. Then $\Lambda$ is Liouville with
\[
\Lambda 
\text{ positive} \qquad\text{if and only if}\qquad \text{$\on{ind}(\Lambda) \le n$}
\]\end{theorem*}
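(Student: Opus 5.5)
Near $\Sigma$ I would write the contact form as $\alpha = u\,dt + \beta$ on $(-\epsilon,\epsilon)\times\Sigma$, with $u$ a function and $\beta$ a $1$-form on $\Sigma$. The characteristic foliation is spanned by the vector field $X$ defined by $\iota_X\Omega = \beta_0$, where $\Omega$ is a volume form on $\Sigma$ and $\beta_0 = \beta|_\Sigma$; the contact condition says that $d\beta_0$ is nondegenerate on $\ker\beta_0$ away from singularities. A neighborhood $U$ of $\Lambda$ is Liouville exactly when, after rescaling $\alpha$ by a positive function $e^{f}$ (a different contact form for $\xi$), the restricted form $e^f\beta_0$ satisfies $(d(e^f\beta_0))^n \neq 0$ on $U$. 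The sign of this top form relative to $\Omega$ is then the sign of the Liouville set.

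So the task is to produce a function $f$ on a neighborhood of $\Lambda$ with
\[
(df\wedge\beta_0 + d\beta_0)^n = n\,df\wedge\beta_0\wedge(d\beta_0)^{n-1} + (d\beta_0)^n
\]
nonvanishing of the correct sign. Writing $(d\beta_0)^n = g\,\Omega$ and $\beta_0\wedge(d\beta_0)^{n-1} = c\,\iota_X\Omega$ with $c>0$ (positivity of $c$ is the contact condition), this becomes the requirement that
\[
\pm\bigl(n c\,X(f) + g\bigr) > 0 \quad\text{near } \Lambda.
\]
Up to a positive factor, $g/c$ is the divergence of $X$ with respect to a suitable conformal volume. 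Thus $\Lambda$ is positive (respectively negative) Liouville iff there is a Lyapunov-type function $f$ with $X(f) > -\phi$ (respectively $X(f) < -\phi$), where $\phi = g/(nc)$. Dividing by the flow direction, this is a cohomological inequality: $\phi$ must be cohomologous along $X$ to a function of strictly one sign.

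By the standard Livšic/Sigmund-type argument on hyperbolic basic sets, such an $f$ exists iff the integrals of $\phi$ against all invariant measures on $\Lambda$ have a strict uniform sign. Equivalently, by compactness and uniform hyperbolicity, the Birkhoff averages of $\phi$ must be bounded away from zero in one sign uniformly along orbits in $\Lambda$. I would construct $f$ explicitly as $f = -\frac1T\int_0^T\int_0^s \phi(\Phi_r x)\,dr\,ds$ for large $T$, which reduces $X(f)+\phi$ to a time-$T$ average of $\phi$.

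The key computation is identifying this average with linear algebra along $\Lambda$. The characteristic flow is conformally symplectic on $\ker\beta_0/X$ with respect to $d\beta_0$, up to the conformal factor, and $\phi$ is precisely the infinitesimal conformal factor $\mu$ with $\mathcal L_X d\beta_0 = \mu\, d\beta_0$ on the transverse bundle. Conformal symplecticity pairs the Lyapunov exponents as $\lambda_i + \lambda_{2n-1-i+\ldots} = \mu$-average. More precisely, the transverse bundle has dimension $2n-1$ with $\xi$-part of dimension $2n-2$ plus one extra direction, and the exponents pair as $\lambda + \lambda' = \bar\mu$ with one unpaired exponent equal to $\bar\mu$ itself. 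Therefore hyperbolicity forces $\bar\mu \neq 0$, and the sign of $\bar\mu$ is determined by how many exponents are negative: $\bar\mu < 0$ forces at least $n$ negative exponents, and conversely. This gives a uniform sign of $\bar\mu$ over all ergodic measures on $\Lambda$, hence an $f$ with the correct sign, and the matching condition is $\mathrm{ind}(\Lambda)\le n$ for positivity. The orientation convention for "index" (stable versus unstable dimension) I would fix by checking the sink example from the figure, which must come out negative.

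The main obstacle is this pairing step. I need to set up the correct conformally symplectic linear structure on the normal bundle of the flow when $\beta_0$ is not itself a contact form, and show that the unpaired exponent equals $\bar\mu$ uniformly over all measures, not just for periodic orbits. I would first verify the claim on periodic orbits via the linearized return map, which is conformally symplectic on a $(2n-2)$-dimensional quotient times a one-dimensional factor. I would then pass to uniform bounds using the hyperbolic splitting's uniform rates and density of periodic orbits in basic sets.
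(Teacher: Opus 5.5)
Your proposal is correct, and it takes a partly different route from the paper. The reduction to $nX(f)+\on{div}(X,\Omega)>0$ is the paper's Lemma \ref{lem:div_criterion}. This needs $X$ defined by $\iota_X\Omega=\beta_0\wedge(d\beta_0)^{n-1}$, as the paper does; then $c\equiv 1$ and $g=\on{div}(X,\Omega)$. Your $\iota_X\Omega=\beta_0$ is a typo.

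For the cohomological criterion, the paper uses Hahn--Banach (Proposition \ref{prop:sullivan}). Your Birkhoff-average primitive instead builds $f$ on a neighborhood of $\Lambda$ and needs no Liv\v{s}ic theory, once the sign is fixed. Take $f=+\frac1T\int_0^T\int_0^s\phi\circ\Phi_r\,dr\,ds$; this gives $X(f)+\phi=\frac1T\int_0^T\phi\circ\Phi_s\,ds$. Weak-$*$ compactness of empirical measures then gives a single $T$ that works on all of $\Lambda$.

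For the sign of $\int\on{div}\,d\nu$, the paper works only at periodic orbits:
\begin{itemize}
\item the normal form of Lemma \ref{lem:normal_form} gives $\det T_O\on{Ret}_\Gamma=C^n$, with $C>1$ iff the index is at most $n-1$ (Lemma \ref{lem:det_bound_map});
\item the hyperbolicity constants upgrade this to the uniform bound $nA$;
\item density of periodic orbit measures (Theorem \ref{thm:orbit_density}) passes the bound to all invariant measures.
\end{itemize}

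Your Lyapunov pairing is the measure-theoretic version of that normal form. The obstacle you flag is a short computation. Away from singularities $X$ spans $\ker(d\beta_0|_\eta)$, so $\iota_X d\beta_0=h\beta_0$. Hence $\on{span}(X)\subset\eta\subset T\Sigma$ is an invariant flag. $T\Phi_t$ is conformally symplectic on $\eta/X$, and it acts on $T\Sigma/\eta$ (trivialized by $\beta_0$) with the same factor $\exp\int_0^t h\circ\Phi_\tau\,d\tau$. Moreover $\mathcal{L}_X(\beta_0\wedge(d\beta_0)^{n-1})=nh\,\beta_0\wedge(d\beta_0)^{n-1}$ gives $\on{div}(X,\Omega)=nh$, so $\phi=h$ exactly. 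Oseledets on each ergodic measure, followed by ergodic decomposition, then gives the strict sign for every invariant measure. No periodic orbits, density theorem or uniform bound are needed.

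What each route buys: yours works for any compact hyperbolic set of constant index avoiding the singularities of $\Sigma_\eta$, and never uses transitivity or local maximality. The paper's needs only linear algebra at a fixed point, which it reuses in Lemmas \ref{lem:isotropric_stable_unstable} and \ref{lem:pre_lag_unstable}, at the price of the basic-set hypothesis and the density theorem.

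One correction to your conclusion. Your own count says $\bar\mu<0$ exactly when there are at least $n$ negative transverse exponents. So positivity is equivalent to $\on{ind}(\Lambda)\le n-1$, which is what the paper proves in Theorem \ref{thm:hyperbolic_implies_Liouville}. The threshold $\on{ind}(\Lambda)\le n$ in the displayed statement is an off-by-one: index-$n$ basic sets are negative, as the convexity obstruction requires. Your final sentence contradicts your pairing argument, and you should keep the threshold your count gives rather than the printed one.
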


\noindent Our proof introduces a novel ergodic characterization of Liouville invariant subsets analogous to the well-known McDuff-Sullivan criterion for Reeb flows \cite{sullivan1976cycles,duff1987applications}. An interesting result of this analysis is that any Liouville invariant subset cannot be both positive and negative (Lemma \ref{lem:sign_exclusive}).

\vspace{3pt} Any convex hypersurface can be divided into a positive Liouville subset and a negative Liouville subset along a sub-manifold $\Gamma \subset \Sigma$ called the \emph{dividing set} that is transverse to the characterstic foliation. Using this splitting, one can deduce the following corollary of Theorem \ref{thm:intro_basic_sets_are_Liouville}.

\begin{corollary*}[Convexity Obstruction] \label{cor:convexity_obstruction} Let $C_+$ and $C_-$ be basic sets of the characteristic foliation of a convex hypersurface $\Sigma$ in a contact $(2n+1)$-manifold $(Y,\xi)$. Then
\[W^s(C_+) \cap W^u(C_-) = \emptyset \qquad\text{if the indices satisfy $\on{ind}(C_+) < n \le \on{ind}(C_-)$}\]
In particular, the pair $C = (C_+,C_-)$ does not form a heterodimensional cycle. \end{corollary*}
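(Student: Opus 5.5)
The plan is to use the dividing-set decomposition $\Sigma = R_+ \cup \Gamma \cup R_-$ of the convex hypersurface and to show that the two basic sets are forced into incompatible regions. Recall that $\Gamma$ is transverse to the characteristic foliation $L$, which is directed from $R_+$ toward $R_-$. Also, $R_\pm$ are positive and negative Liouville respectively: on $R_+$ there is a contact form restricting to a Liouville form $\lambda$ whose Liouville field directs $L$, and similarly on $R_-$ with the opposite orientation. The transversality gives three topological facts.

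\begin{itemize}
\item Every orbit of $L$ crosses $\Gamma$ at most once, passing from $R_+$ to $R_-$.
\item Any compact invariant set, in particular any basic set, is disjoint from $\Gamma$.
\item A transitive basic set lies entirely in $R_+$ or entirely in $R_-$.
\end{itemize}

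\emph{Step 1: placing $C_+$.} Theorem \ref{thm:intro_basic_sets_are_Liouville} applies since $\on{ind}(C_+) < n$, so $C_+$ is positive Liouville. If $C_+ \subset R_-$, then $C_+$ would also be negative Liouville, witnessed by the form on $R_-$. This contradicts Lemma \ref{lem:sign_exclusive}. Hence $C_+ \subset R_+$.

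\emph{Step 2: trapping the heteroclinic orbit.} Suppose some $x \in W^s(C_+) \cap W^u(C_-)$ exists. Its forward orbit accumulates on $C_+ \subset R_+$. Since orbits never cross $\Gamma$ from $R_-$ into $R_+$, the whole orbit of $x$ lies in $R_+$. Its backward limit set is contained in $C_-$, and $C_-$ lies in one of the two regions. That region therefore meets the closure of the orbit, which lies in $\overline{R_+}$. Since $C_-$ misses $\Gamma$, this forces $C_- \subset R_+$.

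\emph{Step 3: index bound in $R_+$, the main obstacle.} It remains to rule out a basic set of index $\ge n$ inside the positive Liouville region $R_+$. I would argue directly, rather than rely only on the ``if and only if'' in Theorem \ref{thm:intro_basic_sets_are_Liouville}; the boundary case $\on{ind} = n$ is exactly where index conventions bite. On $R_+$, after rescaling, $L$ is generated by the Liouville field $Z$ of $d\lambda$, whose flow satisfies $\phi_t^* d\lambda = e^t d\lambda$. It therefore acts conformally symplectically on the $2n$-dimensional tangent spaces, and on the symplectic quotient of $T\Sigma$ by the flow direction the exponents pair up as $\chi_i + \chi_{2n-1-i} = 1$ (in the normalized time). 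Consequently at most $n-1$ exponents are negative. Equivalently, $d\lambda$ restricted to the stable bundle must vanish, because it is contracted while being scaled by $e^t$. This means the stable bundle together with the flow direction is isotropic-like, which bounds its dimension and thus gives $\on{ind} \le n-1$ in the paper's convention. This contradicts $\on{ind}(C_-) \ge n$, so $W^s(C_+) \cap W^u(C_-) = \emptyset$. Since a heterodimensional cycle requires this intersection to be nonempty, the final clause follows.

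Making the index count match the convention of Theorem \ref{thm:intro_basic_sets_are_Liouville} exactly at the borderline value $n$ is the step I expect to need the most care.
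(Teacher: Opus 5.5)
Your proof is correct and has the same skeleton as the paper's. You split $\Sigma$ along the dividing set, place $C_+$ in the positive half via the Liouville theorem and Lemma \ref{lem:sign_exclusive}, and use that the characteristic foliation crosses $\Gamma$ only from $R_+$ to $R_-$. Your Steps 1--2 are the paper's argument, reorganized.

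The real difference is Step 3. The paper places $C_-$ by the mirror image of your Step 1: index $\ge n$ gives negative Liouville, and sign exclusivity keeps $C_-$ out of $R_+$. Your worry about the borderline comes from a misprint. The introduction's version says ``positive iff $\on{ind}\le n$'', but the body version, Theorem \ref{thm:hyperbolic_implies_Liouville}, has the correct threshold $n-1$ for a $2n$-dimensional $\Sigma$. With that version, Step 3 is immediate.

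Your direct argument is a genuine alternative, but state it through isotropy rather than exponent pairing. With $\phi_t^*\lambda=e^t\lambda$ and $u,v\in E^s_x$,
\[
d\lambda(u,v)=e^{-t}\,d\lambda(T\phi_t u,T\phi_t v)\to 0,\qquad d\lambda(Z,u)=\lambda(u)=e^{-t}\lambda(T\phi_t u)\to 0 .
\]
So $\on{span}(Z)\oplus E^s$ is $d\lambda$-isotropic, which gives $\on{ind}\le n-1$. This subspace is the weak stable bundle, so it does not depend on the time parametrization.

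This route is more elementary and more general than the paper's. It needs no invariant measures, no orbit density and no sign exclusivity, and it applies to any compact hyperbolic invariant set. Applied to the reversed flow in $R_-$, it shows every hyperbolic set there has index $\ge n$. That would replace your Step 1 too, making the corollary independent of Theorem \ref{thm:hyperbolic_implies_Liouville}. What the paper's route buys instead is the stronger structural statement that basic sets are Liouville with sign fixed by index, from which the corollary is a two-line consequence.

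One correction: your second topological fact is false for general compact invariant sets. The closure of an orbit running from a source in $R_+$ across $\Gamma$ to a sink in $R_-$ is compact and invariant, yet it meets $\Gamma$. The fact does hold for basic sets. Points of $\Gamma$ are wandering, because orbits cross $\Gamma$ at most once, so a small flow box around such a point is never revisited. Basic sets, by contrast, consist of non-wandering points, since their periodic orbits are dense.
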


\begin{remark*}[Single Heteroclinic Versus Cycle] The reader will note that Corollary \ref{cor:convexity_obstruction} only requires one heteroclinic, instead of a full heterodimensional cycle, to obstruct convexity. However, for a generic Axiom A flow, intersections in $W^s(C_+) \cap W^u(C_-)$ can only exist when $\on{ind}(C_+) > \on{ind}(C_-)$. Therefore, the Palis Conjecture suggests that, for basic sets $C_+$ and $C_-$ with $\on{ind}(C_+) < n \le \on{ind}(C_-)$, the intersection $W^s(C_+) \cap W^u(C_-)$  can only be robustly non-empty when $C_+$ and $C_-$ form a heterodimensional cycle. In other words, it seems likely that the robust version of the obstruction in Corollary \ref{cor:convexity_obstruction} does require a heterodimensional cycle.
\end{remark*}

\begin{remark*} Theorem \ref{thm:intro_basic_sets_are_Liouville} is a generalization of a theorem of Breen \cite{b2021} stating that hyperbolic orbits are Liouville. The obstruction in Corollary \ref{cor:convexity_obstruction} is analogous to the obstruction to convexity coming from retrograde flowlines between critical points (cf. \cite{hh2019}).
\end{remark*}

\subsection{Creation Of Heterodimensional Dynamics} \label{subsec:creation_of_heterodimensional} Next we discuss the problem of realizing robust heterodimensional cycles in a given characteristic foliation. First, we have the following theorem that permits the creation of heterodimensional cycles by small perturbations.

\begin{theorem*}[Cycle Creation] \label{thm:cycle_creation} Let $\Sigma$ be a hypersurface in a contact manifold $(Y,\xi)$ of dimension $2n+1 \ge 5$ and let $U \subset Y$ be an open set intersecting $\Sigma$. Then there is an isotopy
\[
\iota_s:\Sigma \to Y \qquad\text{with}\qquad \iota_0(\Sigma) = \Sigma \qquad\text{and}\qquad \iota_s = \iota_0 \text{ outside of $U$}
\]
that is arbitrarily small in the $C^0$-topology and such that the characteristic foliation of $\Sigma_1$ contains a heteroclinic cycle $C$ with index
\[
\on{ind}(C_+) = n - 1 \qquad\text{and}\qquad \on{ind}(C_-) = n
\]\end{theorem*}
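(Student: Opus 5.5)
The strategy is to reduce everything to a local model inside $U$. Once the characteristic foliation is made standard near a point, a compactly supported modification of the dynamics in a box is realized by a $C^0$-small, compactly supported isotopy of $\Sigma$. First I would use a Darboux chart in $U$ centered at a point of $\Sigma$ where the characteristic foliation is non-singular. There $\Sigma$ is the graph of a function over a hypersurface $\Sigma_0$ whose characteristic foliation is the standard flow-box. The model is a neighborhood $[0,1]_t \times B^{2n-1}$ with $\xi = \ker(dz - \lambda)$ in contact Hamiltonian form, and the characteristic foliation is $\partial_t$. In this model, graphs $\{z = \epsilon f\}$ (or, more flexibly, the contact Hamiltonian description of Section \ref{sec:contact_hamiltonian_manifolds}) give hypersurfaces whose characteristic foliation is the suspension-type flow of a contact-Hamiltonian-generated map of the cross-section $B^{2n-1}$.

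The key realization step is the following. For a graph over the $t$-direction, the first return (holonomy) map from $\{t=0\}$ to $\{t=1\}$ is the time-one map of a time-dependent contact vector field on $B^{2n-1}$ with a conformal factor. Here $B^{2n-1}$ is a contact $(2n-1)$-manifold, or more precisely the induced Hamiltonian structure with drag. Conversely, any compactly supported contactomorphism $\phi$ of the cross-section that is isotopic to the identity through compactly supported contactomorphisms is the holonomy of a compactly supported graph perturbation. Rescaling the box in the transverse direction (replacing $\phi$ by a conjugate under the contact dilation $(z,\lambda)\mapsto(c z, c\lambda)$ of a tiny region) makes the isotopy $C^0$-small, although not $C^1$-small. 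This is exactly the flexibility available in the $C^0$ setting, and it mirrors the $C^0$-smallness used in \cite{jc2024}.

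Next I must build the dynamics. The heteroclinic cycle needs periodic orbits, but the flow-box has no recurrence. So I would first create a "return": inside the box, twist $\Sigma$ so that its characteristic foliation contains an invariant solid region $[0,1]\times D$ with a plug-like holonomy. Alternatively, I would arrange that the holonomy from $\{t=0\}$ to $\{t=1\}$ contains a compact invariant domain, closed up into a mapping torus by a local Wilson/Kuperberg-type plug construction. The approach I would try first is to embed in $U$ a copy of the mapping torus $S^1 \times D^{2n-1}$ of a compactly supported contactomorphism $\phi$ of $D^{2n-1}$. I would realize it as a small torus-like bump, so that the characteristic foliation on the bump restricted to the core region is the suspension of $\phi$. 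Periodic orbits of the foliation then correspond to periodic points of $\phi$, and the index of a suspended orbit equals the number of eigenvalues of $D\phi$ of modulus $>1$ (unstable convention).

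It then remains to choose $\phi$ on $D^{2n-1}$ as a contactomorphism with two hyperbolic fixed points $p_+$, $p_-$ of indices $n-1$ and $n$ and a heteroclinic cycle between them. Because contactomorphisms carry a conformal factor, the eigenvalues of $D\phi$ at a fixed point are $\mu$ on the Reeb direction and a conformally symplectic spectrum $\{\lambda_i, \mu/\lambda_i\}$ on $\xi$. Taking $\mu$ slightly less than $1$ at $p_+$ and slightly greater than $1$ at $p_-$ gives unstable dimensions $n-1$ and $n$. The required dimension $2n-1\ge 3$ is where $n\ge 2$ enters. The robust connection $W^u(p_+)\cap W^s(p_-)$ is a transverse intersection, which is easy to arrange. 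The fragile one $W^u(p_-)\cap W^s(p_+)$ is created by a further small compactly supported contact perturbation that pushes a point of $W^u(p_-)$ onto $W^s(p_+)$, which is possible by the flexibility of contact Hamiltonians. The main obstacle is this global model: producing, inside a contact ball, a hypersurface whose foliation is a genuine suspension with recurrence while keeping the isotopy $C^0$-small. I expect the mapping-torus bump, controlled via the contact Hamiltonian formalism and the rescaling trick, to handle this.
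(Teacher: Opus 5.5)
\textbf{There is a genuine gap exactly where you locate the ``main obstacle'': you never produce recurrence.} The tools you set up cannot produce it. Take any perturbation of the kind in your realization step, where the characteristic field is $\partial_t+V_H$ with or without your rescaling, or any $C^1$-small perturbation. The characteristic foliation then stays transverse to the slices $\{t=\mathrm{const}\}$ of the flow box. So every orbit runs monotonically from $\{t=0\}$ to $\{t=1\}$ and no closed orbit appears. A closed orbit needs a perturbation that is $C^0$-small but $C^1$-large. Your ``twist'', ``Wilson/Kuperberg-type plug'' and ``torus-like bump'' are only named. Moreover, an arbitrary plug vector field is not a characteristic foliation, so such plugs cannot simply be imported.

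The paper supplies this step as Lemma~\ref{lem:orbit_creation}:
\begin{itemize}
\item Regard the box as a piece of the Liouville manifold $([0,1]\times D_{\mathrm{std}}, e^t\alpha_{\mathrm{std}})$. Graphical deformations there give $\lambda+dH$, with characteristic field $Z-X_H$.
\item A small exact change makes $\lambda$ vanish on a Lagrangian disk $\Lambda$.
\item Adding a function equal to the angular momentum coordinate near $\partial\Lambda$ turns $\partial\Lambda$ into a closed orbit. This function is $C^0$-small because momenta vanish on the zero section.
\end{itemize}
The return map of that orbit is uncontrolled. The paper therefore uses Proposition~\ref{prop:isotopy_to_Id} and Corollary~\ref{cor:local_small_Ham} to cancel it near the fixed point with a $C^0$-small Hamiltonian, before composing with the cycle map.

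Your mapping-torus idea could also be made rigorous, by a different route. Take the standard neighborhood $J^1(S^1\times D^{n-1})$, with form $dz+y\,dx+s\,dt$, of a tiny Legendrian $S^1\times D^{n-1}$. There the hypersurface $\{s=0\}$ is foliated by closed characteristic circles with identity holonomy. Circles in a $(2n+1)$-ball are unknotted and $\pi_1(S^{2n-1})=0$, so a compactly supported isotopy of $\Sigma$ inside a small ball makes $\Sigma$ contain this piece. With identity holonomy, your rescaling trick would replace the isotopy-to-identity step. But some argument of this kind has to be given.

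\textbf{The construction of $\phi$ also needs repair.} With your conventions, $\dim W^u(p_+)=\dim W^s(p_-)=n-1$ in the $(2n-1)$-dimensional section, and both are Legendrian (Lemma~\ref{lem:isotropric_stable_unstable}). So $W^u(p_+)\cap W^s(p_-)$ is the \emph{fragile} connection: a transverse intersection there would be empty. The intersection $W^u(p_-)\cap W^s(p_+)$, between two $n$-dimensional manifolds, is the robust one. You have the roles reversed.

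More importantly, neither connection comes for free. ``Pushing'' one Legendrian onto the other by a small perturbation requires them to already pass close to each other, and that global configuration is what must be built. The paper does this in Lemma~\ref{lem:single_het} and Proposition~\ref{prop:contact_vector_field_with_cycle}:
\begin{itemize}
\item The two fixed points are the endpoints of a Legendrian segment lying in both Legendrian invariant manifolds. This gives the fragile connection.
\item A Reeb chord of a Legendrian unknot, along which the contact vector field is a positive multiple of the Reeb field, joins $W^u(O)$ to $W^s(P)$. This gives the robust connection.
\end{itemize}
A short-time flow of this vector field (Corollary~\ref{cor:cycle_creation_for_Id}) then gives the required contactomorphism.
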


\noindent Theorem \ref{thm:cycle_creation} is proven by a explicit construction of a heterodimensional cycle in a standard region where the hypersurface resembles a certain standard local model, specifically the symplectization of a small contact disk. Precisely, one first perturbs the characteristic foliation to create a small closed orbit contained in a region modelled on the standard Liouville ball, then one modifies the monodromy of a Poincare section along this orbit to introduce a heterodimensional cycle between two closed hyperbolic orbits close to the original small orbit.

\vspace{3pt}

 Second, we show that any heterodimensional cycle of the correct index can be used to create a robust heterodimensional cycle. This can be viewed as a stabilization theorem \cite{lt2024} for cycles in the setting of characteristic foliations.

\begin{theorem*}[Fragile To Robust] \label{thm:robustification} Let $\Sigma$ be a closed hypersurface in a contact manifold $(Y,\xi)$ whose characteristic foliation contains a simple heteroclinic cycle $C$ of index $(n-1,n)$. Then there is an isotopy
\[
\iota_s:\Sigma \to Y \qquad\text{with}\qquad \iota_0(\Sigma) = \Sigma
\]
that is arbitrarily small in the $C^\infty$-topology and such that the characteristic foliation of $\Sigma_1$ contains a robust heterodimensional cycle of index $(n-1,n)$.\end{theorem*}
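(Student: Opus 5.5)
We reduce the statement to the stabilization theorem of Li--Turaev \cite{lt2024} for simple heterodimensional cycles of flows. The work is to show that the perturbations their theorem needs can be realized as characteristic foliations of $C^\infty$-small isotopies of $\Sigma$.

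\emph{Reduction to a local realization problem.} Any $C^\infty$-small isotopy of $\Sigma$ changes the characteristic foliation by a $C^\infty$-small perturbation. Conversely, we claim that any $C^\infty$-small perturbation of the characteristic foliation, supported near a compact piece of an orbit away from singularities, is realized by a $C^\infty$-small graphical isotopy. To see this, work in a contact Hamiltonian model (Section \ref{sec:contact_hamiltonian_manifolds}) near a flow box. There $\Sigma$ is a hypersurface in $\Sigma\times\mathbb{R}_t$ with contact form $dt+\lambda$ on a neighborhood. The characteristic foliation of the graph of $f$ is spanned by the kernel of $d(\lambda+df)|$, which is the kernel of $d\lambda$ restricted to $\Sigma_f$. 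Equivalently, it is generated by the contact Hamiltonian vector field of $f$ modulo the Reeb direction. A transverse section of the flow box can be identified with a piece of a contact or Liouville manifold. Perturbing $f$ along a short segment of the orbit then changes the holonomy of the line field by an arbitrary symplectic, or conformally symplectic, diffeomorphism close to the identity and compactly supported in the section, given by a small generating Hamiltonian. So the holonomy along a short segment can be modified by any element near the identity of the group of compactly supported Hamiltonian diffeomorphisms of the section.

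\emph{Application of Li--Turaev.} Li--Turaev show that a flow, or diffeomorphism, with a simple heterodimensional cycle between hyperbolic periodic orbits of indices differing by one can be $C^\infty$-perturbed so that the continuations are part of a robust heterodimensional cycle. They do this by creating blenders near the cycle, under a nondegeneracy condition on the transition maps. Their perturbations are local: they are supported near finitely many points of the fragile and robust heteroclinic orbits and of the periodic orbits, and they consist of unfolding the fragile intersection and adjusting finitely many transition maps in general position. We will check that these perturbations only need to be generic within a class of maps large enough to contain the conformally symplectic ones. The Poincar\'e return maps of a characteristic foliation are conformally symplectic on a transverse $2n$-dimensional section. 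Index $(n-1,n)$ is exactly the symmetric situation in which the central direction has a one-dimensional center at each orbit, compatible with conformal symplecticity. Where the Li--Turaev genericity conditions are open and dense, we achieve them within the conformally symplectic class, using the previous step to realize each required local modification by a $C^\infty$-small isotopy.

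\emph{Main obstacle.} The hard step is the compatibility check. The Li--Turaev nondegeneracy conditions include nonreal or real central multipliers, a partially hyperbolic splitting along the cycle, and a genericity condition on the central transition maps. We must verify that each of these can be attained by conformally symplectic perturbations rather than arbitrary diffeomorphisms. I would first try to show that the relevant conditions are open and nonempty in the space of $1$-jets and $2$-jets of conformally symplectic maps, using the center direction together with its symplectic complement. Then I would conclude by transversality within the Hamiltonian class, which is rich enough at the jet level to make them dense.
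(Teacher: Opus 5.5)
\textbf{Verdict: genuine gap.} Your outline matches the paper's: plugs post-compose transition and return maps with small contact Hamiltonian flows on a section, and then Li--Turaev is applied. But the step you defer as the ``main obstacle'' is essentially the whole proof, and your plan for it is set up in the wrong class.

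\textbf{The wrong class of maps.} A local section of the characteristic foliation of the $2n$-dimensional $\Sigma$ is a $(2n-1)$-dimensional contact manifold, and the return and transition maps are contact embeddings. They are not conformally symplectic maps of a $2n$-dimensional section. So the realizable perturbations are only post-compositions by small contactomorphisms, not arbitrary perturbations of the line field.

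In this class, nonemptiness of the Li--Turaev conditions ``at the jet level'' is not automatic, because a contactomorphism fixing $P$ preserves $\xi_P$. For example, Condition \ref{cond:simple_fragile_heteroclinic} asks that $\mathrm{Tr}\Gamma_-(W^u(O_-))$ (dimension $n-1$) be transverse to $W^{se}(O_+)$ (dimension $n$) at $P^s_+$. No contactomorphism could achieve this if both tangent spaces lay in $\xi$.

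The missing input is the structure of invariant manifolds of contact maps, which the paper reads off from the normal form of Lemma \ref{lem:normal_form}. At an index $n-1$ point:
\begin{itemize}
\item the conformal factor $C>1$ is the real, simple center-unstable multiplier, with eigendirection transverse to $\xi$;
\item $W^s$ is Legendrian;
\item $W^u$ is pre-Lagrangian with Legendrian strong-unstable leaves;
\item $W^{se}$ is pre-Lagrangian along $W^s$.
\end{itemize}
The symmetric statements hold at index $n$. With this, each transversality requirement in Conditions \ref{cond:simple_fragile_heteroclinic} and \ref{cond:simple_robust_heteroclinic} reduces to making two Lagrangian subspaces of $\xi_P$ transverse. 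A lifted linear symplectic map does this (Lemma \ref{lem:transversality_lemma}). Condition \ref{cond:saddle} is then obtained by stretching the curve $I_-$, which is transverse to $\xi$ because $W^s(O_-)$ is pre-Lagrangian (Lemma \ref{lem:transverse_stretching}).

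\textbf{The missing unfolding parameter.} You cannot use Li--Turaev as a black box whose perturbations you afterwards restrict to a subclass. What can actually be applied is their Theorem B (Theorem \ref{thm:unfoldings_to_robust}), whose hypothesis is a non-degenerate cycle together with a \emph{proper unfolding}. Here the center multipliers are forced to be real, so this is the saddle--saddle case. It requires two independent contact parameters:
\begin{itemize}
\item one for which the splitting function has $0$ as a regular value;
\item one along whose zero set $\theta=-\log|\lambda_{cs}(O_-)|/\log|\lambda_{cu}(O_+)|$ has nonzero derivative.
\end{itemize}
You only mention unfolding the fragile intersection; the multiplier parameter, and its realizability by contact perturbations, is absent.

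The paper gets the multiplier parameter by taking Darboux coordinates at $O_-$ with $E^{cs}(O_-)=\mathrm{span}(\partial_z)$. It then composes $\mathrm{Ret}\,C_-$, via a plug along $C_-$, with the contact dilation $(z,x,y)\mapsto(e^rz,e^{r/2}x,e^{r/2}y)$, which multiplies $\lambda_{cs}(O_-)$ by $e^r$.

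The splitting parameter comes from pushing the Legendrian $\mathrm{Tr}\Gamma_-(W^s(O_+))$ off the Legendrian $W^u(O_-)$ by a Reeb flow. For a weakly compatible metric, the distance between these nearby Legendrians equals the length of the short Reeb chord, so $\sigma=s$ exactly (Propositions \ref{prop:legendrian_dist} and \ref{prop:unfolding_proposition}).

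Without the non-degenerate approximation and this explicit contact unfolding, the appeal to Li--Turaev is not justified. Your list of non-degeneracy conditions is also inaccurate: there is no ``partially hyperbolic splitting'' condition. The actual conditions are Conditions \ref{cond:simple_fragile_heteroclinic}--\ref{cond:focus}.
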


\noindent Note that the combination of Corollary \ref{cor:convexity_obstruction}, Theorem \ref{thm:cycle_creation} and Theorem \ref{thm:robustification} immediately imply the main result (Theorem \ref{thm:main_result}) stated at the beginning of the introduction. 

\vspace{3pt}

This is an application of the deep and very recent work of Li-Turaev \cite{lt2024} on robust heterodimensional dynamics. Precisely, we show (in Theorem \ref{thm:characteristic_unfoldings}) that, for any hypersurface $\Sigma$ whose characteristic foliation contains a heterodimensional cycle of index $(n-1,n)$, we can construct a family of embedded hypersurfaces
\[
\iota_\epsilon:\Sigma \to Y \qquad\text{parametrized by $\epsilon \in \R^2$ with $\iota_0(\Sigma) = \Sigma$}
\] such that the characteristic foliations of the surfaces $\iota_\epsilon(\Sigma)$ form a \emph{proper unfolding} of singular line fields on $\Sigma$, in the sense of \cite{lt2024}. Roughly speaking, a proper unfolding $\Phi_\epsilon$ in the sense of \cite{lt2024} is a family of flows (in the continuous time case) or diffeomorphisms (in the discrete time case) parametrized by $\epsilon \in \R^k$ for $k  \ge 2$. The systems $\Phi_\epsilon$ must have a continuously varying pair of hyperbolic orbits $C^+_\epsilon$ and $C^-_\epsilon$ that form a heterodimensional cycle $C$ when the parameters are zero. Moreover, as one of the parameters $\epsilon_1$ varies, the fragile heteroclinic intersection disappears in a generic manner. Similarly, as the other parameter $\epsilon_2$ varies, a certain distinguished multiplier of $C^-_\epsilon$ called the \emph{center-stable} multiplier must vary non-trivially (see Figure \ref{fig:unfolding}). We will give a very detailed review of the theory of unfoldings in Section \ref{sec:non_degenerate_cycles_and_unfoldings}.  

\begin{figure}[h]
    \centering
    \includegraphics[width=.9\linewidth]{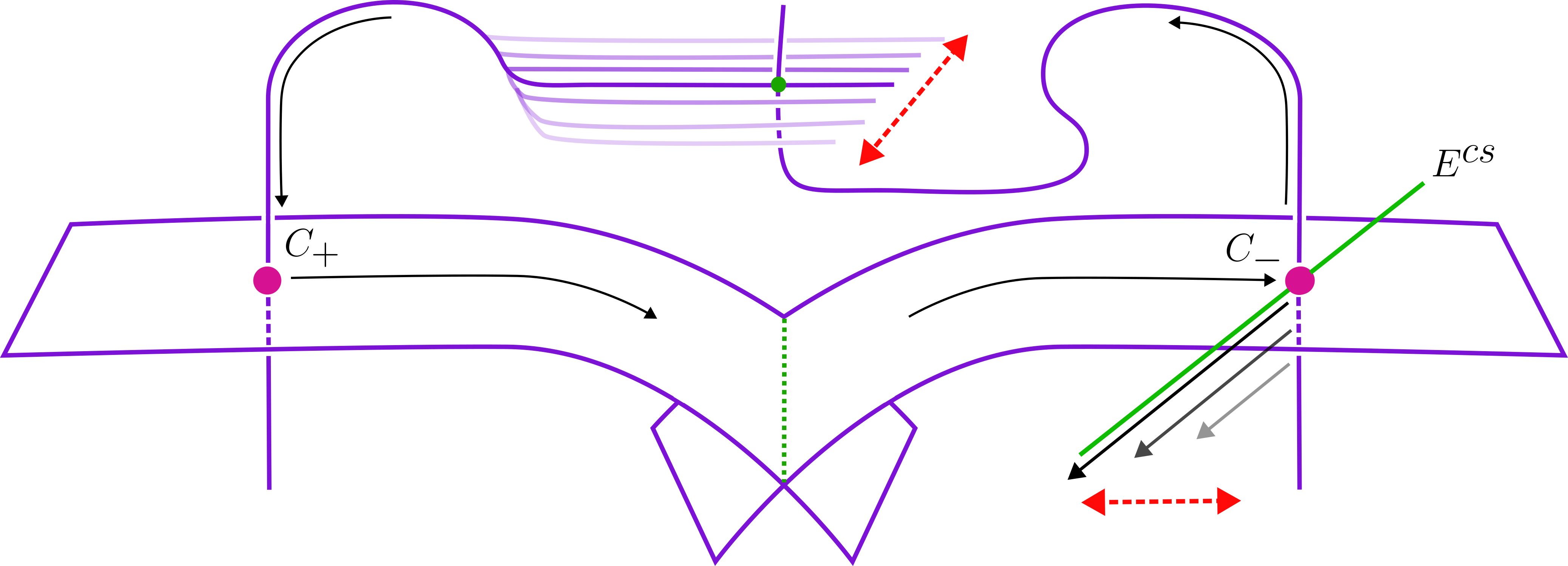}
    \caption{A proper unfolding based at the heterodimensional cycle in Figure \ref{fig:heterodimensional_cycle}. Varying one of the parameters in the family causes the fragile heteroclinic to disappear transversely, and varying another parameter causes the center-stable multiplier to vary, as depicted by the dashed arrows above.}
    \label{fig:unfolding}
\end{figure}

A main result of Li-Turaev \cite[Thm B]{lt2024} states that, for any proper unfolding such that $\Phi_0$ has a heterodimensional cycle $C$, the system $\Phi_\epsilon$ exhibits a robust heterodimensional cycle $C'$ of the same index as $C$ for a generic small parameter $\epsilon$. Thus, a generic hypersurfaces $\iota_\epsilon(\Sigma)$ in the family constructed in Theorem \ref{thm:characteristic_unfoldings} will contain a robust heterodimensional cycle of the desired index for any small generic parameter $\epsilon$.

\begin{remark*}[Blenders] Blenders \cite{bonattidiaz1995,whatisblender} are the key mechanism producing robust heterodimensional cycles in the results of Li-Turaev (cf. \cite[Thm D]{lt2024}). They are also the key ingredient in the construction of robustly non-convex hypersurfaces in \cite{jc2024}, where the first author constructed the first examples of blenders in contact dynamics. In fact, the hypersurfaces constructed in \cite{jc2024} do contain heterodimensional cycles induced by the presence of a blender. Thus, this paper identifies more precisely the mechanism causing robust non-convexity in \cite{jc2024}.
\end{remark*}

\subsection{Generic Dynamics For Characteristic Foliations} We conclude this introduction with a perspective on our results related to the program on $C^1$-generic dynamics outlined by Bonatti. 

\vspace{3pt}

In \cite{bonattitowards}, Bonatti articulates a general philosophy for understanding $C^1$-generic diffeomorphisms, stating several conjectures similar to the Palis Conjecture. Roughly, each conjecture in \cite{bonattitowards} posits that the space of diffeomorphisms divides into two $C^1$-open sets consisting of (a) a set of diffeomorphisms satisfying a global structural property and (b) a set of diffeomorphisms exhibiting a robust local structure that obstructs the global one. The specific global structural property and robust local structure vary from conjecture to conjecture.

\vspace{3pt}

The results of this paper fit neatly into this program as follows. We consider space of closed oriented hypersurfaces in contact manifolds equipped with the $C^2$-topology, which corresponds to the $C^1$-topology on the characteristic foliations. On one hand, convexity of the hypersurface provides a global structural property of the characteristic dynamics. On the other hand, the results of this paper suggest that the complementary robust local structure should be a certain type of heterodimensional cycle. More precisely, we fix the following terminology.

\begin{definition*}[Positive-Negative] A heterodimensional cycle $C$ in the characteristic foliation of a hypersurface $\Sigma$ in a contact $2n+1$-manifold $(Y,\xi)$ is \emph{positive-negative} if the indices satisfy
\[
\on{ind}(C_+) \le n-1 \qquad\text{and}\qquad \on{ind}(C_-) \ge n
\]
\end{definition*}

We can divide the space of closed oriented hypersurfaces in a contact manifold into two $C^2$-open sets: the set of convex hypersurfaces and the set of hypersurfaces containing a robust positive-negative heterodimensional cycle. Corollary \ref{cor:convexity_obstruction} states that these sets are disjoint; Theorems \ref{thm:cycle_creation} and \ref{thm:robustification} state that the latter set is non-empty and $C^0$-dense in higher dimensions; and the result of Honda-Huang \cite{hh2019} states that the former set is non-empty and $C^0$-dense. We thus arrive at the following Palis-type conjecture in the spirit of Bonatti's program.

\begin{conjecture*} \label{con:convex_palis} The space of closed oriented hypersurfaces in a contact manifold $(Y,\xi)$ decomposes into two open and $C^2$-dense sets: the set of convex hypersurfaces and the set of hypersurfaces containing a robust positive-negative heterodimensional cycle $C$.
\end{conjecture*}

\noindent Phrased differently, Conjecture \ref{con:convex_palis} states that positive-negative cycles are generically the only obstruction to convexity.

\vspace{3pt}

In this paper, all of the positive-negative heterodimensional cycles $C$ involved in the convexity obstructions are coindex one. While they are not very well understood, robust heterodimensional cycles of higher codimension are known to exist \cite{barrientos2014symbolic,barrientos2025robust,asaoka2022stable}. It is interesting to ask if there are examples of robust positive-negative cycles of a different type. 

\begin{question*} Are there examples of robust, positive-negative heterodimensional cycles of coindex greater than one in the characteristic foliations of hypersurfaces in contact manifolds?
\end{question*}

\subsection*{Outline} This concludes Section \ref{sec:introduction}, the introduction. In Section \ref{sec:contact_hamiltonian_manifolds}, we provide  background on contact Hamiltonian manifolds (or equivalently, hypersurfaces in contact manifolds). In Sections \ref{sec:liouville_and_hyperbolic_invariant_sets} and \ref{sec:construction_of_heteroclinic_cycle}, we prove Theorems \ref{thm:intro_basic_sets_are_Liouville} and \ref{thm:cycle_creation} respectively. In Section \ref{sec:non_degenerate_cycles_and_unfoldings}, we provide key background on non-degeneracy and unfoldings from \cite{lt2024}. Finally, in Section \ref{sec:contact_unfoldings_and_robustifications} we prove Theorem \ref{thm:robustification}. 

\subsection*{Acknowledgements} AI tools were used solely for proofreading this manuscript. JC and MH were partially supported by NSF Award DMS-2446019 and US-Israel BSF Award 2024157. 

\newpage

\section{Contact Hamiltonian Manifolds And Convexity} \label{sec:contact_hamiltonian_manifolds}

In this section, we discuss the theory of contact Hamiltonian manifolds and their characteristic foliations.  These spaces are the natural intrinsic models for hypersurfaces in contact manifolds. This terminology is introduced at this level of generality in the survey \cite{chaidez2026conformally}.

\subsection{Contact Hamiltonian Manifolds} We start by introducing contact Hamiltonian manifolds and the basic structures associated to them.

\begin{definition}[Singular Hyperplane Field] A \emph{singular hyperplane field} $\eta$ on a manifold $\Sigma$ is an equivalence class of 1-forms up to multiplication by a nowhere zero smooth function. We let
\[
\on{ker}(\lambda) \qquad \text{denote the equivalence class of a 1-form $\lambda$}
\]
A \emph{singularity} of a singular hyperplane field $\eta$ is a point where the defining 1-form $\lambda$ vanishes. \end{definition}
 
\begin{definition}[Contact Hamiltonian Structure] \label{def:contact_Hamiltonian_manifold} A \emph{contact Hamiltonian structure} $\eta$ on a $2n$-manifold $\Sigma$ is a singular hyperplane field $\eta$ defined by 1-form $\lambda$ satisfying
\begin{equation} \label{eq:contact_Hamiltonian_form}
d\lambda^n \neq 0 \text{ at any point where $\lambda \wedge d\lambda^{n-1}$ vanishes.}
\end{equation}
A 1-form $\lambda$ satisfying this formula is called a \emph{contact Hamiltonian form}.\end{definition}

Any contact Hamiltonian manifold has a naturally associated dynamical system, well-defined up to reparametrization. More precisely, we have the following definition.

\begin{definition}[Characteristic Flow] The \emph{characteristic vector field} $Z$ determined by a contact Hamiltonian form $\lambda$ and a volume form $\mu$ is the unique vector field satisfying
\begin{equation} \label{eq:characteristic_vectorfield}
\iota_Z\mu = \lambda \wedge d\lambda^{n-1}
\end{equation}
A flow acquired by integrating a characteristic vector field is called a \emph{characteristic flow}. \end{definition}

\noindent The characteristic vector field depends on the choice of contact Hamiltonian form and volume form, but it determines a well-defined (singular) line field, in the following sense.

\begin{definition}[Singular Line Field] A \emph{singular line field} $L$ on a manifold $\Sigma$ is an equivalence class of vector fields up to multiplication by a nowhere zero smooth function. We let
\[
\on{span}(V) \qquad \text{denote the equivalence class of a vector field $V$}
\]
A \emph{singularity} of a singuar line field $L$ is a point where the defining vector field vanishes.\end{definition}

\begin{definition}[Characteristic Foliation] The \emph{characteristic foliation} $\Sigma_\eta$ of a contact Hamiltonian manifold $(\Sigma,\eta)$ is the unique singular line field spanned by any characteristic vector field. \end{definition}

\begin{remark} It is straightforward to check that the characteristic line field is well-defined (i.e. independent of the volume form and contact Hamiltonian form) \cite[Lem 2.17]{chaidez2026conformally}. \end{remark}

\begin{remark} \label{lem:rmk_alt_char_field} The characteristic vector field of $\eta$ can also be characterized as the unique vector field tangent to $\eta$ and that preserves $\eta$ in the sense that $\mathcal{L}_Z\lambda = f\lambda$ for any contact Hamiltonian form $\lambda$ and some smooth function $f$ that is non-zero along the singularities (cf. \cite[Lem 2.20]{chaidez2026conformally}).
\end{remark}

There is an important class of subsets of contact Hamiltonian manifold, where the contact Hamiltonian manifold structure is given by the kernel of a Liouville form. 

\begin{definition}[Liouville Subsets] A subset $\Lambda \subset \Sigma$ in a contact Hamiltonian manifold $(\Sigma,\eta)$ is called \emph{Liouville} if there is a contact Hamiltonian form $\lambda$ such that
\[
\text{$\lambda|_U$ is a Liouville form on a neighborhood $U$ of $\Lambda$}
\]
Given an orientation on $\Sigma$, a Liouville subset $\Lambda \subset \Sigma$ is called \emph{positive} if $\lambda|_U$ induces the orientation of $\Sigma$ and \emph{negative} if $\lambda|_U$ induces the opposite orientation of $\Sigma$. \end{definition}

There are a handful of examples of contact Hamiltonian manifolds that we will consider: hypersurfaces in contact manifolds, Liouville manifolds, symplectizations and suspensions. 

\begin{example}[Hypersurfaces] \label{ex:hypersurfaces} Any hypersurface $\Sigma$ in a contact manifold $(Y,\xi)$ with contact form $\alpha$ is a contact Hamiltonian manifold with contact Hamiltonian structure
\[
\eta = \xi \cap T\Sigma \qquad\text{defined by the contact Hamiltonian form}\qquad \lambda = \alpha|_\Sigma
\]
The characteristic foliation $\Sigma_\eta$ is simply the standard characteristic foliation $\Sigma_\xi$ with respect to $\xi$. \end{example}

\begin{example}[Exact Symplectic Manifolds] Any exact symplectic manifold $\Sigma$ with Liouville form $\lambda$ is a contact Hamiltonian manifold, with contact Hamiltonian structure
\[
\eta = \on{ker}(\lambda) \qquad\text{with contact Hamiltonian form $\lambda$}
\]
The characteristic foliation $\Sigma_\eta$ is simply the span of the Liouville vector field $Z$ in this case.\end{example}

\begin{example}[Symplectization] \label{ex:symplectization} The symplectization $\R_t \times Y$ of a contact manifold $(Y,\xi)$ is naturally a contact Hamiltonian manifold with contact Hamiltonian structure
\[
\eta = \on{span}(\partial_t) \oplus \xi
\]
In this case, the characteristic foliation is simply the span of the vector field $\partial_t$. \end{example}

\subsection{Contactization} \label{subsec:contactization} Every contact Hamiltonian manifold admits a contact structure on the product of the manifold with an interval, called the contactization. This requires a choice of framing.

\begin{definition}[Framing] \label{def:framing} A \emph{framing} $(u,\theta)$ for a contact Hamiltonian form $\lambda$ on a contact Hamiltonian manifold $(\Sigma,\eta)$ is a pair of a smooth function $u$ and a smooth 1-form $\theta$ on $\Sigma$ where
\begin{equation} \label{eq:framing_volume_form}
u \cdot d\lambda^n + n\theta \wedge \lambda \wedge d\lambda^{n-1} \quad\text{is a volume form on $\Sigma$}
\end{equation}\end{definition}

\begin{definition}[Contactization] The \emph{contactization} $C\Sigma$ of a contact Hamiltonian manifold $(\Sigma,\eta)$ is the product $[-\epsilon,\epsilon]_s \times \Sigma$ equipped with the contact structure $\xi$ with the contact form
\[
\alpha = uds + s(\theta + du) + \lambda \qquad\text{for any contact Hamiltonian form $\lambda$ and framing $(u,\theta)$}
\]\end{definition}

The contactization is indeed a contact manifold containing $\Sigma = 0 \times \Sigma$ as a hypersurface \cite[Lem 2.31]{chaidez2026conformally}. Moreover, it is independent of the contact Hamiltonian form and framing as a germ of a contact structure near $0 \times \Sigma$ due to the following lemma.

\begin{lemma} \cite{g2008intro} \label{lem:stability_for_hypersurfaces} Let $\xi$ and $\xi'$ be contact structures on $Y$ and let $\Sigma \subset Y$ be a closed hypersurface with $\xi \cap T\Sigma = \xi' \cap T\Sigma$. Then there is an isotopy $\Phi$ of $Y$ such that $\Phi_0 = \on{Id}$, $\Phi_1^*\xi = \xi'$ near $\Sigma$ and $\Phi_t(\Sigma) = \Sigma$.\end{lemma}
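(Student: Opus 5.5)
The plan is the standard Moser-type argument for germs of contact structures along a hypersurface, done in two stages.

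\emph{Stage 1: make the contact forms agree along $\Sigma$.} Choose contact forms $\alpha$ for $\xi$ and $\alpha'$ for $\xi'$ near $\Sigma$. Since $\xi \cap T\Sigma = \xi' \cap T\Sigma$, the restrictions $\alpha|_\Sigma$ and $\alpha'|_\Sigma$ define the same singular hyperplane field. I expect this to give $\alpha'|_\Sigma = g\,\alpha|_\Sigma$ for a nowhere-vanishing function $g$, after fixing the sign using coorientations. The point to check is at the singular points, where both restrictions vanish. There one uses the contact Hamiltonian condition \eqref{eq:contact_Hamiltonian_form}: $d\lambda^n \neq 0$ on the relevant locus, so the zeros are nondegenerate. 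Then the ratio of two defining forms extends smoothly and stays nonzero across the zeros. Equivalently, one can argue that the induced characteristic foliations and hyperplane fields agree, and take $g$ from the definition of a singular hyperplane field as an equivalence class.

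After extending $g$ to a neighborhood and replacing $\alpha$ by $g\alpha$, we may assume $\alpha|_{T\Sigma} = \alpha'|_{T\Sigma}$. Next I would correct the $1$-jets. Choose a collar $(-\delta,\delta)_s \times \Sigma$. Both forms can be written as $\alpha = u\,ds + \lambda + O(s)$ and $\alpha' = u'\,ds + \lambda + O(s)$. I would use a diffeomorphism fixing $\Sigma$ pointwise, possibly together with rescaling by $1 + s h$, to arrange that the linear interpolation $\alpha_t = (1-t)\alpha + t\alpha'$ is contact for all $t$ on a small neighborhood of $\Sigma$.

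\emph{Stage 2: show the interpolation is contact near $\Sigma$.} Along $\Sigma$, the form $\alpha_t \wedge d\alpha_t^n$ is affine in the transverse data $(u_t, \theta_t)$ once the tangential data is fixed. By the framing computation \eqref{eq:framing_volume_form}, it equals $ds \wedge \big(u_t\, d\lambda^n + n\,\theta_t \wedge \lambda \wedge d\lambda^{n-1}\big)$, up to terms that vanish on $\Sigma$. This expression is linear in $(u_t, \theta_t)$, so it is a convex combination of two volume forms of the same sign and hence nonvanishing. The sign agreement is guaranteed by the coorientation choice made in Stage 1. By compactness of $\Sigma$, the forms $\alpha_t$ are contact on a uniform neighborhood for all $t \in [0,1]$.

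\emph{Moser step.} Apply Gray's method. Seek a vector field $X_t$ with $X_t \in \ker\alpha_t$ and
\[
\iota_{X_t} d\alpha_t|_{\ker\alpha_t} = -\dot\alpha_t|_{\ker\alpha_t}.
\]
Because $\dot\alpha_t = \alpha' - \alpha$ vanishes on $T\Sigma$, the vector field $X_t$ should be tangent to $\Sigma$, or even vanish there. Then its flow $\Phi_t$ preserves $\Sigma$ and is defined up to time $1$ on a smaller neighborhood. It gives $\Phi_1^*\xi' = \xi$, or the inverse statement depending on the direction chosen, with $\Phi_t(\Sigma) = \Sigma$. To obtain a global isotopy of $Y$, cut off $X_t$ outside a neighborhood. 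This only affects the conclusion away from $\Sigma$, which is allowed because the statement only requires equality near $\Sigma$.

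The main obstacle is the tangency of $X_t$ to $\Sigma$. At points where $\ker\alpha_t \supset T\Sigma$, which are the singular points, $\dot\alpha_t$ vanishing on $T\Sigma$ need not force $X_t$ into $T\Sigma$. My first attempt would be to arrange that $\dot\alpha_t$ vanishes to first order along $\Sigma$, by matching the transverse terms $u$ and $\theta$ in Stage 1 through a collar reparametrization. Then $X_t$ vanishes on $\Sigma$, which gives the required invariance.
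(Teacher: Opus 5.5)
The paper gives no proof of this lemma; it defers to \cite{g2008intro}. Your scheme is the standard argument: normalize so that $\alpha|_{T\Sigma}=\alpha'|_{T\Sigma}=\lambda$, observe that along $\Sigma$ the form $\alpha_t\wedge d\alpha_t^n$ is affine in the normal data, then run Gray's argument. \textbf{There is a genuine gap}, in the sentence claiming that sign agreement ``is guaranteed by the coorientation choice made in Stage 1.''

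Once you rescale so that the tangential parts coincide, no sign freedom is left, and the two volume forms can have opposite signs. For example, let $\alpha=u\,ds+\lambda$ be an $s$-invariant contact form on a collar of a convex $\Sigma=\{s=0\}$. Then $\alpha'=-u\,ds+\lambda$ is the pullback of $\alpha$ under $(s,x)\mapsto(-s,x)$, so it is contact, and $\alpha|_{T\Sigma}=\alpha'|_{T\Sigma}$. However, $\alpha\wedge d\alpha^n=u\,ds\wedge d\lambda^n+n\,\lambda\wedge du\wedge ds\wedge d\lambda^{n-1}$ is linear in $u$. Hence $\alpha'\wedge d\alpha'^n=-\alpha\wedge d\alpha^n$, and your path passes through $\alpha_{1/2}=\lambda$, which is nowhere contact.

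For $n$ odd the conclusion itself fails. Since $(f\alpha)\wedge d(f\alpha)^n=f^{n+1}\alpha\wedge d\alpha^n$, a contact structure then determines an orientation of $Y$. Here $\xi$ and $\xi'$ determine opposite ones, so no orientation-preserving $\Phi_1$ (in particular none isotopic to the identity) can satisfy $\Phi_1^*\xi=\xi'$.

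So this is not a normalization you can make; it is a hypothesis the lemma as quoted omits. You must assume that, with $\alpha|_{T\Sigma}=\alpha'|_{T\Sigma}$, the forms $\alpha\wedge d\alpha^n$ and $\alpha'\wedge d\alpha'^n$ induce the same orientation along $\Sigma$. With that added, your Stage 2 and the Moser step are correct.

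Separately, the ``main obstacle'' you name does not exist, so the unfinished first-order matching of $u$ and $\theta$ can be dropped.
At a zero $p$ of $\lambda$, every $\alpha_t$ vanishes on $T_p\Sigma$, so $\ker\alpha_t(p)=T_p\Sigma$. The defining equation then reads $\iota_{X_t}d\alpha_t|_{\ker\alpha_t}=-\dot\alpha_t|_{T_p\Sigma}=0$, which forces $X_t(p)=0$.
At a point with $\lambda_p\neq0$, the space $H=\ker\alpha_t(p)\cap T_p\Sigma$ is a hyperplane in the symplectic space $(\ker\alpha_t(p),d\alpha_t)$. The equation forces $X_t(p)$ into the symplectic orthogonal of $H$, and that line lies in $H\subset T_p\Sigma$ because hyperplanes are coisotropic.
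Hence $X_t$ is automatically tangent to $\Sigma$ (it lies in the characteristic line), and the cut-off flow preserves $\Sigma$.

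Note also that the matching you proposed would itself require $u$ and $u'$ to have the same sign at the zeros of $\lambda$. A side-preserving diffeomorphism fixing $\Sigma$ only rescales $u$ there by a positive factor, so this is the same missing orientation condition.
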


\begin{corollary}[Contactization Neighborhood] \label{cor:contactization_nbhds} Let $\Sigma \subset Y$ be a closed hypersurface in a contact manifold $(Y,\xi)$ with contact Hamtiltonian structure $\eta = \xi \cap T\Sigma$. Then there is a contact embedding
\[
\iota:C\Sigma \to Y \qquad\text{with}\qquad \iota(\Sigma) = \Sigma
\]
\end{corollary}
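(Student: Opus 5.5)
The plan is to compare two contact structures near $\Sigma$: the given $\xi$ on $Y$, and the one obtained by transporting the contact structure of $C\Sigma$ into a tubular neighborhood of $\Sigma$. Lemma \ref{lem:stability_for_hypersurfaces} will then correct the embedding.

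\textbf{Step 1: a tubular embedding.} Since $\Sigma$ is closed and orientable (I assume $\Sigma$ is co-oriented, which holds when $\Sigma$ and $Y$ are oriented), choose a tubular neighborhood embedding
\[
j:[-\epsilon,\epsilon]_s\times\Sigma\to Y \qquad\text{with}\qquad j(0,x)=x.
\]
Fix a contact form $\alpha$ for $\xi$ and set $\lambda=\alpha|_\Sigma$. This $\lambda$ is a contact Hamiltonian form for $\eta=\xi\cap T\Sigma$ (Example \ref{ex:hypersurfaces}).

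\textbf{Step 2: a framing and the contactization.} Choose a framing $(u,\theta)$ for $\lambda$. Existence follows from the contactization being well defined and contact. Concretely, one can read it off from $j^*\alpha$: write
\[
j^*\alpha=u\,ds+\lambda+s\beta+O(s^2), \qquad \theta=\beta-du.
\]
The contact condition $\alpha\wedge d\alpha^n\neq0$ at $s=0$ then expands to exactly the statement that \eqref{eq:framing_volume_form} is a volume form.

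Let $\alpha_C=u\,ds+s(\theta+du)+\lambda$ be the contactization form, and let $\xi'=\ker\big((j^{-1})^*\alpha_C\big)$ on $U=j((-\epsilon,\epsilon)\times\Sigma)$. By construction both $\alpha_C|_{s=0}$ and $j^*\alpha|_{s=0}$ restrict to $\lambda$ on $T\Sigma$. Hence
\[
\xi'\cap T\Sigma=\ker\lambda=\xi\cap T\Sigma.
\]
By \cite[Lem 2.31]{chaidez2026conformally}, $\xi'$ is contact after shrinking $\epsilon$.

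\textbf{Step 3: apply the stability lemma.} Apply Lemma \ref{lem:stability_for_hypersurfaces} on the manifold $U$, with the closed hypersurface $\Sigma$ and the pair $\xi,\xi'$. This gives an isotopy $\Phi_t$ with $\Phi_t(\Sigma)=\Sigma$ and $\Phi_1^*\xi=\xi'$ near $\Sigma$.

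Then $\iota=\Phi_1\circ j$, restricted to $[-\epsilon',\epsilon']\times\Sigma$ for smaller $\epsilon'$, is a contact embedding of $C\Sigma$ into $(Y,\xi)$. Since $\Phi_1$ preserves $\Sigma$, we get $\iota(\Sigma)=\Sigma$. If one wants $\iota|_\Sigma=\mathrm{id}$ rather than just $\iota(\Sigma)=\Sigma$, note that the Moser-type proof of the lemma can be arranged so that $\Phi_t$ is generated by a vector field vanishing on $\Sigma$.

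\textbf{Expected difficulty.} The only real subtlety is the width of the contactization. It is stated on $[-\epsilon,\epsilon]$ with $\epsilon$ unspecified, so the germ is what matters, and the width can be shrunk freely. Independence of the choices of $\lambda$ and $(u,\theta)$ follows by the same argument applied to two contactizations. I therefore expect no serious obstacle beyond checking that the first-order expansion really yields a framing.
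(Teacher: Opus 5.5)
Your proposal is correct and is essentially the argument the paper intends. The paper states the corollary without a separate proof, as an immediate consequence of Lemma \ref{lem:stability_for_hypersurfaces} applied to $\Sigma$ sitting in both $Y$ and $C\Sigma$ with the same induced structure $\eta$, identified via a tubular neighborhood exactly as you do.

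Reading the framing off $j^*\alpha$ is a good choice. It makes $j^*\alpha$ and $\alpha_C$ agree, together with their differentials, along $\Sigma$. So the orientation compatibility that the stability lemma tacitly needs holds automatically; for $n$ odd the lemma fails for $\ker\alpha_C$ versus its pullback under $s\mapsto -s$. For an arbitrary prescribed framing, as in your closing remark, you may first have to precompose with $s\mapsto -s$.
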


\noindent The set of framings is convex and also non-empty in the following strong sense.

\begin{lemma}[Framing Extension] Let $(\Sigma,\eta)$ be a contact Hamiltonian manifold. Fix an open subset $U$ and a 1-form $\phi$ on $\Sigma$ such that
\[\phi(Z) > 0\qquad \text{on $U$ for some characteristic vector field $Z$}\]
Then there is a framing $(u,\theta)$ with $u|_U = 0$ and $\theta|_U = \phi$.
\end{lemma}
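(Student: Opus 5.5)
The plan is to reduce the framing condition \eqref{eq:framing_volume_form} to a pointwise affine inequality in $(u,\theta)$, and then glue local solutions with a partition of unity.

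\emph{Reduction.} Fix a volume form $\mu$ on $\Sigma$ (orient $\Sigma$ locally if needed; the sign condition below is local), and let $Z$ be the characteristic vector field with $\iota_Z\mu = \lambda\wedge d\lambda^{n-1}$. Since $\theta\wedge\mu = 0$ for degree reasons, contracting with $Z$ gives
\[
\theta\wedge\lambda\wedge d\lambda^{n-1} = \theta\wedge\iota_Z\mu = \theta(Z)\,\mu .
\]
Write $d\lambda^n = g\,\mu$. Then \eqref{eq:framing_volume_form} becomes the pointwise condition
\[
F(u,\theta) := u\,g + n\,\theta(Z) > 0 .
\]
For each $x$, the set of values $(u(x),\theta_x)$ satisfying this is an open half-space. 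In particular it is convex, so any convex combination of local solutions via a partition of unity is again a solution. The contact Hamiltonian condition \eqref{eq:contact_Hamiltonian_form} says exactly that $g \neq 0$ wherever $\lambda\wedge d\lambda^{n-1}$ vanishes, i.e. wherever $Z = 0$.

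\emph{Local solutions.} Cover $\Sigma$ by three open sets and choose a solution on each.
\begin{itemize}
\item On $U' = \{\phi(Z) > 0\}$, take $(u,\theta) = (0,\phi)$.
\item On $V = \{g \neq 0\}$, take $(u,\theta) = (g,0)$, which gives $F = g^2 > 0$.
\item On $W = \{Z \neq 0\}$, take $u = 0$ and $\theta = \langle Z,\cdot\rangle$ for an auxiliary Riemannian metric, which gives $F = n|Z|^2 > 0$.
\end{itemize}
Since $V \cup W = \Sigma$ by the previous paragraph, these three sets cover $\Sigma$. Take a partition of unity $\rho_{U'},\rho_V,\rho_W$ subordinate to this cover with $\rho_{U'} \equiv 1$ on $U$. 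Then set $(u,\theta) = \sum \rho_i\,(u_i,\theta_i)$. By convexity this satisfies $F > 0$ everywhere, and it equals $(0,\phi)$ on $U$.

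\emph{Main obstacle.} The step requiring care is the existence of such a partition of unity, i.e. having $\overline U \subset U'$ so that $\rho_{U'}$ can be $1$ on $U$. This is a genuine issue and not merely technical. If a singular point $x$ with $Z(x) = 0$ lay in $\overline U$, then $u = 0$ on $U$ would force $u(x) = 0$ and hence $F(x) = 0$, so no framing could exist. Thus the lemma should be read with positivity $\phi(Z) > 0$ holding on a neighborhood of $\overline U$, which is how such a lemma is applied in practice. If one is only given positivity on the open set $U$ itself, first shrink $U$ slightly to a relatively compact subset, or equivalently apply the argument to $\overline U$. With this hypothesis in place the construction above is routine.
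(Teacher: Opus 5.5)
Your argument is correct, and your caveat is a genuine correction: the lemma is false as literally stated. Let $x$ be an isolated zero of $Z$, let $B$ be a small ball around $x$, and take $U = B\setminus\{x\}$ and $\phi = \langle Z,\cdot\rangle$. Then $\phi(Z)>0$ on $U$. Yet any $(u,\theta)$ with $u|_U=0$ and $\theta|_U=\phi$ has $u(x)=0$ and $\theta_x(Z(x))=0$, so the framing form vanishes at $x$.

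The right hypothesis is exactly yours, $\overline U\subset\{\phi(Z)>0\}$. It is necessary, because continuity forces the framing form to equal $n\,\phi(Z)\,\mu$ on $\overline U$. Your construction shows it is sufficient. The paper's proof tacitly needs the same thing. It asks for $u|_U=0$ together with $f|_S>0$, where $S=\{\theta(Z)=0\}$ always contains the zeros of $Z$. That is impossible once $S$ meets $\overline U$, and the paper only checks $S\cap U=\emptyset$.

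Your route also differs from the paper's. The paper works sequentially in two steps.
\begin{itemize}
\item It first chooses $\theta$ with $\theta(Z)\ge 0$ and $\theta|_U=\phi$, such that $d\lambda^n$ is a volume form on $S$.
\item It then chooses $u$ vanishing on $U$ with $u\,d\lambda^n$ a nonnegative multiple of $\mu$, positive on $S$. The sign of $u$ is fixed using the splitting $S=S_+\sqcup S_-$ into positive and negative Liouville pieces.
\end{itemize}
Both existence claims are left to the reader.

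You instead make explicit the identity $\theta\wedge\lambda\wedge d\lambda^{n-1}=\theta(Z)\,\mu$, which the paper uses tacitly. This turns the framing condition into a pointwise linear inequality, and you glue three explicit local solutions with a partition of unity. Two things follow from this:
\begin{itemize}
\item The sign bookkeeping is automatic: $u=g$ contributes $g^2$, so there is no need for $S_\pm$.
\item The constructions the paper omits are supplied.
\end{itemize}
The paper's version is shorter and separates the roles of $\theta$ (away from $S$) and $u$ (near $S$). However, its two ``there is'' steps are most naturally justified by the same cutoff reasoning you give.

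One small cleanup: drop the remark about orienting $\Sigma$ locally. The hypothesis requires a global orientation, since a characteristic vector field presupposes a volume form. So does the conclusion, which asks for a volume form. Your gluing also uses a single global $\mu$ and $\lambda$, namely those defining the given $Z$.
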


\begin{proof} Choose a volume form $\mu$ and a contact Hamiltonian form $\lambda$ with characteristic vector field $Z$. There is a 1-form $\theta$ such that $\theta(Z) \ge 0$, such that $\theta|_U = \phi$ and such that $d\lambda^n$ is a volume form on the set $S$ where $\theta(Z) = 0$. Note that $S \cap U = \emptyset$ and $S = S_+ \sqcup S_-$ where $S_+$ is positive Liouville and $S_-$ is negative Liouville. There is then a smooth function $u$ such that
\[u|_U = 0 \qquad\text{and}\qquad u d\lambda^n = f\mu\text{ for a non-negative smooth function $f$ with $f|_S > 0$}\] 
It is simple to check that $(u,\theta)$ is framing that satisfies the desired properties.\end{proof}

\subsection{Convexity} A contact Hamiltonian manifold is convex if the contactization admits a translation invariant contact structure. More precisely, we adopt the following definition.

\begin{definition}[Convex Structure] \label{def:convex_structure} A contact Hamiltonian manifold $(\Sigma,\eta)$ is \emph{convex} if there is a smooth function $u$ such that $(u,-du)$ is a framing, or equivalently
\[
u ds + \lambda \qquad\text{is a contact form on $C\Sigma = [-\epsilon,\epsilon] \times \Sigma$}
\]
We refer to the function $u$ as a \emph{framing function} for the (convex) contact Hamiltonian manifold.\end{definition}

\noindent Note that Definition \ref{def:convex_structure} is simply the intrinsic formulation of convexity for hypersurfaces in contact manifolds, in the sense of Giroux \cite{g1991}.

\begin{definition}[Convex Hypersurface] \label{def:convex_hypersurface} A hypersurface $\Sigma$ in a contact manifold $(Y,\xi)$ is \emph{convex} if there exists a contact vector field $V$ that is transverse to $\Sigma$.
\end{definition}

The key property of convex hypersurfaces is the existence of a natural splitting into a positive Liouville region and a negative Liouville regions along a contact sub-manifold. 

\begin{definition}[Dividing Set] The \emph{dividing set} $\Gamma$ of a convex contact Hamiltonian manifold $(\Sigma,\eta)$ with respect to a framing function $u$ is the hypersurface
\[
\Gamma = \{u = 0\} \subset \Sigma
\]
The regions $\Sigma_+$ and $\Sigma_-$ where $u$ is non-negative and non-positive are respectively called the \emph{positive} and \emph{negative} regions of $\Sigma$. There is a natural splitting
\[
\Sigma = \Sigma_+ \cup \Sigma_- \qquad\text{with}\qquad \Sigma_+ \cap \Sigma_- = \Gamma
\]\end{definition}

\begin{lemma}[Dividing Set Is Contact] The dividing set $\Gamma \subset \Sigma$ is a contact sub-manifold transverse to the characteristic foliation that is independent of the framing function up to isotopy.
\end{lemma}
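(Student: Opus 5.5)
Write $\lambda$ for a contact Hamiltonian form and $u$ for a framing function, so that $\alpha = u\,ds + \lambda$ is a contact form on $C\Sigma = [-\epsilon,\epsilon]_s \times \Sigma$. The plan is to compute the contact condition explicitly, restrict it to $\Gamma = \{u=0\}$, and read off that $0$ is a regular value of $u$, that $\lambda|_\Gamma$ is contact, and that the characteristic field is transverse to $\Gamma$. Independence of $u$ will then follow from convexity of the space of framing functions together with Gray stability.

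\textbf{The basic computation.} Since $d\alpha = du\wedge ds + d\lambda$, we get
\[
\alpha\wedge d\alpha^n \;=\; ds\wedge\bigl(u\, d\lambda^n + n\,\lambda\wedge du\wedge d\lambda^{n-1}\bigr)
\]
up to a universal sign. So $u$ is a framing function exactly when
\[
\mu_u := u\,d\lambda^n + n\,\lambda\wedge du\wedge d\lambda^{n-1}
\]
is a volume form on $\Sigma$. This is just \eqref{eq:framing_volume_form} with $\theta = -du$.

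\textbf{Structure of $\Gamma$.} On $\Gamma$ we have $u=0$, so $\lambda\wedge du\wedge d\lambda^{n-1}\neq 0$ there.
\begin{itemize}
\item This forces $du\neq 0$ along $\Gamma$, so $\Gamma$ is a smooth closed hypersurface.
\item Writing $T\Sigma|_\Gamma = T\Gamma\oplus\langle \nu\rangle$ with $du(\nu)=1$, contracting with $\nu$ shows that $(\lambda\wedge d\lambda^{n-1})|_{T\Gamma}$ is nonvanishing, since the remaining terms contain $du|_{T\Gamma}=0$. Hence $\lambda|_\Gamma$ is a contact form on the $(2n-1)$-manifold $\Gamma$.
\end{itemize}

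\textbf{Transversality.} Take the characteristic field $Z$ with $\iota_Z\mu = \lambda\wedge d\lambda^{n-1}$. Then
\[
du(Z)\,\mu \;=\; du\wedge\iota_Z\mu \;=\; du\wedge\lambda\wedge d\lambda^{n-1},
\]
which is nonzero on $\Gamma$ by the computation above. So $du(Z)\neq 0$, i.e.\ $Z\pitchfork\Gamma$. In particular $Z$ has no zeros on $\Gamma$, so the characteristic foliation is transverse to $\Gamma$ as a line field.

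\textbf{Independence of the framing.} Let $u_0$ and $u_1$ be two framing functions for the same $\lambda$. If $\lambda$ is replaced by $g\lambda$ with $g>0$, then $u$ is replaced by $gu$, which does not change $\Gamma$. The sign $g<0$ is excluded by the orientation convention.

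The main point is that $u_t = (1-t)u_0 + tu_1$ is again a framing function. This holds because $\mu_u$ is linear in $u$ (note $du_t$ is linear in $t$ as well), and both $\mu_{u_0}$ and $\mu_{u_1}$ are volume forms. Here one must check that they induce the same orientation. The sign of $\alpha\wedge d\alpha^n$ is fixed by the coorientation of $\Sigma$ in $C\Sigma$ and the fixed orientation, so both are positive multiples of a fixed volume form, and convex combinations of them remain volume forms.

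Applying the previous steps to each $u_t$, the sets $\Gamma_t = \{u_t=0\}$ form a smooth family of closed contact submanifolds transverse to $Z$. Smoothness uses $du_t\neq 0$ on $\Gamma_t$ and the implicit function theorem.

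\textbf{Constructing the isotopy.} There are two possible routes.
\begin{itemize}
\item Flow along $Z$: since $\Gamma_t\pitchfork Z$, each $\Gamma_t$ is locally a graph over $\Gamma_0$ along $Z$-flowlines, which gives an isotopy through submanifolds transverse to $Z$.
\item Isotopy extension plus Gray: take any isotopy $\phi_t$ with $\phi_t(\Gamma_0)=\Gamma_t$, and note that $\phi_t^*\lambda|_{\Gamma_t}$ is a path of contact forms on the closed manifold $\Gamma_0$. Gray stability then makes the isotopy a contact isotopy after correction.
\end{itemize}

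The main obstacle I expect is this last step: keeping the orientation and sign bookkeeping consistent so that the linear interpolation genuinely stays within framing functions.
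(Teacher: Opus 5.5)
The paper does not prove this lemma; it defers to \cite[Lem 3.7 and 3.8]{chaidez2026conformally} and \cite{b2024}. Your direct computation is the standard argument, and the first three steps are correct. In fact $\alpha\wedge d\alpha^n = ds\wedge\mu_u$ holds with no sign. The identity $\mu_u|_\Gamma = n\,\lambda\wedge du\wedge d\lambda^{n-1}$ shows both that $0$ is a regular value and that $\lambda|_\Gamma$ is contact. Finally, $du(Z)\,\mu = du\wedge\iota_Z\mu$ gives transversality.

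The step that fails as written is the orientation claim in the independence argument. Definition \ref{def:convex_structure} puts no sign on $\mu_u$. Since $\mu_u$ is linear in $u$, we have $\mu_{-u} = -\mu_u$, so $-u_0$ is a framing function whenever $u_0$ is. The segment from $u_0$ to $-u_0$ passes through $u_{1/2} = 0$, which is not a framing function. So the framing functions for a fixed $\lambda$ do not form a convex set: on a connected $\Sigma$ they form two convex cones exchanged by $u\mapsto -u$. Your appeal to ``the coorientation of $\Sigma$ in $C\Sigma$ and the fixed orientation'' imports a convention the definition does not contain.

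The paper does fix such a convention implicitly in Lemma \ref{lem:liouville_halves}. On $\{u\neq 0\}$ one has $\mu_u = u^{n+1}(d(\lambda/u))^n$, so the natural Liouville form $\lambda/u$ on $\{u>0\}$ induces the orientation of $\mu_u$.

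The repair is one line and uses the same symmetry you noted for rescalings. On each component of $\Sigma$ the ratio $\mu_{u_1}/\mu_{u_0}$ has constant sign. Replace $u_1$ by $-u_1$ on the components where it is negative; this leaves $\Gamma_1=\{u_1=0\}$ unchanged, and your interpolation then goes through verbatim. Likewise, no convention is needed to rule out $g<0$: $gu$ is a framing function for $g\lambda$ for any nowhere-vanishing $g$, and it has the same zero set.

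For the isotopy, your first route is stronger than you claim. Since $\lambda(Z)=0$ and $\mathcal{L}_Z\lambda = f\lambda$ (Remark \ref{lem:rmk_alt_char_field}), any map $p\mapsto\Phi^Z_{h(p)}(p)$ pulls $\lambda$ back to a positive multiple of $\lambda$. So sliding along $Z$-flowlines is already a contactomorphism between dividing sets, and Gray stability is unnecessary.

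However, $\Gamma_t$ is a graph along $Z$ over $\Gamma_{t_0}$ only for $t$ near $t_0$, not over $\Gamma_0$ for all $t$, since a flowline through $\Gamma_0$ need not reach $\Gamma_t$. So either subdivide $[0,1]$ using compactness, or use the Ehresmann/isotopy-extension argument from your second route.
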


\begin{lemma}[Halves Are Liouville] \label{lem:liouville_halves} The halves $\Sigma_+$ and $\Sigma_-$ of a convex contact Hamiltonian manifold $(\Sigma,\eta)$ are positive and negative Liouville respectively, and $\Sigma_\eta$ points out from $\Sigma_+$ along $\Gamma$.
\end{lemma}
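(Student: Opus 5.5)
We work with a framing function $u$, so that $\alpha = u\,ds + \lambda$ is a contact form on $C\Sigma$. We first compute the contact condition in terms of $u$ and $\lambda$. Then we show that on $\{u>0\}$ the form $\lambda/u$ is Liouville, and on $\{u<0\}$ the form $-\lambda/u$ is Liouville, with orientations tied to the sign of $u$. Finally we extend these Liouville forms across $\Gamma$ to neighborhoods of $\Sigma_\pm$.

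\textbf{Step 1: the contact condition.} Since $\alpha$ is $s$-independent,
\[
\alpha\wedge d\alpha^n = ds\wedge\big(u\,d\lambda^n + n\,\lambda\wedge d\lambda^{n-1}\wedge du\big)\cdot(\pm 1).
\]
So the convexity condition says exactly that
\[
\mu := u\,d\lambda^n - n\,du\wedge\lambda\wedge d\lambda^{n-1}
\]
is a volume form on $\Sigma$; this is the framing condition \eqref{eq:framing_volume_form} with $\theta=-du$. Fix the orientation of $\Sigma$ so that $\mu>0$.

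\textbf{Step 2: the halves are Liouville.} On $\{u\ne 0\}$, put $\beta = \lambda/u$. Then
\[
d\beta = u^{-2}\,(u\,d\lambda - du\wedge\lambda),
\qquad
d\beta^n = u^{-n-1}\,\mu .
\]
Hence $d\beta^n>0$ where $u>0$, so $\lambda/u$ is a Liouville form inducing the orientation there. Where $u<0$, the form $d\beta^n$ has sign $(-1)^{n+1}$. In that region we use $\beta' = -\lambda/u = \lambda/|u|$, whose $n$-th power is $(-1)^n d\beta^n = -|u|^{-n-1}\mu$. So $\beta'$ is Liouville and induces the opposite orientation. Both $\beta$ and $\beta'$ are positive function multiples of $\lambda$ (up to the sign of $u$), so they are contact Hamiltonian forms for $\eta$.

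\textbf{Step 3: behaviour along $\Gamma$.} On $\Gamma=\{u=0\}$ we have $\mu = -n\,du\wedge\lambda\wedge d\lambda^{n-1}$, which is nonzero. So $du\neq 0$ there, $\Gamma$ is a smooth hypersurface, and $\lambda\wedge d\lambda^{n-1}|_\Gamma\neq 0$. Using $\iota_Z\mu = \lambda\wedge d\lambda^{n-1}$, a short computation gives
\[
du(Z) = -\,\frac{\lambda\wedge d\lambda^{n-1}\wedge du}{\mu}\cdot c
\]
for an explicit sign constant $c$. We track the sign so that $du(Z)<0$ relative to the outward direction. The conclusion is that $Z$ points out of $\Sigma_+=\{u\ge 0\}$; getting the sign conventions consistent is routine bookkeeping.

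\textbf{Step 4: Liouville forms up to the boundary (main obstacle).} The halves $\Sigma_\pm$ include $\Gamma$, where $\lambda/u$ blows up, so the construction must be modified near $\Gamma$. Let $h=\phi(u)$, where $\phi>0$ on $(-\delta,\infty)$, $\phi(t)=t$ for $t\ge\delta$, and $\phi'>0$. Consider $\lambda/h$ on $U=\{u>-\delta\}$. As in Step 2,
\[
d(\lambda/h)^n = h^{-n-1}\big(h\,d\lambda^n - n\,\phi'(u)\,du\wedge\lambda\wedge d\lambda^{n-1}\big).
\]
This is a combination of $d\lambda^n$ and $-n\,du\wedge\lambda\wedge d\lambda^{n-1}$ with positive weights $h$ and $\phi'$. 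It is positive wherever both terms are nonnegative. Near $\Gamma$, Step 3 gives that the second term is positive. The first term, however, may have either sign, so we need some control on $d\lambda^n$ there.

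To get that control, shrink $\delta$ and pick $\phi$ with $\phi'$ large relative to $\phi$ on $[-\delta,\delta]$. This is possible since $\phi$ only needs to be positive and increasing there: take $\phi$ tiny at $-\delta$ and growing steeply, or take $\phi(t)=\varepsilon e^{Kt}$ with $K$ large and glue. With this choice the second term dominates, so $d(\lambda/h)^n>0$ on a neighborhood of $\Sigma_+$.

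The negative half $\Sigma_-$ is handled symmetrically, using the form $\lambda/\phi(-u)$ to get a Liouville form inducing the opposite orientation. The delicate point is this domination estimate near $\Gamma$, and checking that the gluing preserves positivity on the overlap $\delta\le u$. There $h=u$ and Step 2 applies directly.
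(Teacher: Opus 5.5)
Your proof is correct. The paper gives no argument for this lemma (it defers to the cited references), so there is no internal proof to compare against; your Steps 1, 2 and 4 amount to a complete self-contained proof. Two places should be tightened.

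First, in Step 3 there is no sign left to choose. The orientation is fixed by $\mu$ and $Z$ is defined by $\iota_Z\mu=\lambda\wedge d\lambda^{n-1}$. Contracting the vanishing $(2n+1)$-form $du\wedge\mu$ with $Z$ gives $du(Z)\,\mu=du\wedge\lambda\wedge d\lambda^{n-1}$. On $\Gamma$, Step 1 gives $du\wedge\lambda\wedge d\lambda^{n-1}=-\mu/n$, so $du(Z)=-1/n<0$ there. This is your displayed formula with $c=1$. The outward direction is half of the statement, so write this line out instead of saying you ``track the sign so that'' it comes out right.

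Second, Step 4 works but is more elaborate than needed. Write $d\lambda^n=A\mu$ and $-n\,du\wedge\lambda\wedge d\lambda^{n-1}=B\mu$; then Step 1 says exactly $uA+B=1$.

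\begin{itemize}
\item With your steep cutoff, the quantity that matters is the logarithmic derivative. Since $B=1-uA\ge 1/2$ on $\{|u|\le\delta\}$ for small $\delta$, it suffices to have $\phi'/\phi>2\sup|A|$ on $[-\delta,\delta]$. Gluing to $\phi(t)=t$ is consistent with this because $1/\delta$ is large.
\item The constant shift $h=u+\delta$ already gives $d(\lambda/h)^n=h^{-n-1}(1+\delta A)\mu>0$ on all of $\{u>-\delta\}$, as soon as $\delta\sup|A|<1$ (using compactness of $\Sigma$).
\item Equivalently, $u\pm\delta$ is again a framing function by openness of convexity, since $\mu_{u\pm\delta}=(1\pm\delta A)\mu$. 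Your Step 2 then directly yields Liouville forms $\lambda/(u+\delta)$ near $\Sigma_+$ and $\lambda/(\delta-u)$ near $\Sigma_-$, with $d\big(\lambda/(\delta-u)\big)^n=-(\delta-u)^{-n-1}(1-\delta A)\mu$. No gluing is needed.
\end{itemize}

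In either version, finish by multiplying $\lambda$ by a positive function on $\Sigma$ equal to $1/h$ near $\Sigma_\pm$. This gives a global contact Hamiltonian form, which is what the definition of a Liouville subset asks for.
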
 

\noindent For proofs of the above facts, we refer the reader to \cite[Lem 3.7 and 3.8]{chaidez2026conformally} or \cite{b2024}.

\subsection{Deformations} \label{subsec:deformations} There is a natural type of deformation for contact Hamiltonian structures given by embeddings into the contactization..

\begin{definition}[Ambient Deformation] An \emph{ambient deformation} $\iota$ of a contact Hamiltonian structure $\eta$ on $\Sigma$ is a 1-parameter family of embeddings into the contactization
\[
\iota:[0,1] \times \Sigma \to C\Sigma \qquad\text{such that $\iota_0$ is inclusion of $\Sigma = 0 \times \Sigma$}
\]
There is a naturally associated family of contact Hamiltonian structures given by $\iota_t^*\xi$ where $\xi$ is the contact structure on the contactization. We denote the endpoing of this family by
\[
\eta_\iota = \iota^*_1\xi
\]\end{definition}

\begin{definition}[Size] The \emph{$C^k$-size} of an ambient deformation with respect to a Riemannian metric $g$ is defined to be the $C^k$-distance
\[
\|\iota\|_{C^k} = \on{dist}_{C^k}(\iota,\iota_{\on{triv}})
\]
where $\iota_{\on{triv}}$ is the trivial isotopy given by the inclusion $\Sigma \to C\Sigma$ for all parameters. \end{definition}

\begin{remark}[Concatenation] \label{rmk:concatenation} If $\iota$ is an ambient deformation of $(\Sigma,\eta)$ and $\jmath$ is an ambient deformation of $\eta_\iota$ that is sufficiently $C^0$-small, then we can form a concatenation
\[\jmath \star \iota:[0,1] \times \Sigma \to C\Sigma\]
This is the concatenation of the isotopy $\iota$ and the isotopy $\phi \circ \jmath$, where $\phi:U \to C\Sigma$ is an embedding of the a neighborhood $U$ of $\Sigma$ in the contactization of $\Sigma$ with respect to $\eta_\iota$ into the contactization $C\Sigma$ with respect to $\eta$. Such an embedding exists by Corollary \ref{cor:contactization_nbhds}. Note that for any choice of metric $g$ and any integer $k$, there is a constant $A$ depending on $g,k$ and $\phi$ such that
\[
\|\jmath \star \iota\|_{C^k} \le A\|\jmath\|_{C^k} + \|\iota\|_{C^k} 
\]\end{remark}

There are a few key examples of ambient deformations that will appear frequently. In particular, one can ambiently deform by taking the graph of a Hamiltonian within the contactization.

\begin{example}[Graphical Deformation] Fix a sufficiently small smooth function $H$, a contact Hamiltonian form $\lambda$ and a framing $(\theta,u)$  on $\Sigma$. There is an associated \emph{graphical deformation}
\begin{equation}
\iota_H:[0,1]_r \times \Sigma \to [-\epsilon,\epsilon]_s \times \Sigma = C\Sigma \qquad\text{where}\qquad \iota_H(r,x) = (-rH(x),x)
\end{equation}
Here we use the explicit form of the contactization $C\Sigma$ given by
\[
C\Sigma = [-\epsilon,\epsilon] \times \Sigma \quad\text{with contact form}\quad uds + s(du + \theta) + \lambda
\]
The endpoing $\eta_{\theta,H}$ of the corresponding family of contact Hamiltonian structures is given by
\begin{equation}
\eta_{\theta,H} = \on{ker}(\lambda_{\theta,H}) \qquad\text{where}\qquad \lambda_{\theta,H} = \iota_1^*\alpha = -u \cdot dH - H(\theta + du) + \lambda
\end{equation}
\end{example}

\begin{remark}[Sympelectization Case] Given a contact manifold $U$, the symplectization $\R_t \times U$ has a natural framing $(0,dt)$. For this particular framing, we adopt the simplified notation
\[
\lambda_H = -H dt + \lambda \qquad\text{and}\qquad \eta_H = \on{ker}(\lambda_H)
\]
\end{remark}

\begin{remark}[Size] \label{rmk:size} For any contact Hamiltonian manifold $\Sigma$, any choice of Riemannian metric $g$ on the contactization and any integer $k$, there is a constant $A$ depending on $g$ and $k$ with
\[
\|\iota_H\|_{C^k} \le A \cdot \|H\|_{C^k}
\]
\end{remark}

\begin{lemma}[Hamiltonian Flow] \label{lem:Ham_flow} Fix a contact Hamiltonian $H:[0,1]_t \times U \to \R$ on a contact manifold $U$ with contact form $\alpha$. Then the deformed structure $\eta_H = -Hdt + \alpha$ has a characteristic vector field
\[
Z_H = \partial_t + V_H \qquad\text{where $V_H$ is the contact Hamiltonian vector field generated by $H$} 
\]\end{lemma}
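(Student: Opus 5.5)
The plan is to verify the claim using the alternative characterization of the characteristic line field in Remark \ref{lem:rmk_alt_char_field}, rather than the volume-form definition \eqref{eq:characteristic_vectorfield}. That remark says it suffices to produce a vector field $Z$ with two properties. First, $Z$ is tangent to $\eta_H$, i.e.\ $\lambda_H(Z)=0$. Second, $\mathcal{L}_Z\lambda_H = f\lambda_H$ for some smooth function $f$ that is non-zero along the singularities of $\eta_H$.

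The singularity condition is vacuous here. Indeed, $\lambda_H = -H\,dt + \alpha$, and $\alpha$ restricted to $TU$ is nowhere zero, so $\lambda_H$ never vanishes on $[0,1]\times U$ and $\eta_H$ has no singularities. Thus only the tangency and the conformal invariance need to be checked for $Z = \partial_t + V_H$.

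I fix the convention for the contact Hamiltonian vector field: $V_H$ is the ($t$-dependent) vector field on $U$ with
\[
\alpha(V_H) = H, \qquad \iota_{V_H}d\alpha = (R_\alpha H)\,\alpha - d_U H,
\]
where $R_\alpha$ is the Reeb field and $d_U$ is the differential in the $U$-directions. If the paper uses the opposite sign convention, the same computation goes through with the obvious sign changes. Tangency is then immediate:
\[
\lambda_H(Z) = -H\,dt(\partial_t) + \alpha(V_H) = -H + H = 0.
\]

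For the Lie derivative, I use Cartan's formula. Since $\iota_Z\lambda_H=0$, we get $\mathcal{L}_Z\lambda_H = \iota_Z d\lambda_H$. Next, $dH\wedge dt = d_UH\wedge dt$, so
\[
d\lambda_H = -d_UH\wedge dt + d\alpha .
\]
Contracting with $Z=\partial_t+V_H$ gives
\[
\iota_Z(-d_UH\wedge dt) = d_UH - d_UH(V_H)\,dt, \qquad \iota_Z d\alpha = (R_\alpha H)\alpha - d_UH.
\]
Adding these, the $d_UH$ terms cancel, and
\[
\mathcal{L}_Z\lambda_H = -\,d_UH(V_H)\,dt + (R_\alpha H)\,\alpha .
\]
Evaluating the defining identity for $V_H$ on $V_H$ itself gives $0 = (R_\alpha H)H - d_UH(V_H)$, so $d_UH(V_H) = H\cdot R_\alpha H$. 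Hence
\[
\mathcal{L}_Z\lambda_H = (R_\alpha H)(-H\,dt + \alpha) = (R_\alpha H)\,\lambda_H ,
\]
which is the required conformal invariance with $f = R_\alpha H$. By the remark, $Z$ spans the characteristic foliation.

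The main point needing care is bookkeeping: the $t$-dependence of $H$ and the sign conventions for $V_H$. In particular, $\partial_t H$ drops out because it only contributes a $dt\wedge dt$ term. As a cross-check one could instead verify directly that $\iota_Z(dt\wedge\alpha\wedge d\alpha^{n-1})$ is a nonzero multiple of $\lambda_H\wedge d\lambda_H^{n-1}$. However, the Remark \ref{lem:rmk_alt_char_field} route avoids expanding $d\lambda_H^{n-1}$, so I would present that one.
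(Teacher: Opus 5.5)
Your proposal is correct and follows essentially the same route as the paper: check $\lambda_H(Z)=0$, show $\mathcal{L}_Z\lambda_H = f\lambda_H$, and invoke Remark \ref{lem:rmk_alt_char_field}. Your identity $d_UH(V_H) = H\cdot R_\alpha H$ is the paper's $\mathcal{L}_V H = \iota_V(\mathcal{L}_V\alpha) = fH$ with $f = R_\alpha H$, and you handle the $\partial_t H$ terms (and the absence of singularities) more explicitly than the paper does.
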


\begin{proof} Recall that $V = V_H$ is uniquely defined by the property that $\alpha(V) = H$ and $\mathcal{L}_V\alpha = f\alpha$ for a smooth function $f$ on $(0,1)_t \times U$. Thus $Z$ is tangent to the contact Hamiltonian structure since $\lambda_H(V) = -H + \alpha(V) = 0$. Moreover, we can compute the Lie derivative of $\lambda_H$ along $Z$ to see that
\[
\mathcal{L}_Z\lambda_H = -\mathcal{L}_VH \cdot dt + \mathcal{L}_V \alpha = -\iota_V(\mathcal{L}_V\alpha) \cdot dt + \mathcal{L}_V \alpha = f(-H dt + \alpha) = f\lambda_H
\]
Thus the vector field $Z$ is a characteristic vector field by Remark \ref{lem:rmk_alt_char_field}. \end{proof}

\subsection{Plugs} \label{subsec:plugs} We next introduce the key notion of a plug. Let $D_{\on{std}}$ be the contact disk of dimension $2n-1$ with standard contact form $\alpha_{\on{std}}$. Consider the contact Hamiltonian manifold
\[
U_{\on{std}} = [0,1]_t \times D_{\on{std}} \qquad\text{with contact Hamiltonian structure}\qquad \eta_{\on{std}} = \on{ker}(\alpha_{\on{std}})
\]
There is a standard characteristic vector field $Z_{\on{std}} = \partial_t$. We adopt the following notation for the regions of the boundary where this vector field points inward and outward.
\[
D_{\on{in}} = 0 \times D_{\on{std}} \subset U_{\on{std}} \qquad\text{and}\qquad D_{\on{out}} = 0 \times D_{\on{std}} \subset U_{\on{std}} 
\]
The contactization of this contact Hamiltonian manifold can be written as 
\[
\R_s \times U_{\on{std}} = T^*[0,1] \times D_{\on{std}} \qquad \text{with contact structure}\qquad \xi_{\on{std}} = \on{ker}(sdt + \alpha_{\on{std}})
\]
A plug is essentially an ambient deformation of the contact Hamiltonian structure on $U_{\on{std}}$ that is supported away from the boundary. The precise definition is the following.

\begin{definition}[Plug] \label{def:contact_Hamiltonian_plug} A \emph{contact Hamiltonian plug} is an isotopy of embeddings
\[\iota:[0,1]_r \times U_{\on{std}} \to CU_{\on{std}} \qquad\text{such that $\iota_0$ is the inclusion $U_{\on{std}} \subseteq CU_{\on{std}}$}\]
with the property that $\iota_r$ agrees with the inclusion $U_{\on{std}} \subset CU_{\on{std}}$ near the boundary of $U_{\on{std}}$.  \end{definition}

\begin{example}[Graphical Plug] Any contact Hamiltonian $H:[0,1] \times D_{\on{std}} \to \R$ on the standard disk $D_{\on{std}}$ supported away from the boundary of $[0,1] \times D_{\on{std}}$ determines a \emph{graphical plug} $\iota_H$ with
\[
\iota_H(r,t,x) = (rH(t,x),t,x)
\]
\end{example}

\begin{definition}[Plug Insertion] \label{def:plug_insection} A \emph{plugging domain} $U$ in a contact Hamiltonian manifold $(\Sigma,\eta)$ is a domain $U \subset \Sigma$ equipped with an isomorphism of contact Hamiltonian manifolds
\[
(U_{\on{std}},\eta_{\on{std}}) \simeq (U,\eta|_U)
\]
The \emph{insertion} $\iota_U$ of a contact Hamiltonian plug $\iota$ along a plugging domain $U$ is the ambient deformation of $\Sigma$ that agrees with $\iota$ on $U \simeq U_{\on{std}}$ and that is the inclusion $\Sigma \to \Sigma$ outside of $U$. \end{definition}

\begin{remark} Note that the insertion of a contact Hamiltonian plug is only defined for a plug $\iota$ that is sufficiently small in the $C^0$-topology. 
\end{remark}

\begin{remark} \label{rmk:insertion_along_domain_of_H} We denote the insertion of a graphical plug $\iota_H$ along a plugging domain $U$ in $\Sigma$ by
\[
\iota_{U,H}:[0,1] \times \Sigma \to C\Sigma
\]
We denote the corresponding deformed contact Hamitonian structure by
\[\eta_{U,H}\]
Note that for any choice of metric $g$ on $\Sigma$ and any integer $k$, there is a constant $A$ depending on the plugging domain $U$, $g$ and $k$ such that
\[
\|\iota_{U,H}\|_{C^k} \le A \cdot \|H\|_{C^k}
\]\end{remark}

%\[(P,\eta_{\on{std}}) \qquad\text{given by $P = [0,1]_t \times D$ for the standard contact disk $(D,\xi_{\on{std}})$}\]
%Let $\lambda_{\on{std}}$ denote the standard contact Hamiltonian form on $P = [0,1] \times D$ coming from the standard contact form on $D$ and consider the contactization
%\[CP = T^*[0,1] \times D = \R_s \times P \qquad\text{with contact structure }\xi= \on{ker}(s dt + \lambda_{\on{std}})\]

\subsection{Transition And Return Maps} \label{subsec:transition_and_return_maps} We next review the notion of transition maps between local sections of a singular line field, and specifically the characteristic foliation.

\begin{definition}[Transition Map] \label{def:transition_map} Let $D$ and $D'$ be local sections of a singular line field $L$ on $M$ and let $\Gamma$ be a trajectory of $L$ connecting a point $P$ in $D$ to a point $P'$ in $D'$. The \emph{transition map}
\[
\on{Tr}\Gamma:D \to D' \qquad\text{along the trajectory $\Gamma$}
\]
is the germ at $P$ defined as follows. Fix a flow $\Phi$ generating $L$ and let $T$ be the time such that the map $[0,T] \to M$ given by $t \mapsto \Phi_t(P)$ parametrizes $\Gamma$. Then $\on{Tr}\Gamma = \Phi \circ \sigma$ where $\sigma$ is a smooth map
\[\sigma:U \to \R \times M \qquad\text{where $U \subset D$ is a neighborhood of $P$, $\Phi \circ \sigma(D) \subset D'$ and $\sigma(P) = (T,P)$}\]
The transition map is \emph{well-defined} on $U \subseteq D$ if a smooth map $\sigma$ satisfying these properties exists. \end{definition}

We remark on several standard composition and continuity properties of transition maps.

\begin{remark}[Definedness] \label{rmk:definedness} The transition map $\on{Tr}\Gamma:D \to D'$ along $\Gamma$ is unique on any open subset $U \subseteq M$ where it is well-defined, and may be viewed as a germ near the start point $P$ in $D$.  
\end{remark}

\begin{remark}[Composition] \label{rmk:composition}The transition map $\on{Tr}\Gamma$ associated to the concatenation of a segment $\Gamma_1$ connecting $D_0$ to $D_1$ and a segment $\Gamma_2$ connecting $D_1$ to $D_2$ satisfies
\[
\on{Tr}\Gamma = \on{Tr}\Gamma_2 \circ \on{Tr}\Gamma_1 
\]
on any open set $U \subseteq D$ where the transition map of $\Gamma$ is well-defined. \end{remark}

\begin{remark}[Continuity] \label{rmk:continuity} For any local sections $D,D'$ and segment $\Gamma$ from $P$ in $D$ and $P'$ in $D'$, and any singular line field $K$ sufficiently $C^1$-close to $L$, there is an associated segment $\Gamma_K$ starting at $P$ in $D$ and ending at a point in $D'$ and the transition map
\[
\on{Tr}\Gamma_K:D \to D'
\]
is well-defined on an open set $U \subset D$ independent of $K$ and continuous in $K$ in the $C^1$-topology. \end{remark}

\begin{definition}[Poincare Return Map] \label{def:poincare_return} Fix a closed orbit $\Gamma$ of $L$ and a local section $D$ intersecting $\Gamma$ at a point $P$.  The \emph{Poincare return map}
\[\on{Ret}\Gamma: D \to D\]
is the transition map associated to $\Gamma$ as a flowline segment connecting $P$ to $P$. \end{definition}

Next we turn to the transition maps and Poincare return maps associated to characteristic foliations. Fix a contact Hamiltonian manifold
\[\Sigma \qquad\text{with contact Hamiltonian structure}\qquad \eta\]
We note that the local sections and transition maps for characteristic foliations are naturally contact manifolds and contact maps respectively \cite[Lem 2.21 and Rmk 2.23]{chaidez2026conformally}. 

\begin{lemma}[Contact Section] Any local section $D$ of the characteristic foliation $\Sigma_\eta$ is a contact manifold with contact structure $\xi = \eta \cap TD$.
\end{lemma}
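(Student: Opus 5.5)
We need to show that if $D \subset \Sigma$ is a local section of the characteristic foliation $\Sigma_\eta$, then $\xi = \eta \cap TD$ is a contact structure on $D$, i.e.\ $\lambda|_D \wedge (d\lambda|_D)^{n-1} \neq 0$ for a contact Hamiltonian form $\lambda$. The plan is to reduce this to a pointwise linear-algebra statement using the defining identity \eqref{eq:characteristic_vectorfield} for the characteristic vector field.

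Fix a volume form $\mu$ on $\Sigma$, a contact Hamiltonian form $\lambda$ defining $\eta$, and the characteristic vector field $Z$ with $\iota_Z\mu = \lambda \wedge d\lambda^{n-1}$. Since $D$ is a local section, $Z$ is nonzero and transverse to $D$ at every point of $D$. In particular, $D$ contains no singularities of $\Sigma_\eta$. At a point $p \in D$, choose a basis $v_1,\dots,v_{2n-1}$ of $T_pD$. Transversality gives that $Z, v_1, \dots, v_{2n-1}$ is a basis of $T_p\Sigma$, so
\[
(\lambda \wedge d\lambda^{n-1})(v_1,\dots,v_{2n-1}) = \mu(Z,v_1,\dots,v_{2n-1}) \neq 0.
\]
Pullback commutes with wedge and $d$, so the left-hand side equals $(\lambda|_D \wedge d(\lambda|_D)^{n-1})(v_1,\dots,v_{2n-1})$. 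Hence $\lambda|_D$ is a contact form on $D$. In particular $\lambda|_D$ is nowhere zero on $D$, so its kernel is a genuine hyperplane field, and it coincides with $\eta \cap TD$.

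It remains to check that the conclusion is independent of the choice of $\lambda$. Replacing $\lambda$ by $g\lambda$ for a nowhere-zero function $g$ gives $(g\lambda)\wedge d(g\lambda)^{n-1} = g^n \lambda \wedge d\lambda^{n-1}$, because the terms containing $dg \wedge \lambda$ are killed by wedging with $\lambda$. So the nonvanishing on $D$ is preserved, and $\xi = \eta\cap TD$ is well defined.

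The only point needing care is to confirm that $Z$ is nonvanishing on a local section, so that transversality is meaningful. This holds by definition: a local section of a singular line field is a codimension-one submanifold transverse to the nonsingular line field, which forces $Z \neq 0$ along $D$. No other step presents a real obstacle; the argument is essentially a direct computation.
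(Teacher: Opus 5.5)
Your argument is correct. Restricting $\iota_Z\mu = \lambda\wedge d\lambda^{n-1}$ to $T_pD$ and using $T_p\Sigma = \R Z_p \oplus T_pD$ shows that $\lambda|_D\wedge (d\lambda|_D)^{n-1}$ is a volume form on $D$, so $\lambda|_D$ is a contact form with kernel $\eta\cap TD$.

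The paper does not prove this lemma itself; it defers to \cite[Lem 2.21 and Rmk 2.23]{chaidez2026conformally}, and your pointwise computation is the standard direct verification. Your paragraph on rescaling $\lambda \mapsto g\lambda$ is harmless but unnecessary, because being contact is a property of the hyperplane field and the argument works verbatim for any defining form and any local volume form.
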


\begin{lemma}[Contact Transition] The transition map $\on{Tr}\Gamma$ along a segment $\Gamma$ of the characteristic foliation $\Sigma_\eta$ connecting local sections $D$ and $D'$ is a contact map on any $U \subseteq D$ where it is well-defined. 
\end{lemma}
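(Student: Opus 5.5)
The statement is local, so I would work in flow-box coordinates along $\Gamma$. Fix a characteristic vector field $Z = Z_{\lambda,\mu}$ for a contact Hamiltonian form $\lambda$. The trajectory $\Gamma$ is an embedded arc, or at least its relevant germ near $P$ is. Choose a flow box $F \cong [0,T]_t \times D$ around $\Gamma$ with $D$ transverse to $Z$ and $Z = \partial_t$ in these coordinates; this uses only the definition of the transition map via the flow $\Phi$. Arrange $D' = T \times D$. If $D'$ is only some other section, first compose with the flow-time map from $T\times D$ to $D'$; by Remark \ref{rmk:composition} it suffices to treat the two pieces separately.

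\textbf{Invariance along the flow.} By Remark \ref{lem:rmk_alt_char_field}, $\mathcal{L}_Z\lambda = f\lambda$. Integrating this along the flow gives
\[
\Phi_t^*\lambda = e^{\int_0^t f\circ\Phi_s\,ds}\,\lambda .
\]
So $\Phi_t$ preserves $\eta = \ker\lambda$ up to a positive factor. It follows that the time-$T$ map carries $\ker(\lambda|_{T D})$ to $\ker(\lambda|_{TD'})$. The same computation also shows $D$ is contact: transversality of $Z$ gives $\iota_Z(\lambda\wedge d\lambda^{n-1}) \neq 0$ on $TD$, which after restriction is exactly the condition that $\lambda\wedge d\lambda^{n-1}|_D \neq 0$. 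This recovers the Contact Section lemma.

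\textbf{Variable return times.} If $\sigma$ has a variable time component $\tau:U\to\R$, write $\on{Tr}\Gamma(x) = \Phi_{\tau(x)}(x)$. Its differential is
\[
d\on{Tr}\Gamma(v) = d\Phi_{\tau}(v) + d\tau(v)\, Z .
\]
Since $\lambda(Z)=0$, the extra term is killed by $\lambda$. Hence
\[
(\on{Tr}\Gamma)^*\lambda|_{D'} = e^{g}\lambda|_D
\]
for a smooth $g$, so $\on{Tr}\Gamma$ is a contact map wherever $\sigma$ is smooth.

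\textbf{Expected obstacle.} The main subtlety is that $Z$ may vanish at singularities of the foliation, or $\lambda$ may vanish there. A trajectory connecting two sections passes through no singular point, and $f$ is smooth. Still, I would check that $\lambda|_D$ is nonvanishing. This follows because at a zero of $\lambda$ the defining condition forces $d\lambda^n\neq 0$, which makes $Z=0$, contradicting transversality.
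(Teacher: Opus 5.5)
Your proposal is correct. The paper does not prove this lemma. It only cites \cite[Lem 2.21 and Rmk 2.23]{chaidez2026conformally}, so there is no competing argument in the paper to compare with. Your argument is the natural direct one. Integrating $\mathcal{L}_Z\lambda = f\lambda$ (Remark \ref{lem:rmk_alt_char_field}) gives $\Phi_t^*\lambda = e^{\int_0^t f\circ\Phi_s\,ds}\,\lambda$. The extra term $d\tau(v)\,Z$ from the variable flow time is annihilated by $\lambda$, since $Z$ is tangent to $\eta$. This variable-time computation already handles an arbitrary target section $D'$, and also closed orbits where $\Gamma$ is not an embedded arc. So the flow-box reduction at the start is unnecessary.

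A few small slips are worth fixing.

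\textbf{The contact-section step.} The form $\iota_Z(\lambda\wedge d\lambda^{n-1}) = \iota_Z\iota_Z\mu$ is identically zero. What transversality of $Z$ to $D$ actually gives is $(\iota_Z\mu)|_{TD} = (\lambda\wedge d\lambda^{n-1})|_{TD}\neq 0$. That is exactly the contact condition for $\lambda|_D$.

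\textbf{The final paragraph.} This also makes your last paragraph redundant, because $\lambda|_D\neq 0$ follows immediately. Its reasoning is also slightly off. $Z$ vanishes at a zero of $\lambda$ because $\iota_Z\mu = \lambda\wedge d\lambda^{n-1} = 0$ there, not because $d\lambda^n\neq 0$.

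\textbf{Local diffeomorphism.} If ``contact map'' is meant as a contact embedding, note that $(\on{Tr}\Gamma)^*\bigl((\lambda\wedge d\lambda^{n-1})|_{D'}\bigr) = e^{ng}\,(\lambda\wedge d\lambda^{n-1})|_D \neq 0$. Hence $\on{Tr}\Gamma$ is automatically a local diffeomorphism on $U$.
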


There is a key construction for modifying transition maps using plugs, which will be used frequently. Fix a local sections $D$ and $D'$ of the characteristic foliation and a segment of flowline
\[\Gamma \qquad\text{from a point $P$ in $D$ to a point $P'$ in $D'$}\]

\begin{definition}[Well-Positioned] \label{def:well_positioned} A plugging domain $U$ is \emph{well-positioned} with the trajectory $\Gamma$ if 
\[
U \cap \Gamma = [0,1] \times O \subset U_{\on{std}} \qquad\text{where $O \in D_{\on{std}}$ is the origin} 
\]
\end{definition}

\begin{remark} \label{rmk:existence_of_well_positioned} Note that, for any point $O$ along $\Gamma$ and any neighborhood $W$ of $O$ intersecting $O$ in an embedded arc, there is a plugging domain $U \subset W$ well-positioned with $\Gamma$ such that $O$ is identified with the origin in $D_{\on{in}} = D_{\on{std}}$ is the origin.
\end{remark}

Given a plugging domain $U$ that is well-positioned with $\Gamma$, we can decompose $\Gamma$ into the concatenation of the intersection $U \cap \Gamma$ connecting $D_{\on{in}}$ and $D_{\on{out}}$ with the two segments
\[
\Gamma_{\on{in}} \text{ connecting $D$ to $D_{\on{in}} \subset U$} \qquad\text{and}\qquad \Gamma_{\on{out}}\text{ connecting $D_{\on{out}}\subset U$ to $D'$}
\]
Note that the transition map along $U \cap \Gamma$ is simply the identity map $D_{\on{in}} = D_{\on{out}} = D_{\on{std}}$. By Remark \ref{rmk:composition}, we can therefore write the transition map along $\Gamma$ as the composition
\[
\on{Tr}\Gamma = \on{Tr}\Gamma_{\on{out}} \circ \on{Tr}\Gamma_{\on{in}}
\]

\begin{definition}[Insertion Along Trajectory] \label{def:insertion_along_trajectory} Fix a plugging domain $U$ well-positioned with $\Gamma$ and a sufficiently $C^0$-small contact Hamiltonian
\[H:U_{\on{std}} = [0,1] \times D_{\on{std}} \to \R \]
such that $H$ vanishes near the boundary of $U_{\on{std}}$ and such that the time 1 contactomorphism $\Phi_H$ generated by $H$ preserves the origin. There is an associated insertion
\[
\iota_{U,H}:[0,1] \times \Sigma \to C\Sigma \qquad\text{of the plug $\iota_H$ associated to $H$}
\]
will be referred to in short as the \emph{insertion of the plug associated to $H$ along $\Gamma$ at $U$}. \end{definition}

By Lemma \ref{lem:Ham_flow}, the characteristic foliation of $\eta_{U,H}$ of the deformed contact Hamiltonian structure associated to the insertion above is given, on the plugging domain $U$, by
\[
Z_H = \partial_t + V_H \qquad\text{under the identification $U \simeq U_{\on{std}} = [0,1] \times D_{\on{std}}$}
\]
In particular, there is a trajectory of the deformed characteristic foliation connecting $0 \times O$ in $D_{\on{in}}$ to $1 \times O$ in $D_{\on{out}}$ and the transfer map along this trajectory is identified with
\[\Phi_H:D_{\on{std}} \to D_{\on{std}}\]
By concatenating this trajectory with the trajectories $\Gamma_{\on{in}}$ and $\Gamma_{\on{out}}$, we acquire a continuation of the original trajectory $\Gamma$ that we denote as follows.
\[
\Gamma_{U,H} \qquad\text{connecting $D$ to $D'$}
\]
The following lemma is immediate from this discussion and the composition property of transition maps (Remark \ref{rmk:composition}).

\begin{lemma}[Transition Map Deformation] \label{lem:transition_map_deformation} Let $\iota_{U,H}$ be the insertion of a plug associated to a contact Hamiltonian $H$ on $[0,1] \times D_{\on{std}}$ along a trajectory $\Gamma$ at a well-positioned plugging domain $U$. Then
\[
\on{Tr}\Gamma_{U,H}:D \to D' \qquad\text{is given by}\qquad \on{Tr}\Gamma_{U,H} = \on{Tr}\Gamma_{\on{out}} \circ\Phi_H \circ \on{Tr}\Gamma_{\on{in}}
\]
\end{lemma}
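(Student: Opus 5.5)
The plan is to decompose the deformed trajectory $\Gamma_{U,H}$ into three consecutive segments and then apply the composition property (Remark \ref{rmk:composition}). Because $H$ vanishes near $\partial U_{\on{std}}$, the deformed structure $\eta_{U,H}$ coincides with $\eta$ outside a compact subset of the interior of $U$.

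\begin{enumerate}
\item The first segment is $\Gamma_{\on{in}}$, which lies outside the support of the deformation. It is therefore still a trajectory of the deformed characteristic foliation, and its transition map $D \to D_{\on{in}}$ is unchanged.
\item The same reasoning applies to $\Gamma_{\on{out}}$ and its transition map $D_{\on{out}} \to D'$.
\item The middle piece is analyzed using Lemma \ref{lem:Ham_flow}. Under $U \simeq [0,1]_t \times D_{\on{std}}$ (with the framing $(0,dt)$, so that the graphical deformation is $\lambda_H = -H\,dt + \alpha_{\on{std}}$), the deformed characteristic foliation on $U$ is spanned by $Z_H = \partial_t + V_H$.
\end{enumerate}

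For the middle piece, normalize the characteristic vector field so that its $\partial_t$-component is $1$. Its flow then carries $\{t=0\}$ to $\{t=1\}$ in time exactly $1$. Write $\Phi_H$ for the time-1 flow of the time-dependent field $V_H$. The time-$1$ flow of $Z_H$ from $0 \times x$ is $1 \times \Phi_H(x)$.

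This flow realizes the transition map in the sense of Definition \ref{def:transition_map}, with the constant return-time function $\sigma(x) = (1,x)$. So the transition map along the deformed middle segment, from $D_{\on{in}}$ to $D_{\on{out}}$, is $\Phi_H$ under the identifications $D_{\on{in}} = D_{\on{out}} = D_{\on{std}}$. Since $\Phi_H(O) = O$, this segment starts at $0 \times O$ and ends at $1 \times O$. It therefore matches up with the endpoint of $\Gamma_{\on{in}}$ and the starting point of $\Gamma_{\on{out}}$, and the concatenation is exactly $\Gamma_{U,H}$.

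Applying Remark \ref{rmk:composition} to the three segments gives
\[
\on{Tr}\Gamma_{U,H} = \on{Tr}\Gamma_{\on{out}} \circ \Phi_H \circ \on{Tr}\Gamma_{\on{in}}.
\]
This holds on a neighborhood of $P$ on which the right-hand side is defined, and such a neighborhood exists because $\Phi_H$ is a global diffeomorphism of $D_{\on{std}}$ fixing $O$.

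The only point that needs care is the well-definedness of the middle transition map near $O$. Trajectories starting near $0 \times O$ must not exit through the side boundary $[0,1] \times \partial D_{\on{std}}$. This follows because $V_H$ vanishes near $\partial D_{\on{std}}$, so the flow of $V_H$ preserves $D_{\on{std}}$ and the trajectories remain in $U$ until they reach $D_{\on{out}}$.
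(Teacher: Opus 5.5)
Your proposal is correct and follows the paper's argument: the paper also uses Lemma \ref{lem:Ham_flow} to identify the deformed characteristic field on the plug as $\partial_t + V_H$, reads off the middle transition map as $\Phi_H$ (which fixes $O$), and concatenates with $\Gamma_{\on{in}}$ and $\Gamma_{\on{out}}$ via Remark \ref{rmk:composition}. The paper treats the result as immediate; you additionally make explicit the constant time function, why the outer transition maps are unchanged, and why the middle map is well-defined near $O$.
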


\section{Liouville And Hyperbolic Invariant Sets} \label{sec:liouville_and_hyperbolic_invariant_sets}

In this section, we study hyperbolic invariant sets of characteristic foliations. In particular, we prove that basic sets of characteristic foliations are automatically Liouville (Theorem \ref{thm:intro_basic_sets_are_Liouville}) and then deduce the key convexity obstruction in this paper (Corollary \ref{cor:convexity_obstruction}).

\subsection{Invariant Measures And Cohomology} \label{subsec:invariant_measures} We start by reviewing the theory of invariant measures and cohomology for continuous flows and singular line fields.  Fix a continuous flow
\[
\Phi:\R \times X \to X \qquad\text{on a compact metric space}\qquad X
\]
Recall that the space of \emph{coboundaries} and the \emph{cohomology} of the flow are given respectively by
\[
B(X,\Phi) = \on{close}\big(F \circ \Phi_s - F \; : \; s \in \R \text{ and }F \in C^0(X;\R)\big) \quad \text{and}\quad H(X,\Phi) = C^0(X;\R)/B(X,\Phi)
\]
where the closure is taken within the space of continuous functions $C^0(X;\R)$. Similarly, the space of \emph{signed invariant measures} is the dual space to the space of the cohomology.
\[
H(X,\Phi)^\vee
\]
Finally, an \emph{invariant measure} is a signed invariant measure that integrate positively against every positive continuous function. We denote this subset by
\[M(X,\Phi) \subset H(X,\Phi)^\vee\]

In the case of a differentiable flow on a smooth manifold, we have the following alternative description of the space of coboundaries.

\begin{lemma} Let $X$ be a compact manifold with a $C^1$ flow $\Phi$ generated by a $C^1$ vector field $V$. Then
\[
B(X,\Phi) = \on{close}\big(VF \; : \; F \in C^1(X;\R)\big)
\]
\end{lemma}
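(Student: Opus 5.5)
We will prove the two inclusions between the closed subspaces $B(X,\Phi)$ and $B' := \on{close}\big(VF \; : \; F \in C^1(X;\R)\big)$ separately. Both are closed, so it suffices to show that each generating set lies in the other closed space.

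For $B' \subseteq B(X,\Phi)$, fix $F \in C^1(X;\R)$. Since $F$ is $C^1$ and $X$ is compact, the difference quotients
\[
\tfrac{1}{s}(F \circ \Phi_s - F) \to VF \qquad \text{uniformly on $X$ as $s \to 0$.}
\]
To see the uniformity, write $\frac{1}{s}(F\circ\Phi_s - F)(x) = \frac{1}{s}\int_0^s (VF)(\Phi_r x)\,dr$. Then use uniform continuity of $VF$ and the bound $d(\Phi_r x, x) \to 0$ uniformly in $x$ as $r \to 0$. Each difference quotient is a scalar multiple of a generator $F\circ\Phi_s - F$ of $B(X,\Phi)$; scaling $F$ by $1/s$ keeps it a generator. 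Hence $VF$ is a uniform limit of elements of $B(X,\Phi)$, and therefore lies in $B(X,\Phi)$.

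For $B(X,\Phi) \subseteq B'$, fix $F \in C^0(X;\R)$ and $s \in \R$. The key identity is that for $C^1$ functions $G$,
\[
G\circ\Phi_s - G = \int_0^s (VG)\circ\Phi_r\,dr = V\Big(\int_0^s G\circ\Phi_r\,dr\Big).
\]
The second equality is the derivative along the flow of $H := \int_0^s G\circ\Phi_r\,dr$. Indeed,
\[
H(\Phi_\tau x) = \int_\tau^{s+\tau} G(\Phi_r x)\,dr,
\]
and differentiating at $\tau = 0$ gives $VH = G\circ\Phi_s - G$. This computation only needs $G$ to be continuous: for continuous $G$, the function $H$ is differentiable along the flow with $VH = G\circ\Phi_s - G$. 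The remaining issue is that $H$ need not be $C^1$ in transverse directions.

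So approximate instead. Choose $C^1$ functions $G_k \to F$ uniformly; this is possible since $C^1$ functions are dense in $C^0$ on a compact manifold. Since $\Phi_s$ is a homeomorphism of a compact space, $G_k\circ\Phi_s - G_k \to F\circ\Phi_s - F$ uniformly. For each $k$, the function $H_k = \int_0^s G_k\circ\Phi_r\,dr$ is $C^1$, because the flow is $C^1$ and we differentiate under the integral. Moreover $VH_k = G_k\circ\Phi_s - G_k$, which lies in the generating set of $B'$. Taking limits gives $F\circ\Phi_s - F \in B'$, and closedness of $B'$ finishes the argument.

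The only mildly delicate points are the regularity of $H_k$ and the uniform convergence in the first inclusion. Both follow from compactness and the $C^1$ regularity of $\Phi$. No step is a genuine obstacle; the main care is in mollifying before integrating along the flow, rather than trying to differentiate $\int_0^s F\circ\Phi_r\,dr$ directly.
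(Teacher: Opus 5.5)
Your proof is correct and follows the same route as the paper. For one inclusion both use the difference quotients $s^{-1}(F\circ\Phi_s - F)\to VF$, and for the other both use the identity $F\circ\Phi_s - F=\int_0^s (VF)\circ\Phi_r\,dr$. You differ in two small ways. You pull $V$ outside the integral to get an exact $C^1$ primitive $H_k$, where the paper approximates by Riemann sums of the $V(F\circ\Phi_{r_i})$. You also explicitly approximate a merely continuous $F$ by $C^1$ functions, a step the paper's proof skips without comment.
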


\begin{proof} Let $B = B(X,\Phi)$ denote the space of coboundaries and $B'$ denote the closure of the space of Lie derivatives $VF$. If $G = VF$, then $G$ is given by the limit $s^{-1} \cdot (F \circ \Phi_s - F)$ as $s \to 0$ and therefore $G$ is $B$. Since $B'$ is the closure of the space of all such $G$ and $B$ is closed, this shows that $B' \subseteq B$. Conversely if $G = F \circ \Phi_s - F$ then we can write
\[
G = \int_0^s V(F \circ \Phi_r) dr
\]
This implies that $G$ is the limit of linear combinations of functions of the form $VH$, from which it follows that $G$ is in $B'$. Thus $B' \subseteq B$ and the two spaces must coincide. \end{proof}

We can associate cohomology space and a space of invariant measures to a singular line field in a natural way. Fix a closed smooth manifold
\[
X \qquad\text{with a singular line field}\qquad L
\]
Given a pair of spanning vector fields $U$ and $V$ of the singular line field $L$, there is a positive smooth function $F$ such that $V = FU$. There are canonical isomorphisms of Banach spaces
\[
B(X,U) \to B(X,V) \quad\text{and}\quad H(X,U) \to H(X,V)\qquad\text{given by}\qquad G \mapsto FG
\]
This induces a dual identification of the invariant (signed) measures denoted by
\[
H(X,U)^\vee \to H(X,V)^\vee \quad\text{restricting to}\quad M(X,U) \to M(X,V)
\]

\begin{definition}[Cohomology/Measures For Line Fields] The \emph{cohomology} of a singular line field $L$ on a closed manifold $X$ are the Banach spaces
\[
H(X,L) = \underset{Z}{\text{colim}} \; H(X,V) 
\]
where the colimit is taken over the space of spanning vector fields $V$.  We analogously define the spaces of signed invariant measures and invariant measures
\[
H(X,L)^\vee \qquad\text{and}\qquad M(X,L)
\]\end{definition}

\subsection{Divergence Cohomology Class} Next we discuss the divergence of volume form along a singular line field as a cohomology class. Recall that the divergence of a smooth vector field $V$ with respect to a volume form $\mu$ is the function
\begin{equation}
\on{div}(V,\mu) \qquad\text{defined implicitly by}\qquad d(\iota_V\mu) = \on{div}(V,\mu) \cdot \mu
\end{equation}

\begin{definition}[Divergence] The \emph{divergence class} of a singular line field $L$ on a smooth manifold $X$ is the cohomology class
\[
\on{div}(L) \in H(X,L)
\]
given by $\on{div}(V,\mu)$ in $H(X,V)$ for any spanning vector field $V$ and any smooth volume form $\mu$.
\end{definition}

\begin{lemma}[Well-Defined] The divergence class $\on{div}(L)$ is well-defined as a cohomology class.
\end{lemma}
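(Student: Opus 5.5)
We need to show that the class of $\on{div}(V,\mu)$ in $H(X,L)$ is unchanged when we change the volume form $\mu$ or the spanning vector field $V$. The identification $H(X,U)\to H(X,V)$ sends $G\mapsto FG$ when $V=FU$, so the claim has two parts. First, for fixed $V$, the class of $\on{div}(V,\mu)$ in $H(X,V)$ is independent of $\mu$. Second, for $V = FU$ with $F>0$ smooth, $F\cdot\on{div}(U,\mu) - \on{div}(FU,\mu)$ lies in $B(X,FU)$. Both reduce to direct computations with $d(\iota_V\mu)$, combined with the description of coboundaries for $C^1$ flows from the preceding lemma, namely $B(X,V) = \on{close}(VG)$.

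\emph{Step 1: change of volume form.} Any other volume form is $\mu' = e^{h}\mu$ for a smooth function $h$, or $-e^h\mu$ if it has the opposite orientation; the sign does not affect the divergence. We compute
\[
d(\iota_V e^h\mu) = e^h\, dh\wedge\iota_V\mu + e^h d(\iota_V\mu) = e^h\big(V h + \on{div}(V,\mu)\big)\mu,
\]
using $dh\wedge \iota_V\mu = (Vh)\mu$, which follows from contracting $dh\wedge\mu = 0$ with $V$. Hence $\on{div}(V,e^h\mu) = \on{div}(V,\mu) + Vh$. Since $Vh$ is a coboundary by the preceding lemma, the two divergences define the same class in $H(X,V)$.

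\emph{Step 2: change of spanning field.} Take $V = FU$ with $F>0$. Then
\[
d(\iota_{FU}\mu) = dF\wedge\iota_U\mu + F\,d(\iota_U\mu) = \big(UF + F\on{div}(U,\mu)\big)\mu,
\]
so $\on{div}(FU,\mu) = F\on{div}(U,\mu) + UF$. We need $UF \in B(X,FU)$. Write $UF = F\cdot U(\log F)$; in the identification this is the image of $U(\log F)\in B(X,U)$, which is consistent. Directly, $UF = (FU)(\log F) = V(\log F)$, which is manifestly a Lie derivative along $V$, hence a coboundary in $B(X,V)$. Therefore the image $F\on{div}(U,\mu)$ of the $U$-class agrees with $\on{div}(V,\mu)$ modulo $B(X,V)$, so the classes correspond under the canonical isomorphism and therefore define the same element of the colimit.

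\emph{Remaining checks.} We must also confirm that the map $G\mapsto FG$ carries $B(X,U)$ into $B(X,FU)$, so that the identifications are well-defined and compatible. This holds because $F\cdot UG = (FU)G$, and multiplication by the bounded function $F$ is continuous, so it preserves closures. Combining Steps 1 and 2, the class is independent of both choices. There is no serious obstacle here. The only subtle point is the bookkeeping that the colimit identifications are exactly multiplication by $F$. Note also that divergence is computed for smooth (hence $C^1$) fields, so the preceding lemma applies. If $X$ is non-orientable, $\mu$ can be replaced by a density and the same computation goes through locally.
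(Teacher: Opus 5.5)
Your proof is correct and is essentially the paper's argument. It uses the same two identities, $\on{div}(FU,\mu) = F\on{div}(U,\mu) + (FU)(\log F)$ for a change of spanning field and $\on{div}(V,e^h\mu) = \on{div}(V,\mu) + Vh$ for a change of volume form, and in each the correction term is a Lie derivative along the relevant field and hence a coboundary. Your extra check that $G \mapsto FG$ sends $B(X,U)$ into $B(X,FU)$ is only asserted in the paper when it sets up the identifications, so including it is a reasonable addition.
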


\begin{proof} Given a different spanning vector field $V$ and a smooth volume form $\mu$. Given a positive smooth function $F$, we can compute that
\[
\on{div}(FV,\mu) = F \cdot \on{div}(V,\mu) + VF = F \cdot \on{div}(V,\mu) + FV(\on{log}(F)) 
\]
This formula states that the divergence $\on{div}(FV,\mu)$ is identified with $\on{div}(V,\mu)$ under the identification of cohomology classes between $V$ and $FV$. Similarly, we can compute that
\[
\on{div}(V,F\mu) = \on{div}(V,\mu) + V(\on{log}(F))
\]
This shows that the cocycle is independent of the choice of $\mu$. \end{proof}

\begin{example}[Closed Orbit] \label{ex:closed_orbit} Suppose that $\Gamma$ is a closed orbit of a singular line field $L$ and consider the natural invariant Lebesgue probability measure
\[
\mu_\Gamma \in M(X,L)
\]
supported on the closed orbit $\Gamma$. The divergence evaluates against this measure as
\begin{equation}
\int_X \on{div}(L) \cdot \mu_\Gamma = \on{log}|\on{det}(T_O\on{Ret}_\Gamma)|
\end{equation}
Here $\on{Ret}_\Gamma:D \to D$ is the Poincare return map of $\Gamma$ with respect to any local section $D$ along $\Gamma$ and $O \in D$ is the fixed point of $\on{Ret}_\Gamma$ corresponding to the orbit $\Gamma$.
\end{example}

The divergence cocycle can be used to detect the existence of a spanning vector field that has positive (or negative) divergence along a given invariant set. More precisely, we have the following divergence croterion for convexity, which may be viewed as an analogue of the McDuff-Sullivan criterion for Reeb vector fields \cite{duff1987applications}.

\begin{proposition}[Div Criterion] \label{prop:sullivan} Let $\Lambda$ be a compact invariant set of a singular line field $L$ on a closed manifold $X$ and  of $L$. Then there is a spanning vector field $V$ with $\on{div}(V,\mu) > 0$ along $\Lambda$ if and only if
\[
\int_X \on{div}(L) \cdot \nu > 0 \qquad\text{for every invariant measure $\nu \in M(X,L)$} 
\]
Similarly, there is a spanning vector field $V$ with $\on{div}(V,\mu) < 0$ along $\Lambda$ if and only if
\[
\int_X \on{div}(L) \cdot \nu < 0 \qquad\text{for every invariant measure $\nu \in M(X,L)$} 
\]\end{proposition}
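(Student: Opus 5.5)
We read the quantifier ``for every invariant measure $\nu$'' as ranging over invariant probability measures supported on $\Lambda$; otherwise the statement is false. The proof follows the McDuff--Sullivan scheme: an averaging argument, in which a coboundary correction upgrades positivity on average to pointwise positivity. We fix once and for all a smooth spanning vector field $V_0$ of $L$ with flow $\Phi$, a smooth volume form $\mu$, and $f = \on{div}(V_0,\mu)$. We treat only the positive case; the negative case follows by applying the same argument to $-f$, equivalently by reversing signs throughout.

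\emph{Easy direction.} Suppose $V = FV_0$ with $F>0$ and $\on{div}(V,\mu) > 0$ on $\Lambda$. The Well-Defined lemma shows that $\on{div}(V,\mu)$ represents $\on{div}(L)$ under the identification $H(X,V_0)\cong H(X,V)$. Hence pairing with an invariant measure $\nu$ supported on $\Lambda$ gives $\int \on{div}(V,\mu)\,\nu$. This integral is positive, because the integrand is a continuous function that is positive on the compact set $\Lambda$ (so bounded below by a positive constant there) and $\nu$ is a nonzero positive measure on $\Lambda$.

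\emph{Hard direction, step 1 (compactness).} The set of invariant probability measures of $\Phi|_\Lambda$ is nonempty and weak-$*$ compact, and $\nu\mapsto\int f\,d\nu$ is continuous on it. So the hypothesis gives a constant $c>0$ with $\int f\,d\nu\ge c$ for all such $\nu$. Define the ergodic averages
\[
f_T = \frac1T\int_0^T f\circ\Phi_s\,ds .
\]
The key claim is that $f_T > c/2$ on all of $\Lambda$ for $T$ sufficiently large. Suppose not. Then there are $T_k\to\infty$ and points $x_k\in\Lambda$ with $f_{T_k}(x_k)\le c/2$. The empirical measures $\frac1{T_k}\int_0^{T_k}\delta_{\Phi_s x_k}\,ds$ are supported on the invariant set $\Lambda$. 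Pass to a weak-$*$ convergent subsequence. The limit is $\Phi$-invariant, because the measures are almost invariant: they differ from their pushforwards by $\Phi_t$ by $O(t/T_k)$ in total variation. The limit also satisfies $\int f\le c/2$, which contradicts the choice of $c$. I expect this Krylov--Bogoliubov style step to be the main point needing care, though it is standard.

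\emph{Step 2 (coboundary correction).} For the $T$ from step 1, set
\[
h(x) = \frac1T\int_0^T\!\!\int_0^s f(\Phi_r x)\,dr\,ds .
\]
This is smooth on all of $X$, since $\Phi$ and $f$ are smooth. Since $V_0\big(\int_0^s f\circ\Phi_r\,dr\big) = f\circ\Phi_s - f$, we get $V_0 h = f_T - f$. Now take $F = e^{h}$ and $V = FV_0$. The formula from the Well-Defined lemma gives
\[
\on{div}(V,\mu) = F\big(f + V_0 h\big) = F f_T ,
\]
which is positive on $\Lambda$ by step 1. This produces the required spanning vector field. Note that the construction is global on $X$, so no extension argument from $\Lambda$ is needed.
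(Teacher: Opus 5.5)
Your proof is correct, but it takes a different route from the paper for the converse direction. Your reading of the quantifier is also the one the paper uses: its proof works with $M(\Lambda,\Phi)$, and the application in Theorem~\ref{thm:hyperbolic_implies_Liouville} only quantifies over measures supported on $\Lambda$. You are right that the literal ``only if'' fails otherwise. Your easy direction is the same as the paper's.

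For the converse, the paper argues by duality. If no coboundary $H$ on $\Lambda$ makes $\on{div}(V,\mu)+H$ positive, then Hahn--Banach separates the affine space $\on{div}(V,\mu)+B(\Lambda,\Phi)$ from the open cone of positive functions. This produces a nonzero positive functional that annihilates coboundaries, i.e.\ an invariant measure with non-positive pairing, contradicting the hypothesis. The paper then extends the resulting coboundary from $\Lambda$ to $X$ and uses density to replace it by a smooth one of the form $U(\log F)$.

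You instead build the correcting coboundary explicitly. Weak-$*$ compactness together with the Krylov--Bogoliubov construction upgrades the uniform bound $\int f\,d\nu \ge c$ to pointwise positivity of the averages $f_T$ on $\Lambda$. The smooth function $h$ with $V_0 h = f_T - f$ then gives the reparametrization $e^{h}V_0$ directly.

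What your route buys:
\begin{itemize}
\item The coboundary is smooth and defined on all of $X$ from the start. So the extension step and the approximation step disappear. The paper treats both briefly, and they need an openness-of-positivity argument because Hahn--Banach only yields an element of the closed space $B(\Lambda,\Phi)$.
\item You get quantitative information: an explicit averaging time $T$ and the margin $c/2$.
\end{itemize}

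What the paper's route buys: it is the classical Sullivan-style cone-duality template. That template is more abstract and carries over to settings where no convenient time-averaging is available. Here, though, your argument is shorter to make fully rigorous.
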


\begin{proof} Fix a spanning vector field $V$ generating a flow $\Phi$ on $X$. The set of invariant measures in $M(X,V)$ with support in $\Lambda$ is equivalent to the set of invariant measures $M(\Lambda,\Phi)$ of $\Phi|_\Lambda$. The existence of a spanning vector field $V$ with $\on{div}(V,\mu) > 0$ along $\Lambda$ evidently implies that
\[
\int_X \on{div}(L) \cdot \nu = \int_X \on{div}(V,\mu) \cdot \nu > 0 \qquad\text{for every invariant measure $\nu \in M(X,L) = M(X,V)$} 
\]
Conversely, suppose that the divergence $\on{div}(L) = \on{div}(V,\mu)$ pairs positively with any invariant measure $\nu$ in $M(\Lambda,\Phi)$. Then we claim that there is a coboundary
\[
H:\Lambda \to \R \qquad\text{such that}\qquad \on{div}(V,\mu) + H > 0
\]
Indeed, suppose otherwise. Then by the Hahn-Banach separation theorem, there is a signed invariant measure $\nu \in H(\Lambda,\Phi)^\vee$ in the dual of $H(\Lambda,\Phi)$ such that
\begin{equation} \label{eq:inequality_Hahn_Banach}
\int_\Lambda \on{div}(V,\mu) \cdot \nu \le 0 < \Big(\int_\Lambda F\nu \; : \; F \in C^0(\Lambda,\R_+)\Big) 
\end{equation}
This inequality implies that $\nu$ is a measure since it is posiitve on any positive function. Thus (\ref{eq:inequality_Hahn_Banach}) contradicts the hypothesis on the divergence. We can extend the coboundary $H$ on $\Lambda$ to a coboundary on $X$. Moreover, by the density of the functions $ZG$ in the space of boundaries, we may assume that $H = -U(\on{log}(F))$ where $F$ is a positive smooth function on $X$. Then we have
\[
F \cdot \on{div}(V,\mu) = \on{div}(U,\mu) - U(\on{log}(F)) > 0
\]
where $V$ is a spanning vector field defined by $FV = U$. An analogous proof applies for the negative divergence case. \end{proof}

\subsection{Liouville Criterion Via Divergence} \label{subsec:liouville_subsets} 

The following result (cf. \cite[Lem 2.36]{chaidez2026conformally}) characterizes Liouville subsets using the divergence.

\begin{lemma}[Divergence Criterion] \label{lem:div_criterion} Fix a subset $\Lambda \subset \Sigma$ of a contact Hamiltonian manifold and a volume form $\mu$. Then $\Lambda$ is positive Liouville if and only if there is a characteristic vector field $Z$ such that
\[
\on{div}(Z,\mu) > 0 \qquad\text{along $\Lambda$}
\]
Similarly, $\Lambda$ is negative Liouville if and only if there is a characteristic vector field with negative divergence.\end{lemma}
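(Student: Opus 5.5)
The plan is to relate the Liouville condition to the divergence of a characteristic vector field through a direct computation with the defining formula (\ref{eq:characteristic_vectorfield}). Fix a contact Hamiltonian form $\lambda$ and let $Z$ be the characteristic vector field with $\iota_Z\mu = \lambda \wedge d\lambda^{n-1}$. Then, by the definition of divergence,
\[
\on{div}(Z,\mu)\cdot\mu = d(\iota_Z\mu) = d(\lambda\wedge d\lambda^{n-1}) = d\lambda^n .
\]
Thus $\on{div}(Z,\mu) > 0$ at a point exactly when $d\lambda^n$ is a positive multiple of $\mu$ there, i.e. when $\lambda$ is a Liouville form near that point inducing the orientation given by $\mu$. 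Similarly, $\on{div}(Z,\mu) < 0$ exactly when $d\lambda^n$ is a negative multiple of $\mu$. The whole proof rests on this identity.

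For the forward direction, suppose $\Lambda$ is positive Liouville. Choose a contact Hamiltonian form $\lambda$ that is Liouville on a neighborhood $U$ of $\Lambda$ and induces the orientation of $\Sigma$. Then $d\lambda^n = f\mu$ with $f>0$ on $U$. The characteristic vector field $Z$ built from $(\lambda,\mu)$ therefore has $\on{div}(Z,\mu) = f > 0$ along $\Lambda$.

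For the converse, suppose some characteristic vector field $Z'$ has $\on{div}(Z',\mu)>0$ along $\Lambda$. Fix any contact Hamiltonian form $\lambda$ with associated characteristic field $Z$. Since both span the characteristic foliation, $Z' = gZ$ for a nowhere-zero smooth $g$; after possibly replacing $\lambda$ by $-\lambda$, which rescales $Z$ by $(-1)^n$, I assume $g>0$. The key step is to realize $Z'$ itself as the characteristic field of a rescaled form $\lambda' = h\lambda$ with $h>0$. Since
\[
\lambda'\wedge d\lambda'^{n-1} = h^n\,\lambda\wedge d\lambda^{n-1},
\]
the choice $h = g^{1/n}$ gives $\iota_{Z'}\mu = \lambda'\wedge d\lambda'^{n-1}$. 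The identity above then yields $d\lambda'^n = \on{div}(Z',\mu)\,\mu > 0$ near $\Lambda$, by openness. So $\lambda'$ is a positive Liouville form on a neighborhood of $\Lambda$. The negative case is identical with the inequalities reversed.

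The main obstacle is the sign bookkeeping in the rescaling step. When $g<0$ and $n$ is even, replacing $\lambda$ by $-\lambda$ does not flip the sign of $Z$. In that case I would instead use the well-definedness of the divergence class. Since $\on{div}(-Z',\mu) = -\on{div}(Z',\mu)$, a characteristic field with negative divergence spans the same line field as one with positive divergence only up to a sign. So I would check that the statement is really about the unoriented singular line field and track orientations consistently, or, if needed, restrict to characteristic vector fields defined via (\ref{eq:characteristic_vectorfield}) for some form $\lambda$. With that convention, the rescaling $h = g^{1/n}$ with $g>0$ suffices.
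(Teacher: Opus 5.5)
Your argument is correct. The paper gives no proof of its own here and instead cites \cite[Lem 2.36]{chaidez2026conformally}, and your route (the identity $\on{div}(Z,\mu)\,\mu = d(\lambda\wedge d\lambda^{n-1}) = d\lambda^n$, plus rescaling $\lambda$ by $g^{1/n}$ so that a given characteristic field becomes the one attached to $(\lambda',\mu)$) is the natural direct proof of both directions.

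Your closing worry should be settled by your second option, not the unoriented reading. Under the unoriented reading the criterion could not tell positive from negative, since $\on{div}(-Z,\mu) = -\on{div}(Z,\mu)$, which contradicts Lemma \ref{lem:sign_exclusive}. Once characteristic fields are built from volume forms $k\mu$ with $k>0$, one has $Z' = (h^n/k)Z$ for $\lambda'' = h\lambda$. Then $g<0$ forces $n$ odd, which is exactly the case your substitution $\lambda\mapsto-\lambda$ already covers, so the case $g<0$ with $n$ even never arises.
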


\noindent In general, it is possible for a subset of a contact Hamiltonian manifold can be both positive and negative Liouville. In contrast, we have the following lemma.

\begin{lemma}[Sign Exclusivity] \label{lem:sign_exclusive} Let $\Lambda \subset \Sigma$ be a compact invariant set the characteristic foliation of a contact Hamiltonian manifold $(\Sigma,\eta)$ that is Liouville. Then $\Lambda$ is not both positive and negative.
\end{lemma}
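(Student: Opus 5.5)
Suppose for contradiction that $\Lambda$ is both positive and negative Liouville. By the Divergence Criterion (Lemma \ref{lem:div_criterion}) we then get two characteristic vector fields $Z_+$ and $Z_-$ and a volume form $\mu$ with
\[
\on{div}(Z_+,\mu) > 0 \qquad\text{and}\qquad \on{div}(Z_-,\mu) < 0 \qquad\text{along } \Lambda.
\]
The plan is to play these two sign conditions against each other using invariant measures, via the Div Criterion (Proposition \ref{prop:sullivan}).

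First I would choose an invariant probability measure $\nu$ supported on $\Lambda$. This exists because $\Lambda$ is compact and invariant, so Krylov--Bogolyubov applies to the flow of any spanning vector field restricted to $\Lambda$. Then I would evaluate the divergence class against $\nu$. For this I need $Z_+$ and $Z_-$ to span the same line field on $\Lambda$ with the same orientation, say $Z_- = FZ_+$ with $F > 0$ near $\Lambda$. Under that assumption, the identification of cohomology in the Well-Defined lemma shows that $\on{div}(Z_+,\mu)$ and $\on{div}(Z_-,\mu)$ represent the same class $\on{div}(L)$. Pairing with the corresponding invariant measure would give a quantity that is both $>0$ and $<0$, which is a contradiction.

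The main obstacle is the sign of $F$. A characteristic vector field is defined by $\iota_Z\mu = \lambda\wedge d\lambda^{n-1}$, and replacing $\lambda$ by $g\lambda$ multiplies $\lambda\wedge d\lambda^{n-1}$ by $g^n$ off $\lambda$. For $n$ even this is positive, but for $n$ odd, reversing the sign of $\lambda$ reverses $Z$. This is exactly the situation where positive and negative Liouville forms are related, since flipping the sign of a Liouville form flips the orientation it induces when $n$ is odd.

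If $F<0$, the divergence formula gives $\on{div}(-V,\mu) = -\on{div}(V,\mu)$, so the two conditions only say $\on{div}(V,\mu)>0$ and $\on{div}(V,\mu)>0$ for $V = Z_+$ and $Z_- = -|F|V$. That is no contradiction. So the argument needs a separate orientation check. I would handle this by working directly with Liouville forms rather than the abstract criterion. A positive Liouville form $\lambda_+$ has $d\lambda_+^n>0$ on $U$, and a negative one $\lambda_- = g\lambda_+$ near $\Lambda$ has $d\lambda_-^n<0$.

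Take the Liouville vector fields $Z_\pm$ defined by $\iota_{Z_\pm}d\lambda_\pm = \lambda_\pm$. With respect to the volume forms $d\lambda_\pm^n$ these have divergence $n$, and they span the characteristic foliation. Using the fixed volume form $\mu = d\lambda_+^n$, I would compute that $Z_- = hZ_+$ for some nonvanishing $h$, and track the sign of $h$ through $\lambda_-\wedge d\lambda_-^{n-1} = g^n\lambda_+\wedge d\lambda_+^{n-1}$. One then gets
\[
\on{div}(Z_-,\mu) = n + Z_-\log|d\lambda_-^n/\mu|.
\]
Integrating this against an invariant measure for $Z_-$ gives $n$ plus a coboundary, hence $n$. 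Transporting to $Z_+$ gives a pairing of sign $\on{sgn}(h)\cdot n$. Comparing with $\int \on{div}(Z_+,\mu)\,\nu = n$ forces $\on{sgn}(h)=+1$.

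The remaining step, and the place I expect the real work, is to show that the orientation reversal of $d\lambda^n$ forces $h<0$ precisely when the measure pairing forces $h>0$. I would check this by a careful sign computation of $g^n$ against $g^n/|g|^n$, using $d\lambda_-^n = g^n d\lambda_+^n$ on $\Lambda$, where $\lambda_-\wedge d\lambda_-^{n-1}$ and $d\lambda_-^n$ pick up the same factor $g^n$.
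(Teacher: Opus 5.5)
Your first paragraph is the paper's argument. If $Z_- = FZ_+$ with $F>0$ and $\nu$ is a $Z_+$-invariant probability measure on $\Lambda$, then $\int \on{div}(Z_-,\mu)\,\nu = \int (F\on{div}(Z_+,\mu) + Z_+F)\,\nu = \int F\on{div}(Z_+,\mu)\,\nu$, and this is both negative and positive. You are right that the argument needs $F>0$, and that the literal definitions (forms up to nowhere-zero functions) do not guarantee it.

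Your repair via Liouville fields cannot be completed, because the only case it is meant to handle ($n$ odd, $g<0$) is one where the statement is false.
\begin{itemize}
\item Write $\lambda_- = g\lambda_+$. Then $\lambda_-\wedge d\lambda_-^{n-1} = g^n\lambda_+\wedge d\lambda_+^{n-1}$, so the characteristic fields satisfy $Z_- = g^nZ_+$, i.e.\ $F = g^n$.
\item For the Liouville fields, the identity $\iota_V d\lambda^n = n\lambda\wedge d\lambda^{n-1}$ gives $h = g^nf_+/f_-$, where $d\lambda_\pm^n = f_\pm\mu$. Since $f_+>0>f_-$, this means $\on{sgn}(h) = -\on{sgn}(g^n)$.
\item So your ``remaining step'' $h<0$ holds exactly when $g^n>0$. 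That is exactly when $F>0$, in which case your first paragraph already finishes. The detour never covers a new case.
\item When $n$ is odd and $g<0$ is allowed, take $g=-1$. Then $-\lambda_+$ is a Liouville form with $d(-\lambda_+)^n = -d\lambda_+^n$. Every positive Liouville set is therefore also negative, and no computation can produce a contradiction.
\item Separately, $d\lambda_-^n = g^n d\lambda_+^n$ is false for non-constant $g$. There is an extra term $ng^{n-1}\,dg\wedge\lambda_+\wedge d\lambda_+^{n-1}$.
\end{itemize}

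What is missing is a convention, not a computation. Contact Hamiltonian forms must be taken up to \emph{positive} functions, i.e.\ $\eta$ must be co-oriented, as it is for $\lambda = \alpha|_\Sigma$ with $\alpha$ a contact form. The paper uses this tacitly. It is also what makes the characteristic foliation an oriented line field, with spanning fields differing by positive functions as in the introduction and Section 3, so that indices and signs are meaningful. Under this convention $g>0$ and $F = g^n>0$, and your first paragraph is a complete proof, the same as the paper's. Without it, and with $n$ odd, the lemma fails.
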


\begin{proof} Suppose that this were the case. Choose a characteristic vector field $Z$, a volume form $\mu$ and a positive smooth function $F$ such that
\[
\on{div}(Z,\mu) > 0 \qquad\text{and}\qquad \on{div}(Z,F\mu) = F \cdot \on{div}(Z,\mu) + ZF < 0
\]
Take any invariant probability measure $\nu$ supported on the invariant set $\Lambda$. Then we have
\[
\int_Y (F \cdot \on{div}(Z,\mu) \cdot \nu  = \int_Y (F \cdot \on{div}(Z,\mu) + ZF) \cdot \nu = \int_Y \on{div}(Z,F\mu) \cdot \nu 
\]
The left hand side is positive and the right hand side is negative. This is a contradiction.
\end{proof}

\subsection{Hyperbolic Basic Sets Are Liouville} We now prove that hyperbolic basic sets are automatically Liouville, generalizing the result of Honda-Huang \cite{hh2019} for hyperbolic fixed points and Breen for hyperbolic periodic orbits \cite{b2021}.  We recall the basic definition of hyperbolicity.

\begin{definition}[Hyperbolic Set] A compact invariant set $\Lambda$ of a flow $\Phi$ on a manifold $X$ is \emph{hyperbolic} if there are $\Phi$-invariant, continuous sub-bundles of the tangent bundle $TX$ called
\[
\text{the \emph{stable bundle} $E^s(\Phi,\Lambda)$} \qquad\text{and} \qquad \text{the \emph{unstable bundle} $E^u(\Phi,\Lambda)$} 
\]
such that the tangent bundle of the manifold $X$ splits as
\[
TX|_\Lambda =  \on{span}(Z) \oplus E^s(\Phi,\Lambda) \oplus E^u(\Phi,\Lambda)
\]
and such that there exist $A,B > 1$ such that,  for any $U \in E^u(\Phi,\Lambda)$, $V \in E^s(\Phi,\Lambda)$ and $s \in \R$
\begin{equation} \label{eq:hyperbolic_constants}
|T\Phi_s(U)| \ge \exp(As + B) \cdot |U| \qquad\text{and}\qquad |T\Phi_s(V)| \le \exp(-As + B) \cdot |V|
\end{equation}
 \end{definition}

\begin{definition}[Index] The \emph{index} $\on{ind}(\Lambda) $ of a hyperbolic invariant set $\Lambda$ of a flow $\Phi$ is the rank of the stable bundle $E^s(\Phi,\Lambda)$.
\end{definition}

\begin{definition}[Basic] A hyperbolic invariant set $\Lambda$ is \emph{basic} if it is transitive and locally maximal.
\end{definition}

We recall a few basic results about hyperbolic invariant sets that we will use later. First, we have the following reparametrization property (cf. \cite[Thm 5.1.11]{fh2019}).

\begin{lemma}[Reparametrization] \label{lem:reparametrization} Let $\Lambda$ be a hyperbolic invariant set of a flow $\Phi$ be a flow on a manifold $X$. Then $\Lambda$ is a hyperbolic invariant set of any smooth time reparametrization $\Psi$ of $\Phi$.
\end{lemma}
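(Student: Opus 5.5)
The plan is to write the reparametrized flow explicitly as a time change and to build its hyperbolic splitting by correcting the stable and unstable bundles of $\Phi$ in the flow direction. Let $\Phi$ be generated by $V$, and let $\Psi$ be generated by $W = fV$, where $f$ is a smooth positive function. On the compact set $\Lambda$ we have $0 < c \le f \le C$. We write $\Psi_t(x) = \Phi_{\tau(t,x)}(x)$, where $\tau$ is defined implicitly by
\[
\int_0^{\tau(t,x)} \frac{ds}{f(\Phi_s(x))} = t ,
\]
so that $ct \le \tau(t,x) \le Ct$ for $t \ge 0$. Differentiating this identity in $x$ gives, for $v \in T_xX$,
\[
T\Psi_t(v) = T\Phi_{\tau}(v) + d\tau_t(v)\, V(\Phi_\tau x), \qquad d\tau_t(v) = -f(\Phi_\tau x)\int_0^{\tau} d(1/f)\big(T\Phi_s v\big)\, ds .
\]
The bundles $E^s(\Phi,\Lambda)$ and $E^u(\Phi,\Lambda)$ are generally not $T\Psi$-invariant, because of the $V$-component in this formula. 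The fix is to replace them by graphs of linear maps into $\on{span}(V)$.

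\textbf{Stable bundle.} For $v \in E^s(\Phi,\Lambda)$ at $x$, set
\[
\sigma^s(v) = -f(x)\int_0^\infty d(1/f)(T\Phi_s v)\, ds ,
\]
which converges absolutely because $|T\Phi_s v| \le e^{-As+B}|v|$ and $d(1/f)$ is bounded on a neighbourhood of $\Lambda$. Define
\[
E^s(\Psi,\Lambda) = \{\, v + \sigma^s(v)V : v \in E^s(\Phi,\Lambda) \,\}.
\]
Since $T\Psi_t V = (f\circ\Phi_\tau / f)\, V$, a direct computation gives, for $w = v + \sigma^s(v)V$,
\[
T\Psi_t w = T\Phi_{\tau}v + c_t\, V(\Phi_\tau x), \qquad c_t = -f(\Phi_\tau x)\int_\tau^\infty d(1/f)(T\Phi_s v)\, ds .
\]
Two consequences follow.
\begin{enumerate}
\item The image $T\Psi_t w$ has the form $v' + \sigma^s(v')V$ with $v' = T\Phi_\tau v$, using the cocycle property of the integral. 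Hence $E^s(\Psi,\Lambda)$ is $T\Psi$-invariant.
\item Both terms of $T\Psi_t w$ are bounded by a constant times $e^{-A\tau}|v|$. Since $\tau \ge ct$, this gives $|T\Psi_t w| \le e^{-Act + B'}|w|$, using that $|w|$ and $|v|$ are comparable because $\sigma^s$ is uniformly bounded.
\end{enumerate}
Negative times are handled by the same formula with $\tau(t,x) \le ct < 0$, which gives the reversed inequality in \eqref{eq:hyperbolic_constants} with constant $Ac$.

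\textbf{Unstable bundle.} The construction is symmetric, using
\[
\sigma^u(v) = f(x)\int_{-\infty}^0 d(1/f)(T\Phi_s v)\, ds \qquad\text{for } v \in E^u(\Phi,\Lambda),
\]
which converges because $T\Phi_s v$ decays exponentially as $s \to -\infty$.

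\textbf{Completing the argument.} Because the new bundles are graphs over $E^s(\Phi,\Lambda)$ and $E^u(\Phi,\Lambda)$ in the direction $\on{span}(V) = \on{span}(W)$, we still have the splitting $TX|_\Lambda = \on{span}(W) \oplus E^s(\Psi,\Lambda) \oplus E^u(\Psi,\Lambda)$. The ranks are unchanged, so the index is also preserved. Continuity of $\sigma^s$ and $\sigma^u$ follows from uniform convergence of the defining integrals together with continuity of the original bundles.

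\textbf{Main obstacle.} The delicate step is checking invariance of the graph bundles: one has to verify carefully, via the cocycle identity for $\tau$ and the transformation $T\Psi_t V = (f\circ\Phi_\tau/f)V$, that the $V$-coefficient of $T\Psi_t w$ is exactly $\sigma^s(T\Phi_\tau v)$. An alternative route is to pass to the normal bundle $TX|_\Lambda/\on{span}(V)$, where the induced cocycles of $\Psi$ and $\Phi$ coincide up to the time change. There hyperbolicity transfers immediately, and the splitting can be lifted using a standard cone-field argument.
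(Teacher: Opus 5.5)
The paper does not prove this lemma; it only cites Fisher--Hasselblatt. Your direct construction, which corrects $E^s(\Phi,\Lambda)$ and $E^u(\Phi,\Lambda)$ by graphs into $\mathrm{span}(V)$, is a standard way to do it, and your formulas for $\tau$, $d\tau_t$ and $T\Psi_t V$ are correct.

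\textbf{The error.} The graph maps $\sigma^s$ and $\sigma^u$ you wrote down have the wrong sign. As a result, the ``direct computation'' of $c_t$, the step you yourself single out, is false as stated. For $w = v + \sigma^s(v)V$, your own formulas give the $V$-coefficient of $T\Psi_t w$ as
\[
d\tau_t(v) + \sigma^s(v)\,\frac{f(\Phi_\tau x)}{f(x)} = f(\Phi_\tau x)\Bigl(\frac{\sigma^s(v)}{f(x)} - \int_0^{\tau} d(1/f)(T\Phi_s v)\,ds\Bigr).
\]
With your $\sigma^s$ this equals $-f(\Phi_\tau x)\bigl(\int_0^\infty + \int_0^\tau\bigr) d(1/f)(T\Phi_s v)\,ds$. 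That is not $\sigma^s(T\Phi_\tau v)$, and it does not tend to zero. So the bundle you define is neither invariant nor contracted.

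\textbf{The fix.} Requiring this coefficient to tend to $0$ forces
\[
\sigma^s(v) = f(x)\int_0^\infty d(1/f)(T\Phi_s v)\,ds, \qquad \sigma^u(v) = -f(x)\int_{-\infty}^0 d(1/f)(T\Phi_s v)\,ds.
\]
With these choices the stable coefficient becomes $f(\Phi_\tau x)\int_\tau^\infty d(1/f)(T\Phi_s v)\,ds = \sigma^s(T\Phi_\tau v)$. The unstable one becomes $-f(\Phi_\tau x)\int_{-\infty}^\tau d(1/f)(T\Phi_s v)\,ds = \sigma^u(T\Phi_\tau v)$. After this change, your invariance, decay, continuity and splitting arguments go through unchanged.

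\textbf{A sanity check.} Take $V = \partial_\theta - y\,\partial_y$ on $S^1 \times \R$, with $\Lambda = \{y = 0\}$ and $1/f = 1 + ay$ near $\Lambda$. The strong stable leaves of the time change are the lines $\{\theta - ay = \mathrm{const}\}$, tangent to $\partial_y + aV$. Your formula instead gives $\partial_y - aV$, whose image under $T\Psi_t$ tends to $-2a\,\partial_\theta$.

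\textbf{Two smaller points.} First, the decay estimate needs $|v| \le K|w|$. That inequality comes from the uniform transversality of $E^s(\Phi,\Lambda)$ and $\mathrm{span}(V)$ over the compact set $\Lambda$, i.e.\ boundedness of the projection along $\mathrm{span}(V)$. Boundedness of $\sigma^s$ only gives the reverse inequality $|w| \le K|v|$.

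Second, the quotient-bundle route you mention is the other common proof. On $TX|_\Lambda/\mathrm{span}(V)$, the cocycle of $\Psi$ at time $t$ is exactly that of $\Phi$ at time $\tau(t,x)$, so hyperbolicity of the normal cocycle transfers at once with rate $Ac$. The cost is that lifting it back to an invariant splitting of $TX|_\Lambda$ needs an invariant-graph argument of the same kind as yours. So your explicit, correctly signed formulas remain useful.
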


\noindent In particular, Lemma \ref{lem:reparametrization} implies that hyperbolicity is a property of invariant sets of singular line fields. Second, we have the following orbit density theorem.

\begin{theorem}[Orbit Density] \label{thm:orbit_density} Let $\Lambda$ be a hyperbolic basic set of a flow $\Phi$. Then the orbit measures
\[
\mu_\gamma \qquad\text{where $\gamma$ is a periodic orbit in $\Lambda$}
\]
are dense in the set of all probability measures supported on $\mu$. \end{theorem}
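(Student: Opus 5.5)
The plan is to prove this classical fact (a flow version of Sigmund's theorem) using only the Anosov closing lemma and the shadowing lemma for hyperbolic flows, together with local maximality and transitivity of $\Lambda$. Since $\Lambda$ is compact, the space $M(\Lambda,\Phi)$ of invariant probability measures supported on $\Lambda$ is weak-$*$ compact and convex. I will show density in three reductions.

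\emph{Reduction to ergodic combinations.} By the ergodic decomposition and Krein--Milman, finite rational convex combinations $\sum_i p_i \nu_i$ of ergodic measures $\nu_i$ are dense in $M(\Lambda,\Phi)$. Fix such a combination, a finite set of test functions $F_1,\dots,F_m \in C^0(\Lambda;\R)$, and $\epsilon>0$.

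\emph{Long orbit segments.} By the Birkhoff ergodic theorem, for each $i$ there is a $\nu_i$-generic point $x_i\in\Lambda$ and a time $T_0$ such that for all $T\ge T_0$ the empirical measure $\frac1T\int_0^T \delta_{\Phi_t(x_i)}\,dt$ is $\epsilon$-close to $\nu_i$ on the $F_k$. By transitivity of $\Lambda$ there is a dense orbit. Hence for each ordered pair $(i,j)$ there is an orbit segment in $\Lambda$ of length at most some bound $S$, depending only on $\epsilon$ and the points $x_i$, that starts $\delta$-close to $\Phi_{T_i}(x_i)$ and ends $\delta$-close to $x_j$. Here $\delta$ is the shadowing constant and $T_i$ are the chosen times. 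The issue is that the endpoints $\Phi_{T_i}(x_i)$ vary with $T_i$. To handle this, I will choose the $T_i$ along a subsequence where $\Phi_{T_i}(x_i)$ returns near $x_i$, using Poincar\'e recurrence for the generic point. Then only the finitely many connections between the points $x_i$ are needed. I then form a periodic $\delta$-pseudo-orbit by following $x_1$ for time $T_1\approx p_1 N$, connecting, following $x_2$ for time $T_2\approx p_2N$, and so on, before closing back to $x_1$.

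\emph{Shadowing and conclusion.} The shadowing lemma for hyperbolic flows, with time reparametrization close to the identity, produces a genuine periodic orbit $\gamma$ that $\eta$-shadows this pseudo-orbit. Local maximality forces $\gamma\subset\Lambda$. The connecting pieces have total length at most $kS$, which is a vanishing fraction as $N\to\infty$. The reparametrization distortion is small and uniform. The $F_k$ are uniformly continuous. Together these give that $\mu_\gamma$ is within $O(\epsilon)$ of $\sum p_i\nu_i$ on each $F_k$.

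I expect the main obstacle to be the bookkeeping in the shadowing step for flows. One must make sure the reparametrization in the flow shadowing lemma changes the time-proportions $p_i$ only by a factor $1+O(\eta)$. One must also handle the lack of topological mixing: a basic set of a flow may be, for example, a constant-roof suspension, so the specification property is unavailable. The construction above avoids specification by using only transitivity and a periodic pseudo-orbit, with no prescribed period. This is why I close a pseudo-orbit rather than invoking Bowen's specification.
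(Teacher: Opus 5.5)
The paper gives no proof of this statement. It quotes it as a classical fact (Sigmund's theorem for hyperbolic flows) and uses it only in Case 2 of the proof of Theorem \ref{thm:hyperbolic_implies_Liouville}. You correctly read the garbled statement as density of closed-orbit measures in the space $M(\Lambda,\Phi)$ of \emph{invariant} probability measures on $\Lambda$. Density among all probability measures is false, since $M(\Lambda,\Phi)$ is weak-$*$ closed. Your argument is the standard proof: ergodic decomposition, generic points, a periodic pseudo-orbit, closing/shadowing, then local maximality. You are also right that shadowing plus transitivity suffices and specification is not needed. That is what lets the proof cover non-mixing basic sets such as constant-roof suspensions; specification-based proofs must first go through Bowen's mixing/suspension dichotomy.

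Two steps need tightening.

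\textbf{Transitivity.} The inference ``there is a dense orbit, hence for each ordered pair there is a forward orbit segment from near $x_i$ to near $x_j$'' does not follow. A dense full orbit does not give forward connections. For example, take two hyperbolic closed orbits $\gamma_1,\gamma_2$ joined by a single transverse heteroclinic orbit from $\gamma_1$ to $\gamma_2$. This is a locally maximal hyperbolic set with a dense orbit, yet $\frac12(\mu_{\gamma_1}+\mu_{\gamma_2})$ is not a limit of closed-orbit measures. You must therefore use transitivity in the forward sense: for all nonempty open $U,V\subset\Lambda$ there is $t>0$ with $\Phi_t(U)\cap V\neq\emptyset$, or equivalently a dense forward semi-orbit. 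This is the standard meaning for basic sets, and the statement is false without it.

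\textbf{Return times.} Poincar\'e recurrence alone gives returns of $x_i$ near itself, but not returns at times $T_i\approx p_iN$ for a common $N$, since recurrence times can be very sparse. Any of the following fixes it:
\begin{itemize}
\item Use genericity. Also require $x_i\in\mathrm{supp}\,\nu_i$, which is a full-measure condition. Applying genericity to a bump function $F$ supported in $B(x_i,\delta/2)$, with $\int F\,d\nu_i>0$, shows the orbit of $x_i$ spends a positive proportion of time in that ball. Hence every window $[p_iN,(p_i+\epsilon')N]$ contains a return time once $N$ is large.
\item Traverse one fixed return loop of $x_i$, of length at least $T_0$, some $k_i$ times, choosing $k_i$ to match the proportions.
\item Drop recurrence altogether. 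Take a finite $\delta/2$-net of $\Lambda$ and precompute forward connections between net points; this gives a uniform bound $S(\delta)$ on connection times.
\end{itemize}
With these repairs, the shadowing bookkeeping goes through as you describe.
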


We also require the following lemma for hyperbolic fixed points of contactomorphisms. Our proof uses Lemma \ref{lem:det_bound_map} from Section \ref{subsec:invariant_manifolds_of_contactomorphisms} below, where many related results will also be proven.

\begin{lemma}[Determinant Bound For Flows] \label{lem:det_bound_flows} Let $\Gamma$ be a hyperbolic closed orbit of index $n-1$ or less of the characteristic foliation of a contact Hamiltonian manifold $(\Sigma,\eta)$ . Then
\[
\on{det}(T_O\on{Ret}_\Gamma) \ge \sigma_{cu}(T_O\on{Ret}_\Gamma)^n
\]
where $\sigma_{cu}(T_O\on{Ret}_\Gamma)$ is the smallest absolute value of an eigenvalue of the differential of $\on{Ret}_\Gamma$ at $O$ that is greater than one. \end{lemma}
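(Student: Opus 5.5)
The plan is to reduce the statement to a linear-algebra fact about the differential of a contactomorphism at a hyperbolic fixed point, which is the content of Lemma \ref{lem:det_bound_map}. Choose a local section $D$ through $O$ transverse to $\Gamma$. By the Contact Section and Contact Transition lemmas, $D$ is a contact manifold of dimension $2n-1$ with contact structure $\xi = \eta \cap TD$, and $\on{Ret}_\Gamma : D \to D$ is a contact map fixing $O$. Hyperbolicity of $\Gamma$ means $A = T_O\on{Ret}_\Gamma$ has no eigenvalues of modulus one. The index of $\Gamma$ equals the number of eigenvalues of $A$ of modulus less than one, counted with multiplicity, since the flow direction has been quotiented out.

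\textbf{Structure of $A$.} Fix a contact form $\alpha$ on $D$ near $O$. Since $\on{Ret}_\Gamma^*\alpha = g\alpha$ for a smooth function $g$, the following hold with $c = g(O)$:
\begin{itemize}
\item $A$ preserves $\xi_O$;
\item $A$ acts on the line $T_OD/\xi_O$ by multiplication by $c$;
\item differentiating gives $\on{Ret}_\Gamma^*d\alpha = dg\wedge\alpha + g\,d\alpha$, so restricted to $\xi_O$ we get $A^*(d\alpha|_{\xi_O}) = c\, d\alpha|_{\xi_O}$.
\end{itemize}
Thus $A|_{\xi_O}$ is conformally symplectic with factor $c$. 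I will also check that $c > 0$: the transition map along a flowline of the characteristic foliation preserves the coorientation of $\eta$, because it is a limit of a continuous family of contactomorphisms starting at the identity in a flow box. This fact is also needed to avoid an absolute value in the final inequality.

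\textbf{Eigenvalue pairing.} Since $A|_{\xi_O}$ is conformally symplectic with factor $c$ on a $(2n-2)$-dimensional space, its eigenvalues come in $n-1$ pairs $(\lambda, c/\lambda)$ with multiplicity. This is the standard argument for symplectic matrices applied to $c^{-1/2}A|_{\xi_O}$. The remaining eigenvalue of $A$ is $c$. It follows that
\[
\on{det}(A) = \on{det}(A|_{\xi_O})\cdot c = c^{n-1}\cdot c = c^n.
\]

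\textbf{Index count.} Suppose $c < 1$; note $c \ne 1$ because $c$ is itself an eigenvalue and $A$ is hyperbolic. Then in each pair $(\lambda, c/\lambda)$ the product of moduli is $c<1$, so at least one member has modulus less than one. Together with $c$ itself this gives at least $n$ eigenvalues of modulus less than one, contradicting $\on{ind}(\Gamma)\le n-1$. Hence $c>1$. So $c$ is an eigenvalue of modulus greater than one, and by the definition of $\sigma_{cu}$ we get $c \ge \sigma_{cu}(A)$. Therefore
\[
\on{det}(A) = c^n \ge \sigma_{cu}(A)^n.
\]

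The only delicate point I expect is justifying the pairing of eigenvalues with multiplicity for a conformally symplectic map and the positivity of the conformal factor $c$. The first follows from the characteristic polynomial identity $p(\lambda) = \pm\lambda^{2n-2}c^{-(n-1)}p(c/\lambda)$ for $A|_{\xi_O}$. The second follows from the coorientation argument above.
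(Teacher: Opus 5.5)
Your proposal is correct and is essentially the paper's argument. The paper also passes to the return map on a contact local section and applies Lemma~\ref{lem:det_bound_map}, which uses the Darboux normal form of Lemma~\ref{lem:normal_form} to get $\det(T_O\Phi) = C^n$ and the same eigenvalue count to force $C > 1$. You work intrinsically with a contact form instead of a Darboux chart, and you explicitly justify that the conformal factor is positive, a point the paper's normal form takes for granted. The cleanest way to see that positivity is that $\mathcal{L}_Z\lambda = f\lambda$ and $\lambda(Z) = 0$, so transition maps pull back $\lambda$ to a positive multiple of itself.
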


\begin{proof} Let $O$ be the hyperbolic fixed point of the contact embedding $\Phi = \on{Ret}\Gamma$ as a partially defined contactomorphism of a local section $D$. Note that $\on{ind}(O) = \on{ind}(\Gamma) \le n-1$. Lemma \ref{lem:det_bound_map} in Section \ref{subsec:invariant_manifolds_of_contactomorphisms} then states that the differential $T_O\Phi$ satisfies
\[
\on{det}(T_O\Phi) \ge \sigma_{cu}(T_O\Phi)^n \qedhere
\]
\end{proof}

We are now ready to prove the main result of this section. This is precisely a restatement of Theorem \ref{thm:intro_basic_sets_are_Liouville} in intrinsic terms via contact Hamiltonian manifolds.

\begin{theorem}[Theorem \ref{thm:intro_basic_sets_are_Liouville}] \label{thm:hyperbolic_implies_Liouville} Let $\Lambda \subset \Sigma$ be a hyperbolic basic set of the characteristic foliation of a contact Hamiltonian $2n$-manifold $(\Sigma,\eta)$. Then $\Lambda$ is Liouville with
\[
\Lambda 
\text{ positive} \qquad\text{if and only if}\qquad \text{$\on{ind}(\Lambda) \le n-1$}
\]
\end{theorem}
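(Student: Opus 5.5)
The argument combines three results already in hand: the divergence criterion for Liouville subsets (Lemma \ref{lem:div_criterion}), the measure-theoretic criterion for positive divergence (Proposition \ref{prop:sullivan}), and density of periodic orbit measures (Theorem \ref{thm:orbit_density}). By Lemma \ref{lem:div_criterion}, $\Lambda$ is positive Liouville exactly when some characteristic vector field has positive divergence along $\Lambda$. By Proposition \ref{prop:sullivan}, this holds exactly when
\[
\int \on{div}(\Sigma_\eta)\cdot\nu > 0 \qquad\text{for every invariant probability measure $\nu$ supported on $\Lambda$.}
\]
The negative case is the same with the inequality reversed. The task therefore reduces to controlling the sign of the divergence class on invariant measures of $\Lambda$, and the plan is to do this first on periodic orbits and then pass to general measures.

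\emph{Periodic orbits.} For a periodic orbit $\gamma\subset\Lambda$, Example \ref{ex:closed_orbit} gives $\int\on{div}\cdot\mu_\gamma=\log|\det T_O\on{Ret}_\gamma|$, normalized by the period. Every periodic orbit in $\Lambda$ is hyperbolic with index $\on{ind}(\Lambda)$. Suppose $\on{ind}(\Lambda)\le n-1$. Lemma \ref{lem:det_bound_flows} gives $\det \ge \sigma_{cu}^n$ with $\sigma_{cu}>1$, so the integral is positive. Suppose instead $\on{ind}(\Lambda)\ge n$. Apply the same lemma to the reversed line field: reversing time preserves the characteristic foliation up to sign, swaps stable and unstable bundles, sends the index to $2n-1-\on{ind}\le n-1$, and inverts the return map. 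This gives $\det T_O\on{Ret}_\gamma^{-1}>1$, so the integral is negative.

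\emph{Uniformity.} Positivity on each periodic orbit does not pass to weak-$*$ limits, because the integrals could tend to $0$. I need a uniform bound. For this I would use the hyperbolicity constants in \eqref{eq:hyperbolic_constants}: the expansion $\sigma_{cu}$ satisfies $\log\sigma_{cu}\ge A\cdot T_\gamma - B$, so after dividing by the period $T_\gamma$ the normalized integral is at least roughly $n(A - B/T_\gamma)$. Long orbits are therefore bounded below uniformly. There are only finitely many short periodic orbits in a locally maximal set with period below any fixed bound, and each is individually positive. So there is $c>0$ with $\int\on{div}\cdot\mu_\gamma\ge c$ for every periodic $\gamma\subset\Lambda$.

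Here I would have to be careful. The contact determinant bound must be applied to a normalized spanning field (say unit speed) so that return-map eigenvalues correspond to the flow's growth rates. Cleaner still, I would prove the pointwise statement directly: along $\Lambda$, the linearized flow on $T\Sigma/\on{span}(Z)$ is conformally symplectic on the contact section. Hence the Jacobian on the quotient $E^s\oplus E^u$ equals the $n$-th power of the conformal factor. When $\dim E^u > \dim E^s$, this factor is forced to grow at the rate of $E^u$, and the ergodic averages of $\on{div}$ are then bounded below by $A$ uniformly. This contact-linear-algebra step is the heart of Lemma \ref{lem:det_bound_map}.

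\emph{Conclusion.} By Theorem \ref{thm:orbit_density}, any invariant probability measure $\nu$ on $\Lambda$ is a weak-$*$ limit of periodic orbit measures. Since $\on{div}(Z,\mu)$ is continuous, $\int\on{div}\cdot\nu\ge c>0$ in the index $\le n-1$ case, and $\le -c<0$ in the other. Proposition \ref{prop:sullivan} then yields a characteristic vector field with divergence of the appropriate sign along $\Lambda$. So $\Lambda$ is Liouville, positive when $\on{ind}(\Lambda)\le n-1$ and negative otherwise. The converse implication in the ``if and only if'' follows from Lemma \ref{lem:sign_exclusive}, since $\Lambda$ cannot be both positive and negative. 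The main obstacle is the uniform lower bound $c$, which requires the determinant estimate to be quantitative in the hyperbolicity constants rather than merely strict on each orbit.
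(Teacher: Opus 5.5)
Your proposal is correct and follows essentially the same route as the paper: the determinant bound of Lemma~\ref{lem:det_bound_flows} on periodic orbits, a uniform lower bound from the hyperbolicity constants, density of periodic orbit measures, and then Proposition~\ref{prop:sullivan} with Lemma~\ref{lem:div_criterion}, with the index $\ge n$ case handled by reversing orientation. The minor differences are that the paper obtains $\log\sigma_{cu} \ge A T_\gamma$ directly by iterating the return map and letting $k\to\infty$, which makes your finiteness argument for short orbits unnecessary, and that you make the ``only if'' direction explicit via Lemma~\ref{lem:sign_exclusive}, which the paper leaves implicit.
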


\begin{proof} Fix a hyperbolic basic set $\Lambda$ of the characteristic foliation $\Sigma_\eta$ with
\[
\on{ind}(\Lambda) \le n-1
\]
Fix a characteristic vector field $Z$ generating a characteristic flow $\Phi$. By scaling $Z$, we may assume that every closed orbit of $\Phi$ in a small neighborhood of $\Lambda$ has period $1$ or greater. Fix constants $A,B$ satisfying (\ref{eq:hyperbolic_constants}). We claim that
\begin{equation} \label{eq:div_lower_bound}
\int_\Sigma \on{div}(\Sigma_\eta) \cdot \nu \ge nA \cdot \int_\Sigma \nu > 0 \qquad\text{for all $\Sigma_\eta$-invariant measures $\nu$ supported on $\Lambda$}
\end{equation}
By Proposition \ref{prop:sullivan} and Lemma \ref{lem:div_criterion}, this will prove that $\Sigma_\eta$ is positive Liouville. Moreover, reversing the orientation of $\Sigma$, and thus of the characteristic foliation, swaps the stable and unstable bundles of $\Lambda$. Thus, the case where $\on{ind}(\Lambda) \ge n$ follows by switching orientations. To demonstrate (\ref{eq:div_lower_bound}), we consider two cases. 

\vspace{3pt}

\noindent {\bf Case 1: Orbit Measures.} We first consider the case where $\nu = \mu_\Gamma$ is the invariant probability measure supported on a closed hyperbolic orbit $\Gamma \subset \Lambda$. Fix a local section $D$ of $\Sigma_\eta$ centered at a point $O \in \Gamma$ and let $T$ be the period of $\Gamma$. By Example \ref{ex:closed_orbit} and Lemma \ref{lem:det_bound_flows}, we see that
\begin{equation} \label{eq:liouville_hyperbolic_1}
\int_\Sigma \on{div}(\Sigma_\eta) \cdot \mu_\Gamma \ge \on{log}|\on{det}(T_O\on{Ret}_\Gamma)| \ge n \cdot \on{log}\big(\sigma_{cu}(T_O\on{Ret}_\Gamma)\big)
\end{equation}
where $T_O\on{Ret}_\Gamma$ is the linearized return map of $\Gamma$ and $\sigma_{cu}(T_O\on{Ret}_\Gamma)$ is the smallest absolute value of an eigenvalue with norm greater than one. On the other hand, we can write
\[
T_O\on{Ret}\Gamma = T\Phi_T|_E \qquad\text{where}\qquad E = E^s_O(\Phi,\Lambda) \oplus E^u(\Phi,\Lambda)_O \subset T_O\Sigma
\]
Moreover, $E^s_O(\Phi,\Lambda)$ and $E^u_O(\Phi,\Lambda)$ are precisely the direct sum of the real invariant subspaces of $T_O\on{Ret}_\Gamma$ corresponding to eigenvalues of modulus less than $1$ and greater than $1$, respectively. In particular, there is a non-zero vector $V \in E^u_O(\Phi,\Lambda) \subset T_O\Sigma$ such that
\[
\sigma_{cu}(T_O\on{Ret}_\Gamma)^k \cdot |V|  = |T\Phi_T(V)| \ge \exp(kAT + B) \cdot |V| \qquad\text{for any $k \in \N$}
\]
By dividing by $|V|$, taking a logarithm and passing to the limit as $k \to \infty$, we find that
\begin{equation} \label{eq:liouville_hyperbolic_2}
\on{log}(\sigma_{cu}(T_O\on{Ret}_\Gamma)) \ge AT \ge A
\end{equation}
Combining (\ref{eq:liouville_hyperbolic_1}) and (\ref{eq:liouville_hyperbolic_2}) yields the desired inequality in this case.

\vspace{3pt}

\noindent {\bf Case 2: General Measures.} Now consider any invariant measure $\nu$ supported on $\Lambda$. By Theorem \ref{thm:orbit_density}, there is a sequence of invariant measures $\mu_i$ supported on closed orbits $\gamma_i \subset \Lambda$ such that $\mu_i \to \nu$ weakly in the space of invariant measures. It follows that
\[
\int_\Sigma \on{div}(\Sigma_\eta) \cdot \nu = \lim_{i \to \infty} \int_\Sigma \on{div}(\Sigma_\eta) \cdot \mu_i \ge nA \cdot  \lim_{i \to \infty} \int_\Sigma \nu_i = nA \cdot \int_\Sigma \nu \qedhere
\]\end{proof}

\vspace{3pt}

We are now prepared to prove Corollary \ref{cor:convexity_obstruction}, following the proof that was sketched in the introduction. We also state this result in the language of contact Hamiltonian manifolds.

\begin{corollary}[Corollary \ref{cor:convexity_obstruction}] Let $C_+$ and $C_-$ be basic sets of the characteristic foliation of a contact Hamiltonian $2n$-manifold $(\Sigma,\eta)$. Then
\[W^s(C_+) \cap W^u(C_-) = \emptyset \qquad\text{if the indices satisfy $\on{ind}(C_+) < n \le \on{ind}(C_-)$}\]
In particular, the pair $C = (C_+,C_-)$ does not form a heterodimensional cycle. 
\end{corollary}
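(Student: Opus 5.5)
Throughout I read the statement with the standing hypothesis of Corollary \ref{cor:convexity_obstruction} from the introduction: $(\Sigma,\eta)$ is \emph{convex}. Without this the conclusion would fail, since Theorems \ref{thm:cycle_creation} and \ref{thm:robustification} produce such cycles. The plan is to combine three earlier results:
\begin{itemize}
\item the splitting of a convex contact Hamiltonian manifold into Liouville halves (Lemma \ref{lem:liouville_halves});
\item the sign of basic sets (Theorem \ref{thm:hyperbolic_implies_Liouville});
\item sign exclusivity (Lemma \ref{lem:sign_exclusive}).
\end{itemize}
First I will show that $C_+$ lies in the interior of the positive half $\Sigma_+$ and $C_-$ lies in the interior of the negative half $\Sigma_-$. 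Then a trajectory in $W^u(C_-) \cap W^s(C_+)$ would have to cross the dividing set $\Gamma$ in the wrong direction.

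\textbf{Step 1: locating the basic sets.} Fix a framing function $u$ with dividing set $\Gamma = \{u = 0\}$ and halves $\Sigma_\pm$.
\begin{itemize}
\item Since $\Gamma$ is transverse to the characteristic foliation and the foliation points out of $\Sigma_+$ along $\Gamma$, every trajectory meets $\Gamma$ at most once. Once it enters $\Sigma_- \setminus \Gamma$ it never returns to $\Sigma_+$.
\item Hence no compact invariant set meets $\Gamma$. A point of such a set on $\Gamma$ would have its full orbit inside the set, and that orbit leaves $\Sigma_+$ for good. Its $\omega$-limit set would then lie in $\Sigma_- \setminus \Gamma$, while its $\alpha$-limit set lies in $\Sigma_+$. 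This contradicts transitivity in the basic-set case, and more generally contradicts recurrence of invariant measures. For the basic sets I only need the transitive case.
\item Now take $C_+$. It has a dense orbit, which lies in a single closed half and avoids $\Gamma$. So $C_+ \subset \on{int}(\Sigma_+)$ or $C_+ \subset \on{int}(\Sigma_-)$.
\item In the second case, the negative Liouville form on a neighborhood of $\Sigma_-$ makes $C_+$ negative Liouville. But $\on{ind}(C_+) \le n-1$, so Theorem \ref{thm:hyperbolic_implies_Liouville} makes $C_+$ positive Liouville. This contradicts Lemma \ref{lem:sign_exclusive}.
\item Hence $C_+ \subset \on{int}(\Sigma_+)$. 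Symmetrically, $\on{ind}(C_-) \ge n$ gives $C_- \subset \on{int}(\Sigma_-)$.
\end{itemize}

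\textbf{Step 2: the heteroclinic.} Suppose $x \in W^s(C_+) \cap W^u(C_-)$.
\begin{itemize}
\item Its backward orbit accumulates on $C_- \subset \on{int}(\Sigma_-)$, so it has points in $\on{int}(\Sigma_-)$ at arbitrarily negative times.
\item Its forward orbit accumulates on $C_+ \subset \on{int}(\Sigma_+)$, so it has points in $\on{int}(\Sigma_+)$ at arbitrarily large times.
\item The trajectory must therefore pass from $\Sigma_-$ to $\Sigma_+$, and it can only do so through $\Gamma$. This contradicts the fact that the foliation points out of $\Sigma_+$ along $\Gamma$.
\end{itemize}
So the intersection is empty. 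Since a heterodimensional cycle with these indices requires exactly this intersection to be nonempty, $(C_+,C_-)$ cannot form one.

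\textbf{Main obstacle.} I expect the main difficulty to be sign and orientation bookkeeping, not analysis. I must check that the orientation convention making $\Sigma_+$ positive Liouville in Lemma \ref{lem:liouville_halves} is the same one used in Theorem \ref{thm:hyperbolic_implies_Liouville}, where index $\le n-1$ means positive. I must also check that the ``points out of $\Sigma_+$'' direction refers to the same characteristic orientation that defines $W^s$ and $W^u$. If a convention is reversed, the argument goes through with the roles of the halves swapped, so I would fix one characteristic vector field $Z$ and one volume form at the start. With those fixed, I would verify once that $\on{div}(Z,\mu) > 0$ on $\Sigma_+$ away from $\Gamma$ and that $Z$ exits $\Sigma_+$ along $\Gamma$.
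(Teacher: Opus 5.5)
Your proposal is correct and follows the paper's own proof. You split the convex $\Sigma$ along $\Gamma$ into Liouville halves, use Theorem~\ref{thm:hyperbolic_implies_Liouville} with Lemma~\ref{lem:sign_exclusive} to force $C_+ \subset \Sigma_+$ and $C_- \subset \Sigma_-$, and conclude because $\Sigma_\eta$ crosses $\Gamma$ only out of $\Sigma_+$; you are also right that the convexity hypothesis, dropped in the body statement, is intended. Your Step 1 usefully spells out what the paper leaves implicit, namely that a basic set cannot straddle $\Gamma$. However, the blanket sentence ``no compact invariant set meets $\Gamma$'' is false as written: the closure of any single orbit crossing $\Gamma$ is a counterexample. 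State it only for forward-transitive sets, where your argument is valid because no forward orbit can travel from near $\omega(x) \subset \on{int}(\Sigma_-)$ back to near $\alpha(x) \subset \on{int}(\Sigma_+)$.
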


\begin{proof} Any convex contact Hamiltonian manifold $(\Sigma,\eta)$ can be split along a dividing set $\Gamma \subset \Sigma$ into a positive Liouville domain $\Sigma_+$ and a negative Liouville domain $\Sigma_-$.
\[
\Sigma = \Sigma_+ \cup \Sigma_- \qquad\text{with}\qquad \Gamma = \Sigma_+ \cap \Sigma_-
\]
Here $\Gamma$ is transverse to $\Sigma_\eta$. Moreover, the characteristic foliation $\Sigma_\eta$ points outward from $\Sigma_+$ and inward to $\Sigma_-$ along $\Gamma$. It follows from Theorem \ref{thm:hyperbolic_implies_Liouville} and Lemma \ref{lem:sign_exclusive} that the hyperbolic basic sets $C_-$ and $C_+$ must satisfy
\[
C_+ \subset \Sigma_+ \qquad\text{and}\qquad C_- \subset \Sigma_-
\]
from which it follows that there cannot be a heteroclinic from $C_-$ to $C_+$.  \end{proof}

\section{Creation Of Simple Heteroclinic Cycles} \label{sec:construction_of_heteroclinic_cycle}

We next show that any contact Hamiltonian manifold can be deformed to contain a simple heteroclinic cycle. We formulate our result via contact Hamiltonian plugs (Definition \ref{def:contact_Hamiltonian_plug}).

\begin{theorem}[Local Cycle Creation] \label{thm:local_cycle_creation} For any $\epsilon$ and any $n \ge 2$, there is a contact Hamiltonian $2n$-plug
\[\iota:[0,1] \times U_{\on{std}} \to CU_{\on{std}}\]
with $C^0$-size less than $\epsilon$ whose characteristic foliation contains a simple heteroclinic cycle of index $(n-1,n)$. \end{theorem}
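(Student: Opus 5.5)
The approach is to reduce to building \emph{some} compactly supported plug with the desired cycle, with no size control, and then shrink it. Contact dilations give the size control. Choose Darboux coordinates on the contactization $CU_{\on{std}}$ around an interior point of $0\times U_{\on{std}}$ in which $U_{\on{std}}$ is a hyperplane through the origin. Consider the contactomorphisms
\[
\delta_c(x,y,z) = (c x, c y, c^2 z).
\]
For the hyperplane positioned as a coordinate hyperplane invariant under $\delta_c$, conjugating a plug supported in the chart by $\delta_c$ again gives a plug, since $\delta_c$ preserves $0\times U_{\on{std}}$. Its $C^0$-size is at most $c$ times the original. Characteristic foliations are carried to characteristic foliations by contactomorphisms, so the conjugated plug still has a simple heteroclinic cycle of the same index. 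It therefore suffices to construct one plug of arbitrary size whose deformed hypersurface lies in a fixed Darboux ball.

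\textbf{Step 1: a closed orbit.} Graphical plugs cannot produce closed orbits, since $Z_H=\partial_t+V_H$ always has positive $\partial_t$-component. The deformation must instead fold the hypersurface. I would realize the characteristic foliation near a closed orbit by a model: the mapping torus of a contactomorphism germ $\phi$ of $(D_{\on{std}},O)$, built from $[0,1]\times D_{\on{std}}$ with the structure $\eta_H$ of Lemma \ref{lem:Ham_flow} and the ends glued by $\phi$.

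Its contactization is a neighborhood of the closed orbit $\gamma$. Since $\gamma$ is tangent to $\eta$, it is a Legendrian circle there. By the Legendrian neighborhood theorem, this neighborhood embeds contactly around a Legendrian unknot in a Darboux ball, provided the normal data match. The solid torus $S^1\times D^{2n-1}$ sitting inside this image must then be extended to a hypersurface agreeing with $0\times U_{\on{std}}$ outside a compact set, via an isotopy of $U_{\on{std}}$ rel boundary. This extension is possible topologically because $S^1\times D^{2n-1}$ embeds in $\R^{2n}$ and we may isotope through a thin tube. Away from the model region, the characteristic foliation is arbitrary and irrelevant.

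\textbf{Step 2: the return map.} By Lemma \ref{lem:transition_map_deformation}, inserting a further plug along $\gamma$ composes the return map with an arbitrary compactly supported contactomorphism $\Phi_H$ fixing $O$. So it suffices to build a compactly supported contactomorphism of $(\R^{2n-1},\xi_{\on{std}})$ with the required cycle.

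First take the time-one map of an autonomous contact Hamiltonian whose vector field has two hyperbolic zeros $O_+$ and $O_-$, with stable dimensions $n-1$ and $n$ respectively, joined by a flow line. This can be arranged by choosing $H$ to vanish at two points on the Reeb axis, with suitable Hessians in the $\xi$-directions and a suitable Reeb-direction derivative. The intersection $W^u(O_+)\cap W^s(O_-)$ has expected dimension $n+n-(2n-1)=1$, so it is robust and can be taken to come from this flow line.

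The fragile intersection $W^u(O_-)\cap W^s(O_+)$ has expected dimension $(n-1)+(n-1)-(2n-1)<0$. I would create it by composing with a contactomorphism supported in a small ball disjoint from the fixed points and from the existing connection. This contactomorphism moves a point of a fundamental domain of $W^u(O_-)$ onto $W^s(O_+)$, which is possible because contactomorphisms act transitively on points via Hamiltonians with small support.

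\textbf{Main obstacle.} I expect Step 1 to be the hardest. One must check that the germ of the hypersurface along $\gamma$, with its induced contact Hamiltonian structure, extends compatibly to a compactly supported isotopy of $0\times U_{\on{std}}$ inside $CU_{\on{std}}$, i.e.\ that there is no framing or orientation obstruction when matching the solid-torus model with the standard piece outside. My first attempt would be to choose the model so that its linear normal data agree with those of a standard Legendrian unknot. Then I would use Lemma \ref{lem:stability_for_hypersurfaces} to adjust the contact structure near the glued hypersurface.
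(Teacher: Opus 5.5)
\textbf{Gap in Step 1.} The gap is Step 1, and Step 1 carries the real content of the theorem. You never produce a plug whose characteristic foliation has a closed orbit, and your sketch misplaces the difficulty.

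For $n\ge 2$ the orbit $\gamma$ is a $1$-dimensional \emph{isotropic}, not Legendrian, submanifold of the $(2n+1)$-dimensional contactization. The relevant tool is therefore the isotropic neighborhood theorem. The conformal symplectic normal bundle of a circle is trivial, so there are no normal data to match. Lemma \ref{lem:stability_for_hypersurfaces} is also irrelevant: the ambient contact structure is fixed, so any hypersurface containing the model germ $N$ automatically carries the model characteristic foliation near $\gamma$. An orientation mismatch would only reverse the flow, which preserves the index pair.

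What actually has to be proved is different. The specific embedded solid-torus germ $N\subset CU_{\mathrm{std}}$ must be the image of a solid torus in $0\times U_{\mathrm{std}}$ under a compactly supported ambient isotopy $\Psi$, so that $\Psi_1(0\times U_{\mathrm{std}})\supset N$. The fact that $S^1\times D^{2n-1}$ abstractly embeds in $\R^{2n}$ says nothing about this. The claim is true: circles are unknotted in dimension $2n+1\ge 5$. Given the core, the germ is determined up to isotopy by its coorienting normal line along $\gamma$. After trivializing the normal bundle this is a loop in $S^{2n-1}$, hence null-homotopic. None of this appears in your argument.

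\textbf{Smaller hole in Step 2.} After inserting a plug along $\gamma$, the return map is $\phi\circ\Phi_H$. Reducing to ``a compactly supported contactomorphism with the cycle'' is therefore only valid if $\phi=\mathrm{Id}$ near $O$. For a general germ you must first cancel $\phi$, which is what Proposition \ref{prop:isotopy_to_Id} and Corollary \ref{cor:local_small_Ham} do (linearization, connectedness of $\mathrm{CSp}(2n)$, the Legendrian graph trick). In your framework you can avoid this by choosing $\phi=\mathrm{Id}$ in the model, but you need to say so.

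\textbf{Comparison with the paper.} The rest of your plan is sound and partly differs from the paper. The dilation $\delta_c$ gives $C^0$-smallness for free, since in the symplectization framing $U_{\mathrm{std}}=\{s=0\}$ is a $\delta_c$-invariant coordinate hyperplane. The paper instead controls size at every step. Your Step 2 cycle is a workable substitute for Proposition \ref{prop:contact_vector_field_with_cycle}. You take two zeros on the Reeb axis joined along it, then an arc-supported contactomorphism moving $q\in W^u(O_-)$ to $q'\in W^s(O_+)$. Its support must avoid $O_\pm$, the robust connection, the strict backward orbit of $q$ and the strict forward orbit of $q'$. The paper instead gets both connections from a single autonomous field, using a Reeb chord of a Legendrian unknot.

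For Step 1, however, the paper needs no topology at all. Your observation that graphical plugs satisfy $dt(Z_H)=1$ holds only for the symplectization framing. Take instead the Liouville form $\lambda=e^t\alpha_{\mathrm{std}}$, which has the same kernel. The contactization becomes $ds+\lambda$, and the graph of $H$ has characteristic field equal to the Liouville field $Z-X_H$ of the unchanged form $d\lambda$. So a $C^0$-small $H$ with large derivatives can overpower $Z$.

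Concretely, first add $dG$ so that the Liouville form is the canonical one on $T^*B^n$ near a disk $\Lambda$ in the zero section; then $Z_G=0$ along $\Lambda$. Next take an angular-momentum Hamiltonian $F$. It vanishes on the zero section, so it is $C^0$-small after cutoff, and it has $X_F=\partial_\theta$ along $\partial\Lambda$. Hence $\partial\Lambda$ becomes a closed orbit (Lemma \ref{lem:orbit_creation}). This is exactly the folding you anticipated, realized as a graph over a different framing.
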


\begin{proof}[Proof Sketch] First, we construct small plug containing a periodic orbit (Lemma \ref{lem:orbit_creation}). We then show that any contact embedding with a fixed point can be modified by a small Hamiltonian perturbation so that it is the identity near the fixed point (Corollary \ref{cor:local_small_Ham}) and that the identity map can be further perturbed to contain a heteroclinic cycle of the desired index (Corollary \ref{cor:cycle_creation_for_Id}). We can thus insert a plug associated to a well-chosen, small contact Hamiltonian along the periodic orbit from the first step, in order to modify the transition map to possess a heteroclinic cycle. \end{proof}

\begin{figure}[h]
    \centering
    \includegraphics[width=.9\linewidth]{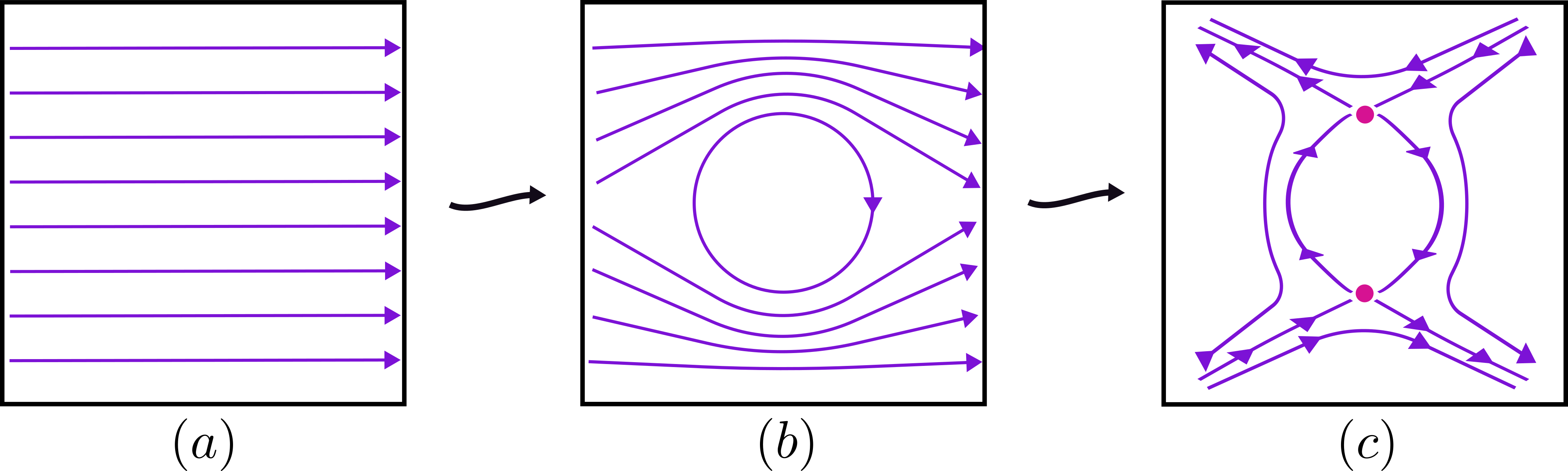}
    \caption{A cartoon of the steps in the proof of Theorem \ref{thm:local_cycle_creation} depicting:
    (a) a local model of the unperturbed characteristic foliation; (b) the introduction of a periodic orbit; and (c) the introduction of a heteroclinic cycle along the orbit. }
    \label{fig:4.1_proof_sketch}
\end{figure}

\begin{remark}[Theorem \ref{thm:cycle_creation}] Given an arbitrary hypersurface $\Sigma$ in a contact manifold $(Y,\xi)$ and an open set $U \subset Y$ intersecting $\Sigma$, we can insert the plug in Theorem \ref{thm:local_cycle_creation} in a plugging domain contained in $U$ to produce an $C^0$-small isotopy supported in $U$ connecting $\Sigma$ to a hypersurface $\Sigma$ containing a heterodimensional cycle $C$ with index $(n-1,n)$. Thus Theorem \ref{thm:local_cycle_creation} immediately implies Theorem \ref{thm:cycle_creation} from the introduction.
\end{remark}

 In the rest of this section, we develop the necessary lemmas used in the proof sketch above (Sections \ref{subsec:orbit_creation}-\ref{subsec:cycle_creation_for_maps}). We provide a full version of the proof at the end of the section (Section \ref{subsec:proof_of_local_cycle_creation}).

\subsection{Closed Orbit Creation} \label{subsec:orbit_creation} We start by showing that we can find a plug whose characteristic foliation contains a periodic orbit.

\begin{lemma}[Orbit Creation] \label{lem:orbit_creation} For any $\epsilon$ and any $n \ge 2$, there is a contact Hamiltonian $2n$-plug
\[\iota:[0,1] \times U_{\on{std}} \to CU_{\on{std}} \qquad\text{ with size less than $\epsilon$}\]
such that the characteristic foliation of the deformed contact Hamiltonian structure $\eta_\iota$ has a closed orbit $\Gamma$.
\end{lemma}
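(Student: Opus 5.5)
We construct the plug as a graphical plug $\iota_H$ for a contact Hamiltonian $H:[0,1]\times D_{\on{std}} \to \R$ supported away from the boundary. By Lemma \ref{lem:Ham_flow}, the deformed characteristic foliation on $U_{\on{std}}$ is spanned by $Z_H = \partial_t + V_H$. Every trajectory is therefore a graph over $[0,1]_t$ and goes from $D_{\on{in}}$ to $D_{\on{out}}$, so no closed orbit can live in a region where the $\partial_t$-component is positive. To get a closed orbit we must make the $t$-component of the characteristic vector field vanish or reverse somewhere. The graph $s=-rH$ only tilts the hypersurface, so the closed orbit must come from making the $dt$-part of the deformed form degenerate. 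Concretely, the deformed form $\lambda_H = -H\,dt + \alpha_{\on{std}}$ has characteristic field proportional to $\partial_t + V_H$ only where this normalization is valid. Near a point where $H$ is allowed to be non-small in $C^1$ while still $C^0$-small (the size condition is only $C^0$), we instead design $H$ so that $\lambda_H$ restricted to a small region is a genuine contact Hamiltonian form whose characteristic foliation contains a circle.

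The cleanest plan is the following. Inside the interior of $U_{\on{std}}$, choose a small ball $B$ on which, after the $C^0$-small isotopy, the hypersurface becomes a small sphere-like ``bump'' that wraps around. Equivalently, we choose a general $C^0$-small plug $\iota$, not necessarily graphical, whose image near $B$ is a thin tube $S^1 \times D^{2n-2}$ embedded in the contactization $\R_s\times U_{\on{std}}$ and tangent to a Legendrian-transverse direction. More concretely, we pick a small transverse (to $\xi_{\on{std}}$) closed loop $\gamma$ in the $(2n+1)$-dimensional contactization, contained in an $\epsilon$-ball around an interior point of $U_{\on{std}}$. By the contact neighborhood theorem for transverse knots, a neighborhood of $\gamma$ is $S^1_\theta\times D^{2n}$ with form $d\theta + \sum (x_i dy_i - y_i dx_i)$. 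The hypersurface $S^1 \times \{x_n=0\}$ (a codimension-one slab) then has characteristic foliation containing the closed orbit $S^1\times\{0\}$, since $\partial_\theta$ is Reeb and lies in the kernel direction. We then isotope $U_{\on{std}}$ within $CU_{\on{std}}$, rel boundary and with $C^0$-size less than $\epsilon$, so that it contains a piece of this slab. This is possible because $\gamma$ and its neighborhood are arbitrarily small, and any two hypersurfaces that are $C^0$-close in a ball are isotopic rel boundary through $C^0$-small isotopies. We choose the slab so that it caps off smoothly with the rest of $U_{\on{std}}$, for example as a finger pushed in the $s$-direction and bent around.

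Finally, we check that $S^1\times\{0\}$ remains a closed orbit of the characteristic foliation of the deformed structure $\eta_\iota = \iota_1^*\xi_{\on{std}}$. It is tangent to $\eta_\iota$ because $d\theta$ is nonzero on it, so we must argue differently. The cleaner choice is to take the hypersurface $\{y_n = 0\}$ in the model $\R_z \times \R^{2n}$ with $dz + \sum x_i dy_i$, wrapped periodically in a direction $x_n$ along which the characteristic field is $\partial_{x_n}$. We then embed a compactified version of this periodic picture using a Legendrian-direction loop (for instance, a loop in the $x_n$-direction closed up by adjusting $z$). The main obstacle is exactly this step: producing an embedded closed orbit in a $C^0$-small region while keeping the characteristic field nonvanishing along it and making the gluing rel boundary. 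I would first try the model ``overtwisted-free'' construction from Honda–Huang-style wrinkling, which creates a closed orbit of the Liouville-type flow, and then verify tangency via Remark \ref{lem:rmk_alt_char_field}.
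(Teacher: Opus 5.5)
Your proposal never actually produces the plug. Each of your three attempts either fails or stops at the decisive step.

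\textbf{First attempt.} Designing $H$ so that $-H\,dt+\alpha_{\on{std}}$ has a closed leaf is impossible. Lemma~\ref{lem:Ham_flow} uses no smallness of $H$, so for every $H$, however large $dH$ is, the graph in the model $\ker(s\,dt+\alpha_{\on{std}})$ has characteristic field $\partial_t+V_H$. All leaves therefore run from $D_{\on{in}}$ to $D_{\on{out}}$, and your caveat ``only where this normalization is valid'' has no basis.

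\textbf{Second attempt.} The transverse-knot tube fails for a structural reason: nonsingular leaves of a characteristic foliation are tangent to $\eta\subset\xi$, so a transverse loop is never a leaf. Concretely, on $S^1\times\{x_n=0\}$ the restricted form $d\theta+\sum_{i<n}(x_i\,dy_i-y_i\,dx_i)$ does not involve $y_n$, so the leaves are the $\partial_{y_n}$-lines.

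\textbf{Third attempt.} The isotropic model is the right local picture: $\{y_n=0\}$ with leaves in the $x_n$-direction. To wrap it periodically you must use an $x_n$-invariant form such as $dz-y_n\,dx_n+\sum_{i<n}x_i\,dy_i$, since $dz+\sum x_i\,dy_i$ is not translation invariant in $x_n$. But the step that proves the lemma is exactly what you leave open as ``the main obstacle'': realizing this periodic model inside $\iota_1(U_{\on{std}})$ for an isotopy that is $C^0$-small and fixed near $\partial U_{\on{std}}$. The principle you invoke earlier, that $C^0$-close hypersurfaces in a ball are isotopic rel boundary through $C^0$-small isotopies, is asserted without justification. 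Even granted, it would not by itself put the model tube into the deformed hypersurface.

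\textbf{How the paper does it.} The paper avoids your obstruction without giving up graphs. It works in the contactization $\R_s\times U_{\on{std}}$ with form $ds+\lambda$, where $\lambda=e^t\alpha_{\on{std}}$ is Liouville, and graphs a function $H$ that is $C^0$-small but has large derivatives. Such a graph need not be a graph in the $s\,dt+\alpha_{\on{std}}$ model: the identification of contactization germs is only guaranteed to carry graphs to graphs for $C^1$-small functions, and by your own observation it cannot do so here. Its characteristic foliation is spanned by the Liouville field $Z-X_H$ of $\lambda+dH$. One takes $G$, $C^0$-small, with $\lambda+dG$ equal to a canonical cotangent Liouville form near a Lagrangian disk $\Lambda$, so the Liouville field vanishes on $\Lambda$. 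One then adds $F$ equal to the angular momentum $\phi$ near $\partial\Lambda$, which is $C^0$-small since $\phi$ vanishes on the zero section. The characteristic field along $\partial\Lambda$ is then $-X_F=-\partial_\theta$, giving a closed orbit.

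\textbf{Completing your route.} Alternatively, your third idea can be finished as follows; none of this is in your proposal.
\begin{enumerate}
\item Choose a small closed curve $\gamma$ in a slice $\{t\}\times D_{\on{std}}$ tangent to its contact structure, so $\gamma$ is isotropic in $CU_{\on{std}}$.
\item By the isotropic neighborhood theorem, identify a neighborhood of $\gamma$ with $S^1_q\times\R_p\times\R_z\times\R^{2n-2}$ carrying the form $dz-p\,dq+\lambda_{\on{std}}$, where $\lambda_{\on{std}}$ is the standard Liouville form on $\R^{2n-2}$.
\item Any two loops of cooriented hyperplanes containing $T\gamma$ are homotopic, since $\pi_1(S^{2n-1})=0$. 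So you can twist $U_{\on{std}}$ inside a thin tube around $\gamma$ until it coincides with $\{p=0\}$ near $\gamma$.
\item The leaves of $\{p=0\}$ are the $q$-circles, so $\gamma$ is a closed leaf. The isotopy moves points by at most the tube radius, so it is $C^0$-small.
\end{enumerate}
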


\begin{proof} We consider the standard Liouville form $\lambda$ on $U_{\on{std}} = [0,1]_t \times D_{\on{std}}$ given by $e^t\alpha_{\on{std}}$ where $\alpha_{\on{std}}$ is the standard contact form. The contactization $CU_{\on{std}}$ with respect to this Liouville form is
\[
C U_{\on{std}} = \R_s \times U_{\on{std}} \qquad\text{with contact form}\qquad ds + \lambda
\]
Given any smooth function $H:U_{\on{std}} \to \R$, we can form the graphical deformation $\lambda_H = dH + \lambda$. The Liouville vector field $Z_H$ of $\lambda_H$ is a characteristic vector field for $\eta_H = \on{ker}(\lambda_H)$ which satisfies
\[
Z_H = Z - X_H \qquad\text{for the Hamiltonian vector field $X_H$ of $H$}
\]
We must show that for each $\epsilon$, there is a smooth function $H:U_{\on{std}} \to [-\epsilon,\epsilon]$ that is the identity near $\partial U_{\on{std}}$ such that $Z_H$ has a closed orbit. We construct this in steps.

\vspace{3pt}

\noindent {\bf Step 1.} Take a symplectic embedding $W \to U_{\on{std}}$ where $W \subset T^*B^n$ is a small ball in the cotangent bundle of the $n$-ball intersecting the zero section. Let $\Lambda \subset U_{\on{std}}$ be a disk in this chart contained in the image of the zero section $T^*B^n$ and let $\lambda'$ be the pushforward of the standard Liouville form on $T^*B^n$ to $U$. Note that this Liouville form satisfies
\[
\lambda' = 0 \qquad\text{along the disk $\Lambda$}
\]
Moreover, since $\lambda' - \lambda$ is closed and $U$ is a ball, there is a smooth function $F$ on $U$ such that $dF = \lambda' - \lambda$. By adding a constant, we may assume that $F(x) = 0$ at a point $x \in \Lambda$. We then let
\[
G = \chi F \qquad\text{for a smooth bump function $\chi:U \to [0,1]$ with $\varphi = 1$ near $x$}
\]
By choosing the support of $\chi$ sufficiently small around $x$, and by shrinking the disk $\Lambda$ and the Darboux chart $U$, we can assume that
\[
|G| \le \epsilon/2 \qquad\text{and}\qquad \lambda_G = dG + \lambda = \lambda' \text{ in a Darboux chart $U$ containing $\Lambda$ and $p$}
\]

\vspace{3pt}

\noindent {\bf Step 2.} Next, we may choose coordinates $(r,\theta,q_1 \dots q_{n-2})$ on $B^n$ where $(r,\theta)$ are radial coordinates on $B^2 = B^2 \times 0 \subset B^n$ and such that $\Lambda$ is the 2-disk of radius $\le \delta$ contained in $B^2 \times 0$. Let $\phi$ denote the momentum coordinate corresponding to $\theta$ and consider a smooth Hamiltonian
\[
F:T^*B^n \to \R \qquad\text{with}\qquad F = \phi \text{ near }\partial\Lambda = \{(\delta,\theta) \in B^2 \; : \; \theta \in S^1\}
\]
It is simple to compute that the Hamiltonian vector field of $F$ is given by
\[
X_F = \partial_\theta \qquad\text{along}\qquad \partial\Lambda
\]
In particular, $X_F$ has $\partial\Lambda$ as a closed loop. Moreover, since $\phi$ vanishes near the zero section of $T^*B^n$ and in particular along $\partial\Lambda$, we can cutoff $F$ so that it satisfies
\[
|F| \le \epsilon/2 \qquad\text{and}\qquad \text{$F$ compactly supported in the chart $U$}
\]
We now pullback $F$ through the chart map to a function on $U$, which we will also denote as $F$. 

\vspace{3pt}

\noindent {\bf Step 3.} Finally, we let $H = F + G$ and let $\lambda_H = dF + \lambda_G = dH + \lambda$. Then the Liouville vector field is given by $Z_H = Z_G - X_F = -X_F$ along $\partial\Lambda$ and thus has $\partial\Lambda$ as a periodic orbit. The size of the deformation is given by $|F + H| \le |F| + |H| \le \epsilon$. \end{proof}

\subsection{Conformally Symplectic Linear Maps} Next, we briefly review conformally symplectic linear maps and their contact lifts. Recall that the conformal symplectic group
\[
\CSp(2n) \subset \on{GL}(2n;\R)
\]
is the group of orientation preserving invertible linear maps that preserve the standard linear symplectic form up to a positive conformal factor
\[
\on{CF}(A) \qquad\text{defined by}\qquad A^*\Omega_{\on{std}} = \on{CF}(A) \cdot \Omega_{\on{std}} 
\]

There is an important non-degeneracy property for conformally symplectic maps that we will need to use later in this section.

\begin{definition}[Conformally Non-Singular] \label{def:conformally_non_singular} A linear conformal symplectomorphism $A$ in $\CSp(2n)$ is \emph{conformally non-singular} if the conformal factor is not an eigenvalue of $A$.
\end{definition}

\begin{lemma} \label{lem:CSp_nonsing} The conformally non-singular elements are open and dense in $\CSp(2n)$.
\end{lemma}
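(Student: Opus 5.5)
The plan is to use the product structure of $\CSp(2n)$: every element is a positive multiple of a linear symplectic map. Openness will come from a continuous defining function, and density from perturbing only the scalar factor.

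\textbf{The conformal factor is continuous.} Pulling back the volume form $\Omega_{\on{std}}^n$ gives $A^*\Omega_{\on{std}}^n = \on{CF}(A)^n \Omega_{\on{std}}^n$. Hence $\det(A) = \on{CF}(A)^n$. Since $\on{CF}(A) > 0$, we get $\on{CF}(A) = \det(A)^{1/n}$, which is a continuous (indeed smooth) function on $\CSp(2n)$.

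\textbf{Openness.} The set of conformally non-singular elements is exactly the set where
\[
f(A) = \det\big(A - \on{CF}(A)\cdot \on{Id}\big)
\]
is non-zero. Since $f$ is a composition of continuous maps, this set is open.

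\textbf{Density.} Fix $A \in \CSp(2n)$, set $c = \on{CF}(A)^{1/2} > 0$, and put $B = c^{-1}A$. Then $B^*\Omega_{\on{std}} = c^{-2}\on{CF}(A)\,\Omega_{\on{std}} = \Omega_{\on{std}}$, so $B \in \on{Sp}(2n)$.

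Every $c' > 0$ gives an element $A' = c'B$ of $\CSp(2n)$ with $\on{CF}(A') = c'^2$. The eigenvalues of $A'$ are $c'$ times those of $B$. So $c'^2$ is an eigenvalue of $A'$ exactly when $c'$ is an eigenvalue of $B$.

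The matrix $B$ has at most $2n$ eigenvalues. We may therefore choose $c'$ arbitrarily close to $c$ that is not an eigenvalue of $B$. The resulting $A' = c'B$ is conformally non-singular, and $\|A' - A\| = |c'-c|\,\|B\|$ can be made arbitrarily small. This proves density.

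The only step needing any care is this density step, and it reduces to the observation that rescaling shifts the conformal factor quadratically but the spectrum only linearly. No analyticity or connectedness argument for $\CSp(2n)$ is needed.
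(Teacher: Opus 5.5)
Your proposal is correct and follows essentially the same argument as the paper. Openness comes from continuity of the conformal factor and the spectrum, and density comes from rescaling $A$ by a positive constant near $1$, which multiplies the conformal factor by the square of that constant but the eigenvalues only by the constant itself. Your normalization to a symplectic $B$ and the explicit formula for the conformal factor as $\det(A)^{1/n}$ just make the same steps more explicit.
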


\begin{proof} Openess is clear since the spectrum and conformal factor vary continuously. To prove density, fix $A \in \CSp(2n)$. For any constant $C \in \R_+$, we have $\on{CF}(CA) = C^2 \cdot \on{CF}(A)$ and the eigenvalues of $CA$ are given by $C\lambda$ for eigenvalues $\lambda$ of $A$. Thus $CA$ is conformally non-singular if $C \cdot \on{CF}(A)$ is not an eigenvalue of $A$. This holds for a generic constant in $\R_+$. 
\end{proof}

A linear conformal sympectomorphism naturally lifts to a contactomorphism as follows. Consider the standard contact Euclidean space
\[
\R^{2n+1} = \R \times T^*\R^n  \qquad\text{with contact form}\qquad \alpha_{\on{std}} = dz + \lambda_{\on{std}}
\]

\begin{definition}[Lifted Contactomorphism] \label{def:lifted_contactomorphisms} The \emph{lifted contactomorphism} $\widetilde{A}$ corresponding to a linear conformal symplectomorphism $A \in \CSp(2n)$ is the contactomorphism
\[
\widetilde{A}:\R^{2n+1} \to \R^{2n+1} \qquad\text{given by}\qquad \widetilde{A}(z,x,y) = \big(\on{CF}(A) \cdot z + f_A(x,y),A(x,y)\big)
\]
Here $f_A$ is the unique function on $\R^{2n} = T^*\R^n$ that vanishes at the origin and that satisfies
\[
df_A = A^*\lambda_{\on{std}} - \on{CF}(A) \cdot \lambda_{\on{std}}
\]\end{definition}

We will require a few elementary properties of lifted contactomorphisms. First, we have the following computation of the differential.

\begin{lemma}[Differential] \label{lem:CSp_differential} The lifted contactomorphism $\widetilde{A}$ of a linear conformal symplectomorphism $A$ fixes the origin and has differential at the origin given by
\[
T\widetilde{A} = \on{CF}(A) \oplus A \in \on{End}(\R \oplus \R^{2n})
\]
\end{lemma}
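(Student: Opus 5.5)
This is a direct computation from Definition \ref{def:lifted_contactomorphisms}, and I would handle the two components of $\widetilde{A}$ separately.

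\textbf{Fixing the origin.} First evaluate $\widetilde{A}$ at the origin $(z,x,y) = (0,0,0)$. The first component is $\on{CF}(A)\cdot 0 + f_A(0) = 0$, because $f_A$ is normalized to vanish at the origin. The second component is $A(0)=0$, because $A$ is linear. So $\widetilde{A}$ fixes the origin.

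\textbf{The differential.} Next write $T\widetilde{A}$ at the origin as a block matrix with respect to the splitting $\R_z \oplus \R^{2n}$.
\begin{itemize}
\item The $\R^{2n}$-component of $\widetilde{A}$ is $A(x,y)$. It is independent of $z$ and linear, so its blocks are $0$ in the $z$-column and $A$ in the $\R^{2n}$-column.
\item The $z$-component is $\on{CF}(A)\,z + f_A(x,y)$. Its derivative in $z$ is the scalar $\on{CF}(A)$, and its derivative in the $(x,y)$-directions is $(df_A)_0$.
\end{itemize}
Hence $T_0\widetilde{A}$ is block diagonal $\on{CF}(A)\oplus A$ exactly when $(df_A)_0 = 0$.

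\textbf{Showing $(df_A)_0=0$.} This is the only step that needs any care. By definition, $df_A = A^*\lambda_{\on{std}} - \on{CF}(A)\,\lambda_{\on{std}}$. Take the standard Liouville form $\lambda_{\on{std}} = \sum_i y_i\,dx_i$, or any linear Liouville form with coefficients linear in the coordinates. It vanishes at the origin. Its pullback under the linear map $A$ is $A^*\lambda_{\on{std}} = \sum_i (A(x,y))_{y_i}\, d\big((A(x,y))_{x_i}\big)$. This again has coefficients that are linear functions, so it also vanishes at the origin. Therefore $(df_A)_0 = 0$.

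Equivalently, $f_A$ is a homogeneous quadratic form: its differential has linear coefficients, and $f_A(0)=0$. Hence its derivative at $0$ vanishes.

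Combining the three steps gives $T\widetilde{A} = \on{CF}(A)\oplus A$ at the origin. The only point to state explicitly is that $\lambda_{\on{std}}$ has linear coefficients and therefore vanishes at $0$, and that this property is preserved under linear pullback.
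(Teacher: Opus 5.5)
Your proof is correct and follows the same argument as the paper. The paper's one-line proof is that $f_A(0)=0$ and that $\lambda_{\on{std}}$, hence also $A^*\lambda_{\on{std}}$, vanishes at the origin, so $df_A=0$ there and the differential is block diagonal; your block-matrix computation just spells out what the paper leaves implicit.
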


\begin{proof} Simply note that $f_A(0) = 0$ and that $\lambda_{\on{std}}$ vanishes at $0$, so that $df_A = 0$ at $0$.
\end{proof} 

\noindent Second, we have the following useful linearization theorem near fixed points of contact embeddings. Fix an open subset $U \subset Y$ of a contact manifold $(Y,\xi)$ and a contact embedding
\[
\Phi:U \to Y \qquad \text{with a fixed point}\qquad x \in U
\]
The differential of the contact embedding restricts to a conformally symplectic bundle map
\[
T\Phi|_\xi:\xi \to \xi \qquad\text{defined over $U$}
\]

\begin{proposition}[Linearization] \label{prop:linearization} Let $\Phi:U \to Y$ be a contact embedding with a fixed point $x$ such that
\[
T\Phi|_\xi \text{ is conformally non-singular at $x$}
\]
Then there are Darboux coordinates on a neighborhood $W$ centered at $x$ and a conformally symplectic linear map $A$ such that
\[
T\Phi = T\widetilde{A} \qquad\text{ at the point $x = 0$ in Darboux coordinates on $W$}
\]\end{proposition}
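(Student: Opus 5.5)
The strategy is to start from an arbitrary Darboux chart centered at $x$, read off the block structure of $T_x\Phi$, and then remove the single off-diagonal block by a further contactomorphism fixing $x$. The conformal non-singularity hypothesis is exactly what makes the last step solvable.

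\emph{Step 1: block form in a Darboux chart.} Choose Darboux coordinates $(z,x,y)$ centered at $x$, with contact form $\alpha = dz + \lambda_{\on{std}}$. Since $\lambda_{\on{std}}$ vanishes at the origin, $\xi_0 = 0 \oplus \R^{2n}$ and $d\alpha|_{\xi_0} = \Omega_{\on{std}}$. Write $\Phi^*\alpha = g\alpha$ and set $c = g(0)$. I will assume $\Phi$ preserves the coorientation, so $c > 0$; this is needed for $A$ below to lie in $\CSp(2n)$, and is otherwise arranged by the convention on $\xi$.

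Because $T_0\Phi$ preserves $\xi_0$, it has the form
\[
T_0\Phi(\partial_z) = c\,\partial_z + v, \qquad T_0\Phi|_{\xi_0} = A, \qquad v \in \xi_0.
\]
The $\partial_z$-coefficient equals $c$ because $\alpha(T\Phi\,\partial_z) = g(0)\alpha(\partial_z)$. Evaluating $d(\Phi^*\alpha) = dg\wedge\alpha + g\,d\alpha$ on $\xi_0$ at the fixed point gives $A^*\Omega_{\on{std}} = c\,\Omega_{\on{std}}$. Hence $A \in \CSp(2n)$ with $\on{CF}(A) = c$, and by hypothesis $c$ is not an eigenvalue of $A$.

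\emph{Step 2: eliminating $v$.} By Lemma \ref{lem:CSp_differential}, $T\widetilde{A} = c \oplus A$. So it suffices to change the Darboux chart by a contactomorphism $\Psi$ fixing $0$ whose differential at $0$ is the shear
\[
P = \begin{pmatrix} 1 & 0 \\ w & I\end{pmatrix}, \qquad\text{i.e.}\qquad \partial_z \mapsto \partial_z + w, \quad P|_{\xi_0} = \on{Id},
\]
for a suitable $w \in \R^{2n}$. The conjugated differential $P^{-1}T_0\Phi P$ still restricts to $A$ on $\xi_0$, and it sends
\[
\partial_z \mapsto c\,\partial_z + v + (A - c)w .
\]
Since $c$ is not an eigenvalue of $A$, the operator $A - c$ is invertible, so I take $w = -(A-c)^{-1}v$. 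This kills the off-diagonal term and gives $T(\Psi^{-1}\Phi\Psi) = c \oplus A = T\widetilde{A}$ at $0$.

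\emph{Step 3: realizing the shear.} To produce $\Psi$, take the time-one flow of the contact vector field $X_H$ of the Hamiltonian $H = z\,\ell(x,y)$, where $\ell$ is a linear function on $\R^{2n}$.
\begin{itemize}
\item Since $H$ and $dH$ both vanish at $0$, the field $X_H$ vanishes at the origin, so its flow fixes $0$.
\item The linearization of $X_H$ at $0$ is nilpotent. It maps $\partial_z$ to the $\Omega_{\on{std}}$-dual of $\ell$, up to sign, and kills $\xi_0$ to first order.
\item Choosing $\ell$ appropriately, the time-one map therefore has differential exactly $P$, using $\exp$ of a square-zero matrix.
\end{itemize}
Composing the original chart with $\Psi$ gives new Darboux coordinates on a neighborhood $W$ in which $T\Phi = T\widetilde{A}$ at $0$.

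I expect the main obstacle to be the linear-algebra check in Step 3 that the contact vector field of $z\ell$ has precisely this nilpotent linearization, with no $\xi_0 \to \xi_0$ or $\xi_0 \to \partial_z$ components. If it fails, I would instead write the desired linear shear as the differential of an explicit polynomial contactomorphism: something like $(z,p) \mapsto (z + \text{quadratic},\, p + zw)$, corrected so that it preserves $\ker(dz + \lambda_{\on{std}})$ up to a factor.
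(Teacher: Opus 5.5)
Your proof is correct. It produces the same change of coordinates as the paper, but by a different mechanism.

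The paper notes that conformal non-singularity makes $\on{CF}(A)$ a simple eigenvalue of $T_x\Phi$, with an eigenvector $R$ transverse to $\xi$. It rescales the contact form to $\beta = f\alpha$, prescribing $f(x)$ and $df_x$, so that $R$ is the Reeb vector of $\beta$ at $x$. It then takes a strict Darboux chart for $\beta$, in which $R = \partial_z$. Your vector $\partial_z + w$ with $w = -(A-c)^{-1}v$ is exactly that eigenvector, since $T_0\Phi(\partial_z + w) = c\,\partial_z + v + Aw = c(\partial_z + w)$, and your shear sends $\partial_z$ to it. In effect, you build by hand the chart that the paper extracts from the Darboux theorem.

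The computation you flagged in Step 3 does go through. With the paper's conventions $\lambda_{\on{std}} = \sum_i y_i\,dx_i$ and $\alpha(X_H) = H$, one gets
\[
X_{z\ell} = z\Big(\ell - \sum_i y_i\,\partial_{y_i}\ell\Big)\partial_z + \sum_i z\,\partial_{y_i}\ell\;\partial_{x_i} + \sum_i \big(y_i\ell - z\,\partial_{x_i}\ell\big)\partial_{y_i}.
\]
Its linear part at the origin sends $\partial_z$ to the constant vector $\sum_i(\partial_{y_i}\ell\,\partial_{x_i} - \partial_{x_i}\ell\,\partial_{y_i})$ and kills $\xi_0$. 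So it is square-zero, the time-one map fixes $0$ with differential equal to your shear $P$, and every $w \in \R^{2n}$ arises from a suitable linear $\ell$.

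The paper's route is shorter and more conceptual, but it leans on the Darboux theorem and a pointwise normalization of the form. Yours is fully explicit and isolates exactly where conformal non-singularity enters, namely the invertibility of $A - c$. It also exhibits the chart change as the time-one map of a contact isotopy fixing $x$.
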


\begin{proof} In any Darboux chart sending $x$ to $0 \in \R^{2n+1}$, the contact structure at $x$ is given by $\R^{2n} \subset \R^{2n+1}$ and the restriction $T\Phi|_\xi$ is identified with a conformally symplectic linear map $A$. The full differential $T\Phi$ at $x$ in this chart can be computed as the block matrix
\[
T\Phi = \left[
\begin{array}{cc}
\on{CF}(A) & 0\\
* & A
\end{array}\right] \quad\text{in}\quad \on{End}(\R \oplus \R^{2n})
\]
Thus by Lemma \ref{lem:CSp_differential}, it suffices to show that we can choose the chart so that $T\Phi$ is diagonal at $0$. To prove this, note that $\on{CF}(A)$ is an eigenvalue of $T\Phi$. If $A$ is conformally non-singular, then the eigenvalue $\on{CF}(A)$ is simple and has an eigenvector $R$ that is positively transverse to $\xi = \R^{2n}$ at $x$. Choose any contact form $\alpha$ and let $\beta = f\alpha $ where $f$ is a positive function such that at $x$ we have
\[
1 = \beta(R) = f \cdot \alpha(R) \qquad \text{and}\qquad 0 = \iota_Rd\beta|_\xi = \alpha(R) \cdot df + f \cdot \iota_Rd\beta
\]
Then $R$ is the Reeb vector field of $\beta$ at $x$. We can take a strict contact Darboux chart (cf. \cite[Thm 2.5.1]{geiges}) to get a chart where $R = \partial_z$. This concludes the proof.
\end{proof}

\subsection{Isotopy To Identity} \label{subsec:isotopy_to_Id} We next discuss a procedure for performing an isotopy of a contactomorphism with a fixed point to a contactomorphism that is the identity near that point.

\begin{proposition}[Isotopy To Identity] \label{prop:isotopy_to_Id} Let $\Phi:U \to Y$ be a contact embedding with a fixed point $x$. Then there is an open set $W \subset U$ containing $x$ and an isotopy of contact embeddings
\[
\Psi:[0,1] \times W \to Y \qquad\text{with}\qquad \Psi_0 = \on{Id} \quad \Psi_1 = \Phi|_W \quad\text{and}\quad \Psi_t(x) = x \text{ for $t \in [0,1]$}
\]
\end{proposition}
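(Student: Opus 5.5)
The plan is to work in a Darboux chart centered at $x$ and connect $\Phi$ to the identity in two stages. First, a weighted rescaling (an Alexander-type trick adapted to the contact dilations) connects $\Phi$ to the lifted contactomorphism $\widetilde{A}$ of its linear part. Second, a path in $\CSp(2n)$ connects $\widetilde{A}$ to the identity. Throughout, choose a Darboux chart identifying a neighborhood of $x$ with a neighborhood of $0$ in $\R^{2n+1} = \R_z \times T^*\R^n$, with $\xi = \on{ker}(dz + \lambda_{\on{std}})$ and $x = 0$. Since $\Phi$ is contact and fixes $0$, $T_0\Phi$ preserves $\xi_0 = \R^{2n}$. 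Its restriction is a conformal symplectic map $A = T\Phi|_{\xi}$ at $0$, and exactly as in the proof of Proposition \ref{prop:linearization}, $T_0\Phi$ has block form with upper-left entry $\on{CF}(A)$ and vanishing upper-right block.

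\textbf{Step 1 (rescaling to the linear model).} For $c > 0$, let $\delta_c(z,w) = (c^2 z, c w)$. This map satisfies $\delta_c^*(dz + \lambda_{\on{std}}) = c^2(dz + \lambda_{\on{std}})$, so it is a contactomorphism. Set $\Phi_c = \delta_c^{-1} \circ \Phi \circ \delta_c$. For a fixed small ball $W$ around $0$ and all $c \in (0,1]$, the map $\Phi_c$ is a contact embedding of $W$ into the chart fixing $0$, and $\Phi_1 = \Phi$.

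Next, Taylor expand the $z$-component $\Phi^z$ and $w$-component $\Phi^w$ of $\Phi$. Because $T_0\Phi$ has zero upper-right block, $\Phi^z$ has no linear term in $w$. Hence
\[
c^{-2}\Phi^z(c^2 z, cw) \quad\text{and}\quad c^{-1}\Phi^w(c^2z,cw)
\]
extend smoothly to $c = 0$. The limit is $\Phi_0(z,w) = (a z + g(w), Aw)$, where $a = \on{CF}(A)$ and $g$ is a homogeneous quadratic.

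The limit $\Phi_0$ is a $C^1$-limit of contact embeddings, so it is contact. The contact condition for a map of this form forces $dg = A^*\lambda_{\on{std}} - \on{CF}(A)\lambda_{\on{std}}$ with $g(0) = 0$, hence $g = f_A$ and $\Phi_0 = \widetilde{A}$ in the sense of Definition \ref{def:lifted_contactomorphisms}. Note that, unlike Proposition \ref{prop:linearization}, no conformal non-singularity hypothesis is needed here. So $c \mapsto \Phi_c$ for $c \in [0,1]$ is a smooth isotopy of contact embeddings of $W$ fixing $0$, from $\widetilde{A}$ to $\Phi$.

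\textbf{Step 2 (connecting $\widetilde{A}$ to the identity).} The group $\CSp(2n) \cong \R_+ \times \on{Sp}(2n)$ is connected, so choose a smooth path $A_t$ with $A_0 = \on{Id}$ and $A_1 = A$. The functions $f_{A_t}$ are quadratic, depend smoothly on $t$, and satisfy $f_{\on{Id}} = 0$. Thus $\widetilde{A_t}$ is a smooth isotopy of contactomorphisms of $\R^{2n+1}$ fixing $0$, from $\on{Id}$ to $\widetilde{A}$.

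Concatenating the path of Step 2 with the path $c \mapsto \Phi_c$ of Step 1, reparametrizing to be smooth (constant near the junction), and restricting to $W$ gives the desired isotopy $\Psi$ with $\Psi_0 = \on{Id}$, $\Psi_1 = \Phi|_W$ and $\Psi_t(x) = x$. Since the $\widetilde{A_t}$ are global maps, one only needs to shrink $W$ so that all the images stay inside the Darboux chart; this is possible by compactness of the parameter interval.

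\textbf{Main obstacle.} The delicate point is smoothness of $\Phi_c$ at $c = 0$ together with the identification of the limit as exactly $\widetilde{A}$. Both rest on the vanishing of the linear $w$-term in $\Phi^z$, which is forced by $T_0\Phi(\xi_0) = \xi_0$. I would check this carefully first, since an error there would leave a $c^{-1}$ term in the $z$-component and break the argument.
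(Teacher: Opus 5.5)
Your proof is correct and takes a genuinely different route from the paper. The paper \emph{composes} $\Phi$ with lifted linear maps $\widetilde{B}_t$ to make the differential at $x$ equal to the identity. This first requires a Darboux chart in which $T_x\Phi$ is block diagonal, which Proposition~\ref{prop:linearization} provides only when $A$ is conformally non-singular; hence the Case 2 reduction via Lemma~\ref{lem:CSp_nonsing}. Once the composite is $C^1$-close to the identity on a small $W$, the Legendrian-graph argument of Lemma~\ref{lem:small_contactomorphism} finishes. You instead \emph{conjugate} by the weighted dilations $\delta_c$, a contact Alexander trick. Every term of positive weight, including the off-diagonal block $\partial_z\Phi^w(0)$, dies as $c \to 0$, leaving the weight-zero part $(az+g(w),Aw)$. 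So one explicit smooth family replaces the non-singularity hypothesis, the case split and the generating-function step, and you never need to check that $x$ stays fixed along a Legendrian-graph isotopy. Your smoothness claim at $c=0$ is right: Hadamard's lemma gives $\Phi^z = Z\beta + \sum W_iW_jh_{ij}(W)$ and $\Phi^w = Z\gamma + \sum W_i\gamma_i(W)$, and the rescalings of both are visibly smooth. Contactness of the limit is cleanest from $\Phi_c^*\alpha = (\mu\circ\delta_c)\,\alpha$, where $\Phi^*\alpha = \mu\alpha$, so that $\Phi_0^*\alpha = \mu(0)\,\alpha$ with $\mu(0) = \on{CF}(A)$. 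One small correction: comparing $\lambda_{\on{std}}$-components then gives $dg = \on{CF}(A)\lambda_{\on{std}} - A^*\lambda_{\on{std}}$, the opposite of the sign you wrote. You inherited that sign from Definition~\ref{def:lifted_contactomorphisms}, where the printed sign is off for the contact form $dz+\lambda_{\on{std}}$, and it changes nothing in your argument. The paper's route yields, as a by-product, a $C^1$-small lemma for contact embeddings on arbitrary domains rather than germs. For this proposition, though, only the germ at $x$ matters, and your argument is shorter and uniform.
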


\begin{corollary}[Local Small Hamiltonian] \label{cor:local_small_Ham} Let $\Phi:U \to Y$ be a contact embedding of an open set $U \subset Y$ with a fixed point $x$. Then for any $\epsilon$ and any contact form $\alpha$ on $Y$, there is a contact Hamiltonian
\[
H:[0,1] \times U \to [-\epsilon,\epsilon]
\]
that is compactly supported in $(0,1) \times U$ and with time 1 contact Hamiltonian flow $\Phi_H = \Phi$ near $x$. \end{corollary}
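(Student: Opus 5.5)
We derive Corollary \ref{cor:local_small_Ham} from Proposition \ref{prop:isotopy_to_Id} by producing a contact Hamiltonian for the isotopy, reparametrizing in time, and then shrinking the spatial support.

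First apply Proposition \ref{prop:isotopy_to_Id} to get an open $W \ni x$ and an isotopy of contact embeddings $\Psi_t : W \to Y$ with $\Psi_0 = \on{Id}$, $\Psi_1 = \Phi|_W$ and $\Psi_t(x) = x$ for all $t$.

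Next, reparametrize time by a smooth nondecreasing function $\rho : [0,1] \to [0,1]$ that is $0$ near $t=0$ and $1$ near $t=1$. The isotopy $\Psi_{\rho(t)}$ is then stationary near both endpoints. Its generating vector field $X_t$ is defined on $\Psi_{\rho(t)}(W)$ and is contact. So $K_t = \alpha(X_t)$ is a contact Hamiltonian on $\bigcup_t \Psi_{\rho(t)}(W)$ that vanishes for $t$ near $0$ and near $1$. Since every $\Psi_{\rho(t)}$ fixes $x$, we have $X_t(x) = 0$, hence $K_t(x) = 0$ for all $t$.

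The cutoff is the key step. Choose a small ball $B_\delta$ around $x$ and a bump function $\chi_\delta$ with $\chi_\delta = 1$ on $B_\delta$ and support in $B_{2\delta} \subset U$. Set $H_t = \chi_\delta K_t$. Two things must be checked.
\begin{itemize}
\item \emph{Smallness.} $K$ is smooth on a compact neighbourhood of $[0,1] \times \{x\}$ and vanishes along $[0,1] \times \{x\}$. Therefore $|K_t| \le C\delta$ on $B_{2\delta}$, so $|H| \le 2C\delta < \epsilon$ for $\delta$ small. Only the $C^0$ bound is needed, which is why the gradient of $\chi_\delta$ (of order $1/\delta$) causes no trouble.
\item \emph{Correct time-1 map near $x$.} The flows of $H$ and $K$ agree on any trajectory that stays in $B_\delta$ for all $t \in [0,1]$. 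Because $x$ is fixed by the whole isotopy, continuity gives a smaller neighbourhood $V$ of $x$ with $\Psi_{\rho(t)}(V) \subset B_\delta$ for all $t$. On $V$ the time-1 flow of $H$ is therefore $\Psi_1 = \Phi$.
\end{itemize}

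Finally, $H$ is compactly supported in $(0,1) \times U$: in time because $\rho$ is locally constant near the endpoints, and in space because $\chi_\delta$ has support in $B_{2\delta} \subset U$.

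All the real content lies in Proposition \ref{prop:isotopy_to_Id}, which we take as given. The only point needing care here is the vanishing $K_t(x) = 0$, which gives the linear bound $|K_t| \le C\delta$ near $x$. Without it the cutoff would not make $H$ small.
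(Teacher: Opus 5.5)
Your proof is correct and follows the same route as the paper: apply Proposition \ref{prop:isotopy_to_Id}, reparametrize in time so that the isotopy is stationary near $t=0,1$, use that the generating contact vector field vanishes at the fixed point $x$, and cut off $\alpha(X_t)$ near $x$. You go a little further than the paper by explicitly deriving the bound $|H| \le \epsilon$ from $K_t(x)=0$ and a shrinking cutoff radius. You also check that the cut-off flow still equals $\Phi$ on a small neighbourhood of $x$; the paper leaves both of these points implicit.
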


\begin{proof} (Corollary \ref{cor:local_small_Ham}) Let $W \subset U$ and $\Psi$ be the open set and isotopy in Proposition \ref{prop:isotopy_to_Id}. After reparametrizing in time, we may assume that $\Psi_t$ is constant for $t$ near $0$ or $1$. Then the contact vector field $V$ generating $\Psi$ vanishes identically at $x$ since $\Psi_t(x) = x$ for all $t$, and for $t$ near $0$ and $1$.  We may then choose $H$ to agree with $\alpha(V)$ in a neighborood of $x$ and to vanish outside of $W$. The contact Hamiltonian flow of $H$ must then agree with $\Psi_t$ in a small neighborhood of $x$. 
\end{proof}

The strategy for Proposition \ref{prop:isotopy_to_Id} can be summarized as follows. First, we prove the result in the case where the contact embedding is small in the $C^1$-topology using a standard Legendrian graph trick (Lemma \ref{lem:small_contactomorphism}). We then use composition with a certain standard family of lifted conformally linear contactomorphisms to reduce to the $C^1$-small case.

\vspace{3pt}

\begin{lemma}[$C^1$-Small Case] \label{lem:small_contactomorphism} Fix a closed contact manifold  $(Y,\xi)$ with a Riemannian metric. Then there is a constant $\epsilon$ such that, for any domain $U \subset Y$ and any contact embedding $\Phi:U\to Y$ with
\[
\|\Phi - \on{Id}\|_{C^1} \le \epsilon
\]
there is an isotopy of contact embeddings $\Phi_t$ from $\on{Id}$ to $\Phi$ with the property that any fixed point $x$ of $\Phi$ is a fixed point of $\Phi_t$ for all $t$. \end{lemma}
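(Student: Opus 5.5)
The plan is the standard Legendrian graph trick. Consider the contact product $Y \times Y \times \R$ with contact form $\alpha_2 - e^{\tau}\alpha_1$; here $\alpha$ is a fixed contact form on $Y$, and $\alpha_1,\alpha_2$ are its pullbacks under the two projections. A contact embedding $\Phi:U \to Y$ with $\Phi^*\alpha = e^{g}\alpha$ determines the Legendrian graph
\[
\Lambda_\Phi = \{(x,\Phi(x),g(x)) : x \in U\}.
\]
The identity gives the diagonal Legendrian $\Lambda_{\on{Id}} = \{(x,x,0)\}$. By the Weinstein neighborhood theorem for Legendrians, a neighborhood of $\Lambda_{\on{Id}}$ is contactomorphic to a neighborhood of the zero section in $J^1\Lambda_{\on{Id}} = T^*Y \times \R$. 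I would fix this neighborhood once, using compactness of $Y$, so that its size and the constants below depend only on $(Y,\xi)$ and the metric.

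Step 1. If $\|\Phi - \on{Id}\|_{C^1} \le \epsilon$, then $g$ is $C^0$-small and $\Lambda_\Phi$ is $C^1$-close to $\Lambda_{\on{Id}}$ over $U$. The $C^0$-smallness of $g$ comes from comparing $\Phi^*\alpha$ with $\alpha$; the paper's $C^1$-norm presumably controls this, since $\Phi^*\alpha - \alpha$ is controlled by $\|\Phi - \on{Id}\|_{C^1}$. Under the Weinstein chart, $\Lambda_\Phi$ is therefore a $C^1$-small section of $J^1Y$ over (an open subset of) $Y$. A Legendrian section of $J^1$ has the form $j^1f = \{(x,df(x),f(x))\}$ for a function $f$ on $U$. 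So $\Lambda_\Phi = j^1 f$, with $f$ small in $C^1$ (indeed in $C^2$).

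Step 2. Set $\Lambda_t = j^1(tf)$. These are Legendrian and $C^1$-close to the diagonal, so they are graphs of contact embeddings $\Phi_t$ defined on $U$. This gives an isotopy with $\Phi_0 = \on{Id}$ and $\Phi_1 = \Phi$. The point to check is that $C^1$-closeness of $\Lambda_t$ to the diagonal forces the projection of $\Lambda_t$ to the first factor to be a diffeomorphism onto its image, and forces its projection to the second factor to be an embedding. This again follows once $\epsilon$ is below a uniform threshold.

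Step 3 (fixed points). A fixed point $x$ of $\Phi$ with $g(x)=0$ corresponds to an intersection point of $\Lambda_\Phi$ with $\Lambda_{\on{Id}}$, i.e. to a point where $df(x)=0$ and $f(x)=0$. Those conditions are preserved by $tf$, so $x$ stays fixed by every $\Phi_t$. The main obstacle is that a fixed point may have $g(x) \neq 0$, meaning the conformal factor there is not $1$. Such a point is not a zero of $j^1f$, only a point with $df(x)$ in a certain ``vertical'' direction.

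To deal with this, I would choose the Weinstein identification carefully so that the fibre of $J^1\Lambda_{\on{Id}}$ over $(x,x,0)$ contains the whole segment $\{(x,x,\tau)\}$. This is possible since $\partial_\tau$ is transverse to the diagonal and can be taken as the Reeb-type direction, and the points $(x,x,\tau)$ are Legendrian-transverse data. Under such a chart, the condition ``$\Phi(x)=x$'' reads ``$df(x) = 0$'', possibly with $f(x) \neq 0$, and this condition is also linear in $f$. Hence it is preserved under $f \mapsto tf$, and every fixed point of $\Phi$ remains fixed along the isotopy. If a global choice of chart with this property is awkward, I would fall back on the observation that the set $\{(x,x,\tau)\}$ is a Legendrian-foliated $\R$-family. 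In that case I would construct the chart directly via the contact form $\alpha_2 - e^\tau \alpha_1$ by taking $\tau$ as the $J^1$-coordinate $z$ near the diagonal.
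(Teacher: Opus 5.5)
Your overall route is the paper's: the Legendrian graph in $Y\times Y\times\R$, a Weinstein chart onto a neighbourhood of the zero section of $J^1Y$, then the family $j^1(tf)$. The first half of your Step~3 is right. A fixed point with $g(x)=0$ is a point where $f(x)=0$ and $df(x)=0$, and that condition is linear in $f$. This is also the only case used later: in the proof of Proposition~\ref{prop:isotopy_to_Id} the lemma is applied at the origin, where $T\Phi_1=\on{Id}$, so the conformal factor there is $1$. The paper's proof does not treat fixed points with $g(x)\neq 0$ separately, so you were right to flag them. The problem is your repair.

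The chart you ask for cannot exist. Since $\alpha_1$ and $\alpha_2$ vanish on $\partial_\tau$, we have $(\alpha_2-e^\tau\alpha_1)(\partial_\tau)=0$. So the $\tau$-direction lies in the contact distribution; the Reeb field is $(0,R_\alpha,0)$, with $R_\alpha$ the Reeb field of $\alpha$. Each segment $\{(x,x,\tau)\}$ is therefore a Legendrian arc, and no contactomorphism can send it onto $\{(x,0,z)\}$, which is tangent to the Reeb field $\partial_z$ of $J^1Y$.

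More precisely, the contact form of $J^1Y$ restricts to $dz$ on the fibre $T^*_xY\times\R_z$, so a Legendrian arc in that fibre through $(x,0,0)$ lies in $T^*_xY\times\{0\}$. Hence in any chart placing the segment in the fibre over $x$, the condition $\Phi(x)=x$ reads ``$f(x)=0$ and $df(x)\in c_x$'' for some curve $c_x\subset T^*_xY$ through $0$. It never reads ``$df(x)=0$ with $f(x)$ free''. Your fallback of using $\tau$ as the $z$-coordinate fails for the same reason.

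For $f\mapsto tf$ to preserve this condition you need $c_x$ to be a line through the origin. The natural candidate is $(x,x,\tau)\mapsto(x,(e^\tau-1)\alpha_x,0)$. It matches the induced forms: with the convention $dz-\lambda_{\mathrm{can}}$, both pull back to $(1-e^\tau)\alpha$. With such a chart, $\Phi_t$ fixes $x$ with conformal factor $1+t(e^{g(x)}-1)$. But you would have to prove a relative Weinstein--Moser statement producing a Legendrian neighbourhood chart that extends this map, and nothing in the proposal does that.

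Separately, Step~1's claim that $\Lambda_\Phi$ is $C^1$-close to the diagonal needs $dg$ to be small. That is a second-derivative condition, not implied by $\|\Phi-\on{Id}\|_{C^1}\le\epsilon$. For example, take the time-one map of the contact Hamiltonian $\tfrac12(z-xy)^2$ on $(\R^3,dz-y\,dx)$. It has $T_0\Phi=\on{Id}$ but $dg_0=dz$, so shrinking the domain makes $\|\Phi-\on{Id}\|_{C^1}$ small while the tangent plane of $\Lambda_\Phi$ at the origin stays tilted. The paper's proof makes the same leap.
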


\begin{proof} This is a standard argument using the Legendrian graph (cf. \cite{ms2017} for the analogous argment in the symplectic setting). We consider two cases.

\vspace{3pt}

\noindent {\bf Case 1: Contactomorphisms.} First assume that $U = Y$ so that $\Phi$ is a contactomorphism. Fix a contact form $\alpha$ on $Y$ and consider the contact manifold
\[Y \times \R_s \times Y \qquad\text{with contact form}\qquad \pi_1^*\alpha - e^s\pi_2^*\alpha\]
Here $\pi_i$ denotes the projection map from $Y \times \R_s \times Y$ to the $i$th factor of $Y$. Any contactomorphism $\Phi:Y \to Y$ determines a Legendrian graph $\Gamma_\Phi \subset Y \times \R_s \times Y$ given by
\[\Gamma_\Phi = \{(x,F(x),\Phi(x) \; : \; x \in Y\} \qquad\text{where $F:Y \to \R_+$ is defined by $\Phi^*\alpha = F\alpha$}\]
Let $\Gamma_{\on{Id}} \subset Y \times \R_s \times Y$ denote the Legendrian diagonal given by the graph of $\on{Id}$. We can take a Legendrian Weinstein neighborhood around $\Gamma_{\on{Id}}$ identified with an open neighborhood
\[
W \subset \R \times T^*Y \qquad\text{of the diagonal $\Gamma_{\on{Id}} = Y$}
\]
Let $\pi:\R \times T^*Y \to Y$ denote the projection to the base in the jet bundle $\R \times T^*Y$. There is an $\epsilon$ such that if $\|\Phi - \on{Id}\|_{C^1} \le \epsilon$ then $\Gamma_\Phi$ is identified with the 1-jet of a smooth function
\[G_\Phi:Y \to \R \] Conversely, any sufficiently $C^2$-small function $G$ determines a Legendrian $\Gamma_G$ contained in $W$ that is $C^1$-close to the zero section via the 1-jet, and thus a contactomorphism $\Psi_G$ defined by
\[
\Psi_G(x) = \pi_2(\pi_1^{-1}(x) \cap \Gamma_G)
\]
In particular, we can write a contact isotopy $\Phi:[0,1] \times Y \to Y$ with $\Phi_0 = \on{Id}$ and $\Phi_1 = \Phi$ by taking $\Phi_t = \Psi_{tG}$ where $G$ is the function corresponding to $\Phi$. The fixed points of $\Psi_G$ are precisely the zeros of $G$, and so it is simple to see that $\Phi_t = \Psi_{tG}$ has the same fixed points for all $t$. 

\vspace{3pt}

\noindent {\bf Case 2: Embeddings.} Now assume more generally that $U$ is any domain in $Y$. Choose a constant $\epsilon$ as in Case 1. We can apply the construction of Case 1 to acquire a Legendrian with boundary
\[
\Gamma_\Phi \subset W \subset \R \times T^*Y\qquad\text{that is $C^1$-close to $U \subset Y$}
\]
This Legendrian $\Gamma_\Phi$ is the 1-jet of a smooth function $G$ defined over the domain $U' = \pi(\Gamma_\Phi)$ where $\pi$ is projection from $\R \times T^*Y$ to $Y$. We can extend $G$ to a smooth function over all of $Y$ and this defines a contactomorphism
\[
\Psi_G:Y \to Y \qquad\text{such that}\qquad \Psi_G|_U = \Phi
\]
Then we take our isotopy of embeddings $\Phi_t$ to be the restriction of the contact isotopy $\Psi_{tG}$ for $t \in [0,1]$ to the domain $U$.  \end{proof}

We can now prove the full version of Proposition \ref{prop:isotopy_to_Id} by using lifted conformally symplectic linear maps to reduce to the case of $C^1$-small contact embeddings.

\begin{proof} (Proposition \ref{prop:isotopy_to_Id}) By possibly shrinking $U$ and passing to a Darboux chart, we may assume that $Y = D$ is the standard contact disk in $\R^{2n+1}$ and that $x = 0$ is the origin. Let $A \in \CSp(2n)$ be the conformally symplectic linear map given by $T\Phi|_\xi$ at $0$. We consider two cases.

\vspace{3pt}

\noindent {\bf Case 1: Non-Singular Differential.} First suppose that $A$ is conformally non-singular. Then by Proposition \ref{prop:linearization}, we can pass to a Darboux chart $W$ where $T\Phi = \widetilde{A}$ at $x = 0$. Since the conformally symplectic group $\CSp(2n)$ is connected, we can choose a path
\[
B:[0,1] \to \on{CSp}(2n) \qquad\text{with}\qquad B_0 = \on{Id} \text{ and }B_1 = A^{-1}
\]
We can then lift $B$ to a path of lifted contactomorphisms $\widetilde{B}_t$. Note that $\widetilde{B}_t$ fixes the origin for all $t$ by Lemma \ref{lem:CSp_differential}. Thus the path of contact embeddings $\Phi \circ \widetilde{B}_t$ is then an isotopy of contact embeddings from a small neighborhood $W$ of $0$ to $D$ such that
\[
T\Phi_1 = \on{Id} \text{ at the origin $x = 0$}\qquad\text{and}\qquad \Phi_t(0) = 0 \text{ for all $x$}
\]
By further shrinking the chart $W$, we may assume that $\|\Phi_1 - \on{Id}\|$ is arbitrarily small on $W$. We may also assume that $W$ is a domain (i.e. a compact  codimension zero sub-manifold with boundary). Now Lemma \ref{lem:small_contactomorphism} provides an isotopy $\Psi_t$ from $\Phi_1$ to $\on{Id}$ such that $\Psi_t(0) = 0$ for all $t$. The concatenation of $\Phi_t$ and $\Psi_t$ therefore gives the desired isotopy from $\on{Id}$ to $\Phi$ on $W \subset U$.

\vspace{3pt}

\noindent {\bf Case 2: General Differential.} In the general case, choose a path $B_t$ in $\on{CSp}(2n)$ so that $B_0 = \on{Id}$ and $AB_1$ is conformally non-singular. This path exists by Lemma \ref{lem:CSp_nonsing}. Then the isotopy $\Psi_t = \Phi \circ \widetilde{B}_t$ restricted to a small open $W \subset U$ is an isotopy from $\Phi$ to a map satisfying the assumptions of Case 1. This reduces to that case.
\end{proof}

\subsection{Cycle Creation For Maps} \label{subsec:cycle_creation_for_maps} Here we construct contact vector fields (and thus arbitrarily small contactomorphisms) that contain heteroclinic cycles between two hyperbolic fixed points. 

\begin{proposition}[Contact Vector Field With Cycle] \label{prop:contact_vector_field_with_cycle} For any $n \ge 2$, there is a contact vector field
\[
V \qquad\text{on the standard contact disk $D_{\on{std}}$ of dimension $2n-1$}
\]
that has hyperbolic critical points $O$ of index $n-1$ and $P$ of index $n$ such that
\[
W^s(V,O) \cap W^u(V,P) \neq \emptyset \qquad\text{and}\qquad W^u(V,O) \cap W^s(V,P) \neq \emptyset 
\]\end{proposition}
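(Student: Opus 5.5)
The plan is to first reduce to dimension three by a product construction, and then build the three-dimensional example by hand from two invariant arcs: one transverse to $\xi$ and one Legendrian.

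\emph{Reduction to $n=2$.} Write the standard contact space of dimension $2n-1$ as $\R^3 \times T^*\R^{n-2}$ with contact form $dz - y\,dx + \sum_{i} q_i\, dp_i$. Suppose $H_0(z,x,y)$ is a contact Hamiltonian on $\R^3$. I would take
\[
H = H_0 + \sum_{i=1}^{n-2} q_i p_i .
\]
Along $\{q=p=0\}$, the contact vector field of $H$ should restrict to that of $H_0$. In the normal directions the linearization should be the hyperbolic split $\dot q_i \approx -q_i$, $\dot p_i \approx p_i$, up to terms proportional to the Reeb eigenvalue $\partial_z H_0$. The plan is to scale the quadratic term by a large constant $K$ so that these cross terms are dominated. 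Then each pair $(q_i,p_i)$ contributes exactly one stable and one unstable direction at every zero. This raises both indices by $n-2$ and leaves the connecting orbits inside $\R^3 \times 0$. Transversality of the robust intersection is preserved, since the extra directions lie in both $W^u(O)$ and $W^s(P)$ in complementary ways. So it suffices to build, on the contact $3$-disk, a contact vector field with hyperbolic zeros $O$ of index $1$ and $P$ of index $2$ forming a cycle. I would reduce the $C^0$-size at the end by conjugating with the contact dilations $(z,x,y)\mapsto(c^2z,cx,cy)$ and rescaling time.

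\emph{Eigenvalue bookkeeping in dimension three.} At a zero of a contact vector field, the linearization is block triangular. One block is the Reeb eigenvalue $c$, and the other is a $2\times 2$ block on $\xi$ whose eigenvalues sum to $c$ (the conformal factor; compare Lemma \ref{lem:CSp_differential}). For index $1$ at $O$ I would take $c>0$ and a saddle on $\xi$. For index $2$ at $P$ I would take $c<0$ and a saddle on $\xi$. Concretely, near each point $H$ is a quadratic model $H = c z + a xy$ (in coordinates where this gives the required sign pattern), which by construction has these eigenvalues.

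\emph{Global construction.} I would arrange two invariant arcs joining $O$ and $P$.
\begin{itemize}
\item[(i)] A segment of the $z$-axis, where $H = h(z)$ with $h(0)=h(1)=0$, $h>0$ on $(0,1)$, $h'(0)>0$ and $h'(1)<0$. Here $V = h\,\partial_z$ along the axis, so the segment is an orbit from $O=(0,0,0)$ to $P=(1,0,0)$ leaving $O$ in its unstable Reeb direction and entering $P$ in its stable Reeb direction. This orbit lies in $W^u(O)\cap W^s(P)$, which is the robust intersection.
\item[(ii)] A Legendrian arc $L$ from $P$ back to $O$, leaving $P$ tangent to its one-dimensional unstable $\xi$-direction and entering $O$ tangent to its one-dimensional stable $\xi$-direction. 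I would realize $L$ as the Legendrian lift of a planar curve in the front and choose $H$ to vanish on $L$. A contact vector field whose Hamiltonian vanishes on a Legendrian is tangent to it. The induced vector field on $L$ is determined by the normal derivative of $H$ along $L$, so I can prescribe it to be nonvanishing in the interior and pointing from $P$ to $O$. Then $L \subset W^u(P)\cap W^s(O)$, which is the fragile intersection.
\end{itemize}

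The main obstacle is compatibility: a single $H$ must vanish on $L$, equal $h(z)$ to first order along the $z$-axis, and restrict to the prescribed quadratic models near $O$ and $P$. This has to hold without creating extra zeros or spoiling the tangency directions of $L$ at the endpoints. I would first choose the front of $L$ so that its tangent lines at $O$ and $P$ coincide with the chosen saddle eigendirections of the quadratic models. Then I would define $H$ by patching with a partition of unity: the local models near $O$ and $P$; a function $h(z)+O(|x|^2+|y|^2)$ in a tube around the axis; and a function of the form $\rho\cdot(\text{distance to } L)$ in a tube around $L$, with $\rho$ chosen to give the prescribed flow on $L$. Outside these tubes I would cut $H$ off. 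Conditions (i) and (ii) are closed and local, and the two arcs meet only at $O$ and $P$, where the local models are already consistent with both. Since the fixed points and the connecting orbits are then determined locally, the patching should go through; the delicate check is the consistency at $O$ and $P$ themselves.
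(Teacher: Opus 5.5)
Your proposal is correct. Its geometric core is the same as the paper's: a loop formed by a Legendrian arc and a Reeb chord, capped by quadratic saddle models $\gamma z + a\,xy$ that play the role of the paper's $H_\pm$.
\begin{itemize}
\item The Legendrian arc is invariant because $H$ vanishes on it, and it gives the fragile heteroclinic $P \to O$.
\item The Reeb chord is invariant because $dH$ is a multiple of $dz$ along it, so $V \parallel R$, and it gives the robust heteroclinic $O \to P$.
\end{itemize}

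The two routes differ in how dimension and globalization are handled. The paper works directly in dimension $2n-1$. Lemma \ref{lem:single_het} builds the model in $J^1\R^{n-1}$, where the $(n-1)$-dimensional zero section supplies all Legendrian stable and unstable directions at once. The paper then transplants this model through a strict Weinstein neighborhood of the Legendrian unknot and sets $V = R$ away from a thin neighborhood. The unknot's Reeb chord therefore becomes a flowline for free, and nothing has to be prescribed along the long chord.

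You instead reduce to dimension three, and your product step is exact rather than approximate. For $H = H_0 + K\sum q_ip_i$ and $\alpha = dz - y\,dx + \sum q_i\,dp_i$, one computes $V = V_0 + \sum_i\big[(\partial_z H_0 - K)q_i\,\partial_{q_i} + Kp_i\,\partial_{p_i}\big]$. Hence $\{q=p=0\}$ is invariant, the linearization along it is block diagonal, and for $K > \gamma_O$ both indices rise by exactly $n-2$. In dimension three, your partition-of-unity patching is legitimate because both constraints survive convex combination. Along the $z$-axis every local piece has value $h(z)$ and differential $h'(z)\,dz$. Along $L$ every piece vanishes and has a normal derivative of the same sign.

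What each route buys: yours is more elementary and front-based, and it isolates a reusable index-raising trick. The paper's is shorter and uniform in dimension. The price of your route is an extra patching argument and a hyperbolic factor that is not compactly supported; neither matters for the statement as posed.

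Two details need tightening.
\begin{itemize}
\item Tangency of $L$ to the eigenlines at $O$ and $P$ is not enough. Near $O$ your $V$ is linear, so $W^s_{\mathrm{loc}}(O)$ is exactly the $x$-axis, and an invariant arc entering $O$ on which $H$ vanishes must coincide with it there. So choose the front exactly horizontal on short intervals at both ends. For $\alpha = dz - y\,dx$ take $a_O>0$ and $a_P<0$, so that $\partial_x$ is the stable line at $O$ and the unstable line at $P$.
\item The expression $\rho\cdot(\text{distance to } L)$ is not smooth along a curve in a $3$-manifold. Instead use $-f(q)\,p$ in Weinstein coordinates $(q,p,z)$ on $J^1L$ with $\alpha = dz - p\,dq$. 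This function vanishes on $L$ and induces $f(q)\,\partial_q$ there.
\end{itemize}
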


\begin{corollary}[Cycle Creation For Identity] \label{cor:cycle_creation_for_Id} For any $\epsilon$ and any $n \ge 2$, there is a contact Hamiltonian
\[
H:[0,1] \times D_{\on{std}} \to [-\epsilon,\epsilon] \qquad\text{on the standard disk $D_{\on{std}}$ of dimension $2n-1$}
\]
that is compactly supported in $(0,1) \times \on{int}(D_{\on{std}})$ and such that $\Phi_H$ contains a simple heteroclinic cycle consisting of two hyperbolic fixed points $O$ and $P$, where $O$ is the origin.
\end{corollary}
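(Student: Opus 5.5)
The plan is to derive Corollary \ref{cor:cycle_creation_for_Id} from Proposition \ref{prop:contact_vector_field_with_cycle} by rescaling and cutting off the vector field. The underlying observation is that the time-one map of a small multiple of a contact vector field is a small contactomorphism. A hyperbolic zero of the vector field becomes a hyperbolic fixed point of that map, and the stable and unstable manifolds carry over unchanged.

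\textbf{Step 1: normalize the vector field.} Let $V$ be the contact vector field on $D_{\on{std}}$ from Proposition \ref{prop:contact_vector_field_with_cycle}, with hyperbolic zeros $O$ of index $n-1$ and $P$ of index $n$ and heteroclinic intersections in both directions. Conjugating by a contact translation/Darboux chart, we may assume $O$ is the origin. The cycle (two points and two connecting orbits) is compact, so it lies in a compact set $K \subset \on{int}(D_{\on{std}})$. The relevant local stable and unstable manifolds, together with the heteroclinic orbit segments, can be taken to lie in $K$ as well. If necessary we precompose with the contact dilation $(z,x,y) \mapsto (c^2 z, cx, cy)$ to shrink everything into a small ball; this is not strictly needed, since only the Hamiltonian values must be small, which Step 2 handles.

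\textbf{Step 2: build the Hamiltonian.} Write $V = V_h$ for the contact Hamiltonian $h = \alpha_{\on{std}}(V)$. Choose a cutoff $\chi$ on $D_{\on{std}}$ equal to $1$ on a neighborhood of $K$ and compactly supported in the interior. Choose $\rho:[0,1]\to[0,\infty)$ compactly supported in $(0,1)$ with $\int_0^1 \rho = 1$. Set $H(t,x) = \delta\,\rho(t)\,\chi(x)\,h(x)$ for a small $\delta>0$; then $|H| \le \epsilon$ once $\delta$ is small, and $H$ is compactly supported in $(0,1)\times\on{int}(D_{\on{std}})$. On the neighborhood where $\chi = 1$, the vector field is $V_{H_t} = \delta\rho(t) V$. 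Since these fields are all parallel, the time-one map there equals the time-$\delta$ flow $\phi^V_\delta$ of $V$, at least on the set of points whose $V$-orbit stays in $\{\chi=1\}$ for time $\delta$. This set contains a neighborhood of $K$ when $\delta$ is small.

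\textbf{Step 3: transfer the cycle.} The points $O$ and $P$ are fixed by $\phi^V_\delta$ with differentials $e^{\delta DV}$, which are hyperbolic with the same indices. Stable and unstable manifolds of a flow's zero coincide with those of its time-$\delta$ map. So the heteroclinic points $q\in W^s(V,O)\cap W^u(V,P)$ and $q'\in W^u(V,O)\cap W^s(V,P)$ lie on stable and unstable manifolds for $\Phi_H$ as well.

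\textbf{Main obstacle.} The delicate point is that $\Phi_H$ agrees with $\phi^V_\delta$ only near $K$, while the global stable and unstable sets of $\Phi_H$ are defined by forward and backward iteration. This is resolved by the invariance of $K$: the whole $V$-orbits of $q$ and $q'$ lie in $K$, and $K$ is contained in the open region where $\Phi_H = \phi^V_\delta$. Therefore every iterate $\Phi_H^k(q)$ equals $\phi^V_{k\delta}(q)$ for all $k\in\Z$, and these converge to $O$ or $P$ as required. This gives a simple heteroclinic cycle of $\Phi_H$ consisting of the hyperbolic fixed points $O$ (the origin) and $P$, of indices $n-1$ and $n$.
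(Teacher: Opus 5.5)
Your proposal is correct and follows the same route as the paper: take the contact vector field $V$ of Proposition~\ref{prop:contact_vector_field_with_cycle}, recentre so that $O$ is the origin, and realize the time-$\delta$ flow of $V$ as $\Phi_H$ for a small Hamiltonian reparametrized to be supported in times $(0,1)$. The one difference is your spatial cutoff $\chi$, together with the argument that the compact $V$-invariant set $K$ containing the cycle lies where $\Phi_H=\phi^V_\delta$. This step is in fact needed: the paper's proof asserts that the flow of $\delta V$ is the identity near $\partial D$, but the $V$ built in the proposition equals the Reeb field $R$ (with Hamiltonian $1$) outside $U_\epsilon$, so without your cutoff $H$ would not be compactly supported.
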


\begin{proof} Let $V$ be the vector field in Proposition \ref{prop:contact_vector_field_with_cycle} with critical points $O$ and $P$. We may assume after applying a contactomorphism that $O$ is the origin. Let $\Phi:[0,1] \times D \to D$ be the flow of $\delta V$ for small $\delta$. Evidently $\Phi$ is identity near $\partial D$. Moreover, we can reparametrize in time to a contact isotopy $\Phi'$ generated by a vector field $V'$ with $V'_t = 0$ for $t$ near $0$ and $1$, and such that $|V'_t| \le 2\delta |V|$. If $\delta$ is chosen small, the Hamiltonian $H$ for $V'$ will satisfy $|H| \le \epsilon$ and $\Phi'_1$ will be the time $\delta$ flow of $V$, which will have a heteroclinic cycle consisting of the origin $O$ and $P$.
\end{proof}

For the proof of Proposition \ref{prop:contact_vector_field_with_cycle} , we first construct a certain standard local model for one of the heteroclinics. Denote the standard contact Euclidean space by
\[
\R^{2n-1} = \R \times T^*\R^{n-1}  \qquad\text{with the standard contact form}\qquad dz + \lambda_{\on{std}}
\]
Let $O_{\on{std}}$ denote the origin of $\R^{2n-1}$ and let $P_{\on{std}}$ denote the point in the zero section defined by
\[P_{\on{std}} = (1,0 \dots 0) \in \R^{n-1} \subset \R \times T^*\R^{n-1}\]
Let $\Xi_{\on{std}}$ denote open line segment in $\R^{n-1}$ connecting $O_{\on{std}}$ and $P_{\on{std}}$ given by
\[
\Xi_{\on{std}} = \{(x_1,0 \dots 0) \; : \; 0 < x_1 < 1\} \subset \R^{n-1}
\]
Finally, let $\Gamma^u_{\on{std}}$ and $\Gamma^s_{\on{std}}$ denote the positive half of the Reeb trajectory starting at $O_{\on{std}}$ and the negative half of the Reeb trajectory ending on $P_{\on{std}}$ respectively.
\[
\Gamma^u_{\on{std}} = [0,\infty)_z \times O_{\on{std}} \subset \R^{2n-1} \qquad\text{and}\qquad \Gamma^s_{\on{std}} = (-\infty,0]_z \times P_{\on{std}} \subset \R^{2n-1}
\]

\begin{lemma}[Single Heteroclinic] \label{lem:single_het} For any $\epsilon$ and any $n \ge 2$, there is a contact vector field $V$ on the standard contact manifold $\R^{2n-1}$
with the following properties.
\begin{itemize}
    \item[(a)] There are hyperbolic critical points of $V$ at $O_{\on{std}}$ of index $n-1$ and $P_{\on{std}}$ of index $n$.\vspace{2pt}
    \item[(b)] The segment $\Xi_{\on{std}}$ from $P_{\on{std}}$ to $O_{\on{std}}$ is a heteroclinic, or equivalently
    \[\Xi_{\on{std}} \subset W^s(V,O_{\on{std}}) \cap W^u(V,P_{\on{std}})\]
    \item[(c)] The positive Reeb trajectory $\Gamma^u_{\on{std}}$ starting at $O_{\on{std}}$ is contained in $W^u(V,O_{\on{std}})$ and the negative Reeb trajectory $\Gamma^s_{\on{std}}$ ending on $P_{\on{std}}$ is contained in $W^s(V,P_{\on{std}})$.\vspace{2pt}
    \item[(d)] The vector field $V$ satisfies $V = \partial_z$ outside of the set
    \[
    U_\epsilon = \on{Nbhd}_\epsilon(\R^{n-1}) \cap \on{Nbhd}_\epsilon(\Gamma^u_{\on{std}} \cup \Xi_{\on{std}} \cup \Gamma^s_{\on{std}})
    \]
\end{itemize}
\end{lemma}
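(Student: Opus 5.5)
Our plan is to build $V$ from a contact Hamiltonian $H = \alpha(V)$ on $\R^{2n-1}$ with $\alpha = dz + \lambda_{\on{std}}$, using $\lambda_{\on{std}} = \sum y_i dx_i$, so that $V = \partial_z$ corresponds to $H \equiv 1$. We will write $H = 1 - \chi\cdot(1 - K)$, where $\chi$ is a cutoff equal to $1$ near $\Gamma^u_{\on{std}} \cup \Xi_{\on{std}} \cup \Gamma^s_{\on{std}}$ and supported in $U_\epsilon$. The model function $K$ will be built so that its contact vector field $V_K$ has the required features near this set. Recall the formulas for the contact vector field of a Hamiltonian $K(z,x,y)$ with this form:
\[
\dot z = K - y\,\partial_y K,\qquad \dot x = \partial_y K,\qquad \dot y = -\partial_x K + y\,\partial_z K .
\]
On the Legendrian zero section $\{z=0,y=0\}$ these reduce to $\dot x = \partial_y K$, $\dot z = K$ and $\dot y = -\partial_x K$.

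The first step is to obtain the critical points and the heteroclinic segment. We choose $K = g(z)\,h(x) + y\cdot w(x)$, where $h$ vanishes exactly at $x = O$ and $x = P$ along the $x_1$-axis, and $w$ is a vector field on $\R^{n-1}$ with zeros at $O$ and $P$ and a flowline $\Xi_{\on{std}}$ from $P$ to $O$. With this choice the zero section restricted to $h = 0$ is invariant, and the flow on the $x_1$-axis is $\dot x = w$. A cleaner way to see the structure is to use the lifted picture: if $K = y\cdot w(x) + c(x)\,z$-type terms, then $V_K$ is the contact lift of the flow of $w$ on $\R^{n-1}$ (cotangent lift plus a $z$-component). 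The stable and unstable dimensions split as follows. The $x$-directions carry the indices of $w$ at each zero. The $y$-directions carry the negated transposed eigenvalues, shifted by $\partial_z K$. The $z$-direction carries the eigenvalue $\partial_z K$.

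To get index $n-1$ at $O$, we take $O$ to be a sink of $w$, which gives $n-1$ stable $x$-directions. We then make $\partial_z K = c > 0$ at $O$, large enough that the $y$-eigenvalues $-\mathrm{Dw}^T + c$ are all positive. With the $z$-direction also unstable, the stable dimension is exactly $n-1$. At $P$ we make $P$ a source of $w$ along $x_1$, or a saddle as required, and choose $\partial_z K = -c$ at $P$. The $z$-direction is then stable, and the bookkeeping gives stable dimension $n$. This is the step where we must be careful. The $y$-eigenvalues equal $\partial_zK$ minus the eigenvalues of $\mathrm{Dw}^T$, and they must have the right signs. Choosing $w$ as a gradient-like field with $P$ a source and $|c|$ large at both ends should work: at $P$ the $x$-directions are unstable ($n-1$ of them) while the $y$- and $z$-directions are stable ($n$ of them).

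Hyperbolicity is then checked from the resulting block-triangular linearization. We realize $\partial_z K = \pm c$ by adding $z\,c(x)$ with $c(O) > 0$ and $c(P) < 0$. We also keep $K(O) = K(P) = 0$, so that these are genuine zeros of $V_K$.

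For property (c), we note that along the $z$-axis over $O$ (where $x = O$, $y = 0$) the equations reduce to $\dot z = K(z,O,0)$ with $\dot x = \dot y = 0$. The ray $\Gamma^u_{\on{std}}$ is therefore invariant. We arrange $K(z,O,0) > 0$ for $z > 0$, vanishing to first order at $0$ with positive slope, so that the ray flows away from $O$ and lies in $W^u(O)$. Since we need $V = \partial_z$ far out, we let $K(z,O,0)$ increase from its linear behavior near $0$ up to $1$ for $z$ large. The analogous construction at $P$ uses $K(z,P,0) > 0$ for $z < 0$, with the flow moving toward $P$. The remaining task is gluing. We replace $c(x)z$ by a function $k(z,x)$ that equals $cz$ near $z = 0$ and equals $1$ for $|z|$ large along the relevant rays, and we interpolate with $H \equiv 1$ inside $U_\epsilon$ using $\chi$. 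This interpolation is the main obstacle: the cutoff must not create new zeros, and it must not destroy the invariance of $\Xi_{\on{std}}$ and of the two rays. We will handle this by choosing $\chi$ to be identically $1$ on a neighborhood of the three curves. We also need every point where $\chi \neq 1$ to have $H \neq 0$, and we arrange this by choosing $K$ close to $1$ on the transition region. Properties (a), (b) and (c) are local and remain unaffected, while (d) holds by construction.
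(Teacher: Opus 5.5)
Your model Hamiltonian is the paper's. The paper's $\phi H_+ + (1-\phi)H_-$, with $H_+ = 2z - \sum_i x_iy_i$ and $H_-$ its negated translate to $P_{\on{std}}$, is exactly your $y\cdot w(x) + c(x)z$ with $w = -\phi x + (1-\phi)(x-e_1)$ and $c = 4\phi - 2$. Your block-triangular eigenvalue count is correct, and no largeness of $|c|$ is needed: at a sink $-Dw^T + c$ already has positive spectrum for any $c>0$. Your zero-section argument for (b) is also correct.

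The trouble is the gluing paragraph. You single it out as the main obstacle, but as written it does not resolve it.

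\emph{First issue.} $\chi$ cannot be identically $1$ near the three curves and also supported in $U_\epsilon$. Since $U_\epsilon \subset \on{Nbhd}_\epsilon(\R^{n-1}) \subset \{z^2+|y|^2<\epsilon^2\}$ and the rays are unbounded in $z$, the cutoff must vary along $\Gamma^u_{\on{std}}$ and $\Gamma^s_{\on{std}}$ somewhere in $|z|<\epsilon$. Invariance of the rays there is not addressed. In addition, on the ray over $O_{\on{std}}$ you have $\partial_x K = z\nabla c(O_{\on{std}})$. So your claim $\dot x=\dot y=0$ there requires $c$, and later $k$, to be independent of $x$ near $O_{\on{std}}$ and $P_{\on{std}}$. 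In the paper this is supplied by $\phi\equiv 1$ near $O_{\on{std}}$ and $\phi\equiv 0$ near $P_{\on{std}}$.

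\emph{Second issue.} The requirement that $H\neq 0$ wherever $\chi\neq 1$ cannot be met by making $K$ close to $1$ on the transition region. Near $\Xi_{\on{std}}\subset\{z=0\}$ you have $K = y\cdot w(x)$, which is small and takes both signs. Hence $H = 1-\chi(1-K)$ is forced to vanish at some points where $0<\chi<1$. Fortunately this requirement is superfluous: neither the lemma nor its use in Proposition \ref{prop:contact_vector_field_with_cycle} excludes further zeros of $V$.

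\emph{The fix.} The paper's repair is simpler than your extra interpolation $k$. Keep $K$ unmodified, and let the single cutoff depend only on $z$ in a neighborhood of the rays while equalling $1$ near $\Xi_{\on{std}}$. Then $\partial_xH=\partial_yH=0$ on $\Gamma^u_{\on{std}}$, so $V = \bigl((1-\chi)+\chi cz\bigr)\partial_z$ there. This is positive for $z>0$, since it is a convex combination of $1$ and $cz>0$. The same holds on $\Gamma^s_{\on{std}}$, where $c<0$ and $z<0$.

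Your two-cutoff scheme can also be salvaged, provided $k\equiv 1$ on a full neighborhood of the rays for $|z|\ge z_0$ with $z_0<\epsilon$, and $\chi\equiv 1$ near the curves only for $|z|\le z_0$. The reason is that wherever $K=1$ and $dK=0$ along a ray, one has $H=1$ and $dH=-(1-K)\,d\chi+\chi\, dK=0$ regardless of $\chi$.
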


\begin{proof} We start by fixing some choices and notation. First, let $V_+$ denote the contact vector field
\[
V_+ = 2z\partial_z + 3\sum_i y_i \partial_{y_i} - x_i \partial_{x_i} \qquad\text{with Hamiltonian}\qquad H_+ = 2z - \sum_i x_i y_i
\]
Note that this vector field has a single hyperbolic critical point of index $n-1$ at $O_{\on{std}}$. Second, let $V_-$ be the translation of $-V_{\on{std}}$ so that the single critical point lies on $P_{\on{std}}$ and let $H_-$ be the corresponding Hamiltonian. This critical point of $V_-$ has index $n$. Finally, pick smooth functions
\[
\phi:\R^{2n-1} \to [0,1] \qquad\text{and}\qquad \psi:\R^{2n-1} \to [0,1] 
\]
satisfying the following properties: $\phi$ depends only on the $x$-variables, $\phi = 1$ near $O_{\on{std}}$ and $\phi = 0$ near $P_{\on{std}}$; and $\psi$ depends only on the $z$-variable in a neighborhood of $\Gamma^u_{\on{std}}$ and $\Gamma^s_{\on{std}}$, $\psi = 1$ near $\Xi_{\on{std}}$ and $\psi = 0$ outside of the set $U_\epsilon$. Define $V = V_H$ to be the contact Hamiltonian vector field of
\[
H = (1 - \psi) + \psi(\phi H_+ + (1-\phi)H_-)
\]
Note that properties (a) and (d) are immediate since $H = H_+$ near $P_{\on{std}}, H = H_-$ near $q_{\on{std}}$ and $H = 1$ outside of $U_\epsilon$ by construction. Let us now verify (b) and (c). 

\vspace{3pt}

First, we verify property (b). Note that $H_+$ and $H_-$ vanish along the zero section $\R^{n-1}$. The contact Hamiltonian equations then imply that $V$ is given by the combination
\[
V = \phi V_+ + (1-\phi) V_- \qquad\text{in a neighborhood of the segment $\Xi_{\on{std}}$}
\]
Directly along the line segment $\Xi_{\on{std}}$ connecting $O_{\on{std}}$ to $P_{\on{std}}$ in $\R^n$, this further simplifies to
\[
V = -\phi x_1 \partial_{x_1} + (1-\phi)(x_1 - 1) \partial_{x_1} \qquad\text{on the line segment } \Xi_{\on{std}}
\]
It follows from this expression that the segment $\Xi_{\on{std}}$ is a heteroclinic from $y_{\on{std}}$ to $x_{\on{std}}$. Second we verify property (c). It follows from the choices of $H_\pm$ and $\psi$ that
\begin{equation} \label{eq:single_het_lemma_1} dH_+ = 2 dz \text{ along $\Gamma^u_{\on{std}}$}\qquad H_- = -2 dz \text{ along $\Gamma^s_{\on{std}}$}\qquad d\psi = \partial_z\psi \cdot dz \qquad\text{along $\Gamma^u_{\on{std}}$ and $\Gamma^s_{\on{std}}$}\end{equation}
Note that the contact Hamiltonian $H$ has contact Hamiltonian vector field $V$ parallel to the Reeb vector field $R$ at a point $x$ if and only if $dH = dH(R) \cdot \alpha$ at $x$. From (\ref{eq:single_het_lemma_1}), we compute that
\begin{equation} \label{eq:single_het_lemma_2} 
H = (1 - \psi) + \psi H_+ \quad\text{and thus}\quad dH = (H_+ - 1) \cdot d\psi + \psi\cdot dH_+ \quad\text{ near $\Gamma^u_{\on{std}}$}
\end{equation}
An analogous formula holds with $H_-$ near $\Gamma^s_{\on{std}}$. It follows that $V$ is proportional to $R$ and of the form $V = H \cdot R$ along $\Gamma^u_{\on{std}}$ and $\Gamma^s_{\on{std}}$. From (\ref{eq:single_het_lemma_2}), we can compute that
\begin{equation} \label{eq:single_het_lemma_3} 
V = ((1 - \psi) + 2z\psi)\partial_z \qquad\text{along $\Gamma^u_{\on{std}}$}\qquad\text{and}\qquad V = ((1 - \psi) - 2z\psi)\partial_z \qquad\text{along $\Gamma^s_{\on{std}}$}
\end{equation}
It is immediate from (\ref{eq:single_het_lemma_3}) and the construction of $\psi$ that this is a positive multiple of $\partial_z$ along $\Gamma^u_{\on{std}}$ and $\Gamma^s_{\on{std}}$. It follows that $\Gamma^u_{\on{std}}$ is a trajectory limiting to $O_{\on{std}}$ backward in time and $\Gamma^s_{\on{std}}$ is a trajectory limiting to $P_{\on{std}}$ forward in time. In other words, $\Gamma^u_{\on{std}} \subset W^u(V,O_{\on{std}})$ and $\Gamma^s_{\on{std}} \subset W^s(V,P_{\on{std}})$.  \end{proof}

\begin{proof} (Proposition \ref{prop:contact_vector_field_with_cycle}) Consider the standard contact disk $D \subset \R \times T^*\R^{n-1}$ with the standard contact form $dz + \lambda_{\on{std}}$ and Reeb vector field $R = \partial_z$. Let $\Lambda \subset D$ be the standard Legendrian unknot, which must necessarily have a Reeb chord $\Gamma$ from $P$ to $O$. By the (strict) Legendrian Weinstein neighborhood theorem, there is a strict contact embedding
\[
U' \to D \qquad\text{from a neighborhood $U' \subset \R \times T^*\Lambda$ of the zero section $\Lambda$}
\]
that sends the zero section to $\Lambda$. Take an chart $W \subset \Lambda$ along with a diffeomorphism $\R^{n-1} \to W$ sending $O_{\on{std}}$ to $O$ and $P_{\on{std}}$ to $P$. This induces an embedding of jet spaces $\R^{2n-1} \to \R \times T^*\Lambda$ and thus a strict contact embedding
\[
U \to D \qquad\text{from a neighborhood $U \subset \R^{2n-1}$ of $\R^{n-1}$}
\]
This neighborhood must contain the region $U_\epsilon$ in Lemma \ref{lem:single_het}(d) for sufficiently small $\epsilon$. Let $V_\epsilon$ be the corresponding vector field in Lemma \ref{lem:single_het} for this choice of $\epsilon$ and define a contact vector field $V$ on $D$ by
\[
V = V_\epsilon \text{ in $U_\epsilon$} \qquad\text{and}\qquad V = R \text{ outside of $U_\epsilon$}
\]
Lemma \ref{lem:single_het}(d) implies that $V$ is smooth. Lemma \ref{lem:single_het}(a-b) implies that $O$ and $P$ are hyperbolic critical points of $V$ of index $n-1$ and $n$ respectively, with a connecting heteroclinic $P$ to $O$ contained in $U_\epsilon$. Finally, Lemma \ref{lem:single_het}(c-d) implies the Reeb chord $\Gamma$ is tangent to $V$ and intersects the stable manifold of $P$ and the unstable manifold of $O$. It follows that there is a heteroclinic from $O$ to $P$. \end{proof}

\subsection{Proof Of Theorem \ref{thm:local_cycle_creation}} \label{subsec:proof_of_local_cycle_creation} We now assemble the results of this section to prove Theorem \ref{thm:local_cycle_creation}.        

\begin{proof} (Theorem \ref{thm:cycle_creation}) Fix a constant $\delta > 0$. Lemma \ref{lem:orbit_creation} states that there is a plug $\jmath$ of size $\delta$ such that the deformed characteristic foliation of $\eta_\jmath$ has a closed orbit $\Gamma$. Consider the return map
\[
\Phi = \on{Ret}\Gamma:D \to D \qquad\text{for some local section $D$ intersecting $\Gamma$ at $O \in D$}
\]
Fix a neighborhood $W \subset D$ of $O$ such that the inverse of $\on{Ret}\Gamma$ is well-defined on $W$. We can assume that $W$ is contactomorphic to $D_{\on{std}}$ and that $O$ is identified with the origin. Then the inverse $\Phi^{-1}:W \to D$ is a contact embedding fixing the origin, and we can apply Corollary \ref{cor:local_small_Ham} to get a contact Hamiltonian
\[
F:[0,1] \times D \to [-\delta, \delta]
\]
compactly supported in $[0,1] \times W$ such that $\Phi_F = \Phi^{-1}$ in a neighborhood $U$ of $O$. Similarly, Corollary \ref{cor:cycle_creation_for_Id} implies that there is a contact Hamiltonian
\[G: [0,1] \times D \to [-\delta, \delta]\] 
supported on $(0,1) \times W$ such that $\Phi_G$ has a heterodimensional cycle of the desired index in an arbitrarily small neighborhood of $O$ consisting of hyperbolic fixed points $O$ and $P$. By concatenating the Hamiltonians $F$ and $G$, we get a contact Hamiltonian
\[
H:[0,1] \times D \to [-\delta,\delta] \qquad\text{such that $\Phi \circ \Phi_H = \Phi \circ \Phi^{-1} \circ \Phi_G = \Phi_G$}
\]

We can next choose a small plugging domain $U \simeq U_{\on{std}}$ that is well-positioned along $\Gamma$ (Definition \ref{def:well_positioned}) such that $D = D_{\on{in}}$. By inserting the plug associated to $H$ along $\Gamma$ along this domain (Definition \ref{def:insertion_along_trajectory}), we acquire an ambient deformation
\[
\iota_{U,H} \qquad\text{of the contact Hamiltonian manifold $(\Sigma,\eta_\iota)$}
\]
By Lemma \ref{lem:transition_map_deformation}, after inserting this plug, the continuation $\Gamma_{U,H}$ of $\Gamma$ has return map
\[
\on{Ret}\Gamma_{U,H}:D \to D \qquad\text{given by $\Phi \circ \Phi_H = \Phi_G$ on $W$}
\]
In particular, the deformed contact Hamiltonian structure $\eta_{U,H}$ has a heteroclinic cycle $C$ of the desired index, given by the suspension of the corresponding cycle $C$ of $\Phi \circ \Phi_H$. We now let
\[\iota:[0,1] \times U_{\on{std}} \to CU_{\on{std}}\]
be the concatenation of $\jmath$ and $\iota_{U,H}$ as in Remark \ref{rmk:concatenation}. Then the characteristic foliation of $\eta_\iota = \eta_{U,H}$ has a heteroclinic cycle $C$ of the desired index. Moreover, by Remark \ref{rmk:concatenation} and Remark \ref{rmk:insertion_along_domain_of_H}, there is a constant $A$ independent of $U$ and $H$ such that
\[
\|\iota\|_{C^0} \le \|\iota_{U,H} \star \jmath\|_{C^0} \le A\|\iota_{U,H}\|_{C^0} + \|\jmath\|_{C^0} \le (A+1)\delta
\]
By choosing $\delta$ sufficiently small, we assure that the size of our deformation is less than $\epsilon$. \end{proof}

\section{Non-Degenerate Heterodimensional Cycles And Proper Unfoldings} \label{sec:non_degenerate_cycles_and_unfoldings}

In this section, we review the notions of non-degenerate heterodimensional cycles and proper unfoldings as formulated by Li-Turaev \cite{lt2024}. In the process, we introduce key tools that will be used in the construction of unfoldings in the setting of characteristic foliations. 

\subsection{Multipliers And Invariant Manifolds} \label{subsec:hyperbolic_fixed_points} We begin by reviewing key terminology and structures associated to hyperbolic fixed points of diffeomorphisms. Fix a smooth embedding
\[
\Phi:U \to M\qquad \text{to a manifold $M$ from an open set $U \subset M$}
\]

\begin{remark} In later sections, this map will be the Poincare return map of a periodic orbit of a flow. In particular, it will only be well-defined up to smooth conjugacy as a germ of a diffeomorphism.  
\end{remark}

\begin{definition}[Multipliers] A \emph{multiplier} of a fixed point $O$ of the embedding $\Phi$ is an eigenvalue of the differential of $\Phi$ at $O$. Given a fixed point $O$, we use the notation
\[
\lambda_1(O) \dots \lambda_n(O) \qquad\text{and}\qquad S(O) 
\]
for the sequence of multipliers (with repetition, in order of increasing absolute value) and the set of absolute values of the multipliers, respectively. The \emph{invariant subspace}
\[
\text{$E^s(O,\sigma) \subset T_OM$}\qquad\text{and}\qquad \text{$E^u(O,\sigma) \subset T_OM$} \qquad\text{associated to a real number $\sigma \in \R_+$}
\]
is the direct sum of the real invariant subspaces of the differential of $\Phi$ at $O$ corresponding to multipliers of absolute value less than or equal to $\sigma$ and greater than or equal to $\sigma$, respectively. \end{definition}

We next recall the notion of local invariant manifolds and foliations for a fixed point of a local diffeomorphism or embedding. 

\begin{construction}[Local Invariant Manifolds] \label{con:local_invariant_manifolds} Let $O$ be a fixed point of $\Phi$ and let $P$ be a point in a sufficiently small neighborhood of $\Phi$. The 
\emph{local invariant manifolds}
\[
W^s(P,\sigma) \subset U \qquad\text{and}\qquad W^u(P,\sigma) \subset U \qquad\text{for any real number $\sigma \in \R$}
\]
are constructed as follows. Take an open chart $V \subset U$ containing $P$ and $O$ with an chart map $V \to \R^n$ identifying $O$ with the origin. Extend $\Phi|_V$ to a smooth map $\bar{\Phi}:\R^n \to \R^n$ of the form
\[
\bar{\Phi}(x) = A(x) + F(x)
\]
where $A$ is a linear isomorphism and $F$ is a smooth function that vanishes at the origin with small $C^1$-norm. Then $W^s(P,\sigma)$ and $W^u(P,\sigma)$ are  respectively the components of the subsets
\[
\bar{W}^s(P,\sigma) = \big\{Q \in V : \on{dist}(\bar{\Phi}^i(Q),\bar{\Phi}^i(P)) \le Ce^{\sigma i} \text{ for some $C > 0$ and all $i \ge 0$}\big\}\]
\[
\bar{W}^u(P,\sigma) = \big\{Q \in V:\on{dist}(\bar{\Phi}^i(Q),\bar{\Phi}^i(P)) \le Ce^{\sigma i} \text{ for some $C > 0$ and all $i \le 0$}\big\}
\]
that contain the point $P$. Note that these sets depend on the chart $V$ and extension $\bar{\Phi}$. \end{construction}

\begin{definition}[Unstable And Stable Manifolds] The \emph{local stable manifold} and \emph{local unstable manifold} of a hyperbolic fixed point $O$ of the embedding $\Phi$ are the local invariant manifolds
\[
W^s(O) = W^s(O,1) \qquad\text{and}\qquad W^u(O) = W^u(O,1) 
\]
Note that since $O$ is hyperbolic, we have $W^s(O) = W^s(O,-\epsilon)$  and $W^u(O) = W^u(O,\epsilon)$ for $\epsilon$ positive and sufficiently close to zero. We denote the corresponding tangent spaces at $O$ by
\[
E^s(O) \qquad\text{and}\qquad E^u(O)
\]\end{definition}

The following result summarizes the key properties of local invariant manifolds. See Hirsch-Pugh-Shub \cite[Thm 5.5, p. 61]{hps1977} or alternatively Shilnikov-Shilnikov-Turaev-Chua \cite[\S 5.2]{shilnikov2001methods}.

\begin{theorem}[Local Invariant Manifolds] \label{thm:local_invariant_manifolds} Let $O$ be a fixed point of the smooth embedding $\Phi$ and fix real numbers $\sigma$ and $\tau$ that are not in $S(O)$. Then the local invariant manifolds
\[W^s(P,\sigma) \subset U\qquad\text{and}\qquad W^u(P,\tau) \subset U \qquad\text{for $P$ near the fixed point $O$}\]
are locally $\Phi$-invariant and $C^1$-embedded sub-manifolds that form the leaves of continuous foliations
\[\mathcal{F}^s(\sigma) \quad\text{and}\quad \mathcal{F}^u(\tau) \qquad \text{of a small neighborhood of $O$ in $U$}\]
These manifolds and foliations have the following properties.
\begin{itemize} 
    \item[(a)] (Tangent Space) The tangent spaces of the local invariant manifolds at $O$ are given by
\[
TW^s(O,\sigma)|_O = E^s(O,\sigma) \qquad\text{and}\qquad TW^u(O,\tau)|_O = E^u(O,\tau) 
\]
    \item[(b)] (Inclusions) Given real numbers $\sigma < \sigma'$ and $\tau' < \tau$, there are the following inclusions with equality if and only if the intervals $[\sigma,\sigma']$ and $[\tau',\tau]$ are disjoint from $S(O)$.
    \[
W^s(P,\sigma) \subseteq W^s(P,\sigma') \qquad\text{and}\qquad  W^u(P,\tau) \subseteq W^u(P,\tau')
\]
\item[(c)] (Strong Foliations) The invariant manifolds $W^s(P,\sigma)$ and $W^u(P,\tau)$ and the corresponding foliations $\mathcal{F}^s(\sigma)$ are smooth and independent of the choices in Construction \ref{con:local_invariant_manifolds}, for $\sigma < 1 < \tau$.
\vspace{1pt}
\item[(d)] (Extended Manifolds) The local invariant manifolds $W^s(O,\sigma)$ and $W^u(O,\tau)$ may depend on the choices in Construction \ref{con:local_invariant_manifolds} for $\sigma > 1 > \tau$ but the restricted tangent bundles
\[
TW^s(O,\sigma)|_{W^s(O)} \qquad\text{and}\qquad TW^s(O,\tau)|_{W^u(O)} 
\]
are independent of the choices in Construction \ref{con:local_invariant_manifolds} for any hyperbolic fixed point $O$. \end{itemize}
\end{theorem}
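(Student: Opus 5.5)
The statement is the classical invariant-manifold theorem for a fixed point of a smooth embedding, with a pseudo-hyperbolic splitting at the rates $\sigma,\tau \notin S(O)$. I would prove it by the standard graph-transform (Hadamard--Perron) method applied to the globalized map $\bar\Phi = A + F$ from Construction \ref{con:local_invariant_manifolds}, rather than anything contact-specific.

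\textbf{Step 1: a global problem on $\R^n$.} Since $\sigma \notin S(O)$, the linear map $A = T_O\Phi$ has a dominated splitting $\R^n = E^s(O,\sigma)\oplus E^u(O,\sigma)$. In an adapted norm, $A$ contracts the first factor by less than $\sigma$ and expands the second by more than $\sigma$, so there is a definite spectral gap. Cutting off $\Phi - A$ with a bump function of small radius makes $\|F\|_{C^1}$ as small as desired. For the perturbed map, the set of points whose forward orbit shadows that of $P$ at rate $\le Ce^{\sigma i}$ can then be characterized in two equivalent ways:
\begin{itemize}
\item as the unique fixed point of the graph transform acting on Lipschitz graphs over $E^s(O,\sigma)$ through $P$ (the Lipschitz constant being controlled by the cone condition);
\item equivalently, as the solutions of the Perron integral (summation) equation in a weighted sequence space with weight $e^{-\sigma i}$.
\end{itemize}
The contraction mapping theorem in that weighted space gives existence and uniqueness of the leaf through each $P$. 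Continuous dependence on $P$ gives the continuous foliation $\mathcal F^s(\sigma)$. Local invariance follows from $\bar\Phi = \Phi$ near $O$, and the same argument applied to $\bar\Phi^{-1}$ yields $W^u(P,\tau)$ and $\mathcal F^u(\tau)$.

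\textbf{Step 2: regularity, (a) and (b).} $C^1$-smoothness of each leaf comes from the fibre contraction theorem applied to the induced action on tangent planes, i.e.\ on graphs of linear maps $E^s \to E^u$. Part (a) follows since at $O$ the invariant plane field must be the $A$-invariant subspace $E^s(O,\sigma)$. For (b), the growth condition for $\sigma$ is stronger than that for $\sigma' > \sigma$, so the inclusion is immediate. If $[\sigma,\sigma']$ misses $S(O)$, the two splittings coincide, so both leaves are graphs over the same space and uniqueness gives equality. Conversely, an eigenvalue in $[\sigma,\sigma']$ increases $\dim E^s$, hence the dimension of the leaf, so the inclusion is strict.

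\textbf{Step 3: (c) and (d), the main obstacle.} The delicate part is independence of choices, together with the claimed smoothness, which I expect to be the hardest step.
\begin{itemize}
\item For $\sigma<1$, points of $W^s(P,\sigma)$ have orbits converging exponentially to that of $P$, so they stay in the region where $\bar\Phi = \Phi$. The set is therefore intrinsic to the germ of $\Phi$ and independent of the chart and extension.
\item Higher smoothness of strong stable leaves follows because $\sigma<1$ gives a spectral gap of the kind needed for the $C^r$ section theorem of Hirsch--Pugh--Shub. For the leaves themselves I would cite \cite[Thm 5.5]{hps1977} rather than redo the fibre-contraction induction. (Smoothness of the foliation as a whole is subtler and I would treat that citation carefully.)
\item For (d), with $\sigma>1$ the center-type leaf genuinely depends on the cutoff. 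Its tangent bundle along $W^s(O)$, however, is determined by the tangent dynamics along orbits that stay near $O$: along $W^s(O)$, the bundle $TW^s(O,\sigma)$ is the unique $T\Phi$-invariant plane field satisfying a cone and growth condition, characterized by forward iterates only. Such orbits see only $\Phi$, so uniqueness of this invariant bundle gives independence. The unstable statement follows symmetrically with $\Phi^{-1}$.
\end{itemize}
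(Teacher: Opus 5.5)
\textbf{There is a genuine gap in (c), in the first bullet of your Step 3.} The paper gives no argument of its own here; it cites Hirsch--Pugh--Shub and Shilnikov--Shilnikov--Turaev--Chua. Your Steps 1--2 and your argument for (d) are the standard content of those references. In particular, characterizing $TW^s(O,\sigma)$ along $W^s(O)$ as the forward-slow subspace $\{v : |T\Phi^i v| \le C\sigma^i\}$ is exactly the right reason for (d).

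The problem is this inference in Step 3. From the fact that points of $W^s(P,\sigma)$ have orbits converging exponentially to the orbit of $P$, you conclude that they stay in the region where $\bar\Phi=\Phi$. That only follows if the forward orbit of $P$ itself stays there, i.e.\ if $P\in W^s(O)$. In (d) you made exactly this restriction; in (c) you did not. For a general $P$ near $O$, the $\bar\Phi$-orbit of $P$ leaves $V$. The leaf through $P$ is then determined by $T\bar\Phi$ along that whole orbit, including the part where $\bar\Phi$ is an arbitrary extension.

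This cannot be repaired, because the claim itself fails there. Take $\Phi=A=\mathrm{diag}(\lambda_1,\lambda_2,\mu)$ with $\lambda_1<\sigma<\lambda_2<1<\mu$, and let $P\neq O$ lie on the unstable axis. Compare the extensions $\bar\Phi=A$ and $\bar\Phi=A\circ S$. Here $S$ is a $C^1$-small shear supported near $A^k(P)$ (outside $V$), fixing the unstable axis, with $TS(e_1)=e_1+\epsilon e_2$ along it.
\begin{itemize}
\item In both cases the strong direction at $A^{k+1}(P)$ is $e_1$.
\item Pulling back to $P$, the strong direction is $e_1$ for the first extension.
\item For the second it is $A^{-k}(e_1-\epsilon e_2)\propto e_1-\epsilon(\lambda_1/\lambda_2)^k e_2$.
\end{itemize}
Such $P$ exist arbitrarily close to $O$. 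So even the germ at $O$ of $\mathcal{F}^s(\sigma)$, as a foliation of a neighbourhood, depends on the choices.

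What your argument does establish is weaker, and it is what the paper actually needs. Working with germs (points of a leaf with a large constant $C$ may exit $V$ before converging), the leaves through points of $W^s(O)$ are intrinsic. That is, the restricted foliation $\mathcal{F}^s(\sigma)|_{W^s(O)}$ is independent of the choices, and symmetrically so is $\mathcal{F}^u(\tau)|_{W^u(O)}$. This is precisely the form in which $\mathcal{F}^{ss}$ and $\mathcal{F}^{uu}$ are used later in the paper, so you should state (c) that way.

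\textbf{The smoothness of the foliations in (c) is a second point your proof leaves open.} Your parenthetical hedge does not resolve it. The $C^r$ section theorem, and the result you cite, give smoothness of the individual leaves for $\sigma<1$, not of the foliation. A foliation of a full neighbourhood is in general only finitely smooth.

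For the restricted foliation you can close this. One way is to apply Sternberg's smooth normal form theorem to the contraction $\Phi|_{W^s(O)}$.
\begin{itemize}
\item All multipliers there have modulus less than $1$. So no monomial containing a strong coordinate (modulus $<\sigma$) can be resonant for a weak coordinate (modulus in $(\sigma,1)$).
\item Hence the weak components of the normal form depend only on the weak coordinates $y$.
\item It follows that the leaves of $\mathcal{F}^s(\sigma)|_{W^s(O)}$ are exactly the slices $\{y=\mathrm{const}\}$, which is a smooth foliation.
\end{itemize}

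\textbf{Finally, fix one convention for the rates.} You use $\sigma\notin S(O)$, a multiplier modulus, to split $A$. But you use the weight $e^{-\sigma i}$ in the sequence space, which corresponds to the rate $e^{\sigma}$. The shadowing condition should read $C\sigma^i$. This mismatch is inherited from Construction~\ref{con:local_invariant_manifolds}.
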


\begin{remark}[Alternate Notation] In settings where we must consider the local invariant manifolds of multiple embeddings, we will use the notation
\[
W^s(\Phi;O,\sigma) \qquad\text{and}\qquad W^u(\Phi;O,\tau)
\]
for the stable and unstable invariant manifolds of the embedding $\Phi$. \end{remark}

\begin{remark}[Continuity] \label{rmk:continuity_of_invariants} The local invariant sub-manifolds $W^s(\Phi;O,\sigma)$ and $W^u(\Phi;O,\tau)$ depend continuously in the $C^1$-topology among the embeddings $\Phi$ such that $\sigma$ and $\tau$ are not in $S(O)$. Moreover, if $\sigma < 0 < \tau$ and $\Phi_\epsilon$ is a family of embeddings depending smoothly on a parameter $\epsilon$, then $W^s(\Phi_\epsilon;O,\sigma)$ and $W^u(\Phi_\epsilon;O,\tau)$ depend smoothly on $\epsilon$. See \cite{hps1977} and \cite{shilnikov2001methods} for details.\end{remark}

We will require the following lemma that dictates the behavior of the invariant manifolds with respect to invariant distributions.

 \begin{lemma}[Invariant Distributions] \label{lem:invariant_bundle} Let $O$ be a hyperbolic fixed point of $\Phi$ and fix a real number $\sigma$ not in $S(O)$. Then for any $\Phi$-invariant distribution $D$, we have
 \[
 E^s(O,\sigma) \subset D \text{ at $O$} \qquad\text{if and only if}\qquad TW^s(O,\sigma) \subset D \text{ along the stable manifold $W^s(O)$}
 \]
 \[
 E^u(O,\sigma) \subset D \text{ at $O$} \qquad\text{if and only if}\qquad TW^u(O,\sigma) \subset D \text{ along the unstable manifold $W^u(O)$}
 \] 
 \end{lemma}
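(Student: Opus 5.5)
We argue by the usual characterization of the local stable and unstable manifolds as limits of graph transforms, and use invariance of $D$ under $T\Phi$. We treat only the stable statement; the unstable one follows by applying it to $\Phi^{-1}$, since $D$ is also $\Phi^{-1}$-invariant and $E^u(\Phi;O,\sigma)=E^s(\Phi^{-1};O,\sigma^{-1})$. The direction ``$\Leftarrow$'' is immediate: evaluate the inclusion $TW^s(O,\sigma)\subset D$ at the point $O$ and use Theorem \ref{thm:local_invariant_manifolds}(a), which gives $TW^s(O,\sigma)|_O=E^s(O,\sigma)$.

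For ``$\Rightarrow$'', the key observation is that $TW^s(O,\sigma)|_{W^s(O)}$ is a continuous $T\Phi$-invariant subbundle over $W^s(O)$. By Theorem \ref{thm:local_invariant_manifolds}(d) it is canonical, and it is characterized dynamically as follows. For $Q\in W^s(O)$, a vector $v\in T_Q M$ lies in $TW^s(O,\sigma)$ if and only if $|T\Phi^i(v)|\le Ce^{\sigma' i}$ for all $i\ge 0$, where $\sigma'$ is slightly larger than the log of $\sigma$ but still separated from the next multiplier. This holds because $\Phi^i(Q)\to O$ exponentially, so the linear cocycle $T\Phi^i$ along the orbit of $Q$ is asymptotic to $(T_O\Phi)^i$ and admits a dominated splitting extending $E^s(O,\sigma)\oplus E^u(O,\sigma')$.

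Now fix $Q\in W^s(O)$ and consider the family of subspaces $D_Q$, which is mapped by $T\Phi^i$ to $D_{\Phi^i(Q)}$ by invariance. Let $F_Q=TW^s(O,\sigma)|_Q$. We want $F_Q\subset D_Q$. Pushing forward by $T\Phi^i$, we have $T\Phi^i F_Q=F_{\Phi^i(Q)}$, and this converges to $E^s(O,\sigma)\subset D_O$ as $i\to\infty$. That alone is not enough. The argument is instead to work inside $D$: restrict the cocycle to $D$, so that along the orbit of $Q$ we have the invariant bundle $D_{\Phi^i(Q)}$ converging to $D_O$, and $D_O$ contains $E^s(O,\sigma)$. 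The restricted cocycle $T\Phi|_D$ at $O$ has spectrum containing that of $E^s(O,\sigma)$, with the multipliers of modulus at most $\sigma$ exactly those of $E^s(O,\sigma)$, since $E^s(O,\sigma)\subset D_O$ and any slow vector of $D_O$ lies in $E^s(O,\sigma)$. Applying the same slow-subspace construction to the cocycle on $D$ produces a subbundle $F'_Q\subset D_Q$ of vectors with growth at most $e^{\sigma' i}$, of the same rank as $E^s(O,\sigma)$. By the dynamical characterization, $F'_Q\subset F_Q$, and equality of ranks gives $F_Q=F'_Q\subset D_Q$. Alternatively, one can phrase this as: the slow subspace is obtained by backward iteration $F_Q=\lim_i (T\Phi^i)^{-1}(G_i)$ for any subspaces $G_i$ near $E^s(O,\sigma)$ at $\Phi^i(Q)$ transverse to the fast directions. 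Choosing $G_i\subset D_{\Phi^i(Q)}$, possible since $D$ is close to $D_O\supset E^s(O,\sigma)$, and using that $D$ is closed gives the same conclusion.

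\textbf{Main obstacle.} The hardest step is justifying the dynamical characterization of $TW^s(O,\sigma)$ for $\sigma>1$, where the manifold itself is non-unique, and the rank or convergence statement for the restricted cocycle. I would handle this by citing the standard cone-field and graph-transform argument from \cite{hps1977} or \cite{shilnikov2001methods}, applied to the asymptotically constant cocycle along $W^s(O)$. One also needs $D$ to be at least continuous, which is implicit in ``distribution''.
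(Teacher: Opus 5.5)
Your argument is correct. Your ``alternatively'' sentence is essentially the paper's proof, but your primary argument packages the idea differently.

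\textbf{The paper's route.} It takes the continuous, locally invariant splitting $TM = F^s\oplus F^u$ near $O$ given by the tangent distributions of the two foliations $\mathcal{F}^s(\sigma)$ and $\mathcal{F}^u(\sigma)$ of Theorem \ref{thm:local_invariant_manifolds}. It observes that the stable cones $C^s_\epsilon=\{|v^u|\le\epsilon|v^s|\}$ shrink by a factor $c<1$ under backward iteration. Near $O$, because $D_O\supset E^s(O,\sigma)$, there are subspaces of $D$ lying in $C^s_1$ over $F^s$. The paper pulls these back from $\Phi^k(x)$ to $x$; invariance of $D$ puts them in $D_x\cap C^s_{c^k}$, and letting $k\to\infty$ gives $F^s_x\subset D_x$.

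\textbf{Your route.} You identify $T_QW^s(O,\sigma)$ intrinsically as the maximal slow subspace of the derivative cocycle along the forward orbit of $Q$. You then produce a slow subspace of the restricted cocycle $T\Phi|_D$ of the same rank and conclude by maximality.

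\textbf{What each buys.} Your argument works orbit by orbit. It never needs the complementary foliation $\mathcal{F}^u(\sigma)$ or a splitting on a full neighbourhood of $O$, and it explains directly why $TW^s(O,\sigma)|_{W^s(O)}$ is canonical when $\sigma>1$. The price is that the rank statement for an asymptotically constant cocycle on the (continuously trivialized) subbundle $D$ is itself proved by the same cone argument. So the paper's version is more elementary given Theorem \ref{thm:local_invariant_manifolds}.

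You also only need the easy half of your characterization. The cocycle induced on $TM/D$ converges to $T_O\Phi$ acting on $T_OM/D_O$. Since $E^s(O,\sigma)\subset D_O$, all eigenvalues there have modulus greater than $\sigma$. Hence any vector of $T_QW^s(O,\sigma)$, being slow, must project to zero in $T_QM/D_Q$.
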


\begin{proof} We prove the result for the stable manifolds, since the other case follows by considering the local inverse. Moreover, note that $TW^s(O,\sigma) = E^s(O,\sigma)$ at the fixed point $O$, and so one direction of the result is obvious.
\vspace{3pt}

In the other direction, suppose that $E^s(O,\sigma) \subset D$ at $O$. Consider the continuous distributions tangent to the invariant foliations in Theorem \ref{thm:local_invariant_manifolds}. 
\[
F^s = T\mathcal{F}^s(\sigma) \qquad\text{and}\qquad F^u = T\mathcal{F}^u(\sigma)
\]
These distributions are locally invariant and coincide with $E^s(O,\sigma)$ and $E^u(O,\sigma)$ at $O$, respectively. Since $T_OM = E^s(O,\sigma) \oplus E^u(O,\sigma)$ if $\sigma$ is not in $S(O)$, it follows that
\[
TM = F^s \oplus F^u \qquad\text{in a sufficiently small neighborhood $U'$ of $O$}\]
Given a vector $v$ in $TM$ in $U'$, we write $v^s$ and $v^u$ for its components in $F^s$ and $F^u$. Finally, fix a metric on $TM$. By construction of $E^s(O,\sigma)$ and $E^u(O,\sigma)$ there are constants $\sigma_{cs} < \sigma < \sigma_{cu}$ so that
\begin{equation} \label{eq:lem:invariant_bundle_1}
|T\Phi(u)| < \sigma_{cs}|u| \text{ for any $u \in E^s(O,\sigma)$} \quad\text{and}\quad \sigma_{cu}|v|  < |T\Phi(v)| \text{ for any $v \in E^u(O,\sigma)$}
\end{equation}
Since $F^s$ and $F^u$ are continuous distributions with fibers $E^s(O,\sigma)$ and $E^u(O,\sigma)$ at $O$, it follows that the same inequalities hold for $u \in F^s$ and $v \in F^u$ in a small neighborhood $U'$ of $U$.

\vspace{3pt} 

Next, fix a parameter $\epsilon > 0$ and consider the cone field $C^s_\epsilon$ in the tangent bundle to $U'$ defined by the following inequalities.
\[
C^s_\epsilon = \{v\in TU' \; : \; |v^u| \le \epsilon |v^s|\}
\]
It follows from (\ref{eq:lem:invariant_bundle_1}) that $\Phi^*(C^s_\epsilon) \subset C^s_{c\epsilon}$ where $c =\sigma_{cs}/\sigma_{cu}$. To prove the desired result, it suffices to show that for every $\epsilon$, $x$ in $W^s(O) \cap U'$ and $u$ in $F^s_x = T_xW^s(O,\sigma)$, there is a $v$ in $T_xU'$ such that
\[
v^s = u \qquad\text{and}\qquad v \in D \cap C_\epsilon^s
\]
To prove this claim, note that since $F^s \subset D$ at $O$, there is a neighborhood $N$ of $O$ such that for every $u \in F^s|_N$, there is a $v \in D \cap C_1^s$ with $v^s = u$. On the other hand, for any $x \in W^s(O)$ we know that $\Phi^k(x) \to O$ and thus $\Phi^k(x) \in N$ for large $k$. Thus
\[
\Phi_*^k(u) \in C_1^s \qquad\text{for any $x \in W^s(O) \cap U', u \in F^s_x$ and $k$ large.}
\]
This implies that $u$ is in $C^s_\epsilon$ where $\epsilon = c^k$. Since $c < 1$ and $k$ can be arbitrarily large, this proves the desired claim.\end{proof}

\subsection{Center Multipliers} \label{subsec:center_multipliers} We will be primarily interested in the local invariant manifolds associated to the central multipliers of hyperbolic fixed points.

\begin{definition}[Center Multipliers] Let $O$ be a hyperbolic fixed point of the embedding $\Phi$. The \emph{center-stable norm} and \emph{center-unstable norm} are the elements of $S(O)$ given by
\[
\sigma_{cs}(O) = \max{\sigma \in S(O) \; : \; \sigma < 1}\qquad\text{and}\qquad \sigma_{cu}(O) = \min{\sigma \in S(O) \; : \; \sigma > 1}
\]
The \emph{center-stable} and \emph{center-unstable} multipliers are those with respective norm $\sigma_{cs}(O)$ and $\sigma_{cu}(O)$. 
\end{definition}

\begin{definition}[Center Subspaces] The \emph{center-stable} and \emph{center-unstable} subspaces are the sums of the real invariant subspaces corresponding to the center-stable and center-unstable eigenvalues.
\[
E^{cs}(O) \quad\text{and}\quad E^{cu}(O) 
\]
The \emph{strong stable} and \emph{strong unstable} spaces are the real invariant subspaces corresponding to the eigenvalues of norm less than $\lambda_{cs}(O)$ and greater than $\lambda_{cu}(O)$ respectively, and are denoted by
\[
E^{ss}(O) \quad\text{and}\quad E^{uu}(O) 
\]
Note that the stable and unstable eigenspaces decompose as direct sums
\[
E^s(O) = E^{cs}(O) \oplus E^{ss}(O) \qquad\text{and}\qquad E^u(O) = E^{cu}(O) \oplus E^{uu}(O)
\]
\end{definition}

\begin{definition}[Extended (Un)stable Manifolds] Let $O$ be a hyperbolic fixed point with simple multipliers. The \emph{local extended stable} and \emph{local extended unstable} manifolds
\[
W^{se}(O) \subset U \qquad\text{and}\qquad W^{ue}(O) \subset U
\]
are the unique local invariant manifolds with tangent spaces at $O$ given by the subspaces
\[
E^{se}(O) = E^s(O) \oplus E^{cs}(O)  \quad\text{and}\quad E^{ue}(O) = E^u(O) \oplus E^{cu}(O) 
\]
\end{definition}

\begin{definition}[Strong Foliations] Let $O$ be a hyperbolic orbit with simple multipliers. The \emph{strong stable} and \emph{strong unstable} foliations
\[
\mathcal{F}^{ss} \text{ on } W^s(O) \qquad\text{and}\qquad \mathcal{F}^{uu} \text{ on } W^u(O)
\]
that are the local invariant foliations tangent at $O$ to the subspaces
\[
E^{ss}(O) \quad\text{and}\quad E^{uu}(O)
\]
\end{definition}

\noindent Each of the stable and unstable manifolds are equipped with strong stable and strong unstable foliations, tangent to the corresponding subspaces of the tangent space at the fixed point.

\begin{remark}[Strong Stable/Unstable Manifolds] The leaves of the strong-stable foliation and the strong-unstable foliation that pass through $O$ are denoted by $W^{ss}(O)$ and $W^{uu}(O)$.
\end{remark}

\noindent There is a useful choice of coordinates that straightens out the stable and unstable manifolds near a fixed point. The existence of these coordinates is established in \cite{shilnikov2001methods} and discussed in \cite{lt2024}. 

\begin{theorem}[(Un)stable Linear Coordinates] \label{thm:linear_coordinates} Let $O$ be a hyperbolic orbit. Then there exist two sets of coordinates in a neighborhood of $O$ of the form 
\[
(x_s,x_{cu},x_{uu}) \in \R^a_s \times \R^b_{cu} \times \R^c_{uu} \qquad\text{and}\qquad (y_{ss},y_{cs},y_u) \in \R^d_{ss} \times \R^e_{cs} \times \R^f_u 
\]
called stable linear coordinates $x$ and unstable linear coordinates $y$ with the following properties.
\begin{itemize}
    \item The stable and unstable manifolds $W^s(O)$ and $W^u(O)$ are given in the coordinates by
    \[
    W^s(O) = \{x_{cu} = x_{uu} = 0\} = \{y_u = 0\} \quad\text{and}\quad W^u(O) = \{x_s = 0\} = \{y_{cs} = y_{ss} = 0\}
    \]
    \item The strong stable and strong unstable foliations are given by
    \[
    \mathcal{F}^{ss} = \{x_s = x_{cu} = 0\} \qquad\text{and}\qquad \mathcal{F}^{uu} = \{y_u = y_{cs} = 0\}
    \]
    \item The map $\Phi$ acts linearly on the center-unstable and center-stable components in the $x$-coordinates and $y$-coordinates respectively. That is, there are linear maps
    \[
    A_{cu}:\R^b_{cu} \to \R^b_{cu} \qquad\text{and}\qquad A_{cs}:\R^e_{cs} \to \R^e_{cs} 
    \]
    such that $\Phi$ restricts to $A_{cu}$ on $\{x_s = x_{uu} = 0\}$ and to $A_{cs}$ on $\{y_{ss} = y_u = 0\}$. 
\end{itemize}
\end{theorem}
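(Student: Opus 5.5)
The plan is to build the stable linear coordinates $x$ by working inside the unstable manifold $W^u(O)$ and then extending off it. The unstable linear coordinates $y$ will then come from the same construction applied to the local inverse $\Phi^{-1}$. I will assume, as in the definitions of the extended manifolds and the strong foliations, that the multipliers of $O$ are simple, so that $b = \dim E^{cu}(O) \le 2$ and all center-unstable multipliers share the single modulus $\sigma_{cu}(O)$.

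First, I take a smooth chart in which $W^s(O)$ and $W^u(O)$ are coordinate planes $\{x_u = 0\}$ and $\{x_s = 0\}$, using the smooth stable manifold theorem. Next I straighten the strong unstable foliation $\mathcal{F}^{uu}$ on $W^u(O)$, which is smooth by Theorem \ref{thm:local_invariant_manifolds}(c). Its leaf through $O$ is $W^{uu}(O)$, tangent to $E^{uu}(O)$. To pick a transversal, I choose a locally $\Phi$-invariant center-unstable submanifold $W^{cu}(O) \subset W^u(O)$ tangent to $E^{cu}(O)$. Concretely, I intersect $W^u(O)$ with an extended local invariant manifold $W^s(O,\sigma)$ for $\sigma$ strictly between $\sigma_{cu}(O)$ and the next element of $S(O)$. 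This manifold is tangent to $E^s \oplus E^{cu}$ by Theorem \ref{thm:local_invariant_manifolds}(a,b), and its intersection with $W^u(O)$ is transverse.

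Since $E^{cu}(O)$ is complementary to $E^{uu}(O)$ inside $E^u(O)$, the slice $W^{cu}(O)$ is a local section of $\mathcal{F}^{uu}$. Projection along the leaves therefore identifies $W^{cu}(O)$ with the local leaf space. By invariance of both the slice and the foliation, this identification conjugates $\Phi|_{W^{cu}(O)}$ to the map induced on the leaf space.

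The key step is to linearize $\Phi|_{W^{cu}(O)}$. All of its eigenvalues have the same modulus $\sigma_{cu}(O) > 1$. Consequently no resonance $\lambda_i = \prod_j \lambda_j^{k_j}$ with $\sum_j k_j \ge 2$ can occur, since the right-hand side has modulus $\sigma_{cu}^{|k|} \ne \sigma_{cu}$. Sternberg's theorem, or Poincar\'e's theorem in the Poincar\'e domain, then gives a $C^r$ chart $x_{cu}$ on $W^{cu}(O)$ in which $\Phi$ acts as a linear map $A_{cu}$.

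With that chart in hand, I assemble the coordinates on $W^u(O)$ as follows. I extend $x_{cu}$ to be constant along the leaves of $\mathcal{F}^{uu}$. I then choose $x_{uu}$ as a fiber coordinate along the leaves, vanishing on $W^{cu}(O)$. With these choices $\{x_{cu} = 0\} \cap W^u(O) = W^{uu}(O)$, which is the set $\{x_s = x_{cu} = 0\}$ named in the statement. Moreover $\{x_s = x_{uu} = 0\} = W^{cu}(O)$, and on this slice $\Phi = A_{cu}$.

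Finally, I extend $(x_{cu}, x_{uu})$ off $W^u(O)$ to be constant along the fibres of a smooth fibration transverse to $W^u(O)$ whose fibre through $O$ is $W^s(O)$, and I take $x_s$ to be the original stable coordinate. This keeps $W^s(O) = \{x_{cu} = x_{uu} = 0\}$ and $W^u(O) = \{x_s = 0\}$. Applying the whole construction to $\Phi^{-1}$, with the roles of $s$ and $u$ exchanged, produces the $y$-coordinates together with $W^{ss}(O) = \{y_u = y_{cs} = 0\}$ and the linear map $A_{cs}$ on $\{y_{ss} = y_u = 0\}$.

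The main obstacle is regularity. The center-unstable slice $W^{cu}(O)$ is only finitely smooth and is not unique: by Theorem \ref{thm:local_invariant_manifolds}(d), only its tangent bundle along $W^u(O)$ is canonical. The coordinates are therefore only $C^k$, with $k$ limited by the spectral gap $\log\sigma_{uu}/\log\sigma_{cu}$. I expect this to be enough for the applications, and I would handle it by citing the finite-smoothness version of Sternberg's theorem and choosing $\sigma$ to maximize the gap, as in \cite{shilnikov2001methods}.
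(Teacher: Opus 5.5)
The paper does not prove this statement. It quotes it from \cite{shilnikov2001methods}, with the discussion in \cite{lt2024}, so your argument is an independent, self-contained derivation from Theorem \ref{thm:local_invariant_manifolds}, and it is sound. The key steps all hold. The slice $W^u(O)\cap W^s(O,\sigma)$, with $\sigma\in(\sigma_{cu},\sigma_{uu})$, is an invariant transversal to $\mathcal{F}^{uu}$. The restriction of $\Phi$ to it has no resonances, since all its multipliers have modulus $\sigma_{cu}\neq 1$. Transporting the linearizing coordinate along the leaves, and then along a fibration whose fibre through $O$ is $W^s(O)$, gives the required properties, and the $y$-chart comes from $\Phi^{-1}$. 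You also read the second bullet correctly as $W^{uu}(O)=\{x_s=x_{cu}=0\}$ and $W^{ss}(O)=\{y_u=y_{cs}=0\}$. The labels $\mathcal{F}^{ss}$ and $\mathcal{F}^{uu}$ in the statement are swapped, since $\{x_s=x_{cu}=0\}\subset W^u(O)$.

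Your regularity caveat is forced by the literal third bullet, not by your method. That bullet asks for a $\Phi$-invariant slice $\{x_s=x_{uu}=0\}$ tangent to $E^{cu}(O)$, and under a resonance $\sigma_{uu}=\sigma_{cu}^k$ no such slice need be $C^2$. For example, suppose $\Phi^{-1}|_{W^u(O)}$ is $(x,y)\mapsto(\mu x,\mu^2y+x^2)$ with $0<\mu<1$. An invariant graph $y=h(x)$ must then satisfy $h(\mu x)=\mu^2h(x)+x^2$, and comparing $x^2$-coefficients rules out $h\in C^2$.

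Two small corrections to your last paragraph. First, choosing $\sigma$ buys nothing: $W^s(O,\sigma)$ is the same for every $\sigma$ in the gap, by Theorem \ref{thm:local_invariant_manifolds}(b). Second, you do not need a finite-smoothness Sternberg theorem. The slice is $C^{1,\alpha}$, and because all eigenvalues of $A_{cu}$ have the same modulus, $\lim_{n}A_{cu}^n\circ(\Phi|_{W^{cu}(O)})^{-n}$ converges in $C^1$ to the linearizing chart.

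Finally, Conditions \ref{cond:saddle} and \ref{cond:focus} only use $x_{cu}\circ\Phi=A_{cu}x_{cu}$ on $W^u(O)$. If that is all you want, drop the invariant slice: linearize the induced map on the leaf space of the smooth foliation $\mathcal{F}^{uu}$ through any smooth transversal, and you get $C^\infty$ coordinates. The citation buys the paper brevity; your route buys a transparent proof at the cost of finite smoothness, which the literal statement makes unavoidable.
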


\noindent Finally, there is an important simplicity condition on the central multipliers that will be needed below. We will say that a multiplier norm $\sigma$ in $S(O)$ is \emph{simple} if
\[
\text{the set of multipliers $\lambda$ with $|\lambda| = \sigma$ is a singleton or a conjugate pair}
\]

\begin{definition}[Simple Center Multipliers] A hyperbolic fixed point $O$ of an embedding $\Phi$ is \emph{simple center-stable multipliers} if $\sigma_{cs}(O)$ is simple and \emph{simple center-unstable multipliers} if $\sigma_{cu}(O)$ is simple. In these respective settings, we let 
\[
\lambda_{cs}(O) \qquad\text{and}\qquad \lambda_{cu}(O)
\]
be the unique center-stable and center-unstable multipliers with non-negative imaginary part.
\end{definition}

\begin{figure}[h]
    \centering
    \includegraphics[width=.6\linewidth]{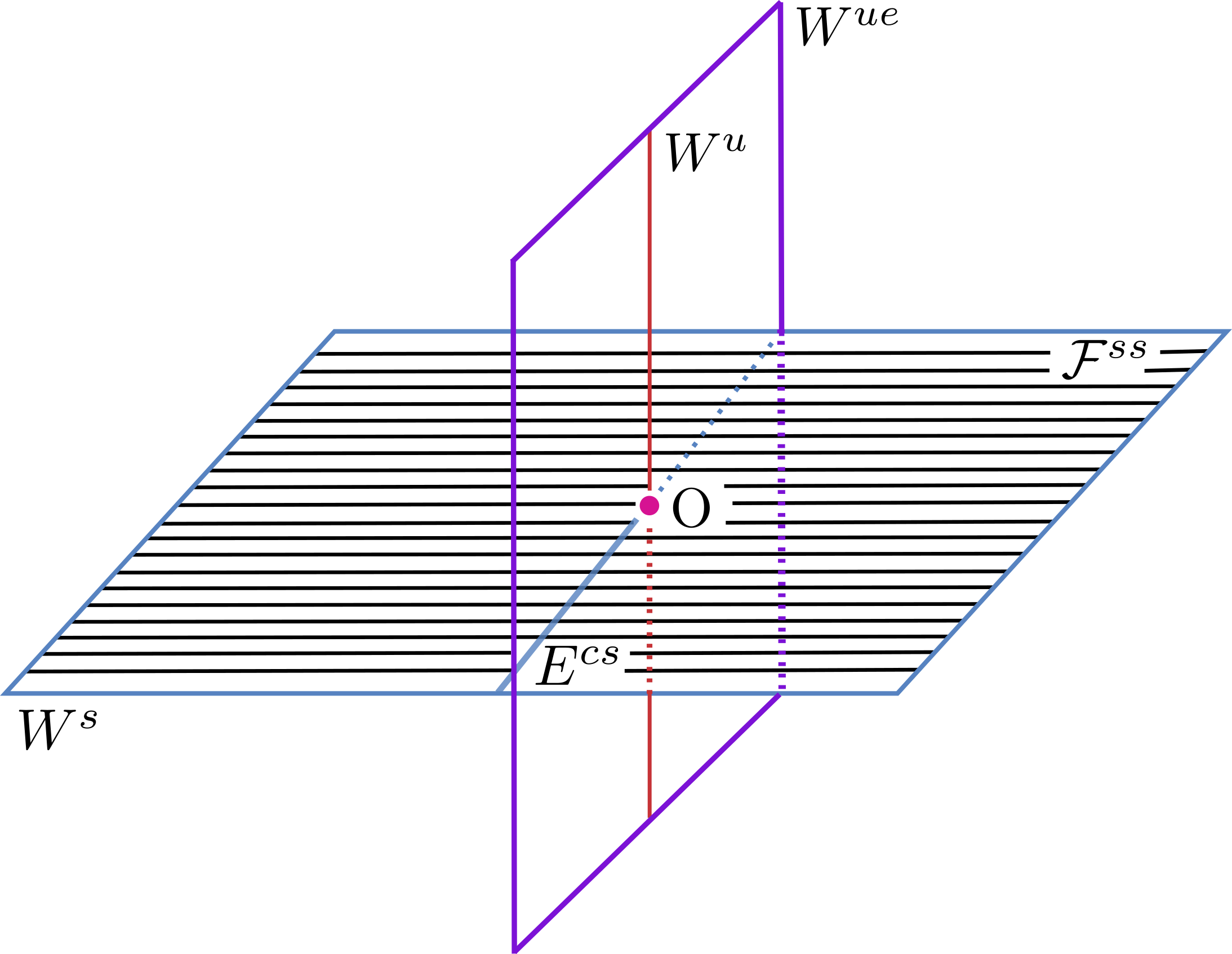}
    \caption{The various invariant manifolds associated to a hyperbolic fixed point $O$ with real and simple center-unstable  multipliers.}
    \label{fig:local_manifolds}
\end{figure}

\subsection{Non-Degeneracy Of Heteroclinic Cycles} We next introduce non-degeneracy for heteroclinic cycles of coindex one following Li-Turaev \cite{lt2024}. We require the following definitions.

\begin{definition}[Coindex] The \emph{coindex} of a heteroclinic cycle $C$ is the difference of the indices of the constituent hyperbolic orbits $C_+$ and $C_-$.
\[
\on{coind}(C) = \on{ind}(C_-) - \on{ind}(C_+)
\]
\end{definition}

\begin{definition}[Simple Center Multipliers] \label{def:simple_center_multipliers_for_cycle} A heterodimensional cycle $C$ has \emph{simple center multipliers} if $C_-$ has simple center-stable multipliers and $C_+$ has simple center-unstable multipliers.
\end{definition}

We also require the following preliminary setup and notation in order to formuate non-degeneracy. Here we will use the notation for transition and return maps from Section \ref{subsec:transition_and_return_maps}. 

\begin{setup}[Non-Degeneracy] \label{set:non_degeneracy} Fix a singular line field $L$ containing a heterodimensional cycle $C$ with coindex one and simple center multipliers. Fix a local sections of $L$ intersecting $C_+$ and $C_-$ at single points denoted as follows.
\begin{equation} \label{eq:choice_of_local_sections}
\text{$D_+$ and $D_-$ with $O_+ = D_+ \cap C_+$ and $O_- = D_- \cap C_-$}
\end{equation}
This determines a pair of Poincare return maps associated to $C_+$ and $C_-$.
\[
\on{Ret}C_+:D_+ \to D_+ \quad\text{and}\quad \on{Ret}C_-:D_- \to D_-
\]
These return maps have hyperbolic a fixed point with simple center-unstable multipliers at $O_+$ and a fixed point with simple center-stable multipliers at $O_-$. Since $C$ forms a heteroclinic cycle, we may fix points along such heteroclinics of the form
\begin{equation} \label{eq:choice_of_heteroclinic_points}
P^u_\pm \in W^u(O_\pm) \subset D_\pm \qquad\text{and}\qquad P^s_\pm \in W^s(O_\pm) \subset D_\pm
\end{equation}
such that $P^u_+$ is connected to $P^s_-$ by a segment of a heteroclinic  from $C_+$ to $C_-$ and $P^s_+$ is connected to $P^u_-$ by a segment of a heteroclinic from $C_-$ to $C_+$. We denote these heteroclinic segments by
\begin{equation} \label{eq:choice_of_heteroclinic_segments}
\Gamma_+ \quad\text{and}\quad \Gamma_-
\end{equation}
We assume that these two segments are disjoint from the sections $D_+$ and $D_-$ other than at their endpoints. In other words, we assume that
\begin{equation} \label{eq:choice_of_heteroclinic_segments_property}
\Gamma_+ \cap D_+ = P^u_+ \qquad \Gamma_+ \cap D_- = P^s_+ \qquad \Gamma_- \cap D_+ = P^s_+ \qquad \Gamma_- \cap D_- = P^u_-
\end{equation}
We will refer to the two heteroclinics as the \emph{robust heteroclinic} $\Gamma_+$ and the \emph{fragile heteroclinic} $\Gamma_-$, respectively. We denote the transition maps along the segments between $P^u_\pm$ and $P^s_\pm$ by
\[
\on{Tr}\Gamma_+:D_+ \to D_- \qquad\text{and}\qquad \on{Tr}\Gamma_-:D_- \to D_+
\]
Note that the points $P^u_\pm$ and $P^s_\pm$ are mapped to each other by these transition maps.
\[
\on{Tr}\Gamma_+(P^u_+) = P^s_- \qquad\text{and}\qquad \on{Tr}\Gamma_-(P^u_-) = P^s_+
\] \end{setup}

\begin{figure}[h]
    \centering
    \includegraphics[width=\linewidth]{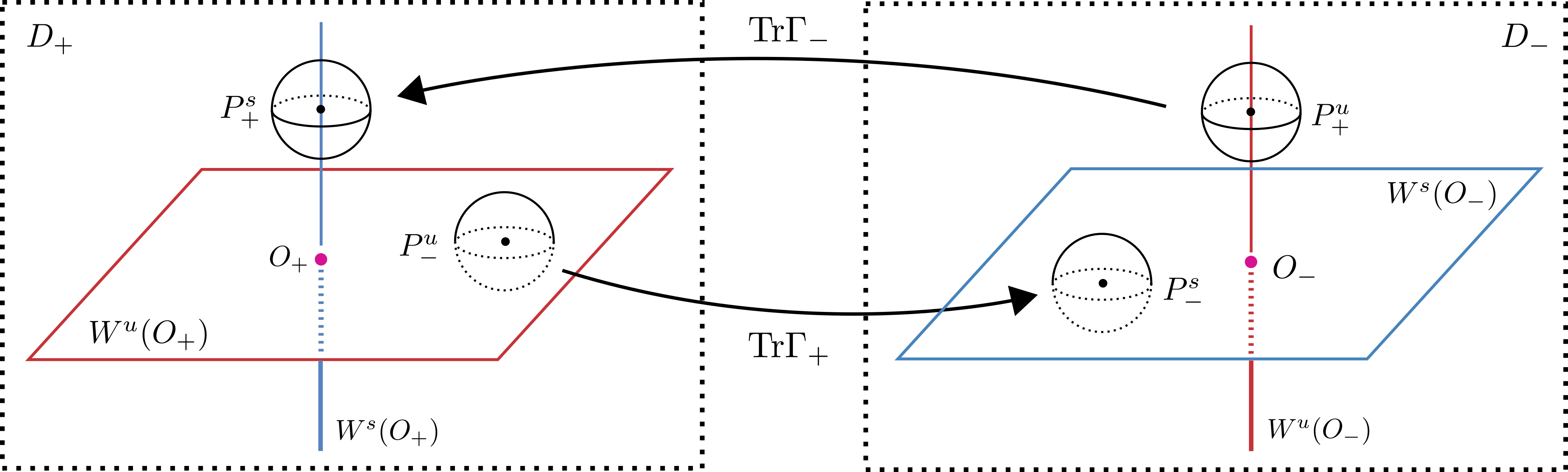}
    \caption{A depiction of the various data fixed in Setup \ref{set:non_degeneracy}. Here we only depict the data on the local sections $D_+$ and $D_-$, and we do not depict the flow itself.}.
    \label{fig:setup}
\end{figure}

We now formulate the non-degeneracy conditions assuming Setup \ref{set:non_degeneracy} as follows. The first three may be viewed as basic transversality and genericity assumptions for the transition maps.

\begin{condition}[Simple Fragile Heteroclinic] \label{cond:simple_fragile_heteroclinic} The transition map of the fragile heteroclinic $\Gamma_-$ satisfies the following transversality properties at the point $P^s_+$.
\[
\on{Tr}\Gamma_-\big(W^u(O_-)\big) \text{ is tranverse to }W^{se}(O_+) 
\quad\text{and}\quad \on{Tr}\Gamma_-\big(W^{ue}(O_-)\big) \text{ is tranverse to }W^s(O_+)
\]
\end{condition}

\begin{condition}[Simple Robust Heteroclinic] \label{cond:simple_robust_heteroclinic} The transition map of the robust heteroclinic $\Gamma_+$ satisfies the following transversality property at the point $P^s_-$.
\[
\on{Tr}\Gamma_+\big(W^u(O_+)) \text{ is transverse to }W^s(O_-)
\]
Moreover, the strong unstable foliation $\mathcal{F}^{uu}$ of the return map $\on{Ret} C_+$ and the strong stable foliation $\mathcal{F}^{ss}$ of the return map $\on{Ret} C_-$ must satisfy the following transversality conditions at $P^s_-$.
\[
\on{Tr}\Gamma_+\big(\mathcal{F}^{uu}\big) \text{ is not tangent to }W^s(O_-) \quad\text{and}\quad \mathcal{F}^{ss} \text{ is not tangent to }\on{Tr}\Gamma_+\big(W^u(O_+)\big)
\]
 \end{condition}

\begin{condition} \label{cond:extra_condition} The points $P^u_+$ and $P^s_-$ must satisfy $P^u_+ \not\in W^{uu}(O_+)$ and $P^s_- \not\in W^{ss}(O_-)$.
\end{condition}
 
\begin{remark}[Intersection Curves] \label{rmk:intersection_curves} Condition \ref{cond:simple_robust_heteroclinic} implies that the unstable manifold $W^u(C_+)$ and the stable manifold $W^s(C_-)$ intersect transversely along the heteroclinic $\Gamma_+$. Therefore
\[
I_- = \on{Tr}\Gamma_+\big(W^u(O_+)) \cap W^s(O_-) \subset D_- \qquad\text{and}\qquad I_+ = W^u(O_+) \cap \on{Tr}\Gamma_+^{-1}\big(W^s(O_-)\big) \subset D_+
\]
are embedded 1-manifolds in a sufficiently small neighborhood of $P^s_-$ and $P^u_+$, respectively. Note that, by construction, the transition map induced by $\Gamma_+$ restricts to a diffeomorphism
\begin{equation}  \label{eq:diffeo_of_intervals}
\on{Tr}\Gamma_+:I_+ \to I_- \qquad\text{such that}\qquad \on{Tr}\Gamma_+(P^u_+) = P^s_-
\end{equation}
Moreover, the two intersection curves satisfy the following trsansversality properties.
\begin{equation} \label{eq:transversality_of_intervals}
I_- \text{ is transverse to }\mathcal{F}^{ss}\text{ in $W^s(O_-)$} \qquad\text{and}\qquad I_+ \text{ is transverse to }\mathcal{F}^{uu} 
\text{ in $W^u(O_+)$}\end{equation}\end{remark}

\begin{remark}[Third Condition] \label{rmk:third_condition} Note that, when Conditions \ref{cond:simple_fragile_heteroclinic} and \ref{cond:simple_robust_heteroclinic} are satisfied, then the intersections $W^{ss}(O_-) \cap I_-$ and $W^{uu}(O_+) \cap I_+$ are isolated. It follows that Condition \ref{cond:extra_condition} can be achieved without perturbation of the line field by simply changing the choice of $P^u_+$ and $P^s_-$ to a different pair of points in $I_+$ and $I_-$. 
\end{remark}

\begin{figure}[h]
    \centering
    \includegraphics[width=\linewidth]{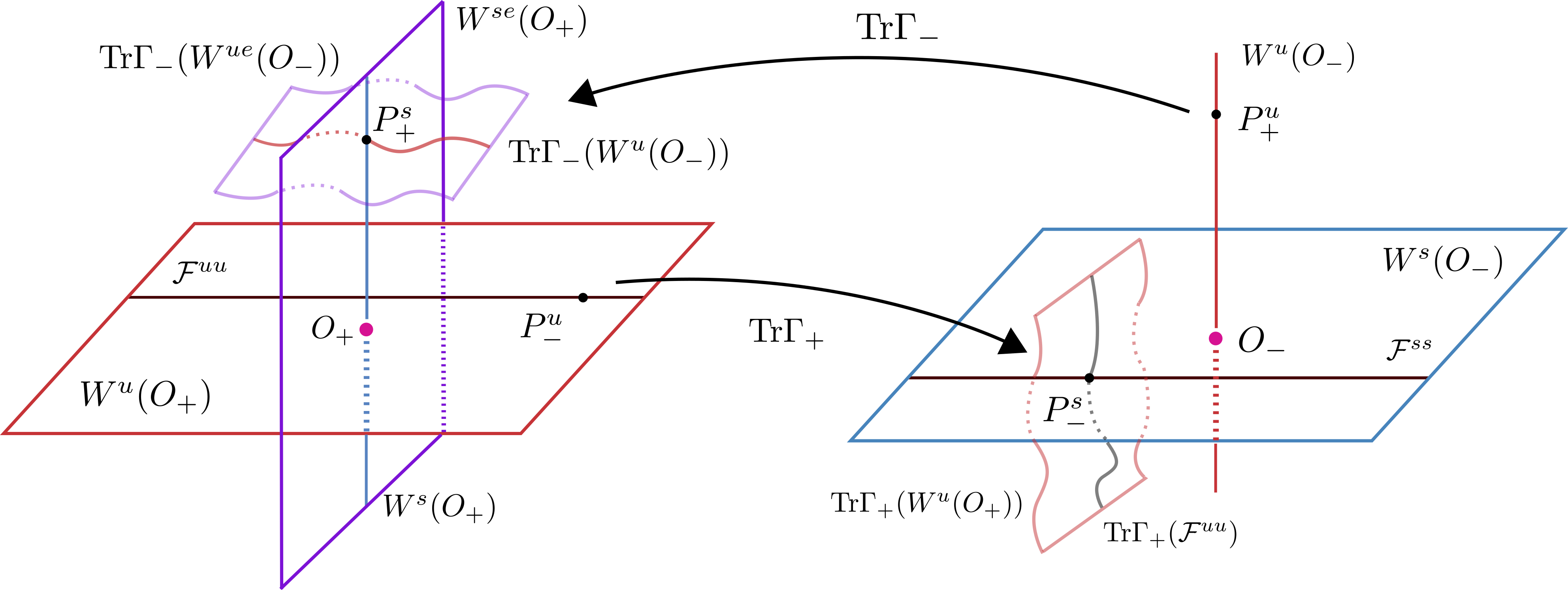}
    \caption{A depiction of data in Setup \ref{set:non_degeneracy} that satisfies the first three non-degeneracy conditions, Conditions \ref{cond:simple_fragile_heteroclinic}, \ref{cond:simple_robust_heteroclinic} and \ref{cond:extra_condition}.}
    \label{fig:nondegeneracy_cond_combined}
\end{figure}

The fourth and final condition depends on the central multipliers of the hyperbolic orbits. Assume Conditions \ref{cond:simple_fragile_heteroclinic}-\ref{cond:extra_condition} and choose linear coordinates as in Theorem \ref{thm:linear_coordinates}.
\begin{equation}\label{eq:linear_coordinates_for_condition}
(x_s,x_{cu},x_{uu}) \text{ for $\on{Ret} C_+$ near $O_+$} \qquad\text{and}\qquad (y_{ss},y_{cs},y_u) \text{ for $\on{Ret} C_-$ near $O_-$}
\end{equation}
Also fix an arbitrary parametrization of the intersection curve $I_-$ that maps the origin to $P^u_+$.
\begin{equation} \label{eq:parametrization_of_I}
\psi:[-1,1]_r \to I_- \subset D_- \qquad\text{such that}\qquad \psi(0) = P^u_+
\end{equation}
The center-unstable coordinates $x_{cu}$ and the center-stable coordinates $y_{cs}$ respectively determine two smooth maps $u_{cu}$ and $v_{cs}$ on the interval $[-1,1]_r$ defined by
\begin{equation}
u_{cu}:[-1,1]_r \to \R^b_{cu} \qquad\text{given by}\qquad u_{cu} = x_{cu} \circ \psi\end{equation}
\begin{equation} v_{cs}:[-1,1]_r \to \R^e_{cs} \qquad\text{given by}\qquad v_{cs} = y_{cs} \circ \on{Tr}\Gamma_+ \circ \psi
\end{equation}
Here $\R^b_{cu} \simeq E^{cu}(O_+)$ and $\R^e_{cs} \simeq E^{cs}(O_-)$ are linearly identified with the center-unstable and center-unstable invariant subspaces of $O_+$ and $O_-$, respectively. Moreover, the properties (\ref{eq:diffeo_of_intervals}) and (\ref{eq:transversality_of_intervals}) and Condition \ref{cond:extra_condition} imply that the following quantities are non-zero.
\[
u_{cu}(0) \neq 0 \qquad v_{cs}(0) \neq 0 \qquad \partial_ru_{cu}(0) \neq 0 \qquad \partial_r v_{cs}(0) \neq 0
\]
We can now formulate the final non-degeneracy conditions, which is broken into cases depending on the center-stable and center-unstable multipliers.

\begin{condition}[Saddle] \label{cond:saddle} If the stable multiplier $\lambda_{cs}(O_-)$ and the unstable multiplier $\lambda_{cu}(O_+)$ are real, then the center-unstable and center-stable coordinates satisfy
\begin{equation} \label{eq:cond:saddle}
\Big|\frac{\partial_r\log |v_{cs}|}{\partial_r\log |u_{cu}|}\Big| \neq 1 \qquad\text{at $r = 0$}
\end{equation} \end{condition}
\begin{condition}[Focus]  \label{cond:focus} If the center-stable multiplier $\lambda_{cs}(O_-)$ is complex, then the vectors
\[
\text{$v_{cs}(0)$ and $\partial_r v_{cs}(0)$ are not proportional as vectors in $\R^2_{cs}$}
\]
Similarly, if the center-unstable multiplier  $\lambda_{cu}(O_+)$ is complex, then the vectors
\[
\text{$u_{cu}(0)$ and $\partial_r u_{cu}(0)$ are not proportional as vectors in $\R^2_{cu}$}
\]\end{condition}

\begin{definition}[Non-Degeneracy] \label{def:non_degeneracy} A coindex one heterodimensional cycle $C$ with simple center multipliers is \emph{non-degenerate} if there exist local sections and heteroclinic points as in Setup \ref{set:non_degeneracy}
\[
D_\pm \text{ as in (\ref{eq:choice_of_local_sections})} \qquad\text{and}\qquad  P^u_\pm \text{ and }P^s_\pm \text{ as in (\ref{eq:choice_of_heteroclinic_points})}
\]
such that the non-degeneracy Conditions \ref{cond:simple_fragile_heteroclinic}, \ref{cond:simple_fragile_heteroclinic}, \ref{cond:extra_condition}, and either \ref{cond:saddle} or \ref{cond:focus} are satisfied. \end{definition}

\subsection{Proper Unfoldings} We next introduce proper unfoldings for heteroclinic cycles following Li-Turaev \cite[\S 2.4]{lt2024}. We continue to assume Setup \ref{set:non_degeneracy} so that we have a heteroclinic cycle
\[
C = (C_+,C_-) \qquad\text{of coindex one for a singular line field $L$}
\]

We must introduce some continuous parameters associated to line fields that are close to $L$. Given a line field $K$ contained in the neighborhood  in the $C^1$-topology, there are continuations
\[
C_K^\pm \qquad\text{of the hyperbolic orbits }C_\pm
\]
The chosen local sections $D_\pm$ remain local sections of $C_K^\pm$ and thus there are continuations
\[
O^\pm_K \qquad\text{that are fixed points of the return maps $\on{Ret} \Gamma^\pm_K:D_\pm \to D_\pm$}
\]
Note that the orbits, return maps and fixed points all vary continuously in $K$. The first set of parameters that we will need are associated to the multipliers of these fixed points. 

\begin{definition}[Ratio/Arguments] Let $K$ be a singular line field that is sufficiently $C^1$-close to $L$. The \emph{multiplier ratio} is the quantity
\[
\theta(K) = -\frac{\log |\lambda_{cs}(O^-_K)|}{\log |\lambda_{cu}(O^+_K)|}
\]
Similarly, the \emph{multiplier arguments} $\omega_s(K)$ and $\omega_u(K)$ are given by
\[
\omega_s(K) = \on{arg}(\lambda_{cs}(O^-_K)) \qquad\text{and}\qquad \omega_u(K) = \on{arg}(\lambda_{cu}(O^+_K))
\]
where $\on{arg}(z) = b$ for a complex number $z = ae^{2\pi i b}$ where $a$ is real and positive, and $0 \le b \le 1/2$. \end{definition}

The additional parameter that we need is more complicated and requires the notion of a splitting function for embeddings of disks discussed in the following remark.

\begin{remark}[Splitting For Disks] \label{rmk:splitting_for_disks} Let $D$ denote a disk equipped with a Riemannian metric $g$ and a pair of properly embedded disks $A \subset D$ and $B \subset D$ with
\[
\on{dim}(A) + \on{dim}(B) = \on{dim}(D) - 1 \qquad\text{and}\qquad \text{$A$ intersects $B$ cleanly at one point $O \in D$}
\]
Fix small $C^1$-neighborhoods $\mathcal{U}$ and $\mathcal{V}$ of $A$ and $B$ respectively in the spaces of embedded disks in $D$. For sufficiently small choices of neighborhoods, the set $\mathcal{U} \times \mathcal{V}$ splits as a disjoint union
\[
\mathcal{U} \times \mathcal{V} = \mathcal{W}_+ \sqcup \mathcal{Y} \sqcup \mathcal{W}_-
\]
where $\mathcal{Y}$ consists of pairs of embedded hypersurfaces that intersect and $\mathcal{W}_\pm$ are the two connected components of $(\mathcal{U} \times \mathcal{V}) \setminus \mathcal{Y}$. Moreover, there is a signed refinement of the distance function
\[
\on{sdist}:\mathcal{U} \times \mathcal{V} \to \R
\]
that is negative on $\mathcal{W}_-$, positive on $\mathcal{W}_+$, and whose absolute value coincides with the distance.
\[
|\on{sdist}(A',B')| = \on{dist}_g(A',B') \qquad\text{for any $A' \in \mathcal{U}$ and $B' \in \mathcal{V}$}
\]
Note that this is unique up to an overall choice of sign, corresponding to the choice of signs on the components $\mathcal{W}_\pm$. \end{remark}

There is a corresponding function on singular line fields $K$ near $L$ that measures the failure of the line field to possess a fragile heteroclinic. Precisely, the (un)stable manifolds
\[
W^s(O^+_K) \qquad\text{and}\qquad W^u(O^-_K)
\]
vary continuously with $K$ in the $C^1$-topology. Moreover, the segments $\Gamma_\pm$ connecting $D_+$ and $D_-$ have continuations
\[
\Gamma^\pm_K \text{ with transition maps $\on{Tr}\Gamma^\pm _K$}
\]
By choosing the disks $D_+$ and $D_-$ sufficiently small in Setup \ref{set:non_degeneracy}, we may assume that
\[
A_K = W^u(O^+_K) \qquad\text{and}\qquad B_K = \on{Tr}\Gamma^-_K(W^s(O^-_K))
\]
are properly embedded disks in $D_+$ of dimension $\on{ind}(C_+)$ and codimension $\on{ind}(C_-)$ respectively. Since the heteroclinic cycle $C$ is codimension one, we therefore have
\[
\on{dim}(A_K) + \on{dim}(B_K) = \on{dim}(D_+) - 1
\]

\begin{definition}[Splitting Function] \label{def:splitting_function} Let $C$ be a heteroclinic cycle equipped with choices of local sections $D_\pm$ and heteroclinic points $P^u_\pm$ and $P^s_\pm$ as in Setup \ref{set:non_degeneracy}. The \emph{splitting function}
\[
\sigma_g:\mathcal{U} \to \R \qquad\text{for a choice of Riemannian metric $g$}
\]
is the continuous function on an open neighborhood $\mathcal{U}$ of $L$ in the space of singular line fields with the $C^1$-topology, defined by
\[
\sigma_g(K) = \on{sdist}_g(A_K,B_K)
\]
where $\on{sdist}_g$ is the signed refinement of the distance function for $g$ described in Remark \ref{rmk:splitting_for_disks}. \end{definition}

We are now ready to precisely define the notion of a proper unfolding and state the main theorem relating unfoldings to robust heteroclinic cycles \cite[Theorem B]{lt2024}.

\begin{definition}[Proper Unfolding] \label{def:proper_unfolding} Let $L$ be a singular line field with a non-degenerate heteroclinic cycle $C$ and let $U$ be an open set of $\R^m$ containing $0$. A \emph{$C^k$ proper unfolding}
\[
L_\epsilon \qquad\text{parametrized by $\epsilon \in U$ and based at $L = L_0$}
\]
is a $C^k$-continuous family of singular line fields such that there is a choice of Riemannian metric $g$ such that
\[
\sigma_g \circ L:U \to \R \qquad\text{is differentiable with $0$ as a regular value}
\]
Moreover, let $\Sigma \subset U$ be the sub-manifold of parameters given by $\Sigma = (\sigma \circ L)^{-1}(0)$. Then the family $L_\epsilon$ must satisfy the following properties depending on the multipliers.
\begin{itemize}
    \item[(SS)] If $\lambda_{cs}(O_-)$ and $\lambda_{cu}(O_+)$ are both real, then $\theta \circ L_\epsilon$ is differentiable and has nowhere zero derivative as a function of $\epsilon \in \Sigma$.
    \vspace{2pt}
    \item[(FS)] If $\lambda_{cs}(O_-)$ is not real and $\lambda_{cu}(O_+)$ is real, then the continuous functions $\theta \circ L_\epsilon$, $\omega_s \circ L_\epsilon$ and $1$ are linearly independent as functions of $\epsilon \in \Sigma$.
    \vspace{2pt}
    \item[(SF)] If $\lambda_{cs}(O_-)$ is real and  $\lambda_{cu}(O_+)$ is not real, then the continuous functions $\theta \circ L_\epsilon$, $\omega_u \circ L_\epsilon$ and 1 are linearly independent as functions of $\epsilon \in \Sigma$.
    \vspace{2pt}
    \item[(FF)] If $\lambda_{cs}(O_-)$ and  $\lambda_{cu}(O_+)$ are both not real, the the continuous functions $\theta \circ L_\epsilon$, $\omega_s \circ L_\epsilon$, $\theta\omega_u \circ L_\epsilon$ and $1$ are linearly independent as functions of $\epsilon \in \Sigma$.
\end{itemize}\end{definition}

\begin{theorem}[Unfoldings To Robust Cycles] \label{thm:unfoldings_to_robust} \cite{lt2024} Let $L_\epsilon$ be a proper unfolding parametrized by $U$ based at a non-degenerate singular line field $L$ with a heteroclinic cycle $C$ of coindex one. Then arbitrarily close to $0 \in U$, there exist parameters $\epsilon \in U$ such that $L_\epsilon$ has a robust heteroclinic cycle of index $\on{ind}(C)$.
\end{theorem}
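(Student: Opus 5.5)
The plan is to deduce the statement from the discrete-time and continuous-time versions of \cite[Thm B]{lt2024}, rather than to reprove the blender machinery. Every notion entering the statement is invariant under reparametrization of the line field. This applies to hyperbolicity (Lemma \ref{lem:reparametrization}), to the continuations $C^\pm_K$, to the return maps $\on{Ret}C^\pm_K$ on the fixed sections $D_\pm$ (up to smooth conjugacy), to the transition maps $\on{Tr}\Gamma^\pm_K$ (Remarks \ref{rmk:definedness} and \ref{rmk:continuity}), and hence to the multipliers, the ratio $\theta$, the arguments $\omega_s,\omega_u$ and the splitting function $\sigma_g$. So the first step is to choose spanning vector fields $V_\epsilon$ for $L_\epsilon$ depending $C^k$-continuously on $\epsilon$. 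One way is to fix a Riemannian metric and normalize a local spanning field near the compact set $C_+\cup C_-\cup\Gamma_+\cup\Gamma_-$, where $L$ has no singularities, and then patch with a partition of unity. This turns $L_\epsilon$ into a $C^k$ family of flows whose non-degeneracy data agree with those of Setup \ref{set:non_degeneracy}.

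Next I would check that the hypotheses of Definitions \ref{def:non_degeneracy} and \ref{def:proper_unfolding} are exactly the hypotheses of \cite{lt2024} for flows, after passing to the sections $D_\pm$. There is one subtlety: Li--Turaev phrase everything for a single first-return structure near the cycle. So I would assemble the local maps $\on{Ret}C^\pm_\epsilon$ and the global maps $\on{Tr}\Gamma^\pm_\epsilon$ into the standard ``local/global map'' description of orbits shadowing the cycle. The quantities entering their conditions are as follows:
\begin{itemize}
\item[(a)] the transversalities of Conditions \ref{cond:simple_fragile_heteroclinic}--\ref{cond:extra_condition};
\item[(b)] the saddle or focus quantity (Conditions \ref{cond:saddle}/\ref{cond:focus}), computed in the linear coordinates of Theorem \ref{thm:linear_coordinates};
\item[(c)] the splitting parameter $\sigma_g$ with $0$ a regular value, together with the independence conditions (SS)--(FF) on $\Sigma=\sigma_g^{-1}(0)$.
\end{itemize}
All of these are defined purely in terms of these maps. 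They are also independent of the choice of linear coordinates and of the metric $g$, up to sign and positive factors, which do not affect regularity or linear independence.

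With the hypotheses matched, the conclusion follows from Li--Turaev's mechanism. For parameters near $\Sigma$, orbits making many turns near $C_+$ and $C_-$ have first-return maps that, after rescaling, converge to a finite-parameter family of model maps. The effective parameters are $\sigma_g$ rescaled by powers of the central multipliers, and the phases $\theta$, $\omega_s$, $\omega_u$. Conditions (SS)--(FF) make these rescaled parameters fill an open set as $\epsilon$ varies near $0$. For suitable values the model contains a center-unstable (or center-stable) blender, and such a blender persists under $C^1$-perturbation. The robust heteroclinic $\Gamma_+$ connects the blender to $C_-$ transversely. The blender's superposition property makes the fragile connection back to $C_+$ $C^1$-robust. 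This produces a robust cycle of index $\on{ind}(C)$ for parameters arbitrarily close to $0$.

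The main obstacle is the renormalization-to-blender step, which I would not redo but cite from \cite[Thm B, Thm D]{lt2024}. The genuinely new work is the bookkeeping in the second step: showing that the line-field formulation, and in particular the continuity of $\sigma_g$ in the $C^1$-topology and its differentiability along $C^k$ families, matches Li--Turaev's flow setting verbatim.
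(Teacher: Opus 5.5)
Your proposal is correct and takes the same route as the paper, which gives no argument of its own here and simply imports \cite[Thm B]{lt2024}. Your reduction (reparametrization invariance, choosing spanning fields continuously in $\epsilon$, and reading the non-degeneracy and properness conditions off the return and transition maps on $D_\pm$) makes explicit the translation the paper leaves implicit. The only hypothesis worth double-checking in that translation is smoothness: as far as I recall, Li--Turaev work in the $C^r$ category with $r \ge 2$, so you should take $k \ge 2$ in your $C^k$ family, which the smooth families constructed in Section \ref{sec:contact_unfoldings_and_robustifications} satisfy.
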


\section{Contact Unfoldings And Robustification} \label{sec:contact_unfoldings_and_robustifications}

In this section, we show that any contact Hamiltonian manifold whose characteristic foliation contains a simple heteroclinic cycle can be ambiently perturbed to contain a robust heteroclinic cycle (Theorem \ref{thm:robustification}). More precisely, we prove that any simple heteroclinic cycle of index $(n-1,n)$ can be perturbed to be non-degenerate (Theorem \ref{thm:nondeg_approx}) and that any non-degenerate simple heteroclinic cycle can be extended to a proper unfolding (Theorem \ref{thm:characteristic_unfoldings}). These two results, combined with Theorem \ref{thm:unfoldings_to_robust} will immediately imply Theorem \ref{thm:robustification}.

\subsection{Invariant Manifolds Of Contactomorphisms} \label{subsec:invariant_manifolds_of_contactomorphisms} We start by showing that the local invariant manifolds of middle-index hyperbolic fixed points of contactomorphisms are variously Legendrian and pre-Lagrangian. Fix a contact manifold $Y$ of dimension $2n-1$ and a contact embedding
\[
\Phi:U \to Y \qquad\text{from an open set $U \subset Y$}
\]
We will require the following normal form for the differential of a contactomorphism will be repeatedly used below. The proof is an elementary computation and left as an exercise. 

\begin{lemma}[Normal Form] \label{lem:normal_form} Let $O$ be a fixed point of a contact embedding $\Phi$. Then in a standard contact Darboux chart centered at $O$, we have
\begin{equation} 
T_O\Phi = \left[\begin{array}{cc}
C & 0\\
* & \sqrt{C} \cdot B\end{array}\right] 
\end{equation}
where $C > 0$ is a constant and $B$ is a symplectic matrix acting on $\R^{2n-2}$ at $O$. \end{lemma}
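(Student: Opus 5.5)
The plan is to compute $T_O\Phi$ in a Darboux chart $(z,x,y)$ centered at $O$ with contact form $\alpha = dz + \lambda_{\on{std}}$, and then read off the block structure from the two facts that $\Phi$ preserves $\xi = \ker\alpha$ and scales $\alpha$. Since $\Phi$ is contact, $\Phi^*\alpha = f\alpha$ for a positive function $f$, and I set $C = f(O)$. The splitting is $T_O Y = \R\langle\partial_z\rangle \oplus \xi_O$ with $\xi_O = \R^{2n-2} = \{dz = 0\}$. This is the splitting used in the block matrix, and it matches the convention in the proof of Proposition \ref{prop:linearization}.

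First, the upper right block vanishes. Because $T_O\Phi$ preserves $\xi_O$, the $\partial_z$-component of $T_O\Phi(v)$ is zero for every $v \in \xi_O$. Second, the upper left entry equals $C$. At $O$ we have $\lambda_{\on{std}} = 0$, so $\alpha_O = dz$. The identity $(\Phi^*\alpha)_O = C\alpha_O$ then gives $dz(T_O\Phi(\partial_z)) = C$. The lower left block $*$ remains unconstrained, since it is the $\xi$-component of $T_O\Phi(\partial_z)$.

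Third, the restriction $A = T_O\Phi|_{\xi_O}$ is conformally symplectic with factor $C$. To see this, differentiate the relation $\Phi^*\alpha = f\alpha$ to get $\Phi^*d\alpha = df\wedge\alpha + f\,d\alpha$. At $O$, restricted to $\xi_O$, the term $df\wedge\alpha$ vanishes because $\alpha|_{\xi_O} = 0$. This leaves $A^*\Omega_{\on{std}} = C\,\Omega_{\on{std}}$, where $\Omega_{\on{std}} = d\lambda_{\on{std}}|_{\xi_O}$ is the standard symplectic form. Setting $B = C^{-1/2}A$ gives $B^*\Omega_{\on{std}} = \Omega_{\on{std}}$, so $B$ is symplectic and $A = \sqrt{C}\,B$.

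None of these steps is a real obstacle. The only point requiring care is the sign convention: $C > 0$ holds because $\Phi$ preserves the coorientation of $\xi$, which is implicit in the paper's use of $\CSp(2n)$ with positive conformal factor. If coorientation were reversed, one would replace $C$ by $|C|$ and $B$ would be anti-symplectic.
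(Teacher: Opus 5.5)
Your proposal is correct and is essentially the elementary Darboux-chart computation that the paper leaves as an exercise, and which it implicitly uses in the proof of Proposition \ref{prop:linearization}: since $\alpha_O = dz$, the identity $(\Phi^*\alpha)_O = C\alpha_O$ forces the first row of $T_O\Phi$ to be $(C,0)$, and differentiating $\Phi^*\alpha = f\alpha$ shows that $T_O\Phi|_{\xi_O}$ is conformally symplectic with factor $C$, so $B = C^{-1/2}\,T_O\Phi|_{\xi_O}$ is symplectic. Your coorientation remark is also apt, because the return and transition maps to which the paper applies this lemma pull back the contact form by a positive factor.
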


We now prove the various properties of the invariant manifolds that we will need. First, the stable and unstable manifolds are isotropic under natural index assumptions.

\begin{lemma}[Isotropic Stables] \label{lem:isotropric_stable_unstable} Let $O$ be a hyperbolic fixed point of a contact embedding $\Phi$. Then
\[
W^s(O) \text{ is isotropic if $\on{ind}(O) \le n-1$} \qquad\text{and}\qquad W^u(O) \text{ is isotropic if $\on{ind}(O) \ge n$} 
\]
%Moreover, $T\Phi|_\xi$ is a hyperbolic linear map on $\xi$ at $O$ if the index $\on{ind}(O)$ is either $n-1$ or $n$ .
\end{lemma}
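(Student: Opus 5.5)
The plan is to prove first that $E^s(O)$ is isotropic in $\xi_O$, i.e. contained in $\xi_O$ and $d\alpha$-isotropic there, when $\on{ind}(O) \le n-1$. Then we propagate this along $W^s(O)$ using invariance under $\Phi$. The unstable case follows by applying the same argument to the local inverse $\Phi^{-1}$. Its fixed point $O$ has index $(2n-1) - \on{ind}(O) \le n-1$ when $\on{ind}(O) \ge n$, and $W^u(\Phi;O) = W^s(\Phi^{-1};O)$.

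\textbf{Linear step.} We use the normal form of Lemma \ref{lem:normal_form}. In a Darboux chart, $T_O\Phi$ has eigenvalue $C$ in the Reeb direction and eigenvalues $\sqrt{C}\mu$ for the eigenvalues $\mu$ of the symplectic matrix $B$ on $\xi_O \cong \R^{2n-2}$. Since $B$ is symplectic, its eigenvalues come in pairs $\mu,\mu^{-1}$. The quotient map $T_O\Phi$ on $T_OY/\xi_O$ is multiplication by $C$, and $\xi_O$ is invariant.

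First suppose $C \ge 1$. Then the Reeb direction is not contracting. The stable subspace is an invariant subspace consisting of vectors contracted by $T_O\Phi$, and its image in $T_OY/\xi_O$ must be $0$, so $E^s(O) \subset \xi_O$. On $\xi_O$ the map is $\sqrt{C}B$, which satisfies $(\sqrt{C}B)^*\omega = C\omega$. For $u,v \in E^s(O)$ we get $\omega(u,v) = C^{-k}\omega(T\Phi^k u, T\Phi^k v) \to 0$, since $C \ge 1$ and the vectors contract. Hence $E^s(O)$ is isotropic.

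Now suppose $C < 1$. The Reeb eigenvalue is stable. For each pair $\mu,\mu^{-1}$ with $|\mu| \ge 1$, the product of the moduli of $\sqrt{C}\mu$ and $\sqrt{C}\mu^{-1}$ is $C < 1$, so at least one of the pair is stable. This gives at least $n-1$ stable eigenvalues in $\xi_O$, plus the Reeb one, so $\on{ind}(O) \ge n$. That contradicts the hypothesis, so $C \ge 1$ whenever $\on{ind}(O) \le n-1$.

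\textbf{Propagation step.} The contact structure $\xi$ is a $\Phi$-invariant distribution, so Lemma \ref{lem:invariant_bundle} with $\sigma = 1$ and $D = \xi$ gives $TW^s(O) \subset \xi$ along all of $W^s(O)$. Thus $W^s(O)$ is an integral submanifold of $\xi$, which is precisely what isotropic means. Alternatively, one can argue directly: for $x \in W^s(O)$ and $v \in T_xW^s(O)$, compare $\alpha(v)$ with $\alpha(T\Phi^k v)$ using $\Phi^*\alpha = F\alpha$. The conformal factors along the orbit are near $C \ge 1$, while $T\Phi^k v \to 0$, and together these force $\alpha(v) = 0$.

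\textbf{Main obstacle.} The delicate point is the linear step when $C = 1$ or when eigenvalues of $\sqrt{C}B$ lie near modulus $1$. Hyperbolicity rules out exact modulus $1$, and the index count above still forces $C \ge 1$, so the argument $E^s(O) \subset \xi_O$ goes through. I expect the propagation step to be routine given Lemma \ref{lem:invariant_bundle}.
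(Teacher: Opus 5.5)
Your proof is correct and is essentially the paper's argument. Like the paper, you reduce via Lemma~\ref{lem:invariant_bundle} (with $D=\xi$) to showing $E^s(O)\subset\xi_O$, then use the normal form of Lemma~\ref{lem:normal_form}: $C<1$ would force index at least $n$, while $C>1$ (hyperbolicity excludes $C=1$) forces $E^s(O)\subset\xi_O$, and the unstable case follows by passing to the local inverse. Your extra check that $E^s(O)$ is $d\alpha$-isotropic is harmless but unnecessary, since any submanifold tangent to $\xi$ is automatically isotropic.
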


\begin{proof} The unstable case follows from the stable case by considering the local inverse. Therefore assume that $O$ has index $n-1$ or less. By Lemma \ref{lem:invariant_bundle}, it suffices to show that
\[
E^s(O) \subset \xi \qquad\text{at the fixed point $O$}
\]
where $E^s(O)$ is the stable eigenspaces of $T_O\Phi$ consisting of the sum of real invariant subspaces corresponding to eigenvalues of norm less than one. We may pass to a standard Darboux chart so that the differential $T_O\Phi$ the normal form in Lemma \ref{lem:normal_form}. Then the eigenvalues of $T_O\Phi$ are $C$ and $\sqrt{C} \cdot \lambda$ where $\lambda$ is an eigenvalue of $B$. Every eigenvalue of $B$ comes in a pair with its inverse. Thus if $C < 1$, then $\sqrt{C} B$ has at least $n-1$ eigenvalues of norm less than $1$ and thus $T_O\Phi$ is index at least $n$. If $C > 1$, then the stable eigenspace is contained in the subspace $\xi= \R^{2n-2}$ of $T_OY = \R^{2n-1}$ at $O$. This proves the desired result.\end{proof}

\begin{remark} Lemma \ref{lem:isotropric_stable_unstable} is also proven by Breen \cite{b2021} with a proof that is seemingly very different.
\end{remark}

\noindent A similar argument can be used to prove a lower bound on determinant, which was invoked in the proof of Lemma \ref{lem:det_bound_flows} in Section \ref{sec:liouville_and_hyperbolic_invariant_sets}.

\begin{lemma}[Determinant Bound] \label{lem:det_bound_map} Let $O$ be a hyperbolic fixed point of a contact embedding $\Phi$ with $\on{ind}(O) \le n-1$. Then
\[
\on{det}(T_O\Phi) \ge \sigma_{cu}(T_O\Phi)^n
\]
where $\sigma_{cu}(T_O\Phi)$ is the center-unstable norm, or equivalently smallest absolute value of an eigenvalue of the differential of $\Phi$ at $O$ that is greater than one.\end{lemma}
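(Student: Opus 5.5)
The plan is to compute the determinant directly from the normal form of Lemma \ref{lem:normal_form}, and then compare it with the center-unstable norm using the index assumption, exactly as in the proof of Lemma \ref{lem:isotropric_stable_unstable}.

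First, pass to a standard contact Darboux chart centered at $O$, so that $T_O\Phi$ has the block lower-triangular form of Lemma \ref{lem:normal_form}. The diagonal blocks are the constant $C > 0$ acting on the Reeb direction and $\sqrt{C}\cdot B$ acting on $\xi = \R^{2n-2}$, with $B$ symplectic and hence $\on{det}(B) = 1$. The determinant is therefore
\[
\on{det}(T_O\Phi) = C \cdot \on{det}(\sqrt{C}\cdot B) = C \cdot C^{n-1}\cdot \on{det}(B) = C^n .
\]
This is independent of the chart, since the determinant of the differential at a fixed point is conjugation invariant. The eigenvalues of $T_O\Phi$ are $C$ together with $\sqrt{C}\cdot\mu$, where $\mu$ ranges over the eigenvalues of $B$.

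Second, use the index bound to show that $C > 1$. Hyperbolicity gives $C \neq 1$. Suppose instead that $C < 1$. The eigenvalues of $B$ occur in pairs $\mu, \mu^{-1}$, so at least $n-1$ of them (with multiplicity) have $|\mu| \le 1$. For each of these, $|\sqrt{C}\mu| < 1$. Together with the eigenvalue $C$ itself, this gives at least $n$ eigenvalues of modulus less than one, so $\on{ind}(O) \ge n$. This contradicts the hypothesis $\on{ind}(O) \le n-1$, and hence $C > 1$.

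Finally, since $C > 1$ is itself an eigenvalue of $T_O\Phi$, it belongs to the set of multiplier norms greater than one. The center-unstable norm $\sigma_{cu}(T_O\Phi)$ is by definition the minimum of that set, so $\sigma_{cu}(T_O\Phi) \le C$. We conclude that $\on{det}(T_O\Phi) = C^n \ge \sigma_{cu}(T_O\Phi)^n$.

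There is no serious obstacle. The only step needing care is the verification of the normal form itself (Lemma \ref{lem:normal_form}, left as an exercise in the paper): the Reeb component is scaled by the conformal factor $C$, and the restriction to $\xi$ is conformally symplectic with conformal factor $C$, so it can be written as $\sqrt{C}$ times a symplectic matrix.
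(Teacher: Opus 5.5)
Your proposal is correct and follows essentially the same route as the paper. Both arguments compute $\det(T_O\Phi) = C\cdot\det(\sqrt{C}B) = C^n$ from the normal form of Lemma \ref{lem:normal_form}, then use hyperbolicity and the index bound (as in Lemma \ref{lem:isotropric_stable_unstable}) to get $C>1$, and finally observe that $C$ is itself an eigenvalue greater than one, so $\sigma_{cu}(T_O\Phi)\le C$. Your count of at least $n-1$ eigenvalues of $B$ with $|\mu|\le 1$, via the reciprocal pairing with multiplicity, is a slightly more careful version of the paper's one-line index step.
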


\begin{proof} We pass to a Darboux chart so that the differential $T_O\Phi$ takes the normal form in Lemma \ref{lem:normal_form}. In the normal form we can compute the determinant as
\[
\on{det}(T_O\Phi) = C \cdot \on{det}(C\Phi) = C^n
\]
Moreover, note that $C$ must be an eigenvalue which satisfies $C \neq 1$ since $T_O\Phi$ is hyperbolic. As in the proof of Lemma \ref{lem:isotropric_stable_unstable}, we have
\[
C > 1 \qquad\text{if the index satisfies}\qquad \on{ind}(O) \le n -1 
\]
Therefore we may compute that
\[
\on{det}(T_O\Phi) = C^n \ge \sigma_{cu}(M)^n = \sigma_{cu}(T_O\Phi)^n \qedhere
\]\end{proof}

A similar result holds for the complementary stable and unstable manifolds under a more restrictive index assumption. We fix the following terminology. 

\begin{definition}[Pre-Lagrangian] \label{def:pre_lagrangian} A sub-manifold $W \subset Y$ in a contact manifold $(Y,\xi)$ is \emph{pre-Lagrangian along a subset $S \subset W$} if it satisfies
\[TW \text{ is transverse to $\xi$} \qquad\text{and}\qquad TW \cap \xi\text{ is a Legendrian sub-bundle of }\xi|_S \]
A sub-manifold $W$ that is pre-Lagrangian along $W \subset W$ will simply be called \emph{pre-Lagrangian}.\end{definition}

\begin{remark} It is a simple exercise to show that if a compact sub-manifold $W$ with boundary is pre-Lagrangian, then there exists a contact form whose Reeb vector field is tangent to $W$.
\end{remark}

\begin{lemma}[Pre-Lagrangian Unstables] \label{lem:pre_lag_unstable} Let $O$ be a hyperbolic fixed point of a contact embedding $\Phi$ with index $n-1$. Then $O$ has simple center-unstable multipliers and
\[
W^u(O) \text{ is pre-Lagrangian with Legendrian foliation $\mathcal{F}^{uu}$ if $\on{ind}(O) = n-1$}\]
Similarly, if $O$ is a hyperbolic fixed point with index $n$, then $O$ has simple center-stable multipliers and
\[
W^s(O) \text{ is pre-Lagrangian with Legendrian foliation $\mathcal{F}^{ss}$ if $\on{ind}(O) = n$}
\]
\end{lemma}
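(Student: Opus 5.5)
By symmetry (replacing $\Phi$ by its local inverse swaps stable and unstable manifolds and sends index $n$ to index $n-1$), it suffices to treat the case $\on{ind}(O) = n-1$. Pass to a standard Darboux chart at $O$ in which $T_O\Phi$ has the normal form of Lemma \ref{lem:normal_form}. Its eigenvalues are $C$ together with $\sqrt{C}\mu$, where $\mu$ ranges over the eigenvalues of the symplectic matrix $B$ on $\R^{2n-2}$.

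\textbf{Step 1: the multipliers.} As in the proof of Lemma \ref{lem:isotropric_stable_unstable}, index $n-1$ forces $C > 1$. The eigenvalues of $B$ come in pairs $\mu,\mu^{-1}$. Those of modulus $\ge 1$ give multipliers $\sqrt{C}\mu$ of modulus $> 1$. Since exactly $n-1$ multipliers are stable, exactly $n-1$ eigenvalues of $B$ (counted with multiplicity) satisfy $\sqrt{C}|\mu| < 1$; these are the small members of the pairs. Hence the unstable multipliers are $C$ together with $n-1$ values $\sqrt{C}\mu$, and the unstable space $E^u(O)$ is $n$-dimensional.

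\textbf{Step 2: simplicity of the center-unstable multiplier.} Consider a pair $\mu^{-1},\mu$ with $|\mu|\ge 1$. Its stable member satisfies $|\sqrt{C}\mu^{-1}| < 1$, so $|\mu| > \sqrt{C}$, and therefore $|\sqrt{C}\mu| > C$. Thus every unstable multiplier other than $C$ has modulus strictly greater than $C$. So $\sigma_{cu}(O) = C$, this multiplier is real and simple, $E^{cu}(O)$ is the $C$-eigenline, and $E^{uu}(O) = E^u(O) \cap \xi_O$ is the $(n-1)$-dimensional sum of the $\sqrt{C}\mu$ eigenspaces.

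\textbf{Step 3: linear algebra at $O$.} The $C$-eigenvector is not in $\xi_O$, because $T\Phi$ restricted to $\xi_O$ is $\sqrt{C}B$ and $C$ is not among its eigenvalues. Hence $E^u(O)$ is transverse to $\xi_O$. Moreover $E^{uu}(O)\subset \xi_O$ is the sum of the generalized eigenspaces of $B$ for the eigenvalues of modulus $> \sqrt{C} > 1$. This is an $(n-1)$-dimensional isotropic subspace (eigenspaces for $\mu,\mu'$ with $\mu\mu'\neq 1$ are $\omega$-orthogonal), hence Lagrangian, i.e. Legendrian in $\xi_O$.

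\textbf{Step 4: propagate along $W^u(O)$.} I expect this to be the main obstacle, since it requires the cone-field argument rather than linear algebra at one point. By Theorem \ref{thm:local_invariant_manifolds}, $W^{uu}$ is the local invariant manifold $W^u(O,\tau)$ for $C < \tau < \min_{\mu}|\sqrt{C}\mu|$, with leaves forming $\mathcal{F}^{uu}$. The contact distribution $\xi$ is $\Phi$-invariant and $E^u(O,\tau) = E^{uu}(O) \subset \xi$. Applying Lemma \ref{lem:invariant_bundle} (unstable version, with the cone argument run on leaves through points of $W^u(O)$) gives $T\mathcal{F}^{uu}\subset \xi$ along $W^u(O)$.

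Since $\dim W^u(O) = n$ and the leaves have dimension $n-1$, transversality of $TW^u$ to $\xi$ holds at $O$ and hence, by openness, near $O$. Invariance then extends it to all of the local $W^u(O)$: the set where $TW^u \subset \xi$ is backward-invariant, but backward orbits converge to $O$. So $TW^u\cap \xi = T\mathcal{F}^{uu}$ has rank $n-1$, and the leaves are isotropic of dimension $n-1$ in a $(2n-1)$-manifold, i.e. Legendrian. This proves that $W^u(O)$ is pre-Lagrangian with Legendrian foliation $\mathcal{F}^{uu}$. The index $n$ statement follows by inverting $\Phi$.
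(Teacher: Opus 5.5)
Your proof is correct and follows essentially the same route as the paper: you invert to reduce to index $n-1$, use the normal form to get $C>1$, and show that every other unstable multiplier $\sqrt{C}\mu$ has modulus exceeding $C$, so that $E^{cu}(O)$ is the simple $C$-eigenline and $E^{uu}(O)\subset\xi_O$ has dimension $n-1$; then you propagate along $W^u(O)$ via Lemma \ref{lem:invariant_bundle}. Your explicit backward-invariance argument for transversality of $TW^u(O)$ to $\xi$ away from $O$ is a small but welcome addition, since Lemma \ref{lem:invariant_bundle} as stated only controls containment in an invariant distribution, not transversality.
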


\begin{proof} The stable case follows from the unstable case by considering the local inverse. Therefore assume that $O$ has index $n-1$. By Lemma \ref{lem:invariant_bundle}, it suffices to show that $O$ has simple center-unstable multipliers and that at the fixed point $O$, we have
\[
E^{uu}(O) \text{ is a subspace of $\xi$ of dimension $n-1$} \qquad\text{and}\qquad E^u(O) \text{ transverse to $\xi$}
\]
Note that $E^u(O)$ is transverse to $\xi$ at $O$ since $E^s(O) \subset \xi$ by Lemma \ref{lem:isotropric_stable_unstable} and $T_OY$ is a direct sum of $E^s(O)$ and $E^u(O)$. Thus we check that $O$ has simple center-unstable multipliers and $E^{uu}(O) \subset \xi$.

\vspace{3pt}

For these claims, we pass to a contact Darboux chart to adopt the normal form in Lemma \ref{lem:normal_form} for $T_O\Phi$ with constant $C > 0$ and symplectic matrix $B$. By Lemma \ref{lem:isotropric_stable_unstable}, $E^s(O) \subset \xi$ is a Lagrangian subspace and the constant $C$ in Lemma \ref{lem:normal_form} satisfies $C > 1$. Moreover, $E^s(O)$ must consist of generalized eigenvectors of $\sqrt{C}B$ with eigenvalue $\sqrt{C}\lambda$ for an eigenvalue $\lambda$ of $B$. In order for $\on{dim}(E^s(O)) = n-1$, it must be the case that $B$ has $n-1$ eigenvalues $\lambda$ such that $\sqrt{C}\lambda < 1$ and thus $\lambda < 1$. Since the inverse eigenvalues will have $\lambda^{-1} > 1$, we conclude that $B$ is hyperbolic and that $E^s(O) \subset \xi_O = \R^{2n-2}$ is the stable invariant subspace $V^s$ of $B$.

\vspace{3pt}

Next, note that the unstable invariant subspace $V^u$ for $B$ is spanned by generalized eigenvectors of $B$ of eigenvalue $\lambda$ with $\sqrt{C}\lambda^{-1} < 1$. It follows that, as eigenvalues of $T_O\Phi$, these eigenvectors have eigenvalue $\sqrt{C}\lambda > C$. Thus the strong stable subspace $E^{uu}(O)$ is precisely $V^u$, which is a subspace of $\xi = \R^{2n-2}$ at $O$. The dimension is the codimension of $V^s$ in $\R^{2n-2}$, which is $n-1$. This concludes the proof. 
\end{proof}

\begin{corollary}[Center Multipliers] \label{cor:real_center_multipliers} Let $O$ be a hyperbolic fixed point of a contact embedding $\Phi$. Then
\[
\text{$\lambda_{cu}(O)$ is real and $E^{cu}(O)$ is transverse to $\xi$ at $O$ if $\on{ind}(O) = n-1$}\]
\[\lambda_{cs}(O) \text{ is real and $E^{cs}(O)$ is transverse to $\xi$ at $O$ if $\on{ind}(O) = n$}
\]
\end{corollary}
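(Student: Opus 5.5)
We only need to treat the case $\on{ind}(O) = n-1$. The case $\on{ind}(O) = n$ follows by applying that case to the local inverse $\Phi^{-1}$: inversion swaps stable and unstable subspaces, replaces multipliers by their inverses (so center-stable becomes center-unstable and reality is preserved), and turns an index $n$ point in dimension $2n-1$ into an index $n-1$ point.

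So assume $\on{ind}(O) = n-1$. Pass to a standard contact Darboux chart centered at $O$ in which $T_O\Phi$ has the block normal form of Lemma \ref{lem:normal_form}, with constant $C>0$ and symplectic matrix $B$ on $\xi_O = \R^{2n-2}$. The proof of Lemma \ref{lem:pre_lag_unstable} gives the following:
\begin{itemize}
\item[(i)] $C>1$;
\item[(ii)] $B$ is hyperbolic, with stable subspace $V^s = E^s(O)$;
\item[(iii)] every multiplier of $\sqrt{C}B$ on the unstable subspace $V^u$ of $B$ has absolute value strictly greater than $C$, and $E^{uu}(O)\supseteq V^u$.
\end{itemize}
The multipliers of $T_O\Phi$ are $C$ together with the eigenvalues of $\sqrt{C}B$, by block lower-triangularity. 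The unstable multipliers are therefore $C$ together with those on $V^u$.

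It follows that the smallest unstable norm is $\sigma_{cu}(O) = C$. It is attained only by the multiplier $C$ itself, which is real and simple: every other unstable multiplier has norm $>C$, and the block form shows $C$ has algebraic multiplicity one. Hence $\lambda_{cu}(O) = C$ is real, and $E^{cu}(O)$ is the one-dimensional eigenspace of $T_O\Phi$ for the eigenvalue $C$.

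It remains to show $E^{cu}(O)$ is transverse to $\xi_O = \{0\}\oplus\R^{2n-2}$. Let $v=(a,w)$ be a $C$-eigenvector. Suppose $a=0$. Then $w\neq 0$ and $\sqrt{C}Bw = Cw$, so $C$ is an eigenvalue of $\sqrt{C}B$. This contradicts (iii), since $C$ would have to be a multiplier on $V^s$ (norm $<1$) or on $V^u$ (norm $>C$). Hence $a\neq 0$, and $E^{cu}(O)$ is transverse to $\xi$. Equivalently, $T_OY = E^s(O)\oplus E^{uu}(O)\oplus E^{cu}(O)$ with the first two summands inside $\xi$, which forces $E^{cu}\not\subset\xi$.

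The only step requiring care is confirming that no eigenvalue of $\sqrt{C}B$ has norm exactly $C$, that is, that the strict inequality in (iii) holds. It does: an eigenvalue $\lambda$ of $B$ on $V^u$ satisfies $|\lambda|>\sqrt{C}$, because its inverse lies on $V^s$ and $\sqrt{C}|\lambda^{-1}|<1$.
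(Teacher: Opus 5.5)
Your proof is correct and uses the same ingredients as the paper. Both reduce to index $n-1$ via the inverse; the paper then simply quotes Lemma \ref{lem:pre_lag_unstable} to get $\dim E^{cu}(O) = \dim E^u(O) - \dim E^{uu}(O) = 1$, so $\lambda_{cu}(O)$ cannot be one of a complex-conjugate pair, while you unpack that lemma's normal-form computation to identify $\lambda_{cu}(O) = C$ explicitly and also verify directly that $E^{cu}(O)$ is transverse to $\xi$, a claim the paper's short proof leaves implicit (it follows from $E^{uu}(O)\subset\xi$ and $E^u(O)\not\subset\xi$).
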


\begin{proof} It suffices to prove the first case by passing to the inverse. If $\on{ind}(O) = n-1$, then the center-unstable bundle $E^{cu}(O)$ is 1-dimensional by Lemma \ref{lem:pre_lag_unstable}. It follows that $\lambda_{cu}(O)$ must be real since otherwise $E^{cu}(O)$ must be 2-dimensional.
\end{proof}

\noindent Next, the extended invariant manifolds of a hyperbolic fixed point with simple multipliers are also pre-Lagrangian in the appropriate sense.

\begin{lemma}[Pre-Lagrangian Extended Manifolds] \label{lem:pre_lag_extended} Let $O$ be a hyperbolic fixed point of a contact embedding $\Phi$. Then
\[
W^{se}(O) \text{ is pre-Lagrangian along $W^s(O)$ if $\on{ind}(O) = n-1$}\]
\[
W^{ue}(O) \text{ is pre-Lagrangian along $W^u(O)$ if $\on{ind}(O) = n$}
\]
\end{lemma}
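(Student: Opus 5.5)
We treat only the stable case $\on{ind}(O) = n-1$. The unstable case follows by applying it to the local inverse $\Phi^{-1}$, which has index $n$ at $O$ and interchanges $W^{se}$ with $W^{ue}$ and $W^s$ with $W^u$. The plan has two parts. First we establish the pointwise linear-algebra statement at the fixed point $O$. Then we propagate it along $W^s(O)$ using the cone-field argument of Lemma \ref{lem:invariant_bundle}, applied to a suitable invariant distribution.

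\textbf{Step 1 (linear algebra at $O$).} Pass to a Darboux chart in which $T_O\Phi$ has the normal form of Lemma \ref{lem:normal_form}. As in the proof of Lemma \ref{lem:pre_lag_unstable}, index $n-1$ forces $C>1$ and $B$ hyperbolic. Moreover, $E^s(O)=V^s$ is the stable subspace of $B$, which is Lagrangian in $\xi_O$. The multipliers of $\sqrt{C}B$ on $V^u$ are $\sqrt C\lambda$ with $|\lambda|>\sqrt C$, so they all exceed $C$. Hence the center-stable multiplier of $O$ is the largest multiplier in norm among those below $1$; it is the largest-norm eigenvalue of $\sqrt{C}B|_{V^s}$. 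We assume simplicity, as in the definition of the extended manifolds. Then $E^{se}(O)=E^s(O)\oplus E^{cu}(O)$, where $E^{cu}(O)$ is the eigenline of the eigenvalue $C$. This follows from the eigenvalue ordering above, and it is the reading of $E^{se}$ consistent with the dimension count $\dim W^{se}=n$ and with the statement being pre-Lagrangian. By Corollary \ref{cor:real_center_multipliers} this line is real and transverse to $\xi_O$. Consequently $E^{se}(O)$ is transverse to $\xi_O$, and $E^{se}(O)\cap\xi_O=E^s(O)$ is Legendrian, i.e. Lagrangian in $\xi_O$. So $W^{se}(O)$ is pre-Lagrangian at $O$.

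\textbf{Step 2 (propagation along $W^s(O)$).} Transversality to $\xi$ is an open condition. However, $W^s(O)$ is not contained in a small neighborhood of $O$ in the relevant sense, so we argue dynamically. By Theorem \ref{thm:local_invariant_manifolds}(d), the bundle $TW^{se}(O)|_{W^s(O)}$ is canonical and $T\Phi$-invariant. It contains $TW^s(O)$, which lies in $\xi$ by Lemma \ref{lem:isotropric_stable_unstable}. Since $\dim W^s(O)=n-1$ and $W^s(O)$ is isotropic, $TW^s(O)$ is Legendrian in $\xi$ along $W^s(O)$. It therefore suffices to show that $TW^{se}(O)\not\subset\xi$ at each $x\in W^s(O)$. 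If this holds, then $TW^{se}\cap\xi$ is exactly $n-1$ dimensional, equals $TW^s$, and is therefore Legendrian.

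To prove this, suppose $TW^{se}_x\subset\xi_x$ for some $x\in W^s(O)$. Both bundles are invariant, so $TW^{se}_{\Phi^k(x)}\subset\xi_{\Phi^k(x)}$ for all $k\ge0$. As $k\to\infty$ we have $\Phi^k(x)\to O$, and $TW^{se}$ is continuous along $W^s(O)$ up to $O$ with limit $E^{se}(O)$. Since $\xi$ is closed, the limit gives $E^{se}(O)\subset\xi_O$, which contradicts Step 1.

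\textbf{Main obstacle.} The delicate point is the continuity of $TW^{se}(O)$ along $W^s(O)$ up to $O$. Since $W^{se}(O)$ is only $C^1$ and depends on choices, we need the convergence $TW^{se}_{\Phi^k(x)}\to E^{se}(O)$. Theorem \ref{thm:local_invariant_manifolds} gives a $C^1$ embedded submanifold through $O$, so this is continuity of its tangent bundle, which is immediate. If one prefers to avoid this point, the cone-field argument in Lemma \ref{lem:invariant_bundle} gives the same conclusion.
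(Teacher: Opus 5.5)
Your proof is correct and is essentially the paper's argument: $E^{se}(O)=E^s(O)\oplus E^{cu}(O)\not\subset\xi_O$ because the $C$-eigenline is transverse to $\xi_O$; this non-containment propagates along $W^s(O)$ by invariance; and the dimension count against the Lagrangian $TW^s(O)$ finishes. Your direct propagation via $\Phi^k(x)\to O$ gives the pointwise statement the paper extracts from Lemma \ref{lem:invariant_bundle}, and your two slips are harmless: in the reduction $\Phi^{-1}$ has index $n-1$ rather than $n$, and in Step 1 the relevant multiplier is the center-unstable one ($C$), not the center-stable one.
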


\begin{proof} The stable case follows from the unstable case by considering the local inverse. Therefore assume that $O$ has index $n-1$. It follows from Lemma \ref{lem:pre_lag_unstable} that
\[
E^{se}(O) = E^{cu}(O) \oplus E^s(O) \not\subset \xi
\]
Therefore by Lemma \ref{lem:invariant_bundle}, the tangent space $TW^{se}(O)$ is transverse to $\xi$ along all of $W^s(O)$. On the otherhand, $TW^s(O) \subset TW^{se}(O)$ is a Lagrangian subspace of $\xi$ along $W^s(O)$ by Lemma \ref{lem:isotropric_stable_unstable}. It follows from dimension considerations that
\[
TW^{se}(O) \cap \xi = TW^s(O) \quad\text{along $W^s(O)$}
\]
Thus $W^{se}(O)$ is pre-Lagrangian along $W^s(O)$.\end{proof}

\subsection{Contact Preliminaries} Here we record several results in contact geometry that will be useful in the constructions of unfoldings in the following sections. 

\vspace{3pt}

We first need the following result, stating that intersections between Legendrians and pre-Lagrangians can be made transverse by an arbitrarily small perturbation.

\begin{lemma}[Transversality Contactomorphism] \label{lem:transversality_lemma} Let $L$ and $M$ be sub-manifolds of the contact manifold $Y$ intersecting at a point $P$ and let $U$ be a neighborhood of $P$. Suppose that
\[
\text{$T_PL$ is transverse to $\xi$ at $P$} \qquad\text{and}\qquad \text{$T_PL \cap \xi$ and $T_PM \cap \xi$ are Lagrangian subspaces of $\xi$ at $P$}
\]
Then there is an arbitrarily $C^\infty$-small Hamiltonian $H:[0,1] \times U \to \R$ supported in $(0,1) \times U$ such that $\Phi_H(L)$ intersects $M$ transversely at $P$. \end{lemma}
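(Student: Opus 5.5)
Throughout, let $\on{dim} Y = 2n-1$, so $\xi_P$ is symplectic of dimension $2n-2$. The plan is to reduce to a linear-algebra statement at $P$ and realize the needed linear change by a contactomorphism generated by a Hamiltonian supported near $P$ that fixes $P$. First pass to a Darboux chart centered at $P$ with contact form $dz + \lambda_{\on{std}}$. Since $T_PL$ is transverse to $\xi$ and $\Lambda_L = T_PL \cap \xi_P$ is Lagrangian, we have $\on{dim} L = n$, and $T_PL = \Lambda_L \oplus \on{span}(w)$ for some vector $w$ with $dz(w) \neq 0$. The hypothesis only makes $\Lambda_M = T_PM \cap \xi_P$ Lagrangian, so $\on{dim} T_PM$ is $n-1$ or $n$. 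Transversality at $P$ means $T_PL + T_PM = T_PY$. Because $T_PL$ already contains a vector outside $\xi_P$, this is equivalent to
\[
\Lambda_L + \Lambda_M + \big(\text{projection of } T_PM \text{ to } \xi_P \text{ along } w\big) = \xi_P .
\]
In particular, it suffices to arrange $\Lambda_L \cap \Lambda_M = 0$, so that the two Lagrangians span $\xi_P$. (If $\on{dim} M = n-1$, then $M$ is Legendrian at $P$, and this is exactly the condition needed.)

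Next, choose a symplectic linear map $B \in \on{Sp}(2n-2)$ close to the identity with $B\Lambda_L \cap \Lambda_M = 0$. This is possible because the Lagrangians transverse to $\Lambda_M$ form an open dense subset of the Lagrangian Grassmannian, and $\on{Sp}$ acts transitively with a submersive orbit map. Take a path $B_t$ from $\on{Id}$ to $B$ that stays near the identity, and consider the lifted contactomorphisms $\widetilde{B}_t$ of Definition \ref{def:lifted_contactomorphisms}. These have conformal factor $1$, fix the origin, and by Lemma \ref{lem:CSp_differential} have differential $1 \oplus B_t$ there. Thus $T_P\widetilde{B}_1$ maps $\Lambda_L$ to $B\Lambda_L$ and preserves $\xi_P$. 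It sends $w$ to a vector still transverse to $\xi_P$, so $\widetilde{B}_1(L)$ is again transverse to $\xi$ at $P$ with Lagrangian trace $B\Lambda_L$. By the criterion above, $\widetilde{B}_1(L)$ meets $M$ transversely at $P$.

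It remains to localize and make the perturbation small. The isotopy $\widetilde{B}_t$ is generated by the contact Hamiltonian $K_t$, a quadratic form on $T^*\R^{n-1}$ (plus a $z$-dependent term). We reparametrize in time so that it is stationary near $t = 0$ and $t = 1$, then set $H = \chi K$ for a bump function $\chi$ equal to $1$ near $P$ and supported in $U$, exactly as in the proof of Corollary \ref{cor:local_small_Ham}. Near $P$ the flow of $H$ agrees with $\widetilde{B}_t$ and fixes $P$, so $T_P\Phi_H = T_P\widetilde{B}_1$ and transversality at $P$ holds.

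For $C^\infty$-smallness, we choose $B$ within $\delta$ of the identity. Then $K$ is a quadratic form with coefficients $O(\delta)$, and on a fixed ball its $C^k$ norms are $O(\delta)$. Multiplying by a fixed cutoff keeps $\|H\|_{C^k} = O_k(\delta)$. The main point to check is exactly this: that the cutoff does not destroy either smallness or the fixed point. It does not, because the support of $\chi$ is fixed and only the coefficients shrink. Letting $\delta \to 0$ gives arbitrarily $C^\infty$-small $H$.
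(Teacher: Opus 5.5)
Your proposal is correct and follows essentially the same route as the paper: pass to a Darboux chart at $P$, pick a linear symplectic map close to the identity that makes the Lagrangian traces $T_PL\cap\xi_P$ and $T_PM\cap\xi_P$ transverse, lift it to contactomorphisms fixing $P$ with differential $1\oplus B$, and cut off the generating Hamiltonian near $P$. The only difference is that the paper uses a one-parameter subgroup $A_s$ with $s$ small, while you use a path to a nearby $B$; for your $O(\delta)$ bound on $K$ the path must have small velocity, not merely stay near the identity, which the choice $B_t=\exp(t\log B)$ (precisely the paper's one-parameter subgroup) guarantees.
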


\begin{proof} By passing to a Darboux chart centered at $P$ and shrinking $U$, we can assume that $U = D_{\on{std}}$ is the standard contact disk and $P$ is the origin in $D_{\on{std}} \subset \R^{2n-1}$. We consider the subspaces
\[
V = T_PL \cap \R^{2n-2} \qquad\text{and}\qquad W = T_PM \cap \R^{2n-2}\qquad\text{in the contact structure $\xi_P = \R^{2n-2}$ at $P$}
\]
These are Lagrangian subspaces of $\R^{2n-2}$ and thus we can find a path 1-parameter subgroup of linear symplectomorphisms $A:\R \to \on{Sp}(2n)$ such that $A_s(V)$ is transverse to $W$ for all sufficiently small $s > 0$. We can then take the family of lifted contactomorphisms
\[
\Phi_s = \widetilde{A}_s:\R^{2n-1} \to \R^{2n-1} \qquad\text{as in Definition \ref{def:lifted_contactomorphisms}}
\]
By Lemma \ref{lem:CSp_differential}, we know that $\Phi_s$ fixes $P$ and the differential $T\Phi_s$ of $\Phi_s$ at $P$ is given by $1 \oplus A_s$ for all $s$. It follows that $T\Phi_s(V)$ is transverse to $W$ as subspaces of $\R^{2n-2} = \xi_P$. Since $L$ is transverse to $\xi$ at $P$, this furthermore implies that $T\Phi_s(T_PL)$ is transverse to $T_PM$ for all $s$. 

\vspace{3pt}

Now we simply note that $\Phi_s$ is a 1-parameter subgroup of contactomorphisms and is therefore generated by an autonomous contact Hamiltonian $G$. We define $H$ to be the cutoff of $sG$ by a bump function supported in $U$ that is $1$ near $P$. Then $\Phi_H = \Phi_s$ near $P$, so that $\Phi_H(L)$ intersects $M$ at $P$ transversely. By taking $s$ close to $0$, we can ensure that $H$ is arbitrarily $C^\infty$-small. By reparametrizing time, we can assume that $H$ is supported in the interval $(0,1)$ in time. \end{proof}

\begin{remark} \label{rmk:reverse_transversality} By taking the inverse contactomorphism, we can also find an arbitrarily $C^\infty$-small contactomorphism $H$ satisfying the hypotheses of Lemma \ref{lem:transversality_lemma} so that $\Phi_H(M)$ is transverse to $L$.
\end{remark}

Next, we require the following lemma stating that transverse curves can be stretched near a given point using contactomorphisms.

\begin{lemma}[Stretching Contactomorphism] \label{lem:transverse_stretching} Let $I$ be an embedded 1-manifold in a contact manifold $(Y,\xi)$ that is transverse to $\xi$ at a point $P$ in $I$ and let $U$ be a neighborhood of $P$. Then there is an arbitrarily $C^\infty$-small Hamiltonian $H:[0,1] \times U \to \R$ supported in $(0,1) \times U$ such that $\Phi_H$ restricts to a diffeomorphism $\psi:I \to I$ supported in $U \cap I$ such that
\[\psi(P) = P \qquad\text{and}\qquad T_P\psi \neq \pm \on{Id}\]
\end{lemma}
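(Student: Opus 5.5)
We follow the template of Lemma \ref{lem:transversality_lemma}: pass to a Darboux chart, use a one-parameter group of lifted conformally symplectic linear maps (Definition \ref{def:lifted_contactomorphisms}) that fixes $P$ and preserves $I$, and cut off its generating Hamiltonian. Shrinking $U$, we work in the standard contact disk $D_{\on{std}} \subset \R^{2n-1} = \R_z \times T^*\R^{n-1}$ with $P$ at the origin. Since $I$ is transverse to $\xi$ at $P$, its tangent line there is spanned by a vector with nonzero $\partial_z$ component. We first apply a small contactomorphism, or a choice of strict contact Darboux chart as in the proof of Proposition \ref{prop:linearization}, so that $T_PI = \on{span}(\partial_z)$. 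Shrinking further, we make $I$ equal to the $z$-axis near $P$: a curve transverse to $\xi$ is locally a Reeb trajectory for a suitable contact form, and strict Darboux coordinates then straighten it.

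The model contactomorphisms are the lifts of the conformal scalings $A_s = e^{s}\on{Id} \in \CSp(2n-2)$, which have $\on{CF}(A_s) = e^{2s}$. Because $A_s^*\lambda_{\on{std}} = e^{2s}\lambda_{\on{std}}$, we get $f_{A_s} = 0$, so
\[
\widetilde{A}_s(z,x,y) = (e^{2s}z, e^{s}x, e^{s}y).
\]
These maps form a one-parameter group generated by the autonomous contact Hamiltonian $G = 2z + \sum_i x_i y_i$, up to the sign conventions used in the paper. The $z$-axis is invariant, and the restricted map is $z \mapsto e^{2s}z$. By Lemma \ref{lem:CSp_differential}, its derivative at $P$ is $e^{2s} \neq \pm 1$ for $s \neq 0$.

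Next we localize. Let $\chi$ be a bump function supported in $U$ and equal to $1$ on a small ball $B$ around $P$, and set $H = \chi \cdot sG$, with time reparametrized so that it is supported in $(0,1)$. For small $s$, $H$ is arbitrarily $C^\infty$-small, and $\Phi_H = \widetilde{A}_s$ on a smaller ball $B' \subset B$ containing $P$. The remaining issue is that $\Phi_H$ must preserve $I$ everywhere, and this is the main obstacle. The cutoff could move points of $I$ off $I$ in the annulus where $\chi$ is nonconstant. We handle this by making $V_H$ tangent to $I$ along all of $I \cap U$. Near $P$, with $I$ equal to the $z$-axis, we take $\chi$ to depend only on $z$ along a tubular neighborhood of the axis, i.e. $\chi = \chi_1(z)\chi_2(|x|^2 + |y|^2)$ with $\chi_2 = 1$ near $0$.

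Along the axis we then have $x = y = 0$ and $dH = s(\chi_1' \cdot 2z + 2\chi_1)dz$, which is proportional to $\alpha = dz$. By the criterion used in the proof of Lemma \ref{lem:single_het}, $V_H$ is therefore a multiple of $R = \partial_z$ there, namely $V_H = H\partial_z = 2sz\chi_1(z)\partial_z$. Consequently the flow preserves the $z$-axis, is supported in $U$, and restricts to a diffeomorphism $\psi$ of $I$. This $\psi$ is the time-one map of the one-dimensional field $2sz\chi_1(z)\partial_z$, which vanishes at $P$ and has derivative $2s$ there. Hence $\psi(P) = P$ and $T_P\psi = e^{2s} \neq \pm\on{Id}$. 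If the preliminary straightening of $I$ was done by a contactomorphism $\Theta$, we conjugate by $\Theta$, replacing $H$ by the pullback Hamiltonian. The conjugation is by a fixed map, so $H$ stays arbitrarily small as $s \to 0$.
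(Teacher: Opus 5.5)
Your proposal is correct and is essentially the paper's argument. Both straighten $I$ to the $z$-axis in a Darboux chart, take an autonomous Hamiltonian whose differential along the axis is a multiple of $dz$ (so its contact vector field there is $H\partial_z$), cut it off near the axis and scale by a small $s$; the paper simply uses $G = f(z)$ with $f(z) = z$ near $0$, so the $\sum_i x_i y_i$ term from your conformal-scaling generator plays no role along the axis.
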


\begin{proof} By the standard neighborhood theorem for transverse sub-manifolds, we can pass to a Darboux chart to assume that
\[
Y = \R^{2n-1} = \R_z \times T^*\R^{n-1} \qquad I = [-1,1]_z \times 0 \subset \R^{2n-1} \qquad P = 0 \in [-1,1]
\]
Here $\R^{2n-1}$ is equipped with the standard contact form $\alpha_{\on{std}} = dz + \lambda_{\on{std}}$.  Let $G$ be the autonomous contact Hamiltonian $G = f(z)$ where $f$ vanishes outside of $U \cap I$ and $f(z) = z$ near $0$. Then the contact Hamiltonian vector field along $\R_z \times 0$ is precisely $f(z)\partial_z$ and the flow $\Phi^G_t$ of $G$ restricts to a flow $\phi_t$ on $I$ such that
\[
\phi_t(0) = 0 \qquad\text{and}\qquad T_0\phi_t = e^t \on{Id} \qquad\text{for all $t$}
\]
We can now take $H$ to be the cutoff of $sG$ by a cutoff function that is $1$ near $I$ and that vanishes outside of $U$. Then $H$ can be made arbitrarily $C^\infty$-small by taking $s$ small enough, and the restriction $\psi$ of $\Phi_H$ to $I$ will have the desired properties.\end{proof}

Finally, we will require a result from metric contact geometry. We formulate this result via the following definition.

\begin{definition}[Weakly Compatible] \label{def:weakly_compatible} A Riemannian metric $g$ on a contact manifold $(Y,\xi)$ with contact form $\alpha$ is \emph{weakly compatible} if the Reeb vector field $R$ satisfies
\[
g(R,R) = 1 \qquad \text{and}\qquad \text{$\xi$ is orthogonal to $R$} 
\]
\end{definition}

\noindent It is a standard fact that the flowlines of the Reeb vector field $R$ are geodesics with respect to any weakly compatible metric (cf. \cite{becker2023geodesible}). We will need the following result.

\begin{proposition}[Legendrian Distance] \label{prop:legendrian_dist} Let $g$ be a weakly Riemannian metric on a compact contact manifold with boundary $(Y,\xi)$ with contact form $\alpha$. Fix compact Legendrian sub-manifolds $\Lambda$ and $K$ intersecting at a single point
\begin{equation} \label{eq:leg_dist_transversality}
P = \Lambda \cap K \qquad\text{satisfying}\qquad \xi_P = T_P\Lambda \oplus T_PK
\end{equation}
Then there is a constant $\epsilon > 0$, a neighborhood $B$ of $P$, and $C^\infty$-neighborhoods $\mathcal{U}$ of $\Lambda$ and $\mathcal{V}$ of $K$ in the space of Legendrian embeddings such that
\begin{itemize}
    \item[(a)] Any pair of Legendrian $\Lambda'$ in $\mathcal{U}$ and $K'$ in $\mathcal{V}$ are connected by a unique Reeb chord 
    \[
    \Gamma(\Lambda',K') \subset B \qquad\text{of length}\qquad  T(\Lambda',K') < \epsilon
    \]
    \item[(b)] The distance $\on{dist}_g(\Lambda',K')$ between any pair of Legendrians $\Lambda'$ in $\mathcal{U}$ and $K'$ in $\mathcal{V}$ is given by
    \[
    \on{dist}_g(\Lambda',K') = T(\Lambda',K') 
    \]
\end{itemize}
Moreover, the Reeb chord $\Gamma(\Lambda',K')$ and its length $T(\Lambda',K')$ depend smoothly on $\Lambda'$ and $K'$. \end{proposition}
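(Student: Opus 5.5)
We work in Darboux coordinates and proceed in three steps: a linear model, persistence of the Reeb chord, and a distance comparison.

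\textbf{Step 1: linear model.} By a strict contact Darboux chart centered at $P$ we may take $Y$ near $P$ to be a neighborhood of the origin in $\R^{2n-1} = \R_z \times T^*\R^{n-1}$, with $\alpha = dz + \lambda_{\on{std}}$ and $R = \partial_z$. Since $g$ is weakly compatible, at $P$ the vector $R$ is a unit vector orthogonal to $\xi_P$. Hypothesis (\ref{eq:leg_dist_transversality}) says that $\Lambda$ and $K$ are Legendrians meeting transversely inside $\xi_P$, so they are in particular cleanly intersecting of complementary dimension in $\xi_P$.

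\textbf{Step 2: the Reeb chord, giving (a) and smoothness.} Consider the map
\[
F:\Lambda' \times K' \times (-\epsilon,\epsilon) \to Y, \qquad F(x,y,t) = \phi^R_t(x) - y ,
\]
written in the chart, or equivalently the condition $\phi^R_t(x) = y$. Its differential at $(P,P,0)$ is $(u,v,s) \mapsto u - v + sR$. This is an isomorphism onto $T_PY = T_P\Lambda \oplus T_PK \oplus \R R$, and the dimensions match: $(n-1)+(n-1)+1 = 2n-1$. The implicit function theorem, applied with the Legendrians varying in a $C^\infty$ (in fact $C^1$) neighborhood, produces a unique solution $(x,y,t)$ near $(P,P,0)$. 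This solution depends smoothly on $(\Lambda',K')$. The chord is $\Gamma(\Lambda',K') = \phi^R_{[0,t]}(x)$, and $T(\Lambda',K') = |t|$ (orienting the chord from whichever endpoint comes first). Shrinking the neighborhood $B$ and the neighborhoods $\mathcal{U},\mathcal{V}$ gives uniqueness within $B$ and the bound $T < \epsilon$.

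\textbf{Step 3: distance comparison, giving (b).} This is the main obstacle. One inequality is immediate: Reeb trajectories have unit speed, so $\on{dist}_g(\Lambda',K') \le T(\Lambda',K')$.

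For the reverse inequality, use the fact that Reeb flowlines are geodesics of $g$ because $R$ is unit and geodesible. The plan is to show that the chord is the minimizing geodesic between $\Lambda'$ and $K'$, by combining the first variation formula with uniqueness of critical points.

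\emph{Criticality.} The chord is orthogonal to both Legendrians at its endpoints, since their tangent spaces lie in $\xi \perp R$. It is therefore a critical point of the length functional on paths from $\Lambda'$ to $K'$.

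\emph{Existence of a minimizer near $B$.} Since $\Lambda',K'$ are $C^0$-close to $\Lambda,K$, which meet only at $P$, and the distance between them is at most $T < \epsilon$, any minimizing pair of points $(x',y')$ lies near $P$. Connecting them by a minimizing geodesic gives a short geodesic contained in $B$. By first variation it is orthogonal to $\Lambda'$ at $x'$ and to $K'$ at $y'$.

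\emph{Identification with the chord.} It remains to show that this orthogonal geodesic is a Reeb chord; then uniqueness in Step 2 finishes the argument. Consider the map $(x,v) \mapsto \exp_x(v)$ on the normal bundle $N\Lambda'$. At $P$ this normal bundle is $T_PK \oplus \R R$, which is transverse to $T_PK$ within the relevant directions. Hence the set of short geodesics leaving $\Lambda'$ orthogonally that hit $K'$ orthogonally is cut out transversally, again by the implicit function theorem. Its solution is therefore locally unique near $(P,0)$.

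The Reeb chord is one such solution. Indeed, $R \perp T\Lambda'$ at $x$, and $R$ is orthogonal to $T K'$ at $y$ because $K'$ is Legendrian. So the minimizer coincides with the chord, and $\on{dist}_g = T$.

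The delicate point is checking nondegeneracy of this normal-geodesic problem at $P$. The normal exponential map of $\Lambda'$, restricted to endpoints on $K'$ with the orthogonality condition, has differential built from $T_P\Lambda \oplus T_PK \oplus \R R$. Its invertibility is exactly (\ref{eq:leg_dist_transversality}) together with the orthogonality of $R$ to $\xi_P$.
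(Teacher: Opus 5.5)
Your strategy is the paper's. Part (a) is the same transversality and implicit function argument for $\phi^R_t(\Lambda')$ meeting $K'$. In part (b), your short geodesics leaving $\Lambda'$ and arriving at $K'$ orthogonally are exactly the critical points near $(P,P)$ of $\sigma=\on{dist}_g^2$ restricted to $\Lambda'\times K'$, which is how the paper identifies the distance-minimizing pair with the endpoints of the Reeb chord.

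The gap is in the step you flag as delicate: you justify nondegeneracy at $(P,0)$ with a false claim. The normal space $N_P\Lambda$ is the $g$-orthogonal complement
$(T_P\Lambda)^{\perp}=\R R\oplus\big((T_P\Lambda)^{\perp}\cap\xi_P\big)$.
Weak compatibility says nothing about $g|_{\xi}$, so $N_P\Lambda=T_PK\oplus\R R$ holds only if $T_P\Lambda\perp_g T_PK$. Calling a space that contains $T_PK$ ``transverse to $T_PK$'' has no content. Likewise, $R\perp\xi_P$ is not what makes the linearization invertible. Its role is to make the Reeb chord a geodesic orthogonal to both Legendrians, i.e.\ a solution of the normal-geodesic problem in the first place.

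The fix is a short computation. The linearized problem at $(P,0)$ asks for $u\in T_P\Lambda$ and $w\in N_P\Lambda$ with $k:=u+w\in T_PK$ and $w\perp_g T_PK$. Then $w$ is $g$-orthogonal to both $u$ and $k$, so $g(w,w)=g(w,k)-g(w,u)=0$. Hence $w=0$ and $u=k\in T_P\Lambda\cap T_PK=0$. Since unknowns and equations both number $2n-1$, the linearization is an isomorphism.

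This is the same fact the paper uses, in the form that the Hessian of $\sigma$ at $(P,P)$ is $2g(u-v,u-v)$, positive definite on $T_P\Lambda\oplus T_PK$ because $T_P\Lambda\cap T_PK=0$. Only this intersection condition and positivity of $g$ enter here; the full splitting $\xi_P=T_P\Lambda\oplus T_PK$ is what part (a) needs.

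With nondegeneracy established this way, local uniqueness persists for $\Lambda',K'$ in small $C^\infty$-neighborhoods. After shrinking $\mathcal{U}$, $\mathcal{V}$ and $B$, both the minimizing pair and the chord endpoints lie in that uniqueness neighborhood. Your conclusion $\on{dist}_g(\Lambda',K')=T(\Lambda',K')$ then follows.
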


\begin{remark} Note that, for a weakly compatible metric, the length of a Reeb chord with respect to the parametrization induced by the Reeb vector field is equivalent to the metric length.
\end{remark}

\begin{proof} Let $R$ denote the Reeb vector field of the contact form $\alpha$ and let $\Psi:\R \times Y \to Y$ denote the Reeb flow. Also let $\iota:\Lambda \to Y$ and $\jmath:\Lambda \to K$ denote the inclusions of $\Lambda$ and $K$. Recall that Reeb chords are in bijection with points
\[
(T,Q) \in (\Psi \circ \iota)^{-1}(\jmath(K))
\]
The hypothesis (\ref{eq:leg_dist_transversality}) implies that $\Gamma(\Lambda,K) = (0,P)$ is a transverse intersection point. This implies that there are $C^\infty$-neighborhoods $\mathcal{U}$ of $\Lambda$, $\mathcal{V}$ of $K$, and $[-\epsilon,\epsilon] \times B$ of $(0,P)$ such that every inclusion $\iota' \in \mathcal{U}$ and $\jmath' \in \mathcal{V}$ has a unique transverse chord
\[
\Gamma(\Lambda',K') = (T(\Lambda',K'),P(\Lambda',K')) \in [-\epsilon,\epsilon] \times U
\]
that depends smoothly on the sub-manifolds $\Lambda$ and $K$. This is simply a rephrasing of (a). 

\vspace{3pt}

Next, let $\sigma:Y \times Y \to \R_+$ denote the square distance function $\sigma(x,y) = \on{dist}_g(x,y)^2$. Recall that the square distance function is smooth on a neighborhood of the diagonal $\Delta \subset Y \times Y$ and the Hessian along the diagonal is given by
\begin{equation} \label{eq:Hessian_of_square_dist}
\nabla^2 \sigma_{(x,x)}(u \oplus v, u \oplus v) = 2g_x(u-v,u-v)
\end{equation}
By choosing $B$ to be geodesically convex with diamater smaller than the injectivity radius of $g$, we can guarantee that $\sigma|_{B \times B}$ is differentiable and that any two points in $B$ are connected by a unique (and length minimizing) geodesic contained in $B$. Note that (\ref{eq:Hessian_of_square_dist}) implies that the restriction $\sigma|_{\Lambda \times K}$ has a non-degenerate minimum at $(P,P)$ since
\[
T_P\Lambda \cap T_PK = 0
\]Moreover, the restriction $\sigma|_{\Lambda \times K}$ varies smoothly with $\Lambda$ and $K$ since $\sigma$ is smooth. It follows that, by choosing the neighborhoods $B$, $\mathcal{U}$ and $\mathcal{V}$ sufficiently small, we can guarantee that the function
\[
\sigma|_{\Lambda' \times K'} \qquad\text{for any $\Lambda' \in \mathcal{U}$ and $K' \in \mathcal{V'}$}
\]
has a unique critical point $(P',Q')$ contained in the neighborhood $(B \cap \Lambda') \times (B \cap K')$ such that $P'$ and $Q'$ are connected by a unique geodesic in $B$ (which is length minimizing) and such that
\[\sigma(P',Q') = \on{dist}_g(\Lambda',K')^2\]
On the other hand, since $\Gamma(\Lambda',K') \subset B$ is contained in $B$, it is the unique length minimizing geodesic connecting the endpoints of $\Gamma(\Lambda',K')$. Moreover, since the endpoints of $\Gamma(\Lambda'K')$ are orthogonal to $\Lambda'$ and $K'$ at the endpoints, the endpoints are critical points of $\sigma|_{\Lambda' \times K'}$ and therefore must be $(P',Q')$. It follows that the distance between the endpoints is $\on{dist}_g(\Lambda',K')$ and that $\on{dist}_g(\Lambda',K') = T(\Lambda',K')$. This proves (b).\end{proof}

%\begin{lemma}[Legendrian-Lagrangian Standard Form] Let $(Y,\xi)$ be a contact manifold and consider a smooth family of embeddings\[\iota_s:L \to Y \qquad\text{and}\qquad \jmath_s:\Lambda \to Y \qquad\text{parametrized by an open set $U \subset \R^k$ containing $0$}\]Suppose that $\iota_s(L)$ is pre-Lagrangian for all $s \in U$, $\jmath_s(\Lambda)$ is Legendrian for all $s \in U$ and $\iota_0$ is transverse to $\jmath_0$ at a point $P$. Then there family of contactomorphisms\[\Phi_s:Y \to Y \qquad \text{parametrized a neighborhood $V \subset U$ of $0$}\]such that $\Phi_0 = \on{Id}$, $\Phi_s \circ \iota_0 = \iota_s$ and $\Phi_s \circ \jmath_0 = \jmath_s$ for $s \in V$ \end{lemma}

\subsection{Non-Degenerate Approximation} \label{subsec:nondegenerate_approximation} In this part, we prove that any contact Hamiltonian manifold with a heterodimensional cycle of the appropriate index can be ambiently perturbed so that the heterodimensional cycle is non-degenerate in the sense of Definition \ref{def:non_degeneracy}.

\begin{theorem}[Non-Degenerate Approximation] \label{thm:nondeg_approx} Let $(\Sigma,\eta)$ be a contact Hamiltonian manifold whose characteristic foliation contains a heterodimensional cycle
\[
C = (C_+,C_-) \qquad\text{with index $(n-1,n)$}
\]Then there is an ambient deformation $\iota$ of $(\Sigma,\eta)$, arbitrarily small in the $C^\infty$-topology, such that the continuations $C'_\pm$ of $C_\pm$ in the characteristic foliation of $\eta_\iota$ form a non-degenerate heteroclinic cycle $C'$. 
\end{theorem}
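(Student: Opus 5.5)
We verify the non-degeneracy conditions of Definition \ref{def:non_degeneracy} one at a time. Each failing condition is fixed by inserting small plugs along the heteroclinic segments $\Gamma_\pm$ at well-positioned plugging domains (Remark \ref{rmk:existence_of_well_positioned}). By Lemma \ref{lem:transition_map_deformation}, such a plug changes only $\on{Tr}\Gamma_\pm$, by pre-composition with a contactomorphism $\Phi_H$. The return maps $\on{Ret}C_\pm$, and hence the hyperbolic fixed points $O_\pm$, their multipliers and their local invariant manifolds on $D_\pm$, are left unchanged. Each step uses a $C^\infty$-small Hamiltonian, and the conditions obtained are open. So we can concatenate the plugs (Remark \ref{rmk:concatenation}) and keep the earlier conditions intact, provided the later perturbations are small enough. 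The first step is to fix the Setup \ref{set:non_degeneracy} and record the contact geometry of the invariant manifolds on the $(2n-1)$-dimensional sections.
\begin{itemize}
\item[(a)] Since $\on{ind}(O_+)=n-1$, the manifold $W^s(O_+)$ is Legendrian (Lemma \ref{lem:isotropric_stable_unstable}).
\item[(b)] $W^u(O_+)$ is pre-Lagrangian with Legendrian foliation $\mathcal{F}^{uu}$ (Lemma \ref{lem:pre_lag_unstable}).
\item[(c)] $W^{se}(O_+)$ is pre-Lagrangian along $W^s(O_+)$ (Lemma \ref{lem:pre_lag_extended}).
\item[(d)] Symmetrically, $W^u(O_-)$ is Legendrian, while $W^s(O_-)$ and $W^{ue}(O_-)$ are pre-Lagrangian.
\item[(e)] Both center multipliers are real and simple (Corollary \ref{cor:real_center_multipliers}). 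So only the saddle Condition \ref{cond:saddle} is relevant, and the focus case never arises.
\end{itemize}

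Next we achieve the transversality Conditions \ref{cond:simple_fragile_heteroclinic} and \ref{cond:simple_robust_heteroclinic}.

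For the fragile heteroclinic, the point $P^s_+$ lies in $\on{Tr}\Gamma_-(W^u(O_-))\cap W^{se}(O_+)$. Here the first manifold is Legendrian and the second is pre-Lagrangian, with Legendrian trace $TW^s(O_+)$. Lemma \ref{lem:transversality_lemma} applies after the roles are suitably arranged, using Remark \ref{rmk:reverse_transversality} to move the Legendrian rather than the pre-Lagrangian. It gives a plug along $\Gamma_-$ making these two manifolds transverse at $P^s_+$. Transversality here means their Lagrangian parts in $\xi$ are transverse, which is exactly what the lemma produces. The same plug also makes $\on{Tr}\Gamma_-(W^{ue}(O_-))$ transverse to $W^s(O_+)$, because both conditions reduce to $T\on{Tr}\Gamma_-(TW^u(O_-))$ being transverse to $TW^s(O_+)$ inside $\xi_{P^s_+}$.

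For the robust heteroclinic we work at $P^s_-$. There $\on{Tr}\Gamma_+(W^u(O_+))$ and $W^s(O_-)$ are both pre-Lagrangian, with Legendrian traces $\on{Tr}\Gamma_+(\mathcal{F}^{uu})$ and $\mathcal{F}^{ss}$. Applying Lemma \ref{lem:transversality_lemma} to the leaves through $P^s_-$ makes the two Legendrian leaves transverse in $\xi$. This gives transversality of the two $n$-dimensional manifolds, since they sum to $\xi\oplus(\text{transverse directions})$. It also gives the non-tangency of $\on{Tr}\Gamma_+(\mathcal{F}^{uu})$ to $W^s(O_-)$, and of $\mathcal{F}^{ss}$ to $\on{Tr}\Gamma_+(W^u(O_+))$. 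Some care is needed that the resulting intersection curve $I_-$ is transverse to $\xi$, so that the relevant sums have full dimension. I expect to handle this with one further generic perturbation. Condition \ref{cond:extra_condition} then costs nothing, by Remark \ref{rmk:third_condition}.

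The main obstacle is the saddle Condition \ref{cond:saddle}: $|\partial_r\log|v_{cs}|/\partial_r\log|u_{cu}||\neq1$ at $r=0$. My plan is to change only $v_{cs}$ by altering $\on{Tr}\Gamma_+$ near $P^s_-$, while keeping $u_{cu}$ fixed.

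By (b) and (d), both center directions are transverse to $\xi$. Moreover the intersection curve $I_-$ is transverse to $\xi$ at $P^s_-$: its tangent line is not contained in the Legendrian foliation, so it has a Reeb component.

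Apply Lemma \ref{lem:transverse_stretching} to the curve $I_-$ at $P^s_-$. This produces a $C^\infty$-small contactomorphism $\Phi_H$ supported near $P^s_-$ which preserves $I_-$, fixes $P^s_-$, and has derivative $e^{s}\ne\pm1$ along $I_-$. We compose it into $\on{Tr}\Gamma_+$ through a plug at the outgoing end of $\Gamma_+$, placed as close as possible to $D_-$. To make the composition literally valid, we use a flowbox in which $D_-$ is moved slightly backward.

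The map $\Phi_H$ preserves $I_-$, and we can arrange that it also preserves $W^s(O_-)$ locally, for example by choosing $G=f(z)$ in coordinates adapted to $W^s(O_-)$. If so, the earlier transversality conditions survive, and the effect is to reparametrize $\on{Tr}\Gamma_+\circ\psi$ by a map with derivative $e^{s}$ at $0$. This multiplies $\partial_r v_{cs}(0)$ by $e^{s}$ and leaves $v_{cs}(0)$ and $u_{cu}$ unchanged. The ratio in (\ref{eq:cond:saddle}) is therefore scaled by $e^{s}$. For all small $s\neq0$, it differs from $1$ unless it was already different from $1$ at $s=0$. This is the delicate point. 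Lemma \ref{lem:transverse_stretching} only controls $\Phi_H$ along $I_-$, so one must check that it also preserves $W^s(O_-)$ and the $y_{cs}$-coordinate to first order near $P^s_-$. If this fails, I would instead construct $G$ explicitly in the linear coordinates of Theorem \ref{thm:linear_coordinates}, as a function of the Reeb-transverse center coordinate.
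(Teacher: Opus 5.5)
Your proposal is correct and is essentially the paper's proof: plugs along $\Gamma_\pm$ realized as in Lemma \ref{lem:transition_deformation_for_hets} (which conjugates the Hamiltonian by $\on{Tr}\Gamma_{\on{out}}$ rather than moving $D_-$), Lemma \ref{lem:transversality_lemma} with Remark \ref{rmk:reverse_transversality} for Conditions \ref{cond:simple_fragile_heteroclinic} and \ref{cond:simple_robust_heteroclinic}, Remark \ref{rmk:third_condition} for Condition \ref{cond:extra_condition}, Corollary \ref{cor:real_center_multipliers} to reduce to the saddle case, and Lemma \ref{lem:transverse_stretching} along $I_-$ at $P^s_-$ for Condition \ref{cond:saddle}. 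The two points you leave open are not real issues. First, $I_-$ is automatically transverse to $\xi$, because $TI_-\cap\xi$ lies in the intersection of the tangent spaces of the two Legendrian leaves through $P^s_-$, which you have already made transverse. Second, in the saddle step $W^s(O_-)$ is not moved at all, since only $\on{Tr}\Gamma_+$ changes. So $\Phi_H(I_-)=I_-$ already gives $I_-\subset\Phi_H(\on{Tr}\Gamma_+(W^u(O_+)))\cap W^s(O_-)=I^\iota_-$, hence $I^\iota_\pm=I_\pm$ by transversality. Then $v^\iota_{cs}=y_{cs}\circ\Phi_H|_{I_-}\circ\on{Tr}\Gamma_+\circ\psi$ involves $\Phi_H$ only along $I_-$, so you need no control of $\Phi_H$ on $W^s(O_-)$ or of $y_{cs}$ off $I_-$.
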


\begin{proof} The heterodimensional cycle $C$ in Theorem \ref{thm:nondeg_approx} is coindex one by hypothesis and $C$ automatically has simple center multipliers in the sense of Definition \ref{def:simple_center_multipliers_for_cycle} by Lemma \ref{lem:pre_lag_unstable}. Therefore, it is only necessary show that we can perform a $C^\infty$-small isotopy to achieve Conditions \ref{cond:simple_fragile_heteroclinic}-\ref{cond:focus}. Each non-degeneracy condition is open in the $C^1$-topology (cf. the discussion in \cite[\S 2.2]{lt2024}) and thus we may show that a perturbation exists that achieves each condition separately. 

\vspace{3pt}

Lemmas \ref{lem:fragile_het_condition} and \ref{lem:robust_het_condition} below establish this for Conditions \ref{cond:simple_fragile_heteroclinic} and \ref{cond:simple_robust_heteroclinic} respectively. Condition \ref{cond:extra_condition} can be achieved without perturbation by changing the heteroclinic segments as in Remark \ref{rmk:third_condition}. Finally, Corollary \ref{cor:real_center_multipliers} implies that the center-stable and center-unstable multipliers $\lambda_s$ and $\lambda_u$ are both real under our index assumptions on $C$. Thus the final condition that must be achieved is Condition \ref{cond:saddle}, and this is addressed in Lemma \ref{lem:saddle_saddle_condition} below.\end{proof}

In the remainder of this part, we work towards the proofs of Lemmas \ref{lem:fragile_het_condition}, \ref{lem:robust_het_condition} and \ref{lem:saddle_saddle_condition}. Fix a contact Hamiltonian manifold $(\Sigma,\eta)$ and a heterodimensional cycle $C$ as in Theorem \ref{thm:nondeg_approx}. Also fix local sections and heteroclinic points as in Setup \ref{set:non_degeneracy}.
\[
D_\pm \text{ as in (\ref{eq:choice_of_local_sections})} \qquad\text{and}\qquad  P^u_\pm \text{ and }P^s_\pm \text{ as in (\ref{eq:choice_of_heteroclinic_points})}
\]
Let $\Gamma_+$ and $\Gamma_-$ be the heteroclinic segments connecting $P^u_+$ to $P^s_-$ and $P^u_-$ to $P^s_+$, respectively. As in Setup \ref{set:non_degeneracy} and specifically (\ref{eq:choice_of_heteroclinic_segments_property}), we will assume that these heteroclinic segments are disjoint from the partial sections $D_+$ and $D_-$ away from their endpoints. We consider the corresponding transition maps
\[
\on{Tr}\Gamma_+:D_+ \to D_- \qquad\text{and}\qquad \on{Tr}\Gamma_-:D_- \to D_+
\]
Given an sufficiently $C^\infty$-small ambient deformation $\iota$ of $\eta$, the characteristic foliation of the deformed contact Hamiltonian structure $\eta_\iota$ is $C^\infty$-close to the characteristic foliation of $\eta$. In particular, it inherits continuations
\[
C^\iota_\pm \text{ of the hyperbolic orbits $C_\pm$}  \qquad\text{and}\qquad \Gamma^\iota_\pm \text{ of the heteroclinic segments $\Gamma_\pm$}
\]
In order to achieve the desired non-degeneracy conditions, we will plug the characteristic foliation along the heteroclinic segments $\Gamma_+$ and $\Gamma_-$. In particular, we will repeatedly use the following variant of Lemma \ref{lem:transition_map_deformation} from Section \ref{subsec:transition_and_return_maps}.

\begin{lemma}[Transition Deformation For Heteroclinics] \label{lem:transition_deformation_for_hets} There is an $\epsilon$ and a neighborhood $W$ of $P^s_-$ in $D_-$ such that, for any contact Hamiltonian
\[
H:[0,1] \times D_- \to \R \qquad\text{with}\qquad |H| \le \epsilon \quad\text{and}\quad \on{supp}(H) \subset (0,1) \times W 
\]
there is an ambient deformation $\iota = \iota_H$ of $\eta$ such that the return maps of the continuations $C^\iota_\pm$ and the transition maps of the continuations $\Gamma^\iota_\pm$ satisfy
\[
\on{Tr}\Gamma^\iota_+ = \Phi_H \circ \on{Tr}\Gamma_+ \qquad \on{Tr}\Gamma^\iota_- = \on{Tr}\Gamma_- \qquad \on{Ret} C^\iota_\pm = \on{Ret} C_\pm
\]
Moreover, the map sending $H$ to the ambient deformation $\iota_H$ is $C^\infty$-continuous. The analogous results for Hamiltonians on $D_+$ and the transition map of $\Gamma_-$ also holds.\end{lemma}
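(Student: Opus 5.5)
The plan is to reduce the statement to Lemma \ref{lem:transition_map_deformation} by inserting a graphical plug along the robust heteroclinic segment $\Gamma_+$, just before it reaches $D_-$. By Remark \ref{rmk:existence_of_well_positioned}, choose a plugging domain $U \simeq U_{\on{std}} = [0,1]\times D_{\on{std}}$ that is well-positioned with $\Gamma_+$. Place it in a small flow-box neighborhood of an arc of $\Gamma_+$ ending at $P^s_-$, and arrange that its outgoing face $D_{\on{out}}$ is a neighborhood of $P^s_-$ inside $D_-$ itself. This is possible after shrinking $D_-$ and using the flow-box coordinates, since the transition map between nearby sections along a short arc is a contactomorphism. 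Concretely, I would take $U$ to be the union of short backward characteristic trajectories from a small disk $W \subset D_-$ around $P^s_-$, with $D_{\on{in}}$ a pushed-back copy of $W$. The contact Hamiltonian structure on $U$ is then identified with $\eta_{\on{std}}$ via a Darboux chart on $W$, using $\on{ker}(\alpha_{\on{std}})$ on $[0,1]\times W$ and the section $D_{\on{in}}$.

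Given $H$ supported in $(0,1)\times W$ with $|H|\le\epsilon$, define $\iota_H = \iota_{U,H}$ as the insertion of the graphical plug (Definition \ref{def:insertion_along_trajectory}). Here $\epsilon$ is small enough that the insertion is defined (the $C^0$-smallness remark after Definition \ref{def:plug_insection}), so that the time-one map $\Phi_H$ maps $W$ into $D_-$. Write $\Gamma_+ = \Gamma_{\on{out}}\cdot(U\cap\Gamma_+)\cdot\Gamma_{\on{in}}$ with $\Gamma_{\on{out}}$ trivial, since $D_{\on{out}}\subset D_-$. Lemma \ref{lem:transition_map_deformation} then gives $\on{Tr}\Gamma^\iota_+ = \Phi_H\circ\on{Tr}\Gamma_{\on{in}}$. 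The unperturbed transition over $U$ is the identity, so $\on{Tr}\Gamma_{\on{in}}=\on{Tr}\Gamma_+$ as germs, and hence $\on{Tr}\Gamma^\iota_+ = \Phi_H\circ \on{Tr}\Gamma_+$. I would also require $\Phi_H$ to preserve the origin in Definition \ref{def:insertion_along_trajectory}; that normalization only serves to name the continuation, and for general $H$ the continuation $\Gamma^\iota_+$ is simply the concatenated trajectory through the plug.

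The remaining step, which I expect to be the main obstacle, is to ensure that the plug does not affect $C_\pm$ or $\Gamma_-$. Since the characteristic foliation changes only inside $U$, it suffices that $U$ is disjoint from the periodic orbits $C_\pm$ and from the segment $\Gamma_-$, and that the local sections' return and transition germs near $O_\pm$ and $P^u_-$ avoid $U$. To arrange this, note that $P^s_-$ lies on $W^s(O_-)$ but not on $C_-$. The compact set $C_+\cup C_-\cup\Gamma_-$ together with the flow arcs defining $\on{Ret}C_\pm$ near $O_\pm$ is therefore disjoint from the open end of $\Gamma_+$ near $P^s_-$, using (\ref{eq:choice_of_heteroclinic_segments_property}). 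Choosing $W$ and the length of the flow box small enough then gives disjointness, and the formulas $\on{Tr}\Gamma^\iota_-=\on{Tr}\Gamma_-$ and $\on{Ret}C^\iota_\pm=\on{Ret}C_\pm$ follow because the relevant trajectories and their neighborhoods are untouched. One subtlety is that $\on{Ret}C_-$ is a germ at $O_-$ while $W\ni P^s_-$ sits in $D_-$; this is harmless as long as $W$ avoids a fixed neighborhood of $O_-$ on which the return map is defined.

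Continuity of $H\mapsto\iota_H$ in $C^\infty$ is immediate from the explicit formula $\iota_H(r,t,x)=(rH(t,x),t,x)$ on $U$ and the identity outside, which gives bounds of the form in Remark \ref{rmk:insertion_along_domain_of_H}. The statement for $D_+$ and $\Gamma_-$ is proved identically, with the plug placed along the end of $\Gamma_-$ near $P^s_+$.
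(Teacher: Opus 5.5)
Your proposal is correct and follows essentially the same argument as the paper: insert a graphical plug along $\Gamma_+$, read off $\on{Tr}\Gamma^\iota_+$ from Lemma \ref{lem:transition_map_deformation}, and shrink the plug so that it misses $C_\pm$ and $\Gamma_-$. The only difference is placement. You put the outgoing face of the plug inside $D_-$, so $\Gamma_{\on{out}}$ is trivial. The paper instead places the plug at an interior point $Q$ of $\Gamma_+$ and conjugates $H$ by the contactomorphism $\on{Tr}\Gamma_{\on{out}}$ to obtain a Hamiltonian $H'$ on $D_{\on{std}}$, a step your placement avoids.
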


\begin{proof} Choose a plugging domain $U = [0,1] \times D_{\on{std}}$ embedded in $\Sigma$ that is well-positioned along $\Gamma_+$ (see Definition \ref{def:well_positioned}). Then we can factor the transition map of $\Gamma$ as a composition
\[
\on{Tr}\Gamma = \on{Tr}\Gamma_{\on{out}} \circ \on{Tr}\Gamma_{\on{in}}
\]of the segment $\Gamma_{\text{in}}$ of $\Gamma_+$ from $D_+$ to the inward boundary $D_{\on{in}} = 0 \times D_{\on{std}}$ of $U$ and the segment $\Gamma_{\text{out}}$ of $\Gamma_+$ from the outward boundary $D_{\on{out}} = 1 \times D_{\on{std}}$ of $U$ to $D_-$. Let $Q$ be the point $\on{Tr}\Gamma_{\on{in}}(P^u_+)$ so that $ \on{Tr}\Gamma_{\on{out}}(Q) = P^s_-$. Note that the transition map $\on{Tr}\Gamma_{\on{out}}$ is a contactomorphism from a small neighborhood of $Q$ in $D_{\on{std}}$ to a small neighborhood of $P^s_-$ in $D_-$. Therefore, there is a neighborhood $U$ of $P^s_-$ in $D_-$ so that, for any contact Hamiltonian $H$ supported in $[0,1] \times U$, we can write
\[
\Phi_H \circ \on{Tr}\Gamma_{\on{out}} = \on{Tr}\Gamma_{\on{in}} \circ \Phi_{H'} \qquad\text{for a contact Hamiltonian $H':[0,1] \times D_{\on{std}} \to \R$}
\]
Note that the map $H \mapsto H'$ is $C^k$-continuous for every $k$ and $H'$ is supported in the open set $U' = \on{Tr}\Gamma_{\on{out}}^{-1}(U)$. Let the ambient isotopy $\iota$ be the plugging $\iota_{U,H'}$ of $\eta$ along the plugging domain $U$ by the contact Hamiltonian $H'$ (Definition \ref{def:plug_insection} and Remark \ref{rmk:insertion_along_domain_of_H}). Then Lemma \ref{lem:transition_map_deformation} states that
\[
\on{Tr} \Gamma^\iota_+ = \on{Tr}\Gamma_{\on{out}} \circ \Phi_{H'} \circ \on{Tr}\Gamma_{\on{in}} = \Phi_H \circ \on{Tr}\Gamma_{\on{out}} \circ \on{Tr}\Gamma_{\on{in}} = \Phi_H \circ \on{Tr}\Gamma_+
\]
Since the map sending $H'$ to the ambient deformation $\iota_{U,H'}$ is $C^k$-continuous for each $k$, the map $H \mapsto \iota_{U,H'}$ is also $C^k$-continuous for all $k$. 

\vspace{3pt}

Finally, the deformation $\iota = \iota_{U,H'}$ is supported in the plugging domain $U$, which can be chosen to lie within an arbitrarily small neighborhood of the point $Q \in \Gamma_+$. Thus, by choosing $U$ sufficiently small, we can guarantee that the characteristic foliation is unchanged along the flow segment $\Xi$ connecting a point $P \in D_-$ in the domain of $\on{Tr}\Gamma_-$ to its image $\on{Tr}\Gamma_-(P)$ in $D_+$, since the union of such segments is disjoint from $\Gamma_+$. This implies that $\Gamma^\iota_- = \Gamma_-$ and $\on{Tr}\Gamma^\iota_- = \on{Tr}\Gamma_-$. A similar argument applies the return maps of $C_+$ and $C_-$.  \end{proof}

We can now present the proofs of Lemmas \ref{lem:fragile_het_condition} and \ref{lem:robust_het_condition} using Lemma \ref{lem:transversality_lemma} and Lemma \ref{lem:transition_deformation_for_hets}.

\begin{lemma}[Fragile Heteroclinic] \label{lem:fragile_het_condition} There is an arbitrarily $C^\infty$-small ambient deformation $\iota$ of $\eta$ such that $C$ has a continuation $C^\iota$ in the characteristic foliation of $\eta_\iota$ satisfying Condition \ref{cond:simple_fragile_heteroclinic}.
\end{lemma}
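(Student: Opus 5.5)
The plan is to perturb the transition map $\on{Tr}\Gamma_-:D_- \to D_+$ by composing it with a small contactomorphism supported near $P^s_+$. This is done with the $D_+$ version of Lemma \ref{lem:transition_deformation_for_hets}. That lemma gives, for any sufficiently small contact Hamiltonian $H$ supported in $(0,1)\times W$ with $W$ a small neighborhood of $P^s_+$ in $D_+$, an ambient deformation $\iota_H$. Under $\iota_H$ we have $\on{Tr}\Gamma^\iota_- = \Phi_H \circ \on{Tr}\Gamma_-$, while $\on{Tr}\Gamma^\iota_+$ and both return maps $\on{Ret} C_\pm$ are unchanged. The local invariant manifolds $W^u(O_-)$, $W^{ue}(O_-)$, $W^s(O_+)$ and $W^{se}(O_+)$ are determined by the return maps, so they stay exactly the same. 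Only the image curves $\on{Tr}\Gamma_-(W^u(O_-))$ and $\on{Tr}\Gamma_-(W^{ue}(O_-))$ get replaced by their images under $\Phi_H$. It therefore suffices to find an arbitrarily $C^\infty$-small $H$ fixing $P^s_+$ such that both transversality statements of Condition \ref{cond:simple_fragile_heteroclinic} hold at $P^s_+$. Smallness of $\iota_H$ then follows from the $C^\infty$-continuity of $H \mapsto \iota_H$.

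Next we identify the contact-geometric type of each manifold in $D_+$, which has dimension $2n-1$. Let $L_1 = \on{Tr}\Gamma_-(W^u(O_-))$. Since $\on{ind}(O_-) = n$, Lemma \ref{lem:isotropric_stable_unstable} shows $W^u(O_-)$ is isotropic of dimension $n-1$, hence Legendrian. Transition maps are contact, so $L_1$ is Legendrian. By Lemma \ref{lem:pre_lag_extended}, $L_2 = \on{Tr}\Gamma_-(W^{ue}(O_-))$ is pre-Lagrangian along $L_1$, with $TL_2 \cap \xi = TL_1$. On the other side, $\on{ind}(O_+) = n-1$, so $M_1 = W^s(O_+)$ is Legendrian and $M_2 = W^{se}(O_+)$ is pre-Lagrangian along $M_1$ with $TM_2\cap \xi = TM_1$. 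Dimensions sum correctly in both pairs: $(n-1)+n = 2n-1$. Transversality of $L_1$ with $M_2$ at $P^s_+$ means $T L_1 \cap TM_2 = 0$, and since $TL_1 \subset \xi$ this is equivalent to $TL_1 \cap TM_1 = 0$ inside $\xi_{P^s_+}$. The same reduction applies to $L_2$ versus $M_1$. So \emph{both} conditions reduce to the single linear condition that the Lagrangian subspaces $T_{P^s_+}L_1$ and $T_{P^s_+}M_1$ of $\xi_{P^s_+}$ are transverse.

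Achieving this is exactly Lemma \ref{lem:transversality_lemma}, applied with $L = L_2$ (transverse to $\xi$, with $T L_2 \cap \xi = TL_1$ Lagrangian) and $M = M_1$ (Legendrian). In its proof, the lifted contactomorphism $\widetilde{A}_s$ fixes $P^s_+$ and acts by $1\oplus A_s$ with $A_s(TL_1)\pitchfork TM_1$ in $\xi$. Hence one small $H$ makes $\Phi_H(L_2)\pitchfork M_1$ and simultaneously $\Phi_H(L_1)\pitchfork M_2$. Finally, $\Phi_H$ fixes $P^s_+$, so the fragile heteroclinic persists through the same points $P^u_-$ and $P^s_+$.

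The main obstacle I expect is bookkeeping rather than geometry. One point is checking that the support of the plug avoids the orbits $C_\pm$ and the robust segment, which Lemma \ref{lem:transition_deformation_for_hets} is designed to handle. The other is verifying that the extended manifolds $W^{se}(O_+)$, which are non-unique, still have a well-defined tangent space along $W^s(O_+)$. Theorem \ref{thm:local_invariant_manifolds}(d) supplies this, so the transversality condition is independent of the choices made.
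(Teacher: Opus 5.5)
Your proposal is correct and takes the same route as the paper. You identify $\on{Tr}\Gamma_-(W^u(O_-))$ and $W^s(O_+)$ as Legendrian and the extended manifolds as pre-Lagrangian at $P^s_+$, make them transverse with the lifted-contactomorphism Hamiltonian of Lemma \ref{lem:transversality_lemma}, and realize $\Phi_H \circ \on{Tr}\Gamma_-$ through the $D_+$ version of Lemma \ref{lem:transition_deformation_for_hets}. Your reduction of both transversality requirements to the single condition $T_{P^s_+}L_1 \cap T_{P^s_+}M_1 = 0$ in $\xi_{P^s_+}$ is a useful sharpening: it shows one Hamiltonian achieves both conditions at once, which the paper only asserts by citing Lemma \ref{lem:transversality_lemma} together with Remark \ref{rmk:reverse_transversality}.
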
 

\begin{proof} We adopt the following simplified notation for the sub-manifolds of $D_+$ in Condition \ref{cond:simple_fragile_heteroclinic}. 
\[
L_- = \on{Tr}\Gamma_-(W^u(O_-)) \qquad M_+ = W^{se}(O_+) \qquad M_- = \on{Tr}\Gamma_-(W^{ue}(O_-)) \qquad L_+ = W^s(O_+)
\]
We also denote the point $P^s_+$ in $D_+$ by $P$. Note that $P$ is contained in $L_+ \cap M_+$ and $L_- \cap M_-$. Note that $L_+$ is the stable manifold of the index $n-1$ hyperbolic fixed point $O_+$ of the contactomorphism $\on{Ret}C_+$, and $L_-$ is the image under a contactomorphism $\on{Tr}\Gamma_-$ of the unstable manifold of the index $n$ hyperbolic fixed point $O_-$ of $\on{Ret}C_-$. It follows from Lemma \ref{lem:isotropric_stable_unstable} that $L_+$ and $L_-$ are Legendrians in $D_+$. Similarly, Lemma \ref{lem:pre_lag_extended} implies that $M_+$ and $M_-$ are pre-Lagrangian along the intersections
\[
M_+ \cap W^s(O_+) \qquad\text{and}\qquad M_- \cap \on{Tr}\Gamma_-(W^u(O_-))
\]
Moreover, $P = P^s_+$ is contained in the intersection of $W^s(O_+)$ and $\on{Tr}\Gamma_-(W^u(O_-))$. It follows that $M_+$ and $M_-$ are pre-Lagrangian at $P$ in the sense of Definition \ref{def:pre_lagrangian}.

\vspace{3pt}

Lemma \ref{lem:transversality_lemma} and Remark \ref{rmk:reverse_transversality} now imply that there is an arbitrarily $C^\infty$-small contact Hamiltonian $H$ supported in an arbitrarily small neighborhood of $P$ such that
\[
\Phi_H(L_+) \text{ is transverse to $M_+$ at $P$}\qquad\text{and}\qquad \Phi_H(M_-) \text{ is transverse to $L_-$ at $P$}
\]
By Lemma \ref{lem:transition_deformation_for_hets}, there is a corresponding $C^\infty$-small ambient deformation $\iota$ of $\Sigma$ such that the continuation $\Gamma^\iota_+$ of $\Gamma_+$ has transition map $\on{Tr}\Gamma^\iota_- = \Phi_H \circ \on{Tr}\Gamma_-$, and such that the return maps of $C_\pm$ and the transition map of $\Gamma_+$ are unchanged. In particular, the (un)stable and extended (un)stable invariant manifolds are the same and
\[
 \text{$\on{Tr}\Gamma^\iota_+(W^u(O_-)) = \Phi_H(L_+)$ is transverse to $W^{se}(O_+) = M_+$ at $P^s_+ = P$}
\]
\[
 \text{$\on{Tr}\Gamma'_+(W^{ue}(O_-)) = \Phi_H(M_-)$ is transverse to $W^{se}(O_+) = L_-$ at $P^s_+ = P$}
\]
This is precisely Condition \ref{cond:simple_fragile_heteroclinic} for the deformed structure $\eta_\iota$ and this proves the lemma.
\end{proof}

\begin{lemma}[Robust Heteroclinic] \label{lem:robust_het_condition}  There is an arbitrarily $C^\infty$-small ambient deformation $\iota$ of $\eta$ such that $C$ has a continuation $C^\iota$ in the characteristic foliation of $\eta_\iota$ satisfying Condition \ref{cond:simple_robust_heteroclinic}.
\end{lemma}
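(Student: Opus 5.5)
We follow the same pattern as the proof of Lemma \ref{lem:fragile_het_condition}, this time perturbing the transition map $\on{Tr}\Gamma_+$ near $P^s_-$ in $D_-$ through Lemma \ref{lem:transition_deformation_for_hets}. Under that lemma the return maps of $C_\pm$, and hence all of their local invariant manifolds and strong foliations, stay fixed; only $\on{Tr}\Gamma_+$ is replaced by $\Phi_H \circ \on{Tr}\Gamma_+$. Condition \ref{cond:simple_robust_heteroclinic} is open, so its three parts can be arranged one at a time, each by a further $C^\infty$-small Hamiltonian, and the resulting deformations concatenated via Remark \ref{rmk:concatenation}.

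Set $L = \on{Tr}\Gamma_+(W^u(O_+))$ and $M = W^s(O_-)$, both submanifolds of $D_-$ passing through $P = P^s_-$. The index of $O_+$ is $n-1$, so Lemma \ref{lem:pre_lag_unstable} shows that $W^u(O_+)$ is pre-Lagrangian with Legendrian foliation $\mathcal{F}^{uu}$. A contact map preserves this property, so $L$ is pre-Lagrangian, with $T_PL \cap \xi = T_P\on{Tr}\Gamma_+(\mathcal{F}^{uu})$ Lagrangian. For $O_-$ of index $n$, the same lemma makes $M$ pre-Lagrangian with Legendrian foliation $\mathcal{F}^{ss}$, so $T_PM \cap \xi = T_P\mathcal{F}^{ss}$.

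We first arrange the two Lagrangians $\ell_L = T_P\on{Tr}\Gamma_+(\mathcal{F}^{uu})$ and $\ell_M = T_P\mathcal{F}^{ss}$ in $\xi_P$ to be transverse. To do this we use the construction in the proof of Lemma \ref{lem:transversality_lemma}: take the lift $\widetilde{A}_s$ of a one-parameter subgroup $A_s$ of $\on{Sp}$ with $A_s\ell_L \pitchfork \ell_M$ for small $s>0$, and cut off its autonomous Hamiltonian near $P$. The hypothesis of that lemma that $T_PL$ be transverse to $\xi$ holds here, since $L$ is pre-Lagrangian. The resulting $\Phi_H$ fixes $P$ and has differential $1 \oplus A_s$ there.

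Once $\ell_L \pitchfork \ell_M$ in $\xi_P$, we check the three requirements. Since $\dim L + \dim M = n + n = 2n = \dim D_- + 1$, transversality of $T_PL$ and $T_PM$ means spanning $T_PD_-$. Both spaces are transverse to $\xi$ and together contain $\ell_L \oplus \ell_M = \xi_P$, so they span. Hence $L \pitchfork M$ at $P$ and the intersection is a curve.

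Next, "$\on{Tr}\Gamma_+(\mathcal{F}^{uu})$ not tangent to $M$" means $\ell_L \not\subset T_PM$. We have $\ell_L \subset \xi$ and $T_PM \cap \xi = \ell_M$, with $\ell_L \cap \ell_M = 0$, so this holds. Symmetrically, $\ell_M \not\subset T_PL$, which is the condition that $\mathcal{F}^{ss}$ is not tangent to $L$. So a single Lagrangian-transversality perturbation yields all of Condition \ref{cond:simple_robust_heteroclinic}. Finally, Lemma \ref{lem:transition_deformation_for_hets} converts $H$ into an arbitrarily $C^\infty$-small ambient deformation.

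The main obstacle is interpretive. One must confirm that the tangency conditions concern the leaves through $P$, namely the leaf of $\on{Tr}\Gamma_+(\mathcal{F}^{uu})$ through $P$ and the leaf of $\mathcal{F}^{ss}$ through $P$, whose tangent spaces at $P$ are $\ell_L$ and $\ell_M$. When $P^u_+ \notin W^{uu}(O_+)$ these leaves are not the strong manifolds themselves. Lemma \ref{lem:pre_lag_unstable} (via Lemma \ref{lem:invariant_bundle}) gives precisely that $T\mathcal{F}^{uu}$ lies in $\xi$ along all of $W^u(O_+)$, so the leaf tangent at $P$ is indeed $T W^u \cap \xi$, which is what the argument needs.
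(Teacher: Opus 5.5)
Your proposal is correct and follows essentially the same route as the paper. You perturb $\on{Tr}\Gamma_+$ near $P^s_-$ by the cut-off lift of a linear symplectic one-parameter subgroup (the mechanism of Lemma~\ref{lem:transversality_lemma}), then convert this into an ambient deformation via Lemma~\ref{lem:transition_deformation_for_hets}, which leaves the return maps, and hence $M$, $\mathcal{F}^{ss}$ and $\mathcal{F}^{uu}$, unchanged. Your explicit reduction of all three requirements of Condition~\ref{cond:simple_robust_heteroclinic} to the single transversality of the Lagrangians $\ell_L$ and $\ell_M$ in $\xi_P$ spells out the step the paper compresses into ``an analogous argument''.
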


\begin{proof} We again adopt simplified notation. Consider the strong-stable foliation $\mathcal{F}^{ss}$ on $W^s(O_-)$ and the strong-unstable foliation $\mathcal{F}^{uu}$ on $W^u(O_+)$. Let $\Lambda$ be the leaf of $\on{Tr}\Gamma_+(\mathcal{F}^{uu})$ passing through $P^s_-$ and $\Xi$ be the leaf of $\mathcal{F}^{ss}$ passing through $P^s_-$. Finally, let
\[
L = \on{Tr}\Gamma_+(W^u(O_+))\qquad M = W^s(O_-) \qquad Q = P^s_-
\]
Note that $M$ is the stable manifold of an index $n$ hyperbolic fixed point and $L$ is the image under a contactomorphism of the unstable manifold of an index $n-1$ hyperbolic fixed point. Thus $L$ and $M$ are pre-Lagrangian by Lemma \ref{lem:pre_lag_unstable}. Moreover, $\Lambda$ and $\Xi$ are Legendrian by Lemma \ref{lem:pre_lag_unstable}, since they are (images of) leaves of the strong stable and unstable foliations on $M$ and $L$. 

\vspace{3pt}

Lemma \ref{lem:transversality_lemma} and Remark \ref{rmk:reverse_transversality} now imply that there is an arbitrarily $C^\infty$-small contact Hamiltonian $H$ supported in an arbitrarily small neighborhood of $P$ such that
\begin{equation} \label{eq:condition_2_transversality}
\text{$\Phi_H(L)$ and $\Phi_H(\Lambda)$ are transverse to $M$ at $P$}\qquad \text{$\Phi_H(L)$ is transverse to $\Xi$ at $P$}
\end{equation}
As in Lemma \ref{lem:fragile_het_condition}, we can now invoke Lemma \ref{lem:transition_deformation_for_hets} to acquire a corresponding $C^\infty$-small ambient isotopy $\iota$ of $\Sigma$ such that the continuation $\Gamma^\iota_+$ has transition map $\Phi_H \circ \on{Tr}\Gamma_+$. An analogous argument to Lemma \ref{lem:fragile_het_condition} then shows that the transversality properties in (\ref{eq:condition_2_transversality}) are equivalent to Condition \ref{cond:simple_robust_heteroclinic} for the deformed characteristic foliation. 
\end{proof}

\begin{lemma}[Saddle-Saddle] \label{lem:saddle_saddle_condition} There is an arbitrarily $C^\infty$-small ambient deformation $\iota$ of $\eta$ such that $C$ has a continuation $C^\iota$ in the characteristic foliation of $\eta_\iota$ satisfying Condition \ref{cond:saddle}. 
\end{lemma}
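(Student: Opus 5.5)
The plan is to reuse the strategy of Lemmas \ref{lem:fragile_het_condition} and \ref{lem:robust_het_condition}. We perturb only the transition map of the robust heteroclinic $\Gamma_+$ by a plug as in Lemma \ref{lem:transition_deformation_for_hets}, so that the return maps $\on{Ret} C_\pm$, their linear coordinates (\ref{eq:linear_coordinates_for_condition}) and the invariant manifolds are all unchanged. By Theorem \ref{thm:nondeg_approx} we may assume Conditions \ref{cond:simple_fragile_heteroclinic}--\ref{cond:extra_condition} already hold, since they are $C^1$-open. By Corollary \ref{cor:real_center_multipliers}, $E^{cu}(O_+)$ and $E^{cs}(O_-)$ are one-dimensional, so $u_{cu}$ and $v_{cs}$ are real functions. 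The quantity in (\ref{eq:cond:saddle}) is then $\big|v_{cs}'(0)u_{cu}(0)/(v_{cs}(0)u_{cu}'(0))\big|$.

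Under $H$ the curve $I_-$ is replaced by $I_-^H = \Phi_H(\on{Tr}\Gamma_+(W^u(O_+))) \cap W^s(O_-)$. I would choose $H$ so that $\Phi_H$ fixes $P^s_-$ and preserves $M = W^s(O_-)$ near $P^s_-$. Then $I_-^H = \Phi_H(I_-)$ and $\Phi_H$ restricts to a self-map of $I_-$, while $u_{cu}$, which is computed upstream on $I_+ = \on{Tr}\Gamma_+^{-1}(I_-)$, is unchanged. The new $v$-function is $y_{cs}\circ \psi_H \circ \on{Tr}\Gamma_+\circ\psi$, where $\psi_H = \Phi_H|_{I_-}$. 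Its value at $0$ is the same, but its derivative at $0$ is multiplied by $T_{P^s_-}\psi_H$. The ratio in (\ref{eq:cond:saddle}) is therefore multiplied by $|T\psi_H|$, and any factor $\neq 1$ close to $1$ makes it differ from $1$ while keeping the other conditions.

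To produce such $H$ I would apply Lemma \ref{lem:transverse_stretching} to the curve $I_-$ at $P^s_-$. This requires $I_-$ to be transverse to $\xi$ at $P^s_-$. That holds because $I_- \subset M$ with $M$ pre-Lagrangian (Lemma \ref{lem:pre_lag_unstable}), $TM\cap\xi = T\mathcal{F}^{ss}$, and $I_-$ is transverse to $\mathcal{F}^{ss}$ by (\ref{eq:transversality_of_intervals}). It remains to ensure that the stretching flow preserves $M$. I would first pass to a Darboux chart in which $M$ is, near $P^s_-$, the pre-Lagrangian $\R_z\times \R^{n-1}_q$ (zero section times Reeb direction) and $I_-$ is the $z$-axis. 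This is possible by a Weinstein-type normal form for pre-Lagrangians foliated by Legendrians, straightening a transverse arc inside. Then the Hamiltonian $G=f(z)$ of Lemma \ref{lem:transverse_stretching} has vector field $f(z)\partial_z + f'(z)(\ldots)$; I would check that it is tangent to $\{p=0\}$. Cutting off by $s$ and feeding the result into Lemma \ref{lem:transition_deformation_for_hets} gives an arbitrarily $C^\infty$-small ambient deformation.

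The main obstacle is this normal-form and invariance step. We need $\Phi_H$ to preserve $W^s(O_-)$ near $P^s_-$ while still stretching $I_-$. If that fails, I would instead compute $I_-^H$ to first order in $s$ and use only that $P^s_-$ is fixed and the derivative along the curve changes nontrivially. Condition \ref{cond:saddle} is an open condition on a $1$-jet, so a first-order variation of $\partial_r v_{cs}(0)$ with $v_{cs}(0)$ fixed suffices for small generic $s$.
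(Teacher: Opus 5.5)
Your proposal is correct and is essentially the paper's argument. You stretch $I_-$ at $P^s_-$ with Lemma \ref{lem:transverse_stretching}, which applies because $I_-$ is transverse to $\xi$, being transverse to $\mathcal{F}^{ss}$ inside the pre-Lagrangian $W^s(O_-)$. You insert this along $\Gamma_+$ via Lemma \ref{lem:transition_deformation_for_hets}, and note that $u_{cu}$ and $v_{cs}(0)$ are unchanged while $\partial_r v_{cs}(0)$ is multiplied by $T\phi\neq\pm 1$.

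The step you flag as the main obstacle is unnecessary. From $\Phi_H(I_-)=I_-\subset W^s(O_-)$ you already get $I_-\subseteq \Phi_H(\on{Tr}\Gamma_+(W^u(O_+)))\cap W^s(O_-)$, and transversality of that intersection forces equality near $P^s_-$. So no invariance of $W^s(O_-)$ and no pre-Lagrangian normal form is needed. Also, to assume the earlier conditions you should cite Lemmas \ref{lem:fragile_het_condition} and \ref{lem:robust_het_condition} rather than Theorem \ref{thm:nondeg_approx}, since that theorem's proof uses the present lemma.
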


\begin{proof} By Lemma \ref{lem:fragile_het_condition} and \ref{lem:robust_het_condition}, we may assume that Condition \ref{cond:simple_fragile_heteroclinic} and \ref{cond:simple_robust_heteroclinic}. As in the formulation of Condition \ref{cond:saddle}, we consider the curves of intersection between the stable and unstable manifolds defined as follows.
\[
I_- = \on{Tr}\Gamma_+\big(W^u(O_+)) \cap W^s(O_-) \subset D_- \qquad\text{and}\qquad I_+ = W^u(O_+) \cap \on{Tr}\Gamma_+^{-1}\big(W^s(O_-)\big) \subset D_+
\]
Fix linear coordinates $(x_s,x_{cu},x_{uu})$ and $(y_{ss},y_{cs},y_u)$ as in Theorem \ref{thm:linear_coordinates} and (\ref{eq:linear_coordinates_for_condition}). Also fix a parametrization $\psi$ of $I_-$ by $[-1,1]_r$ as in (\ref{eq:parametrization_of_I}). We consider the functions on $[-1,1]_r$ given by
\[
u_{cu} = x_{cu} \circ \psi \qquad\text{and}\qquad v_{cs} = y_{cs} \circ \on{Tr}\Gamma_+ \circ \psi
\]
Note that by construction, $\psi(0) = P^u_+$ and $\on{Tr}\Gamma_+(P^u_+) = P^s_-$. Suppose Condition \ref{cond:saddle} is not satisified, since we are finished otherwise. Then
\[
\big|\frac{\partial_rv_{cs}(0) \cdot u_{cu}(0)}{\partial_r u_{cu}(0) \cdot v_{cs}(0)}\big| = \big|\frac{\partial_r\log |v_{cs}|}{\partial_r\log |u_{cu}|}\big| = 1
\]
Note that the interval $I_-$ is transverse to the foliation $\mathcal{F}^{ss}$ in $W^s(O_-)$. Since $W^s(O_-)$ is pre-Lagrangian by Lemma \ref{lem:pre_lag_unstable}, this implies that $I_-$ is transverse to the contact structure of $D_-$. Thus, by Lemma \ref{lem:transverse_stretching}, we can construct an arbitrarily $C^\infty$-small contact Hamiltonian $H$ supported in an arbitrarily small neighborhood of $P^s_-$ such that $\Phi_H$ restricts to a diffeomorphsm
\[
\phi:I_- \to I_- \qquad\text{such that}\qquad \Phi_H(P^s_-) = P^s_- \qquad\text{and}\qquad T\phi \neq \pm \on{Id} \text{ at $P^s_-$}
\]
As in Lemmas \ref{lem:fragile_het_condition} and \ref{lem:robust_het_condition}, we can apply Lemma \ref{lem:transition_deformation_for_hets} to find a corresponding $C^\infty$-small ambient isotopy $\iota$ of $\Sigma$ so that the transition map of the continuation $\Gamma^\iota_+$ satisfies
\[
\on{Tr}\Gamma^\iota_+ = \Phi_H \circ \on{Tr} \Gamma_+ \qquad \on{Tr}\Gamma^\iota_- = \on{Tr}\Gamma_- \qquad \on{Ret}\Gamma^\iota_\pm = \on{Ret}\Gamma_\pm
\]
Let $I^\iota_-$ and $I^\iota_+$ denote the intervals of intersection corresponding to this characteristic foliation of $\eta_\iota$ and let $u^\iota_{cu}$ and $v^\iota_{cs}$ denote the corresponding functions on $[-1,1]_r$. Since $\Phi_H$ fixes $I_-$, it is simple to decud
\[
I_- = \on{Tr}\Gamma_+(W^u(O_+)) \cap W^s(O_-) \subseteq \Phi_H \circ \on{Tr}\Gamma_+(W^u(O_+)) \cap W^s(O_-) = I^\iota_-
\]
On the other hand, the intersections above are transverse and $\Phi_H$ is $C^\infty$-small, it follows that we must have $I^\iota_- = I_-$. A similar argument shows that $I^\iota_+ = I_+$. If we adopt $\psi$ as the parametrization of both $I_-$ and $I^\iota_-$, then
\[
u^\iota_{cu} = u_{cu} \qquad \text{and}\qquad v_{cs}^\iota = y_{cs} \circ \Phi_H \circ \on{Tr}\Gamma_+ \circ \psi = y_{cs} \circ \phi \circ \on{Tr}\Gamma_+ \circ \psi
\]
Note that $v_{cs}^\iota(0) = v_{cs}(0)$ since $\Phi_H \circ \on{Tr}\Gamma_+ \circ \psi(0) = \phi(P^s_+) = P^s_+$. However, $\partial_r v_{cs}^\iota(0) \neq \pm \partial_r v_{cs}(0)$ since $T\phi \neq \pm \on{Id}$ at $P^s_+$. It follows that
\[
\big|\frac{\partial_rv^\iota_{cs}(0) \cdot u_{cu}^\iota(0)}{\partial_r u_{cu}^\iota(0) \cdot v_{cs}^\iota(0)}\big| = \big|\frac{\partial_r v^\iota_{cs}(0)}{\partial_r v_{cs}(0)}\big| \cdot \big|\frac{\partial_rv_{cs}(0) \cdot u_{cu}(0)}{\partial_r u_{cu}(0) \cdot v_{cs}(0)}\big| \neq 1
\]
This is precisely Condition \ref{cond:saddle} for the characteristic foliation of $\eta_\iota$. This concludes the lemma.\end{proof}

\subsection{Characteristic Unfoldings} We conclude this section by proving the following existence theorem for unfoldings of characteristic foliations. This will finish the proof of Theorem \ref{thm:robustification}.

\begin{theorem}[Characterestic Unfoldings] \label{thm:characteristic_unfoldings} Let $(\Sigma,\eta)$ be a contact Hamiltonian manifold whose characteristic foliation contains a non-degenerate heterodimensional cycle $C$ with index $(n-1,n)$. Then there is a 2-parameter family of embeddings from $\Sigma$ into the contactization
\[
\iota_\epsilon:\Sigma \to C\Sigma \qquad\text{parametrized by an open set $U \subset \R^2$ containing $0$}
\]
such that $\iota_0$ is the inclusion $\Sigma = 0 \times \Sigma \to \Sigma$ and such that the characteristic foliations of the contact Hamiltonian structures $\eta_\epsilon = \iota^*_\epsilon\xi$ for $\epsilon \in U$ form a proper unfolding in the sense of Definition \ref{def:proper_unfolding}. \end{theorem}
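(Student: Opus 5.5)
We build the family as a composite of two independent plug insertions, each controlling one parameter, and then check the two conditions of Definition \ref{def:proper_unfolding}. By Corollary \ref{cor:real_center_multipliers}, $\lambda_{cs}(O_-)$ and $\lambda_{cu}(O_+)$ are both real, so only case (SS) is relevant. We must therefore arrange two things: the splitting function $\sigma_g\circ L_\epsilon$ should have $0$ as a regular value, and the ratio $\theta\circ L_\epsilon$ should have nowhere vanishing derivative along $\Sigma=(\sigma_g\circ L)^{-1}(0)$. We take $\epsilon=(\epsilon_1,\epsilon_2)$ and set
\[
\iota_\epsilon = \iota_{\epsilon_2 G}\star\iota_{\epsilon_1 F},
\]
a concatenation as in Remark \ref{rmk:concatenation}. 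Here $F$ is supported near the fragile heteroclinic $\Gamma_-$, and $G$ is supported near a point of the orbit $C_-$ away from $\Gamma_\pm$ and away from the sections $D_\pm$.

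\textbf{Step 1 (splitting parameter).} By Lemma \ref{lem:isotropric_stable_unstable}, in $D_+$ the disk $A=W^u(O_+)$ and the disk $B=\on{Tr}\Gamma_-(W^s(O_-))$ both contain Legendrian sub-manifolds through $P^s_+$. Concretely, $W^s(O_+)$ is Legendrian of dimension $n-1$, and $\on{Tr}\Gamma_-(W^u(O_-))$ is Legendrian of dimension $n-1$. The fragile intersection is measured in the direction transverse to $\xi$. Rather than carry this through directly, we apply Lemma \ref{lem:transition_deformation_for_hets} on $D_+$ with the autonomous Hamiltonian $F\equiv 1$ cut off near $P^s_+$. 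Its time-$\epsilon_1$ flow is the Reeb translation by $\epsilon_1$ near $P^s_+$, so it gives
\[
\on{Tr}\Gamma^{\epsilon}_-=\Phi_{\epsilon_1F}\circ\on{Tr}\Gamma_-,
\]
while the return maps are unchanged. We then pick a weakly compatible metric $g$ for a contact form on $D_+$, in the sense of Definition \ref{def:weakly_compatible}. By a variant of Proposition \ref{prop:legendrian_dist}, the signed distance between the two disks is realized by a short Reeb chord. Translating one disk by the Reeb flow changes this chord length at unit speed, so $\partial_{\epsilon_1}\sigma_g=\pm1$ at $\epsilon=0$. The disks here are pre-Lagrangian/Legendrian rather than a Legendrian pair, so we adapt the argument: using Condition \ref{cond:simple_fragile_heteroclinic}, the minimal chord is transverse to both disks.

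\textbf{Step 2 (multiplier parameter).} Next we insert a graphical plug along $C_-$ itself, at a well-positioned domain on $C_-$ (Lemma \ref{lem:transition_map_deformation}). This changes the return map to $\on{Ret}C^\epsilon_-=\Phi_{\epsilon_2G}\circ\on{Ret}C_-$. We choose $G$ so that $\Phi_{\epsilon_2G}$ fixes $O_-$ and has differential $\widetilde{A}_{\epsilon_2}$, namely the lift (Definition \ref{def:lifted_contactomorphisms}) of the conformal scaling $A_{\epsilon_2}=e^{\epsilon_2}\on{Id}$. Its conformal factor is $e^{2\epsilon_2}$. In the normal form of Lemma \ref{lem:normal_form}, this multiplies $C$ by $e^{2\epsilon_2}$ and $\sqrt{C}B$ by $e^{\epsilon_2}$, so every multiplier of $O_-$ varies. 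In particular the real center-stable multiplier is $\sqrt{C}\lambda$, and it becomes $e^{\epsilon_2}\sqrt{C}\lambda$. Hence $\partial_{\epsilon_2}\log|\lambda_{cs}(O_-^\epsilon)|=1$. Meanwhile $\lambda_{cu}(O_+)$ is unaffected and $\epsilon_1$ does not affect either return map, so
\[
\partial_{\epsilon_2}\theta=-1/\log|\lambda_{cu}(O_+)|\neq 0 \qquad\text{and}\qquad \partial_{\epsilon_1}\theta=0.
\]
Realizing a contactomorphism germ with prescribed linear part fixing $O_-$ by a small Hamiltonian uses Corollary \ref{cor:local_small_Ham} applied to the lifted maps $\widetilde{A}_{\epsilon_2}$. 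This is smooth in $\epsilon_2$ because these maps form a one-parameter subgroup.

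\textbf{Step 3 (verification).} The plug for $G$ also moves $W^s(O_-)$, which enters $B$, so $\sigma_g$ acquires some $\epsilon_2$-dependence. This does no harm, since $\partial_{\epsilon_1}\sigma_g\neq0$ already makes $0$ a regular value. The implicit function theorem then shows that $\Sigma$ is locally the graph $\epsilon_1=h(\epsilon_2)$. Along $\Sigma$, $\theta$ depends only on $\epsilon_2$ with nonzero derivative, which gives (SS). For the smoothness of $\sigma_g$ in $\epsilon$ we use Remark \ref{rmk:continuity_of_invariants}, together with the smooth dependence of Reeb chords in Proposition \ref{prop:legendrian_dist}. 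The main obstacle is Step 1. We must show rigorously that the signed splitting $\on{sdist}_g(A_\epsilon,B_\epsilon)$ is differentiable with nonzero $\epsilon_1$-derivative for disks that are not both Legendrian. We would first try to reduce this to the Legendrian cores, namely $W^s(O_+)$ inside $W^{se}(O_+)$ and the image of $W^u(O_-)$, via Condition \ref{cond:simple_fragile_heteroclinic}. Failing that, we would directly compute the first variation of the distance under the Reeb translation, using the fact that the Reeb direction is transverse to $TA+TB$ at $P^s_+$.
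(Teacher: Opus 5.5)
Your construction is essentially the paper's (Proposition~\ref{prop:unfolding_proposition} and Construction~\ref{const:unfoldings}). The lift $\widetilde{A}_{\epsilon_2}$ of $e^{\epsilon_2}\on{Id}$ is $(z,x,y)\mapsto(e^{2\epsilon_2}z,e^{\epsilon_2}x,e^{\epsilon_2}y)$, which is the paper's scaling flow $\Psi_{2\epsilon_2}$ plugged along $C_-$. The splitting is moved by a Reeb-push plug along $\Gamma_-$ and read off as a Reeb chord length via Proposition~\ref{prop:legendrian_dist}. The differences are cosmetic: the paper pushes in $D_-$ by $\tau(r)+s$ so that $\sigma_g=s$ exactly and the zero set is literally $\{s=0\}$, whereas you push in $D_+$ and use the implicit function theorem.

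There are two corrections.

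\textbf{Step 1.} The obstacle you single out there is not real. It comes from reading the pair $W^u(O_+)$, $\on{Tr}\Gamma_-(W^s(O_-))$ off Definition~\ref{def:splitting_function}, where $s$ and $u$ are interchanged. Those disks have dimension $n$ each, so $\dim A+\dim B=2n\neq\dim D_+-1$.

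\begin{itemize}
\item The fragile intersection is measured by $W^s(O_+)$ and $\on{Tr}\Gamma_-(W^u(O_-))$, which are both Legendrian by Lemma~\ref{lem:isotropric_stable_unstable}.
\item Since $TW^{se}(O_+)\cap\xi=TW^s(O_+)$ along $W^s(O_+)$ (Lemma~\ref{lem:pre_lag_extended}), the first transversality in Condition~\ref{cond:simple_fragile_heteroclinic} says precisely that $\xi_{P^s_+}=T W^s(O_+)\oplus T\on{Tr}\Gamma_-(W^u(O_-))$.
\item Hence Proposition~\ref{prop:legendrian_dist} applies verbatim. A Reeb push of time $\epsilon_1$ shifts the unique short chord by exactly $\epsilon_1$, so $\sigma_g(\epsilon)=\pm\bigl(\epsilon_1+T_0(\epsilon_2)\bigr)$, with $T_0$ smooth by Remark~\ref{rmk:continuity_of_invariants}.
\end{itemize}
This settles Steps~1 and~3 at once, with no adaptation to non-Legendrian disks needed.

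\textbf{Step 2.} For an index-$n$ fixed point, the center-stable multiplier is the transverse eigenvalue $C$ of Lemma~\ref{lem:normal_form}. It is not an eigenvalue $\sqrt{C}\lambda$ of the $\xi$-block; those are the unstable and strong-stable ones. This matches Corollary~\ref{cor:real_center_multipliers}, which says $E^{cs}$ is transverse to $\xi$. Your scaling therefore multiplies $\lambda_{cs}(O_-)$ by $e^{2\epsilon_2}$, and $\partial_{\epsilon_2}\log|\lambda_{cs}|=2$. The conclusion $\partial_{\epsilon_2}\theta\neq0$ is unaffected.
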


We will deduce Theorem \ref{thm:characteristic_unfoldings}, via a plugging construction, from the following proposition on the existence of certain contact isotopies of contactomorphisms and Legendrians.

\begin{proposition} \label{prop:unfolding_proposition} Let $\Phi:U \to D$ be a contact embedding to a contact disk $D$ from an open set $U \subset D$ with a hyperbolic fixed point $O$ of index $n$. Let $\Lambda \subset U$ be a Legendrian with an intesection point
\[
P \in \Lambda \cap W^u(O) \qquad\text{such that}\qquad \xi_P = T_P\Lambda \oplus T_PW^u(O)
\]
Then for any neighborhoods $V$ of $O$ and $W$ of $P$, there is an $\epsilon > 0$ and two families of contact Hamiltonians
\[G_r:[0,1] \times D \to \R \qquad\text{and}\qquad H_{r,s}:[0,1] \times D \to \R\]
parametrized by $r \in [-\epsilon,\epsilon]$ and $(r,s) \in [-\epsilon,\epsilon]^2$ respectively, that satisfy the following properties.
\begin{itemize}
    \item[(a)] (Support) $G_r$ and $H_{r,s}$ are supported on $(0,1) \times V$ and $(0,1) \times W$, respectively, and
    \[G_0 = H_0 = 0\]
    \item[(b)] (Hyperbolicity) For each $r \in [-1,1]$, the point $O$ is a hyperbolic fixed point of the embedding
    \[
    \Phi_r:U \to D \qquad\text{given by} \qquad \Phi_{G_r} \circ \Phi
    \]
    \item[(c)] (Multiplier) Let $\lambda_{cs}(\Phi_r,O)$ denote the center-stable multiplier of $O$ as a fixed point of $\Phi_r$. Then
    \[
    \lambda_{cs}(\Phi_r,O) = e^r \cdot \lambda_{cs}(\Phi_0,O)
    \]
    \item[(d)] (Signed Distance) Let $\Lambda_{r,s} = \Phi_{H_{r,s}}(\Lambda)$. Then there is a closed ball $B \subset D$ centered at $P$ and a Riemannian metric $g$ on $D'$ such that
    \[
    \on{sdist}_g(\Lambda_{r,s} \cap B,W^u(\Phi_r,O) \cap B) = s
    \]\end{itemize}\end{proposition}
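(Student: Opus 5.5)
We build the two families separately: $G_r$ to shift the center-stable multiplier, and $H_{r,s}$ to realize the signed distance. Throughout we use that $O$ has index $n$. By Lemma \ref{lem:pre_lag_unstable} and Corollary \ref{cor:real_center_multipliers}, $\lambda_{cs}(O)$ is real and simple, and $E^{cs}(O)$ is transverse to $\xi$. By Lemma \ref{lem:isotropric_stable_unstable}, $W^u(O)$ is Legendrian, so the hypothesis $\xi_P = T_P\Lambda\oplus T_PW^u(O)$ is exactly the setting of Proposition \ref{prop:legendrian_dist}.

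\textbf{Step 1: the family $G_r$.} Use the normal form of Lemma \ref{lem:normal_form} in a Darboux chart centered at $O$. For index $n$, the constant $C$ must be less than $1$, and the argument of Lemma \ref{lem:pre_lag_unstable} shows that $C$ is itself the center-stable multiplier, with eigendirection $E^{cs}(O)$ transverse to $\xi$. Precompose $\Phi$ with the lifted contactomorphism $\widetilde{A}_r$ of the conformal symplectic map $A_r = e^{r/2}\on{Id}$. Then $\on{CF}(A_r) = e^r$, and by Lemma \ref{lem:CSp_differential} its differential at $O$ is $e^r\oplus e^{r/2}\on{Id}$. Since this differential commutes with the block form up to the lower-left term, the composition $\widetilde{A}_r\circ\Phi$ has eigenvalues $e^rC$ and $e^{r}\sqrt{C}\mu$, where $\mu$ runs over the eigenvalues of $B$.

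We need the top eigenvalue to remain the center-stable one with exactly the factor $e^r$. To arrange this, replace $A_r$ by a conformal symplectic map commuting with $B$, chosen to scale the $\xi$-part so that the $\sqrt{C}B$ block is unchanged: for example $A_r = e^{r/2}\cdot(e^{-r/2}\on{Id}) = \on{Id}$ on $\xi$ with conformal factor $e^r$. This fails, since the conformal factor of $\on{Id}$ is $1$. The fix is to use instead an autonomous contact Hamiltonian that vanishes to second order in the $\xi$-directions, namely $H = z$ near $O$ (a Reeb-scaling type flow). Its time-$r$ map fixes $O$ with differential $e^{r}\oplus e^{r/2}\on{Id}$ for $H = z$ in the standard form $dz+\lambda_{\on{std}}$. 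So the whole spectrum is scaled, and in particular $\lambda_{cs}$ is multiplied by exactly $e^r$.

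All multipliers change by $e^r$ or $e^{r/2}$ with $|r|\le\epsilon$ small, so hyperbolicity and the index persist, which gives (b); the cs multiplier becomes $e^r\lambda_{cs}$, which gives (c). Cut $H$ off by a bump function supported in $V$. The time-$r$ map $\Phi_{G_r}$ then agrees near $O$ with the scaling flow, so $G_0 = 0$ and (a) holds for $G_r$. Only the differential at $O$ matters for (b) and (c), so the cutoff is harmless.

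\textbf{Step 2: the family $H_{r,s}$.} By Remark \ref{rmk:continuity_of_invariants}, the unstable manifolds $W^u(\Phi_r,O)$ form a smooth family of Legendrians in $r$, since they are strong manifolds. Fix a contact form and a weakly compatible metric $g$ near $P$. Proposition \ref{prop:legendrian_dist} then gives a smooth function $T(r,\Lambda') = \on{sdist}_g(\Lambda'\cap B, W^u(\Phi_r,O)\cap B)$, defined as the signed Reeb chord length on a ball $B$ around $P$. Let $K$ be the contact Hamiltonian equal to $1$ near $P$, cut off in $W$. Its time-$t$ flow is the Reeb flow near $P$, and it shifts $\Lambda$ along the chord, so $t\mapsto T(r,\Phi_{tK}(\Lambda))$ has derivative $\pm1$ and equals $T(r,\Lambda)+t$ (up to sign) for small $t$.

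Set $H_{r,s} = \tau(r,s)K$, where $\tau(r,s)$ is defined by solving $T(r,\Phi_{\tau K}(\Lambda)) = s$. This is possible by the implicit function theorem, since $T(0,\Lambda)=0$ and $\partial_\tau T = 1$. The resulting $\tau$ is smooth, with $\tau(0,0)=0$. This gives (d) and the support statement in (a).

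\textbf{Main obstacle.} The main difficulty is Step 1: we must make sure the perturbation scales $\lambda_{cs}$ by exactly $e^r$ while keeping $O$ hyperbolic of the same index. The Reeb-scaling Hamiltonian should do this because its linearization is an explicit multiple of the identity on each block, so it commutes with the normal form. It also remains to check that the cutoff does not disturb Step 2; this holds by taking $V$ and $W$ disjoint, which we may assume after shrinking.
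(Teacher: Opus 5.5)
Your proposal is correct and follows essentially the paper's proof. $G_r$ is a cutoff, in a Darboux chart at $O$, of the generator of the scaling flow $\Psi_r(z,x,y)=(e^rz,e^{r/2}x,e^{r/2}y)$. $H_{r,s}$ is a cutoff Reeb translation near $P$ (your implicit-function $\tau(r,s)$ is just the paper's $\tau(r)\pm s$), with Proposition \ref{prop:legendrian_dist} identifying the signed distance with the Reeb chord length.

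The ``obstacle'' you flag in Step 1 is not real. Your first attempt $\widetilde{A}_r$ with $A_r=e^{r/2}\on{Id}$ already is $\Psi_r$, since $f_{A_r}=0$; the remaining eigenvalues are $e^{r/2}\sqrt{C}\mu$, not $e^{r}\sqrt{C}\mu$. In any case, (c) only uses that $T_O\Phi_{G_r}\circ T_O\Phi$ is block lower-triangular with $(1,1)$-entry $e^rC$. That same observation covers a convention issue: with the paper's $\lambda_{\on{std}}=\sum y_i\,dx_i$, the Hamiltonian $z$ generates $(e^rz,x,e^ry)$ rather than $\Psi_r$, and the argument still goes through.
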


\begin{proof} We break the proof into two steps, namely the construction of $G$ and the construction of $H$. We verify the desired properties (a-d) as we go.

\vspace{3pt}

\noindent {\bf Step 1: Construction Of $G$.} We start by constructing $G$. Consider the splitting of $T_OD$ into strong stable, center stable and unstable spaces
\[
T_OD = E^s(O) \oplus E^u(O) = E^{ss}(O) \oplus E^{cs}(O) \oplus E^{u}(O)
\]
By Lemma \ref{lem:isotropric_stable_unstable} and Lemma \ref{lem:pre_lag_unstable}, the spaces $E^u(O)$ and $E^{ss}(O)$ are Lagrangian subspaces of the contact structure $\xi$ of $D$ at $O$, and $E^{cs}(O)$ is transverse to $\xi$ at $O$. Therefore, by passing to a Darboux chart we may assume that $D = D_{\on{std}}$ is the standard contact disk in $\R^{2n-1} = \R_z \times T^*\R^{n-1}$, $O$ is the origin and
\[
E^{cs}(O) = \text{span}(\partial_z)  \qquad\text{at the point $O$}
\]
We construct $G_r$ as follows. Let $G':\R^{2n-1} \to \R$ be the autonomous contact Hamiltonian generating the contact flow $\Psi_t$ given by
\[\Psi_t(z,x,y) = (e^tz,e^{t/2}x,e^{t/2}y)\]
We can construct $G_r$ by taking $rG$, multiplying by a function that is $1$ near $O$ and $0$ outside of a neighborhood of $O$, and then reparametrizing time so that the resulting contact Hamiltonian is compactly supported in the time parameter $(0,1)$. We thus acquire a 1-parameter family of contact Hamiltonians
\[G_r:[0,1] \times D \to \R \qquad\text{with}\qquad G_0 = 0 \text{ and }\Phi_{G_r} = \Psi \text{ near }O\]
such that $G_r$ is supported on $(0,1) \times D$ and $\Phi_{G_r} = \Psi_r$ in a neighborhood of $O$. Note that property (a) is immediate from the construction. Property (b) holds for $r \in [-\epsilon,\epsilon]$ for some small $\epsilon$, since the origin $O$ is fixed by $\Psi_r$ and hyperbolicity of a fixed point is an open condition. Finally, we argue that property (c) holds. To see this, note that for any vector $v \in E_{cs}(O)$, we have
\[
T_O\Phi_r(v) = T_O\Phi_{G_r} \circ T_O\Phi(v) = e^r \cdot \lambda_{cs}(O) \cdot v
\]
In particular, $e^r \cdot \lambda_{cs}(O)$ is a multiplier of $O$ as a fixed point of $\Phi_r$. Since the center-stable multiplier varies continuously and the spectrum is finite, it follows that $\lambda_{cs}(\Phi_r,O) = e^r \cdot \lambda_{cs}(O)$ for $r \in [-\epsilon,\epsilon]$ and sufficiently small $\epsilon$. 

\vspace{3pt}

\noindent {\bf Step 2: Construction Of $H$.} Next, we construct the family $H$. Fix a small, closed contact Darboux ball $B$ centered at $P$ and contained in the neighborhood $W$ of $P$. Let $\alpha$ be a contact form that is standard in this ball and let $\Psi$ denote the (partially defined) Reeb flow of this contact form. Finally, for convenience, let
\[K_r = W^u(\Phi_r,O) \qquad\text{and}\qquad K = W^u(O) = K_0\]
Note that the family $K_r$ is family of Legendrian sub-manifolds of $D$ by Lemma \ref{lem:isotropric_stable_unstable} that depends smoothly on $r$ (see Remark \ref{rmk:continuity_of_invariants}). Moreover, by the hypotheses of the proposition
\[
\xi_P = T_P\Lambda \oplus T_PK
\]
This implies that for small $\epsilon$, the map
\[
F:[-\epsilon,\epsilon]_\tau \times K \to Y \qquad\text{given by the Reeb flow $F(t,x) = \Psi_t(x)$}
\]
is an embedding transverse to $\Lambda$ at $P$. In particular, there is a smooth function $\tau:[-\epsilon,\epsilon]_r \to \R$ such that $\Lambda_r = \Psi_{\tau(r)}(\Lambda)$ intersects $K_r$ at $P$ and $\xi_P = T_P\Lambda_r \oplus T_PK_r$. 

\vspace{3pt}

Next, choose a weakly compatible metric $g$ on $\R^{2n-1}$. We apply Proposition \ref{prop:legendrian_dist} to find a smooth function $\epsilon)(r) > 0$ such that, for any $r$ and any $s$ with $|s| < \epsilon(r)$, we have
\[
\on{dist}(\Psi_{\tau(r) + s}(\Lambda) \cap B,K \cap B) = \on{dist}_g(\Psi_s(\Lambda_r) \cap B, K_r \cap B) = \on{dist}_g(\Psi_s(\Lambda_r \cap B), K_r \cap B) = |s|\]
By choosing $\epsilon$ small enough, we may assume that $\epsilon(r) > \epsilon$ and thus that the above inequality holds for all $(r,s) \in [-\epsilon,\epsilon]^2$. We now define
\[
H_{r,s}:[0,1] \times D \to D
\]
to be a smooth family of Hamiltonians supported in $(0,1) \times V$ such that the family of contactomorphisms generated by $H_{r,s}$ satisfies
\[
\Phi_{H_{r,s}} = \Psi_{\tau(r) + s} \qquad\text{in a neighborhood of $B$}
\]
Then for sufficiently small $\epsilon$, the family of Legendrians $\Lambda_{r,s} = \Phi_{H_{r,s}}(\Lambda)$ will satisfy $\on{dist}(\Lambda_{r,s} \cap B, W^u(\Phi_r,O) \cap B) = |s|$. Moreover, since the isotopy of Legendrians $s \mapsto \Lambda_{r,s}$ intersects $K_r$ tranversely at one point at $s = 0$, the signed distance changes sign as $s$ changes sign. Therefore property (d) follows and this concludes the proof. \end{proof}

\begin{construction}[Unfolding] \label{const:unfoldings} Let $C$ be a non-degenerate heterodimensional cycle in the characteristic foliation of $(\Sigma,\eta)$, as in Theorem \ref{thm:characteristic_unfoldings}. We now construct a family of embeddings 
\begin{equation} \label{eq:unfolding_definition}
\iota_{r,s}:\Sigma \to C\Sigma  \qquad\text{paramfetrized by $(r,s) \in \R^2$ sufficiently small}
\end{equation}
By Definition \ref{def:non_degeneracy} for non-degeneracy, there exists a choice of local sections, heteroclinic points and heteroclinic segments
\[
D_\pm \text{ as in (\ref{eq:choice_of_local_sections})} \qquad P^u_\pm \text{ and }P^s_\pm \text{ as in (\ref{eq:choice_of_heteroclinic_points})} \qquad\text{and}\qquad \Gamma_\pm \text{ as in (\ref{eq:choice_of_heteroclinic_segments})}
\]
such that Conditions \ref{cond:simple_fragile_heteroclinic}, \ref{cond:simple_robust_heteroclinic}, \ref{cond:extra_condition} and \ref{cond:saddle} all hold. We consider the contact embedding given by the Poincare return map of $C_-$
\[
\Phi = \on{Ret} C_-:U \to D_- \qquad\text{with hyperbolic fixed point}\qquad O = O_- = \Gamma_- \cap D_-
\]
and Legendrian given by the image of the stable manifold of $O_+$ under the transfer map of $\Gamma_-$.
\[
\Lambda = \on{Tr}\Gamma_-(W^s(O_+)) \qquad\text{with intersection point}\qquad P = P^s_-
\]
Note that Condition \ref{cond:simple_fragile_heteroclinic} implies that $\xi_P = T_P\Lambda \oplus T_PW^u(O)$. We can thus apply Proposition \ref{prop:unfolding_proposition} to the embedding $\Phi$ and Legendrian $\Lambda$ to acquire families of contact Hamiltonians
\[G_r:[0,1] \times D_- \to \R \qquad\text{and}\qquad H_{r,s}:[0,1] \times D_- \to \R\]
satisfying Proposition \ref{prop:unfolding_proposition}(a-d) supported in arbitrary neighborhoods $V$ of $O$ and $W$ of $P$. Next, choose a plugging domain $U_G$ that is well-positioned along the orbit $C_-$ and such that the outward boundary is $D_-$. Also choose a plugging domain $U_H$ that is well-positioned along $\Gamma_-$ and that is disjoint from $U_G$. By Lemma \ref{lem:transition_map_deformation} and Lemma \ref{lem:transition_deformation_for_hets}, we can insert the plug corresponding to $G_r$ along $U_G$ and a plug corresponding to $H_{r,s}$ along $U_H$ to get an family of embeddings
\begin{equation} 
\iota_{r,s}:\Sigma \to C\Sigma  \qquad\text{for $(r,s)$ sufficiently close to $(0,0)$}
\end{equation}
such that the continuation $C^{r,s}_-$ of $C_-$ and the continuation $\Gamma^{r,s}_-$ of $\Gamma_-$ have the following return map and transition map, respectively.
\begin{equation} \label{eq:return_and_transition_map_of_unfolding}
\on{Ret}C^{r,s}_- = \Phi_{G_r} \circ \on{Ret}C_- \qquad\text{and}\qquad \on{Tr} \Gamma^{r,s}_- = \Phi_{H_{r,s}} \circ \on{Tr}\Gamma_-
\end{equation}
Note that for (\ref{eq:return_and_transition_map_of_unfolding}) to hold, we must choose the open sets $U$ and $V$ to be sufficiently small neighborhoods of $O = O_-$ and $P = P^s_-$. This guarantees that the trajectory from any point $Q$ in $D_+$ to its image $\on{Tr}\Gamma_-$ in $D_-$ cannot pass through the support of the plug by $G_r$. Also note that the return maps of $C_+$ and $\Gamma_+$ are unchanged as long as $U$ and $V$ are small.
\begin{equation}
\on{Ret}C^{r,s}_+ = \on{Ret}C_+ \qquad\text{and}\qquad \on{Tr}\Gamma_+^{r,s} = \on{Tr}\Gamma_+
\end{equation}
\end{construction}

\begin{lemma} Let $\iota_{r,s}$ be the embeddings in Construction \ref{const:unfoldings}. Then the characteristic foliations $L_{r,s}$ of the contact Hamiltonian structures $\eta_{r,s} = \iota_{r,s}^*\xi$ form a proper unfolding in the sense of Definition \ref{def:proper_unfolding}.
\end{lemma}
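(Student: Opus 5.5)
The plan is to verify the two requirements of Definition \ref{def:proper_unfolding} directly, using the explicit description (\ref{eq:return_and_transition_map_of_unfolding}) of the return map of $C^{r,s}_-$ and the transition map of $\Gamma^{r,s}_-$. Since the index is $(n-1,n)$, Corollary \ref{cor:real_center_multipliers} shows that both center multipliers are real. So only case (SS) needs checking, namely
\begin{itemize}
\item[(i)] the splitting function $\sigma_g\circ L_{r,s}$ is differentiable with $0$ as a regular value;
\item[(ii)] the ratio $\theta\circ L_{r,s}$ has nowhere vanishing derivative along the zero set $\Sigma=\{\sigma_g=0\}$.
\end{itemize}
Continuity of the family $L_{r,s}$ in $C^k$ follows from the $C^\infty$-continuity of $H\mapsto\iota_H$ (Lemma \ref{lem:transition_deformation_for_hets}) together with the smooth dependence of $G_r$ and $H_{r,s}$ on parameters.

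\emph{Step 1: the splitting function.} The splitting function is defined on $D_+$ via $A_K=W^u(O^+_K)$ and $B_K=\on{Tr}\Gamma^-_K(W^s(O^-_K))$. Here I note a notational mismatch with Construction \ref{const:unfoldings}, which measures the fragile intersection in $D_-$ through $\Lambda=\on{Tr}\Gamma_-(W^s(O_+))$ and $W^u(O_-)$. I would first argue that the choice of section is immaterial. The transition map along $\Gamma_-$ is a diffeomorphism near the heteroclinic point, and signed distances measured in $D_-$ and in $D_+$ (with a pulled-back metric) differ by a positive smooth factor that is bounded above and below. Hence the zero set and the regular-value property agree. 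So we may take $\sigma_g(L_{r,s})$ to be the signed distance in a ball $B\subset D_-$ between $W^u(\Phi_r,O)=W^u(\on{Ret}C_-^{r,s},O_-)$ and the image of $W^s(O_+)$ under $\Phi_{H_{r,s}}\circ\on{Tr}\Gamma_-$. Since $\on{Ret}C_+$ is unchanged, this image is exactly $\Lambda_{r,s}$. Proposition \ref{prop:unfolding_proposition}(d) then gives $\sigma_g(L_{r,s})=s$ exactly. This function is smooth, $\partial_s\sigma=1$, and $\Sigma=\{s=0\}$ is parametrized by $r$.

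\emph{Step 2: the multiplier ratio.} Since $\on{Ret}C^{r,s}_+=\on{Ret}C_+$, the multiplier $\lambda_{cu}(O^+)$ is constant. Since $\on{Ret}C^{r,s}_-=\Phi_{G_r}\circ\on{Ret}C_-$, Proposition \ref{prop:unfolding_proposition}(c) gives $\lambda_{cs}(O^-_{r,s})=e^r\lambda_{cs}(O_-)$. Therefore
\[
\theta(L_{r,s})=-\frac{r+\log|\lambda_{cs}(O_-)|}{\log|\lambda_{cu}(O_+)|},
\]
which has $r$-derivative $-1/\log|\lambda_{cu}(O_+)|\neq0$ along $\Sigma$, as required.

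\emph{Main obstacle.} The delicate step is Step 1: identifying the abstract splitting function of Definition \ref{def:splitting_function} with the explicit signed distance from Proposition \ref{prop:unfolding_proposition}(d). This requires two checks. First, the plug for $G_r$ placed along $C_-$ must not change the portion of $W^s(O_+)$ carried by $\Gamma_-$; this holds because the supports $V$ and $W$ can be chosen small and disjoint, and $U_G,U_H$ are disjoint. Second, the metric $g$ from Proposition \ref{prop:unfolding_proposition} must be an admissible choice in Definition \ref{def:proper_unfolding}; this holds because that definition only asks for the existence of some metric. I would handle the transfer between $D_-$ and $D_+$ by redefining the splitting function with the pushed-forward metric, or, if needed, by observing that a diffeomorphic change of section preserves sign, zero set and nondegeneracy of the derivative.
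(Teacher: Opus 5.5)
Your proposal is correct and follows the same route as the paper: you identify the splitting function with the signed distance from Proposition \ref{prop:unfolding_proposition}(d) to get $\sigma_g(L_{r,s}) = s$, and you use Proposition \ref{prop:unfolding_proposition}(c) together with $\on{Ret}C^{r,s}_+ = \on{Ret}C_+$ to show that $\theta$ has nonzero $r$-derivative along $\{s=0\}$. The paper leaves implicit the bookkeeping you add, namely the reduction to case (SS) via Corollary \ref{cor:real_center_multipliers} and the transfer between $D_-$ and $D_+$ via a pushed-forward metric; also, your formula $\theta = -(r+\log|\lambda_{cs}(O_-)|)/\log|\lambda_{cu}(O_+)|$ is the correct one, whereas the paper's display drops the additive term, although the derivative is nonzero either way.
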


\begin{proof} As in Proposition \ref{prop:unfolding_proposition}, we adopt the following notation.
\[
\Phi_r = \Phi_{G_r} \circ \Phi = \on{Ret}C^{r,s}_- \qquad\text{and}\qquad \Lambda_{r,s} = \Phi_{H_{r,s}}(\Lambda) = \on{Tr}\Gamma^{r,s}_-(W^s(O_+))
\]
First, we check that the function $\sigma(L_{r,s})$ is smooth in $r$ and $s$, with a regular value at $0$, where $\sigma$ is the splitting function in Definition \ref{def:splitting_function}. Let $B$ be the ball in Proposition \ref{prop:unfolding_proposition}(d). Then
\[
\sigma_g(L_{r,s}) = \on{sdist}\big(W^u(O_-^{r,s}) \cap B, \on{Tr}\Gamma^{r,s}_-(W^s(O_+^{r,s})) \cap B\big)
\]
Here $O^{r,s}_\pm$ is the continuation fixed point of $O_\pm$ for the line field $L_{r,s}$. Note that $W^u(O_-^{r,s})$ is precisely the unstable manifold $W^u(\Phi_r,O_-)$ of $O_-$ as a fixed point of $\Phi_r$, while $W^s(O_+^{r,s}) = W^s(O_+)$ since the return map of $C_+^{r,s}$ is simply $\on{Ret}C_+$. Therefore
\[
\sigma_g(L_{r,s}) = \on{sdist}\big(W^u(\Phi_r,O_-) \cap B, \Lambda_{r,s} \cap B) = s
\]
This shows that $\sigma_g(L_{r,s})$ is a smooth function in the parameters $r$ and $s$ with a regular value at $0$. Moreover, the sub-manifold $S$ of parameters with $\sigma_g(L_{r,s}) = 0$ is precisely the set where $s = 0$. 

\vspace{3pt}

Second, we compute the derivative of the multiplier ratio along $S$. We are interested in the function on $S = \{s = 0\}$ given by
\[
\theta(r) = -\frac{\on{log}|\lambda_{cs}(O_-^{r,s})|}{\on{log}|\lambda_{cu}(O_+^{r,s})|}|_{s = 0} 
\]
Again, since $\on{Ret}C_+^{r,s} = \on{Ret}C_+$ for all $r$ and $s$, we know that $\lambda_{cu}(O_+^{r,s}) = \lambda_{cs}(O_+)$ is independnt of $r$. On the other hand, $\lambda_{cs}(O^{r,s}_-)$ is simply the center-stable multipler $\lambda_{cs}(\Phi_r,O_-)$ of $O_-$ with respect to the return map $\Phi_r = \on{Ret}C_-^{r,s}$. Therefore, Proposition \ref{prop:unfolding_proposition}(c) implies that
\[
\theta(r) = -\frac{\on{log}|\lambda_{cs}(\Phi_r,O_-)|}{\on{log}|\lambda_{cu}(O_+)|} = -r \cdot \frac{\on{log}|\lambda_{cs}(O_-)|}{\on{log}|\lambda_{cu}(O_+)|}
\]
In particular, $\theta(r)$ is differentiable with a nowhere zero derivative. This verifies the second property of a proper unfolding and finishes the proof.\end{proof}

\bibliographystyle{hplain}
\bibliography{standard_bib}

\end{document}